\title{The golf model on $\znz$ and on $\Z$}
\author{Zoé Varin\thanks{Univ. Bordeaux, CNRS, Bordeaux INP, LaBRI, UMR 5800, F-33400 Talence, France}}
\newtheorem{theorem}{Theorem}[section]
\newtheorem{lemma}[theorem]{Lemma}
\newtheorem{claim}[theorem]{Claim}
\newtheorem{corollary}[theorem]{Corollary}
\newtheorem{proposition}[theorem]{Proposition}
\newtheorem{question}[theorem]{Question}
\newtheorem{definition}{Definition}
\newtheorem{remark}{Remark}
\renewcommand{\P}{\mathbb{P}}%
\newcommand{\Prob}[1]{\P\left(#1\right)}
\newcommand{\E}{\mathbb{E}}
\newcommand{\Esp}[1]{\E\left[#1\right]}
\newcommand{\Var}[1]{\text{Var}\left(#1\right)}
\newcommand{\Cov}[1]{\text{Cov}\left(#1\right)}
\newcommand{\eps}{\varepsilon}
\newcommand{\card}[1]{\left|#1\right|}
\newcommand{\smallcard}[1]{|#1|}
\newcommand{\N}{\mathbb{N}}
\newcommand\R{\mathbb{R}}
\newcommand{\ind}[1]{\mathds{1}_{#1}}
\newcommand \si{\text{if }}
\newcommand \et{\text{ and }}
\newcommand{\fonction}[5]{\begin{array}{lrcl} %eventuellement {l|rcl}
		#1: & #2 & \rightarrow & #3 \\
		& #4 & \longmapsto & #5 \end{array}}
\newcommand\limit[2]{
	\overset{#1}{\underset{#2}{\longrightarrow}}
}
\DeclareMathOperator{\argmin}{argmin}
\newcommand{\minargmin}{\argmin}
\renewcommand\bar{\overline}
\newcommand{\ceil}[1]{\lceil #1 \rceil}
\newcommand{\floor}[1]{\lfloor #1 \rfloor}
\newcommand\zkz{{\Z/k\Z}}
\renewcommand{\mod}{\text{ mod}~} % pour virer l'espace trop gros avant le modulo
\newcommand\LL{\Lukasie }
\newcommand{\TLdet}{{\Holesetdet^F}}
\newcommand{\intzo}{[0,1]}
\newcommand{\Czo}{C\intzo}
\newcommand{\Uzo}{\mathcal{U}([0,1])}
\newcommand{\Ballsetdet}{{B}}
\newcommand{\Ballset}{{\bf \Ballsetdet}}
\newcommand{\Sources}{{S}} %\bf ?
\newcommand{\Clockdet}{C}
\newcommand{\Clock}{{\bf \Clockdet}}
\newcommand\tmoins{{t^-}}
\newcommand{\limdelta}{{\bm D}}
\newcommand \pcercle{\P^{n,\nballs,\nholes,p}}
\newcommand \pcerclenl{\P^{n,\nl(n),p}}
\newcommand \pcerclenllambda{\P^{n,\lambda \sqrt{n},p}}
\newcommand \pcerclen{\P^{n,\nballs(n),\nholes(n),p}}
\newcommand \pcercleparking{\P^{{\sf Parking},n,\nballs,p}}
\newcommand \pcercleparkingi{\P^{{\sf Parking},\ell_i+1,\ell_i,p}}
\newcommand{\configinitcercle}{C^{n,\nballs, \nholes}}
\newcommand{\pcerclei}{\P^{\ell_i,b_i, b_i+1,p}}
\newcommand{\mudbdt}{\mu^{\db ,\dt}}
\newcommand{\muclocks}{\mu^{\sf Clocks}}
\newcommand \pdbdh{\P^{\Z,\db,\dt, p}}
\newcommand \pdhdh{\P^{\Z,\dt,\dt, p}}
\newcommand \ptdhdh{\P^{\Z,t\dt,\dt, p}}
\newcommand \ptepsdhdh{\P^{\Z,t_\eps\dt,\dt, p}}
\newcommand{\golfsequences}{{\sf GolfSequences}^G\left[ \Holesetdet, \Sources \right]}
\newcommand{\golfsequencesrestricted}{{\sf GolfSequences}^G\left[ \Holesetdet, \Sources, \TLdet \right]}
\newcommand{\golfsequencesrestrictedX}{{\sf GolfSequences}^{\znz}\left[ \Holesetdet, \Sources, X \right]}
\newcommand{\golfsequencesrestrictedO}{{\sf GolfSequences}^{\Z/(\ell_i+1) \Z}\left[ H_{i}, S_{i}, \{0\} \right]}
\newcommand{\weight}{{\sf Weight}}
\newcommand{\forests}[2]{\mathcal{F}(#1,#2)}
\newcommand{\markedforests}[2]{\mathcal{F}^{\bullet}(#1,#2)}
\newcommand{\forestsnr}{\forests{n}{k}}
\newcommand{\randforests}[2]{{\bf F}_{#1,#2}}
\newcommand{\randforestsmarked}[2]{{\bf F}^{\bullet}_{#1,#2}}
\newcommand{\randforestsnr}{\randforests{n}{k}}
\newcommand{\Catalan}[1]{C_{#1}}
\newcommand{\Deltan}{\Delta^{(n)}}
\newcommand{\Deltannl}{\Delta^{(n,\nl)}}
\newcommand \Ii{\llbracket x_i , x_ {i+1} \rrbracket}
\newcommand \cardBi {\card{\Ballset^0 \cap \rrbracket x_i , x_ {i+1} \llbracket}}
\newcommand \cardBidet {\card{\Ballsetdet \cap \rrbracket x_i , x_ {i+1} \llbracket}}
\newcommand \cardTidet {\card{\Holesetdet \cap \rrbracket x_i , x_ {i+1} \llbracket}}
\newcommand{\height}{{\sf h}}
\newcommand{\grosevtdefdet}{{$X \subseteq H$, $\forall i, \cardTidet = b_i$ and $ \forall i, \cardBidet = b_i$}}
\newcommand{\grosevtnom}{{\sf Cond}(\Tinit, \Binit, X, (b_i)) }
\newcommand{\grosevtnomdet}{{\sf Cond}(H, B, X, (b_i)) }
\newcommand\znlz{{\Z/\nl \Z}}
\newcommand\Binit{\Ballset^0}
\newcommand\nballs{{N_\ball}}
\newcommand\nholes{{N_\hole}}
\newcommand\dballs{{d_\ball}}
\newcommand\dholes{{d_\hole}}
\newcommand \free{{\sf f}}
\newcommand\nl{{N_\free}} %anciennement N_\ell
\newcommand\db{\dballs}
\newcommand\dt{\dholes}
\newcommand\pz{\P^{\Z,\db,\dt,p}}
\newcommand\zzlambda{{\bm L}^{(\lambda)}}
\newcommand{\TL}{{\Holeset^1}}
\newcommand{\PMC}{P}
\newcommand{\Holesetdet}{{H}}
\newcommand{\Holeset}{{\bf \Holesetdet}}
\newcommand \Tinit{\Holeset^0} %lui c'est juste un alias on pourra le supprimer à la fin
\newcommand \ball{{\sf b}}
\newcommand \hole{{\sf h}}
\newcommand{\Z}{\mathbb{Z}}
\newcommand\znz{{\Z/n\Z}}
\newcommand{\subscript}[2]{$#1 _ #2$}
\newcommand{\Lpol}{\text{\L}}
\newcommand{\bff}{{\bf f}}
\newcommand{\aci}{A_c^i}
\newcommand{\nbc}{N_c}
\newcommand{\nbca}{N_{\alpha_a}}
\newcommand{\fna}{f_n^{\alpha_a}}
\newcommand{\fnc}{f_n^c}
\newcommand{\acone}{A_c^1}
\newcommand{\actwo}{A_c^2}
\newcommand{\rotn}{{{ \sf rot}^{(n)}}}
\newcommand{\rot}{{{ \sf rot}}}
\newcommand{\suiv}{{\sf next}}
\newcommand{\Excursions}{{\sf Exc}}
\newcommand{\sorted}[1]{{\sf Sorted}\left(#1\right)}
\newcommand{\Max}[1]{{\sf Max}\left(#1\right)}
\newcommand{\Sep}{{\sf Sep}}
\newcommand{\Lukasie}{{\L}ukasiewicz\ }
\newcommand{\Veta}V%{V^{({\sf Golf})}}
\newcommand{\Vxi}W%{V^{({\sf Parking})}}
\newcommand{\Snowflake}{\text{\raisebox{-2.5pt}{\Snow}}}
\colorlet{bleu}{Blue!70!Cyan}
\newcommand{\redcolor}[1]{\textcolor{red}{#1}}
\newcommand{\orangecolor}[1]{\textcolor{orange}{#1}}
\newcommand{\todog}[2][]{{%
		\let\marginpar\marginnote
		\reversemarginpar
		\renewcommand{\baselinestretch}{0.8}%
		\todo[#1]{#2}}}
\newcommand{\addition}[1]{{#1}} 
\newcommand{\bigremoval}[1]{{\color{lightgray} }} %#1
\begin{document}
	\maketitle
	\begin{abstract}
		
		We introduce a particle model, which we call the {\textit{golf model}}. Initially, on a graph $G$, balls and holes are placed at random on some distinct vertices. The balls then move one by one, doing a random walk on $G$, starting from their initial vertex and stopping at the first empty hole they encounter, which they fill. On finite graphs, under reasonable assumptions (if there are more holes than balls, and if the Markov chain characterizing the random walks is irreducible), a final configuration is reached almost surely. In this paper, we are mainly interested in $\TL$, the set of remaining holes. We give the distribution of $\Holeset^1$ on $\znz$, and describe a phase transition for the largest distance between two consecutive holes when the number of remaining holes is of order $\sqrt{n}$. We show that the model on $\Z$ is well-defined if each vertex contains either a ball with probability $\db$, a hole with probability $\dt$, or nothing, independently of the other vertices, as long as $\db \leq \dt$, and we describe the law of $\TL$ in this case.
	\end{abstract}

\section{Introduction}

\subsection{Foreword}

In this paper, we define an interacting particle system called the golf model. It is defined on a connected graph $G$ (in the paper, $G$ is mainly $\znz$ or $\Z$) which initially contains balls and free holes at distinct vertices. Each ball is given a random clock uniformly distributed over $\intzo$. 
When its clock rings (at time $t$), a ball located at some vertex $u$ at time 0 is randomly played by a clumsy player until it falls into a free hole $v$; it then remains in this hole until the end of the process (at time $1$); at time $t$, the vertices $u$ and $v$ become neutral vertices, i.e.\ vertices without any free hole or ball (all steps of a ball's displacement are considered instantaneous, and correspond in distribution to an irreducible Markov chain on the graph).

On $\znz$, this model is close to the parking model, for which the ``block size process'' has been studied intensively (it is the process of the distances between consecutive free parking spaces). In particular, a phase transition for this process has been established by Chassaing-Louchard \cite{chassaing2002phase} (we discuss this in more detail in Section \ref{subsect:parking}).

In this paper, we mainly study two models: the golf model on $\znz$ and on $\Z$.

On $\znz$, this model is trivially well-defined; we are interested in $\TL$, the set of free holes at time 1. We characterize the distribution of $\TL$ (Theorem \ref{thm:loitrousresiduels}) and, equivalently, the joint distribution of the distances between consecutive holes (which we call the block sizes, and which play a similar role to the sizes between consecutive parking spaces in the parking model).

Computing the distribution of $\TL$ reveals the algebraic structure of the block distribution, which turns out to be crucial for making combinatorial connections with models of random forests, and for obtaining asymptotic results for the block sizes (as $n$ goes to infinity). In particular, we identify a phase transition for the distribution of the largest block sizes (see Theorem \ref{thm:phaseT}): when the number of free holes at time 1 is of order $\sqrt{n}$, the largest block sizes are of order $n$.

We can also define a golf process with balls that have more general moving strategies (instead of doing a random walk, they can, for example, choose to go to the right or to the left, depending on the parity of the distance to the first free hole on the right). Since the proofs are simpler to state for the golf process, and generalize straightforwardly to these variants, for pedagogical reasons, we give the results and the proofs for the golf process, and then explain in Section \ref{subsect:variantes} how we can generalize these results.

We then study the golf model on $\Z$, this time defined by independently choosing the state of each vertex to be a hole with probability $\dt$ or a ball with probability $\db$. Since, an infinite number of balls have moved before any time $t>0$, an argument is needed to prove that the golf model is well-defined on $\Z$ (see the beginning Section \ref{subsect:intro_Z}). Finally, we compute the distribution of $\TL$ on $\Z$ (see Theorem \ref{thm:loiblocsZ}), using some of the results on $\znz$.

\subsection{Definition of the model}

All along the paper, $G = (V,E)$ is a connected graph, with a finite or countable set of vertices $V$, and $\PMC$ is the transition matrix of an irreducible and recurrent Markov chain on $G$ (that is, $P = \left(P_{u,v}, u,v\in V\right)$ and $P_{u,v} >0 $ implies that $ \{u,v\}$ belongs to the edge set $E$). {We call $\PMC$-Random Walk a Markov chain with kernel $P$}. 

We define a random process $\eta = \left(\eta^t\right)_{t\in \intzo}$, which we call a {golf process}, and which models the evolution of a particle system as time passes from 0 to 1. It is a process defined on $G$, with two distinct sources of randomness : the initial configuration $\eta^0$, and its evolution towards a final configuration. For each $t$, $\eta^t = \left(\eta_v^t, v\in V\right)$ encodes the states of all vertices. It takes its values in the configuration space $\mathcal{S} = \{\hole, 0, \ball, \Snowflake\}^V$. We say that (at time $t$):
\begin{itemize}[noitemsep,topsep=0pt] 
	\item[\labelitemii] $v$ is a \textit{free hole} (or simply a hole) if $\eta^t_v = \hole$,
	\item[\labelitemii] $v$ contains a \textit{ball} if $\eta^t_v = \ball$
	\item[\labelitemii] $v$ is a \textit{neutral} vertex if $\eta^t_v = 0$. As the process evolves over time, there are several types of neutral vertices: \textit{occupied holes}, which were free holes at the beginning ($\eta^t_v = 0$ but $\eta^0_v = \hole$), vertices that contained a ball that has moved before time $t$ ($\eta^t_v = 0$ but $\eta^0_v = \ball$) and the original neutral vertices ($\eta^0_v = 0$).
	\item[\labelitemii] $v$ is \textit{frozen} when $\eta^t_v = \Snowflake$. In this case, the whole process is frozen until the end: $\forall t' \geq t, \forall u\in V, \eta^{t'}_u = \Snowflake$. This frozen state is a kind of cemetery state, where the whole process goes when something goes wrong. Well-defined golf processes will avoid this state with probability 1.
\end{itemize}

Finally, we define the set of free holes at time $t$, $\Holeset^t \coloneqq \{v : \eta^t_v = \hole\}$ and similarly the set of remaining balls $\Ballset^t \coloneqq \{v : \eta^t_v = \ball\}$. Note that, by definition, $\Ballset^0 \cap \Holeset^0 = \emptyset$. Sometimes, for simplicity, if a vertex $v$ is such that $\eta^t_v = \ball$, then we say that $v$ is a ball.

%\pagebreak
\paragraph{Distribution of the initial configuration:}
\begin{itemize}%[noitemsep,topsep=0pt]
	\item The initial state $\eta^0$ can be either random or deterministic. We will only consider models for which $\card{\Ballset^0} \leq \card{\Holeset^0}$, i.e. there are more holes than balls (when $G$ is infinite, we allow $\card{\Ballset^0}$ and $\card{\Holeset^0}$ to be both infinite).
	
	\item Each vertex $v$ has a random \textit{activation clock} $\Clock(v)\in (0,1)$. We will mainly consider the case where the $\Clock(v)$ are i.i.d.\ of uniform distribution on $\intzo$, hereafter denoted $\Uzo$ (which ensures that almost surely all the $\Clock(v)$ are different). We denote by $\muclocks$ = $\mathcal{L}\left(\Clock(v), v\in V\right)$ the clock distribution.\footnote{We will not deal with the case when clocks can be equal, but if one wants to define the golf process without loss of generality, then it suffices to use new random variables to break ties when there are some.}
	
\end{itemize} 
\paragraph{Random evolution of the configuration:} informally, we define the process $\left(\eta^t\right)_{t\in \intzo}$ so that for each vertex $v$ containing a ball, when the clock rings at time $\Clock(v)$, the corresponding ball instantaneously does a $\PMC$-Random Walk until it reaches a still free hole, which it fills (still at time $\Clock(v)$). Then $\eta^{\Clock(v)}$ is updated. We now give a definition of the process $\eta$ (the formal aspects concerning measurability will be discussed just before Definition \ref{defi:def_bien_def}):

We define a jump process $(\eta^t)$, whose jump times belong to the set $\{\Clock(v), v \in \Ballset^0\}$.
Let $t$ be the activation time of a ball: there exists $v$ such that $\Clock(v) = t \et \eta^t_v=\ball $. 
We assume that $\eta^\tmoins = (\eta^\tmoins_w)_{w\in V}$ is well-defined. Then, we consider $(W^v_k)_{k\geq 0}$ a $\PMC$-random walk starting at $v$, and stopped when it hits a free hole, that is at time $\tau = \inf \{k\geq 0 : {W^v_k} \in \Holeset^\tmoins \}$. If $\tau = \infty$ (the ball has not found any free hole), the whole process is \textit{frozen} until time 1 and we set $\eta^t_w \coloneqq \Snowflake, \forall w \in V$ (and thus nothing happens after time $t$). Otherwise, the ball jumps to the hole $u \coloneqq W^v_\tau$ (the first free hole it has reached) and we update the states so that the ball in $v$ fills the hole $u$: $\eta^t_u \coloneqq 0$, $\eta^t_v \coloneqq 0$ (both $v$ and $u$ become neutral) and the state of the other vertices do not change ($\forall w \notin\{v,u\}, \eta^t_w \coloneqq \eta^\tmoins_w$).
(All the random walks of the balls are independent of each other and of the initial configuration).

\paragraph{Final configuration:} At time 1, either the process is frozen or every ball has found a hole, so the process has reached a \textit{final configuration}, characterized (if it is not frozen) by the set of free holes $\Holeset^1$ and the set of occupied holes $\Holeset^0 \backslash \Holeset^1$.

\paragraph{Golf model:} To define a golf process $\eta$, we need 5 parameters: the graph $G$, the kernel $\PMC$ for the random walks, the clocks distribution $\muclocks$, and $\Ballset^0$ and $\Holeset^0$ (the initial configuration $\eta^0$ is characterized by the set of balls $\Ballset^0$ and the set of holes $\Holeset^0$, with $\Holeset^0 \cap \Ballset^0 = {\emptyset}$).
\medskip

The random variables $(\Clock(v))$, $\Ballset^0$, $\Holeset^0$ and the $\PMC$-Random Walks are assumed to be defined on a common probability space $(\Omega,{\cal A}, \mathbb{P})$, which is assumed to be large enough to accommodate also all the random variables used in the sequel.
\medskip

In the following, we are mainly interested in describing the distribution of $\TL$, the set of free holes at time 1 (which contains the same information as $\eta^1$).

\medskip
By nature, even if we are interested in the final configuration of the golf process, in order to get a formal definition we need to define this process as a continuous time process indexed by $[0,1]$, equipped with an ad hoc sigma-field. This introduces some additional complications that can be ignored on $\znz$ (because the process only has only a finite number of jumps).

Since $\eta$ is a continuous-time process which is not continuous, we decide to work on $D_\mathcal{S}[0,1]$ the Polish space of \textit{càdlàg} functions from $[0,1]$ to $\mathcal{S}$, equipped with the Skorokhod topology, where the configuration space $\mathcal{S} = \{\hole, 0, \ball, \Snowflake\}^V$ is equipped with the product topology. Recall that \textit{càdlàg} means right-continuous with left limits.

\begin{definition}\label{defi:def_bien_def}
	We say that $(\eta^t)$ is valid if it satisfies the following conditions : it is almost surely well-defined and not frozen at time 1, it is measurable, and it almost surely belongs to $D_\mathcal{S}[0,1]$.
\end{definition}

When there is a finite number of balls, since $\eta$ jumps only a finite number of times, the golf process $\eta$ is valid under reasonable assumptions:

\begin{proposition}\label{prop:bonne_def_graphe_fini_ET_commutation}
	We recall that we assume that $\PMC$ is irreducible and recurrent.  
	If $\eta$ is a golf process on a finite graph $G$, such that $\card{\Ballset^0} \leq \card{\Holeset^0}$, then the process $\eta$ is valid.
	
	The same results holds for infinite graphs, if $\card{\Ballset^0} \leq \card{\Holeset^0}$ and $\card{\Ballset^0}$ is finite.
\end{proposition}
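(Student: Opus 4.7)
The plan is to exploit the finiteness of $\Ballset^0$ and reduce the construction to a sequence of finitely many single-ball jumps, each of which I will show is well-defined. First I would observe that because the clocks $(\Clock(v))_{v \in \Ballset^0}$ are i.i.d.\ with the continuous distribution $\Uzo$, almost surely they are pairwise distinct; setting $N = \card{\Ballset^0} < \infty$ and discarding a null set, the activation times can be ranked as $t_1 < t_2 < \cdots < t_N$, and the trajectory will be piecewise constant off this finite set.

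Next I would argue by induction on $k \in \{1,\dots,N\}$ that the left-limit $\eta^{t_k-}$ is well-defined and not frozen, and that the $k$-th activated ball almost surely reaches a free hole. Since exactly $k-1$ holes have been filled by time $t_k-$, the remaining hole set has cardinality $\card{\Holeset^0} - (k-1) \geq \card{\Ballset^0} - (k-1) \geq 1$, so it is non-empty. Irreducibility and recurrence of $\PMC$ then ensure that an independent $\PMC$-random walk started at the ball's current vertex hits this non-empty subset in finite time almost surely (from any starting vertex, the walk visits every vertex of $V$ with probability one). Performing the update at $t_k$ produces $\eta^{t_{k+1}-}$, closing the induction. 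After $N$ such jumps, $\eta^1$ is defined and not frozen, giving the first two validity conditions.

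For the remaining conditions, the càdlàg property is automatic because the trajectory is constant on each interval $[t_k, t_{k+1})$, so it lies in $D_\mathcal{S}[0,1]$ almost surely. For measurability, I would fix in advance, on the underlying probability space, an independent $\PMC$-random walk attached to each element of $\Ballset^0$, and then express the whole construction as a finite composition of deterministic, measurable update maps applied to $\eta^0$, the clock vector, and these walks; the composition of finitely many measurable maps is measurable. The infinite-graph case requires no change, because $\card{\Ballset^0}$ is still finite and recurrence of $\PMC$ is precisely what is needed for each hitting time to be a.s.\ finite.

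I expect the main obstacle to be the measurability bookkeeping rather than the probabilistic content: the identity of the ball activated at $t_k$ and the specific walk used to route it must be selected in a jointly measurable way as the configuration evolves. Pre-attaching an independent walk to every ball at time $0$ cleanly sidesteps any need for abstract measurable-selection results, so that validity reduces to (i) a.s.\ distinctness of clocks, (ii) a.s.\ finiteness of each hitting time by recurrence, and (iii) measurability of a finite composition---all immediate under the stated hypotheses.
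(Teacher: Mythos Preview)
Your proof is correct and complete. The paper does not actually supply a proof of this proposition---it states the result and moves on, treating it as essentially immediate from the standing hypotheses (irreducibility, recurrence, finitely many balls, and a.s.\ distinct clocks). Your argument is precisely the natural one: order the finitely many activation times, observe inductively that at each activation at least one hole remains free so recurrence gives a.s.\ finite hitting time, and note that a process with finitely many jumps is automatically c\`adl\`ag and measurable once the randomness (clocks and pre-attached walks) is fixed in advance. There is nothing to compare against, and your write-up would serve well as the omitted proof.
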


\begin{remark}[\textit{Commutation property}]\label{remark:prop_commutation}
	We have the following Commutation property: on finite graphs, the distribution of $\eta^1$ does not depend on $\muclocks$. \addition{In fact, this property is even stronger : fix an initial configuration $\eta^0$. Then, the distribution of $\eta^1$ is the same for any choice of distinct clocks $(\Clock(v))_{v\in V}$.}
	
	This classical result is due to Diaconis-Fulton \cite[Proposition 4.1]{diaconis1991growth} (see Section \ref{subsect:litterature} for more details on their paper). A proof of this property relies on an alternative representation of the process: \addition{
		we associate to each vertex $u$ an infinite sequence $(V_i^u, i\geq 1)$, which we see as a stack of arrows/choices associated with $u$, such that $(V_i^u,i\geq 1)$ is a sequence of i.i.d.\` random neighbors of $u$ with common distribution $\P(V_1^u = v) =P_{u,v}$. Hence, for every $i$, $u\to V_i^u$ has the same distribution of a step of a random walk starting at $u$. Again, the balls move successively according to their clocks. When a ball at vertex $u$ has to take a step, it moves according to the first $V_i^u$ that has not yet been used. Diaconis and Fulton prove that for any realization of the $V_i^u$ (up to a negligible subset for which the balls move forever without finding holes), the final configuration is unchanged if we swap the clocks.}

	The Commutation property may help the reader to figure out that one can forget about the clocks (at least on finite graphs) if we are only interested in the final configuration. However, as we will see in Section \ref{subsect:variantes}, our analysis still applies to time dependent variants, which do not have the Commutation property. 
	
	Note that $\eta^1$ does not record the allocation of the balls (i.e. which ball fills which hole). If it did, Remark \ref{remark:prop_commutation} would no longer hold. Similarly, if we consider the entire process $\eta$ instead of $\eta^1$, then it also depends on the $\muclocks$.
\end{remark}

\begin{remark}
	We will see (for example in Section \ref{subsect:intro_Z}) that it becomes much more complicated when an infinite number of balls are involved. However, we will show that a golf process can be valid on $\Z$ under certain hypotheses (see Theorem \ref{thm:defZ} in Section \ref{subsect:intro_Z}).
\end{remark}

\subsection{Model and main results on $\znz$}\label{subsect:intro_znz}
We now focus on the golf process on the cycle $G = (\znz,E_n)$, with edge set $E_n = \left\{ \{x, x+1\}, x\in \znz \right\}$, for some $n\geq 1$. For simplicity, we often directly call this cycle $\znz$. 

We fix $\nballs \leq \nholes$, and we let $\configinitcercle$ be the set of all initial configurations with $\nholes$ holes and $\nballs$ balls:
\begin{equation}
	\configinitcercle \coloneqq \left\{ (y_0,\ldots,y_{n-1}) \in \{\hole, 0, \ball\}^{\znz} : \card{\{ i : y_i = \ball\}} = \nballs \et \card{\{i : y_i = \hole\}}= \nholes \right\}
\end{equation}
and thus $\card{\configinitcercle} = \binom{n}{\nballs, \nholes, n-\nballs-\nholes}$, the classical multinomial coefficient.

We draw the initial configuration $\eta^0$ uniformly in $\configinitcercle$.
We also fix $p \in \intzo$, and consider $\PMC$ such that $\forall x \in \znz, \PMC_{x, x+1} = p\et \PMC_{x, x-1} = 1-p$, so that $\PMC$ is the transition matrix of a biased random walk on $\znz$. Finally, we let $\nl = \nholes - \nballs$ be the number of free holes at time 1. 

\begin{remark}[Discussion on the distribution of the initial configuration]
	
	We choose to study the golf model with this particular initial distribution because 
	%		\item 
	it is arguably the simplest exchangeable distribution that respects the constraint of having more holes than balls (and every distribution that verifies this is a mixture of uniform distributions on $\configinitcercle $, with $\nballs \leq \nholes$)%,
	. Moreover,
	%		\item 
	it allows to study the golf process on $\Z$ (described in Section \ref{subsect:intro_Z}) as a limit of the golf process on $\znz$. 
	%	\end{itemize}
{Actually, we will see in Section \ref{subsect:variantes} that our approach also enables to study a wide range of initial configurations.}
\end{remark}

\paragraph{Some notation.}We choose to encode subsets $X$ of $\znz$ with cardinality $\card{X} = k$ as sequences indexed by $\Z/ k \Z$, which describes the order in which we discover the elements of $X$ when {turning around the circle} (this will simplify some considerations further in the paper). Figure \ref{fig:notations_sur_znz} illustrates this notation.

\begin{figure}[]
\centering
\begin{tikzpicture}[line cap=round,line join=round,>=triangle 45,x=1.0cm,y=1.0cm,scale = 1]
	\clip(-3.8,-2.6) rectangle (3.5,2.3);
	\draw(0,0) circle (2cm);
	\begin{scriptsize}
		\draw [color=orange,fill = orange] (-1.41,1.41) circle (3.5pt);
		\node [black,below right, xshift = 3pt] at (-1.41,1.41) {$x_3$}; 
		\draw [color=orange,fill = orange] (-1.85,-0.77) circle (3.5pt);
		\node [black,right, xshift = 5pt] at (-1.85,-0.77) {$x_0 = x_4$};
		\draw [color=orange,fill = orange] (1.41,-1.41) circle (3.5pt);
		\node [black,above left, xshift = -4pt] at (1.41,-1.41) {$x_{1}$};
		\draw [color=orange,fill = orange] (1.85,-0.77) circle (3.5pt);
		\node [black,above left, xshift = -3pt, yshift = 1pt] at (1.85,-0.77) {$x_{2}$};
		\node[above] at (0,-1.9) {\redcolor{$0$}};
		\draw[color=bleu,above right] (1.19,1.88) node {$\Delta_2 X =6$};
		\draw[bleu,left] (-2.25,0.49) node {$\Delta_3 X = 2 $};
		\draw[bleu,below] (-0.49,-2.25) node {$\Delta_0 X = \Delta_4 X = 4 $};
		\draw[bleu,right] (2.0,-1.39) node {$\Delta_1 X = 0 $};
		\draw[bleu] (0,0) ++(120:2.2)
		arc(120:-7.5:2.2)
		-- ++(-187.5:0.4)
		arc(-7.5:120:1.8)
		-- cycle;
		
		\draw[bleu] (0,0) ++(-60:2.2)
		arc(-60:-142.5:2.2)
		-- ++(37.5:0.4)
		arc(-142.5:-60:1.8)
		-- cycle;
		
		\draw[bleu] (0,0) ++(187.5:2.2)
		arc(187.5:150:2.2)
		-- ++(-30:0.4)
		arc(150:187.5:1.8)
		-- cycle;
		\draw[bleu] (0,0) ++(326.25:1.8) -- ++(326.25:0.4);
		
	\end{scriptsize}
	\draw [black,fill = black] (0:2) circle (1pt);
	\draw [black,fill = black] (22.5:2) circle (1pt);
	\draw [black,fill = black] (45:2) circle (1pt);
	\draw [black,fill = black] (67.5:2) circle (1pt);
	\draw [black,fill = black] (90:2) circle (1pt);
	\draw [black,fill = black] (112.5:2) circle (1pt);
	\draw [black,fill = black] (157.5:2) circle (1pt);
	\draw [black,fill = black] (180:2) circle (1pt);
	\draw [black,fill = black] (225:2) circle (1pt);
	\draw [black,fill = black] (247.5:2) circle (1pt);
	\draw [black,fill = black] (270:2) circle (1pt);
	\draw [black,fill = black] (292.5:2) circle (1pt);
\end{tikzpicture} 

\caption{Illustration of our notation : for  $n=16$ and the set $X = \{2,3,10,13\} \subseteq \Z/n\Z$ (orange points on the figure), which we identify with the nicely ordered sequence $(x_1,x_2,x_3,x_4) =(2,3,10,13)$, we show the blocks (in blue) and their length $(\Delta_i X)_{i\in\Z/4\Z} = (x_{i+1} - x_i - 1 \mod n)_{i\in\Z/4\Z}$. Notice that $B_0 = \{14,15,0,1\}$.}
\label{fig:notations_sur_znz}
\end{figure}

We denote by $\pi$ the canonical projection $\znz \to \llbracket 1, n\rrbracket$ (here and below, $\llbracket a, b \rrbracket = [a,b]\cap\Z$).
A sequence $x=(x_i)_{i\in \zkz} \in \znz^k$ is said to be \textit{nicely ordered} if $\pi(x_1) < \ldots < \pi(x_{k-1}) < \pi(x_k) = \pi(x_0)$. For every set $X \subseteq \znz$ such that $\card{X} = k$, there exists a unique nicely ordered sequence $x$ of size $k$ containing all the elements of $X$, and we identify $X$ with this sequence $x$.

For $i \in \Z/k\Z$, we define the \textit{$i$th-block} associated to a nicely ordered sequence $x$ (and thus to $X$) as the interval {$B_i = \llbracket x_i, x_{i+1}\rrbracket \backslash\{x_i,x_{i+1}\}$, where $\llbracket a,b \rrbracket$ is the set $\{a \mod n, a+1 \mod n, \ldots, a+i \mod n\} \subseteq \znz$ and where $i$ is the smallest non-negative integer such that $a+i \mod n = b$ (thus $B_i$ is empty if $x_{i+1} = x_i + 1$).}
The size of the $i$th block is then $\Deltan_{i} X = \Deltan_{i} x = \pi(x_{i+1}) - \pi(x_i) - 1 \mod n$.

Note that $0$ belongs to $B_0 \cup \{x_0\}$.\\

The following theorem is one of the key results of the paper, and is proved in Section \ref{subsect:znzloiTL}.  In the theorem and elsewhere in the paper, we will use the notation $\pcercle$, to illustrate the dependence of the distribution of the golf process on the parameters of the model.

\begin{theorem}{(Distribution of set of remaining holes $\TL$.)} \label{thm:loitrousresiduels}
For every $X \subseteq {\znz}$ of size $\card{X} = \nl>0$ and every $i \in \Z/\nl \Z$, we set $\ell_i \coloneqq \Deltan_{i}X$. Then, we have: 
\begin{equation}
	\pcercle\left(\TL = X\right) = \frac{1}{{\binom{n}{\nballs, \nholes, n-\nballs-\nholes}}} \sum_{(b_i)\in B^{(\nballs, X)}}
	\prod_{i\in \Z/\nl\Z}^{} \frac{1}{b_i + 1} \binom{\ell_i}{b_i, b_i, \ell_i-2b_i} \label{eqn:loi_TL_cercle}
\end{equation}
where $B^{(\nballs, X)}$ is the finite set $\left\{(b_i)_{1\leq i \leq \nl} :\sum_{i}b_i = \nballs  \et \forall i, 0 \leq b_i \leq \ell_i/2 \right\}$, which corresponds to the set of possible assignments of $\nballs$ balls into $\nl$ intervals (compatible with the given set $X$), having at most $\ell_i/2$ balls in the $i$th interval.

In particular, the distribution does not depend on $p$.
\end{theorem}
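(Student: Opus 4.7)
The plan is to reduce the global probability to an independent per-block computation, and then establish the per-block identity by induction on the number of balls, using the algebraic structure of the biased walk.

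\textbf{Block decomposition.} Since $X = \TL \subseteq \Holeset^0$ and holes can only be filled (never created) by the dynamics, each vertex $x_i \in X$ is a free hole throughout $[0,1]$. Consequently, if a ball initially in a block $B_i$ ever reached one of the surrounding endpoints $x_i$ or $x_{i+1}$ during its $\PMC$-random walk, it would be absorbed there and remove that endpoint from $\TL$, contradicting $\TL = X$. Conditional on the event $\{\TL = X\}$, every ball is therefore confined to its own block and must fill an interior hole of that block; since the random walks of distinct balls are independent, the sub-processes on distinct blocks are themselves mutually independent.

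\textbf{Summation by blocks.} Since $\eta^0$ is uniform on $\configinitcercle$,
$$
\pcercle(\TL = X) = \frac{1}{\binom{n}{\nballs, \nholes, n - \nballs - \nholes}} \sum_{\eta^0 \in \configinitcercle} \pcercle(\TL = X \mid \eta^0).
$$
I would group the initial configurations by the vector $(b_i)_{i \in \Z/\nl\Z}$ of balls per block. A configuration compatible with the event must place a hole at each $x_i$ and, inside each block $B_i$, exactly $b_i$ balls, $b_i$ initial holes (the ones eventually filled) and $\ell_i - 2 b_i$ neutrals; this yields the multinomial factor $\binom{\ell_i}{b_i, b_i, \ell_i - 2 b_i}$ and the constraint $2 b_i \leq \ell_i$ that defines $B^{(\nballs, X)}$. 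Combined with the independence established above, this reduces the theorem to the following per-block identity: on the path $\{0, 1, \ldots, \ell+1\}$ with absorbing free holes at the endpoints $0$ and $\ell+1$, and a uniform placement of $b$ balls, $b$ holes and $\ell - 2b$ neutrals on $\{1, \ldots, \ell\}$, the averaged probability that no ball is absorbed at $\{0, \ell+1\}$ is exactly $1/(b+1)$, independently of $p$.

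\textbf{Per-block identity.} I would prove this by induction on $b$, the case $b = 0$ being trivial. For $b \geq 1$, I invoke the Commutation property (Remark \ref{remark:prop_commutation}) to fix a convenient clock ordering and to process, say, the leftmost ball of the block first. Its walk is a $p$-biased random walk on $\{0, \ldots, \ell + 1\}$ absorbed at the endpoints and at the interior holes, whose hitting distribution is given by the classical gambler's-ruin harmonic functions of the form $(r^i - 1)/(r^j - 1)$ with $r = (1-p)/p$. Conditional on absorption at an interior hole $h$, both $h$ and the starting site of the first ball become neutral, and we are reduced to a $(b-1)$-ball configuration on the same block, to which the induction hypothesis applies.

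\textbf{Main obstacle.} The core difficulty is showing that after summing over the position of the first ball and over the interior hole $h$ it hits, all the genuinely $p$-dependent contributions coming from these harmonic functions telescope to leave the single Catalan-like constant $1/(b+1)$. I expect this cancellation to follow from a telescoping identity between absorption probabilities on nested subsegments. A potentially more transparent alternative would be a bijective proof that exploits the factorisation $\tfrac{1}{b+1} \binom{\ell}{b, b, \ell - 2b} = \binom{\ell}{2b} C_b$ (with $C_b$ the $b$-th Catalan number, counting Dyck paths of length $2b$), associating each successful walk history with a pair (choice of $2b$ non-neutral positions, Dyck word of length $2b$) via a reflection-principle argument --- but formalising such a bijection on a segment with two absorbing endpoints looks delicate, and I suspect that the inductive route above will be the more direct one.
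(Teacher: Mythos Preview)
Your block decomposition and summation by blocks are exactly the paper's reduction (Lemma \ref{lemma:simpl_thm_loi_trous_residuels} together with the counting in Equation (\ref{eqn:loiTL_cercle_aux_comptage})). The divergence is entirely in the per-block identity, and there you are making your life much harder than necessary.

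The inductive scheme via gambler's-ruin harmonic functions would in principle work, but you yourself flag the main obstacle without resolving it: you have not shown that the $p$-dependent absorption probabilities actually telescope to $1/(b+1)$. As written, this is a gap. The paper bypasses all of this with a single symmetry observation. In your segment formulation $\{0,1,\ldots,\ell+1\}$, both endpoints $0$ and $\ell+1$ are absorbing free holes that are never reached on the event of interest; hence you may identify them into a single vertex, obtaining a golf process on the cycle $\Z/(\ell+1)\Z$ with $b$ balls and $b+1$ holes (one of which is the glued vertex $0$). The event ``no ball reaches an endpoint'' becomes ``the unique remaining hole is $0$''. Now rotational invariance of both the uniform initial configuration and the kernel $\PMC$ on $\Z/(\ell+1)\Z$ forces the remaining hole to be uniform among the $b+1$ initial holes, so the conditional probability that it is $0$ is exactly $1/(b+1)$, independently of $p$ (Lemma \ref{lemma:casinitloitrousresiduels}). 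No induction, no harmonic analysis, no telescoping.

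In short: your reduction is correct and matches the paper; your proposed completion is not yet a proof, and the missing idea is the gluing of the two endpoint holes into one vertex of a smaller cycle, which turns the per-block claim into an immediate consequence of rotational symmetry.
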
 

\begin{remark} 
The identification of the product form of the distribution will allow to pass to the limit eventually. In particular, the multinomial coefficients in the product appear in combinatorial formulas counting trees and we can make connections with the distribution of random forests (this is what we do in the proof of Theorem \ref{thm:phaseT}). 
\end{remark}

We now describe the statistical properties of the \textit{block-size process} $\Deltan\TL$ as $n$ goes to infinity, in two cases: when there is no neutral vertex at the beginning (i.e. $\nballs + \nholes = n$) and when $\nballs$ and $\nholes$ are fixed.

For any sequence $c = (c_i)_{i\in \Z/k \Z}$, we let $\sorted{c} = (\hat{c}_{1},\cdots,\hat{c}_{k})$ be this sequence rearranged in decreasing order, so that $\sorted{\Deltan \TL}$ is the sequence of the block sizes at time 1, in decreasing order, and $\Max{\Deltan \TL} \coloneqq \sorted{\Deltan \TL}_1$ is the size of the largest block in $\TL$.   

The following theorem (proven in Section \ref{subsect:cercle_asymptotiques}) allows one to see that there is a phase transition in the size of the largest block; ``giant blocks'' arise when there is an excess of $\nl = \nholes - \nballs \sim \lambda \sqrt{n}$ holes.

We will call the case where the number of balls goes to infinity, the \textit{dense case}.

\begin{theorem}[Dense case]\label{thm:phaseT} 
Let $(\nholes(n))$ and $(\nballs(n))$ be two sequences such that $n = \nballs(n) + \nholes(n)$, $ \nholes(n) \to \infty$ and $\nballs(n) \to \infty$. Under $\pcerclen$, 

\begin{enumerate}[$(i)$]
	\item \label{thm:phaseT_first_item}if $\nl(n)=  an + o(n)$ for some constant $a>0$, then there exists two explicit constants $\alpha, \beta>0$ such that
	$\P(\alpha \leq \Max{\Deltan \TL}/\log n \leq \beta)\underset{n\to\infty}\to 1$;
	\item \label{thm:phaseT_second_item}if $\nl(n)/\sqrt{n}\to \lambda>0$, then the process $(\sorted{\Deltan \TL}_i/n, i \geq 1)$ converges in distribution for the product topology to a process $(y^{(\lambda)}_j,j\geq 1)$, satisfying $y^{(\lambda)}_j>0$ a.s., for all $j$, and which can be represented as the excursion sizes above the current minimum of a Brownian motion $B_t$ on $\intzo$ conditioned by hitting $-\lambda$ at time $1$.\footnote{A formal definition of this Brownian motion is given in Proposition \ref{prop:cv_marchecondit_vers_browniencondit}.}
	\item \label{thm:phaseT_third_item} if $\nl(n) / \sqrt{n} \to +\infty$, then $\Max{\Deltan \TL}/n\to 0 $ in probability,
	\item \label{thm:phaseT_fourth_item} if $\nl(n) / \sqrt{n} \to 0$, then $\Max{\Deltan \TL}/n\to 1 $ in probability.
\end{enumerate}
\end{theorem}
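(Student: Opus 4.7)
The plan is to use Theorem \ref{thm:loitrousresiduels} to identify, in the dense regime $\nballs + \nholes = n$, the joint law of the block-size sequence $(\Deltan \TL)$ with the law of the excursion lengths above the running minimum of a uniform $\pm 1$-bridge from $0$ to $-\nl$ of length $n$, and then to extract the four regimes using Donsker-type invariance together with regime-specific tail estimates.

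First, substituting $\nballs + \nholes = n$ into Theorem \ref{thm:loitrousresiduels} forces $b_i = \ell_i/2$ (and hence each $\ell_i$ even), since $\sum_i(\ell_i - 2 b_i) = n - \nballs - \nholes = 0$ and each summand is non-negative; the sum over $(b_i)$ collapses to a single term and the formula reduces to
\[ \pcercle(\TL = X) = \frac{1}{\binom{n}{\nballs}} \prod_{i \in \Z/\nl\Z} C_{\ell_i/2}, \]
where $C_m = \frac{1}{m+1}\binom{2m}{m}$ is the $m$-th Catalan number. Since $\binom{n}{\nballs}$ is the number of $\pm 1$-walk bridges from $0$ to $-\nl$ of length $n$, and $\prod_i C_{\ell_i/2}$ is the Dyck-path count for the excursions of length $\ell_i$, a Cycle-Lemma argument (accounting for the $\nl$ canonical rootings of a cyclic bridge at its record lows) identifies this formula with the joint law of excursion lengths above the running minimum of a uniformly chosen such bridge.

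For part (\ref{thm:phaseT_second_item}), Donsker's invariance principle for $\pm 1$-walk bridges gives that $t \mapsto W_{\lfloor nt \rfloor}/\sqrt{n}$ converges in the Skorokhod topology to a Brownian motion on $\intzo$ conditioned to reach $-\lambda$ at time $1$, and a continuity argument transports this to convergence of the ranked, rescaled-by-$n$ excursion lengths above the running minimum, giving exactly the claimed limit $(y^{(\lambda)}_j)$. The other three cases follow from the same coupling in different regimes: in (\ref{thm:phaseT_first_item}), the $\nl \sim a n$ block lengths behave like i.i.d.~variables with an exponential tail (the Catalan generating function has radius of convergence $1/4$, and the appropriate Cramér tilt recovers the empirical mean $(1-a)/a$), so standard maxima-of-i.i.d.~bounds sandwich $\Max{\Deltan \TL}/\log n$ between two explicit constants depending on $a$; in (\ref{thm:phaseT_third_item}), $\nl/\sqrt{n} \to \infty$ makes the typical inter-record-low spacing of the bridge $O((n/\nl)^2) = o(n)$, and a union bound over $\nl$ excursions with a Catalan-tail large-deviation estimate gives $\Max{\Deltan \TL}/n \to 0$; in (\ref{thm:phaseT_fourth_item}), $\nl = o(\sqrt{n})$ is dwarfed by the $\Theta(\sqrt{n})$ bridge fluctuations, so the running minimum strictly decreases only in a window of negligible size, forcing one macroscopic excursion of length $(1-o(1))n$.

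The main obstacle is the continuity step for part (\ref{thm:phaseT_second_item}): Skorokhod convergence of the bridge does not automatically give convergence of all excursion lengths, because the excursion-above-minimum functional is discontinuous at paths with flat minima or two macroscopic excursions of equal length. I would handle this by checking that the limiting Brownian bridge almost surely has a unique global minimum and pairwise distinct macroscopic excursion lengths, so the functional is continuous on a full-measure set, and by showing that the microscopic ($o(n)$ length) excursions of the prelimit bridge are negligible for the product topology on the ranked sequence.
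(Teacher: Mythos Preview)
Your identification of the block-size law with the excursion lengths of a uniform $\pm 1$-bridge (equivalently, of a walk conditioned on $\tau_{-\nl}=n$) is exactly the paper's starting point, and your treatment of \ref{thm:phaseT_second_item} via an invariance principle for this conditioned walk plus a continuity argument for the ranked-excursion functional matches the paper's proof (the paper cites Aldous's excursion-continuity lemma rather than redoing the a.s.\ distinctness argument, but the content is the same).

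The substantive difference is in \ref{thm:phaseT_first_item}, \ref{thm:phaseT_third_item}, \ref{thm:phaseT_fourth_item}. The paper proves a stochastic monotonicity result: for fixed $n$, $\Max{\Delta^{(n,\nl)}\TL}$ is non-increasing in $\nl$ (via a forest-splitting bijection of Bettinelli). This lets the paper reduce each of these three regimes to the critical window. For \ref{thm:phaseT_third_item} and \ref{thm:phaseT_fourth_item} it simply sandwiches $\nl(n)$ between $\lambda\sqrt{n}$ for large (resp.\ small) $\lambda$ and reads off the conclusion from the $\lambda$-dependent limit already obtained; for \ref{thm:phaseT_first_item} it first runs your tilted-walk/local-limit computation only under the stronger hypothesis $\nl=an+o(\sqrt{n})$ (where the tilt $p_a=\frac{1+a}{2}$ makes the conditioning event of polynomial probability), and then uses monotonicity with $a(1\pm\eps)$ to cover the full $\nl=an+o(n)$ range. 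Your proposal skips this monotonicity device, and the resulting direct arguments are where the gaps are: in \ref{thm:phaseT_first_item} your tilt argument, as stated, needs the conditioning event $\{\tau_{-\nl}=n\}$ to have probability $\sim c/\sqrt{n}$, which requires $\nl - an = o(\sqrt{n})$, not merely $o(n)$; in \ref{thm:phaseT_third_item} the claim that the typical inter-record spacing is $O((n/\nl)^2)$ is not correct (the mean spacing is $n/\nl$, and near the boundary $\nl\sim\sqrt{n}\log n$ the excursion tails are only polynomial, so a bare union bound over $\nl$ blocks does not obviously close); and in \ref{thm:phaseT_fourth_item} the ``one macroscopic excursion'' heuristic is right but making it rigorous essentially reproduces the paper's computation of the limit law of $\Delta_0\TL/n$ (which is $N^2/(N^2+\lambda^2)$ at scale $\lambda\sqrt{n}$) followed by sending $\lambda\to 0$. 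Adding the monotonicity lemma would close all three cleanly.
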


The analysis relies on the fact that the sequences of block sizes in our model have the same distribution as the sequences of tree sizes in some uniform random forest with a marked node. We detail this correspondence in Section \ref{subsect:cercle_asymptotiques_preliminaries} and then prove Theorem \ref{thm:phaseT} in Section \ref{subsect:cercle_asymptotiques}.

Finally, we give another theorem about the asymptotic behavior of the golf process on $\znz$, this time when $\nballs$ and $\nholes$ are fixed (the \textit{sparse case}). Note that the last block is a deterministic function of the others (since the sum of all the block sizes is equal to $n-\nl$).

\begin{theorem}[Sparse case]\label{thm:blocs_cercle_nb_nt_fixes}
Let $\nballs$ and $\nholes$ be some fixed integers such that $\nballs < \nholes$.
Under $\pcercle$, the vector $({\Deltan_i \TL}/{n})_{0\leq i < \nl}$ converges in distribution to $\limdelta = (\limdelta_0,\ldots, \limdelta_{\nl-1})$ as $n\to\infty$, where $\limdelta$ is a random variable taking values in 
\begin{equation*}
	{\sf Simplex}(\nl) \coloneqq \left\{ x = \left(x_i\right)_{0 \leq i < \nl} : \sum x_i = 1  \et \forall i, x_i \geq 0\right\},
\end{equation*}
where $\limdelta_{\nl-1} = 1- \sum_{i=0}^{\nl - 2}\limdelta_i$ and $(\limdelta_0,\ldots, \limdelta_{\nl-2})$ has density function (with respect to the Lebesgue measure on $\R^{\nl - 1}$)
\begin{equation}
	f(x_0,\ldots,x_{\nl - 2}) \coloneqq x_0 \nballs! \nholes ! \sum_{(b_i) :\sum_{i}b_i = \nballs}\ \prod_{k \in \Z/\nl\Z} \ \frac{1}{b_k!(b_k + 1)!}{x_k}^{2 b_k}\ind{(x_0,\ldots,x_{\nl - 1})\in {\sf Simplex}(\nl)}, \label{eqn:densite_sparse_case}
\end{equation} 
where $x_{\nl - 1} = 1 - \sum_{i < \nl - 1}x_i$.
\end{theorem}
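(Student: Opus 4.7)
The plan is to start from the exact joint law of $\Deltan \TL$ given by Theorem~\ref{thm:loitrousresiduels} and perform a local limit theorem style computation. Because $\nballs$, $\nholes$ (hence $\nl$) are fixed, every $b_i$ in the sum of \eqref{eqn:loi_TL_cercle} is bounded, so Stirling's formula applied naively on each factor suffices.

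\textbf{Step 1 (combinatorial reduction to block sizes).} I first compute $\pcercle(\Deltan \TL = (\ell_0,\dots,\ell_{\nl-1}))$ for a fixed composition $\ell = (\ell_i)$ of $n-\nl$. Under the convention $0\in B_0\cup\{x_0\}$, the element $x_0$ may be chosen among the $\ell_0+1$ points of $\llbracket -\ell_0, 0\rrbracket \cap\znz$, and the other $x_i$ are then determined by $\ell$. Since the integrand in \eqref{eqn:loi_TL_cercle} depends on $X$ only through $\ell$, summing over these $\ell_0+1$ sets yields
\[
\pcercle(\Deltan \TL = \ell) = \frac{\ell_0+1}{\binom{n}{\nballs,\nholes,n-\nballs-\nholes}} \sum_{(b_i)\in B^{(\nballs,\ell)}} \prod_{i\in\Z/\nl\Z} \frac{1}{b_i+1}\binom{\ell_i}{b_i,b_i,\ell_i-2b_i}.
\]

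\textbf{Step 2 (Stirling asymptotics).} Fix $(x_0,\ldots,x_{\nl-2})$ in the interior of ${\sf Simplex}(\nl)$, set $x_{\nl-1}=1-\sum_{j<\nl-1}x_j>0$, and let $\ell_i$ be the nearest integer to $nx_i$, adjusted so that $\sum_i\ell_i = n-\nl$; then each $\ell_i\to\infty$. Since $\nballs$, $\nholes$ and each $b_i$ stay bounded,
\[
\binom{\ell_i}{b_i,b_i,\ell_i-2b_i} \sim \frac{\ell_i^{2b_i}}{(b_i!)^2}, \qquad \binom{n}{\nballs,\nholes,n-\nballs-\nholes}\sim\frac{n^{\nballs+\nholes}}{\nballs!\nholes!}.
\]
Collecting powers of $n$ with $\sum_i b_i=\nballs$ and $2\nballs-(\nballs+\nholes)=-\nl$ gives
\[
\pcercle(\Deltan \TL = \ell) \;\sim\; n^{1-\nl}\, x_0\, \nballs!\,\nholes!\sum_{(b_i):\sum b_i=\nballs}\;\prod_{i\in\Z/\nl\Z}\frac{x_i^{2b_i}}{b_i!(b_i+1)!}.
\]

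\textbf{Step 3 (from PMF to density and convergence in law).} The first $\nl-1$ coordinates of $\Deltan\TL/n$ take values on a lattice of mesh $1/n$ inside the compact set ${\sf Simplex}(\nl)$. Multiplying the previous asymptotic by $n^{\nl-1}$ converts the PMF into a piecewise-constant density $f_n$ that converges pointwise to the claimed $f$ on the interior of the simplex, which has full Lebesgue measure. Since both $f_n$ and $f$ are (or approximate) probability densities on a compact set, Scheffé's lemma upgrades this pointwise convergence to convergence in total variation, and hence in distribution. To invoke Scheffé cleanly one only needs $\int f = 1$: this follows either from Fatou applied to the sequence $(f_n)$ with $\int f_n = 1$ and a straightforward tightness argument on the compact simplex, or by direct computation using Dirichlet-type integrals $\int x_0\prod_i x_i^{2b_i}\,dx$ on the simplex, which must sum to $1/(\nballs!\nholes!)$ over $(b_i)$ (this identity can be cross-checked as a consistency verification).

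\textbf{Main obstacle.} The only genuine subtlety is the ``symmetry-breaking'' factor $\ell_0+1\sim n x_0$ produced in Step~1 by the labelling convention $0\in B_0\cup\{x_0\}$: it is precisely this factor that delivers the multiplicative $x_0$ in the limit density and distinguishes it from the symmetric Dirichlet-type object one might naively write down. Apart from this and the bookkeeping of powers of $n$ in Stirling's formula, the argument is a routine local-limit computation.
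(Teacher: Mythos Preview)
Your argument is essentially the paper's own proof: the paper also starts from Theorem~\ref{thm:loitrousresiduels}, picks up the size-bias factor $\ell_0+1$ exactly as in your Step~1, performs the same Stirling computation, and concludes via the discrete Scheff\'e lemma (Corollary~\ref{lemma:discreteScheffe}).

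The one point where the paper is more explicit is the verification that $\int f = 1$. Your Fatou suggestion as written only yields $\int f \le 1$; tightness on a compact set is automatic and does not by itself rule out mass escaping to the boundary of the simplex. (It can be salvaged here because the same Stirling bounds show the $f_n$ are uniformly bounded on the compact simplex, giving dominated convergence, but you do not say this.) Your second route---the Dirichlet integral---is the one the paper takes: it computes $\int_{{\sf Simplex}(\nl)} x_0\prod_k x_k^{2b_k}\,dx = \frac{(2b_0+1)!\prod_{k}(2b_k)!}{(\nballs+\nholes)!}$ via the Beta/Dirichlet formula, and then identifies the resulting sum $\sum_{(b_i)}(2b_0+1)\prod_k C_{b_k}$ as the number of marked forests $\card{\markedforests{\nballs+\nholes}{\nl}} = \binom{\nballs+\nholes}{\nballs}$ (Equations~\eqref{eqn:nb_de_forets_avec_cette_taille} and~\eqref{eqn:nbdeforetsmarquees}), which gives exactly $1$. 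This combinatorial identification is the piece you label ``can be cross-checked'' without actually doing it.
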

We prove this theorem in Section \ref{subsect:asympt_nb_et_nt_fixes}.

\begin{remark}\label{rem:loi_de_Dirichlet} The distribution of $\limdelta$ is a mixed Dirichlet distribution. 

Recall that the Dirichlet distribution of parameters $K \geq 2$ and $\alpha = \left(\alpha_i\right)_{0\leq i < K}$ is a distribution on ${\sf Simplex}(K) $ with probability density function $f(x_0,\ldots,x_{K-2}) = \frac{1}{\mathrm{B}(\alpha)} \prod_{i=0}^{K-1} x_i^{\alpha_i - 1}$, where $x_{K-1} = 1-\sum_{i<K-1}x_i$ (see for example \cite[Section 0.3.2.]{pitman2006combinatorial}). The normalizing constant $\mathrm{B}(\alpha) = \frac{\prod_{i=0}^{K-1} \Gamma(\alpha_i)}{\Gamma(\sum_{i=0}^{K-1}\alpha_i)}$ is called the multivariate beta function, as the Dirichlet distribution generalizes in higher dimensions the beta distribution.
\end{remark}

\begin{remark}
This limit is the distribution of the final configuration of several continuous analogues of the golf process defined on the unit circle $C = \R\backslash\Z$. We describe them in Section \ref{subsect:remarque_limit_sparse_case}.
\end{remark}

\subsection{The $p$-parking process on $\znz$}\label{subsect:parking}

We recall the definition of \textbf{the parking process}, which also appears in the computer science literature under the name ``hashing with linear probing'' (which is an important and efficient method to store data in an array).

We consider the usual cycle $\znz$ and we take $m < n$. Each vertex of $\znz$ is a parking space, which is available at time 0. We assume that at each time step $i$ (with $1 \leq i \leq m$), a car chooses uniformly at random a vertex $v_i \in \llbracket 1,n \rrbracket$ (independently of the other cars) and parks at the first available parking space \textit{to the right} (i.e.\ the first among $v_i, v_i + 1 \mod n, v_i + 2 \mod n \ldots)$. This slot is then occupied until the end of the process.

Sometimes, the parking model is defined on a path instead of a cycle: a car chooses $v_i \in \llbracket1,n-1\rrbracket$ (also uniformly at random, when we consider the probabilistic version of the process), and cannot park if it reaches the end of the path (vertex $n$) without finding an available parking space. Parking functions are the sequences $(v_1, \ldots, v_m)$ that leave the vertex $n$ empty at the end. Parking on a path and on a cycle are two problems that are combinatorially equivalent since rotating the cycle with respect to the last empty place yields a linear parking.

This problem has a long history, see for example \cite[Section 1.1]{chassaing2002phase} for a short summary. The reader will find some additional combinatorial information in \cite[Section 4]{flajolet1997Analysis} or in \cite{knuth1973art} (in particular, the number of parking functions of size $n$ is $n^{n-2}$; it was obtained in \cite{Konheim1966AnOD}). One of the proofs of the importance of this model is that the distribution of the relative distances between consecutive free spaces in the partially filled parking is the same as the cluster sizes in the additive coalescent \cite{chassaing2002phase}.

We now consider a variant of the parking process, which we call \textbf{the \textit{$p$-parking process}}. As in the parking process, each vertex of $\znz$ is initially a parking space. A $\nballs$ cars enter successively in the system, the $i$th car chooses a vertex $v_i$ uniformly at random (independently of the other ones) and from this, performs a $p$-Random Walk (from $k$ it goes to $k+1 \mod n$ with probability $p$, and to $k-1\mod n$ with probability $1-p$). Note that for $p=1$, the process is exactly the parking process.

In the following, we write $\pcercleparking$ to explicitly show the parameters of this model. As usual, $\TL$ denotes the number of holes at the end of the process, after all the cars have moved.

We now give the distribution of $\TL$ (which has size $\nl = n - \nballs$).
\begin{theorem}\label{thm:parking_loi_TL}
For every $X \subseteq {\znz}$ of size $\card{X} = \nl>0$, we set, for every $i \in \Z/\nl \Z$, $\ell_i \coloneqq \Delta^{(n)}_{i}X$. Then, we have:
\begin{align}
	\pcercleparking\left(\TL = X\right) = \frac{1}{n^{\nballs}}
	\binom{\nballs}{\ell_1,\ldots,\ell_{\nl}} 
	\prod_{i \in \Z/\nl\Z}^{} \left(\ell_i + 1\right)^{\ell_i-1}.
\end{align}
Once again, this result does not depend on $p$.
\end{theorem}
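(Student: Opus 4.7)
The plan is to condition on the starting positions $(v_1, \ldots, v_{\nballs}) \in (\znz)^{\nballs}$, exploit the block decomposition of $\znz$ induced by $X$, and reduce the computation to a per-block count. The starting point is
\begin{equation*}
\pcercleparking(\TL = X) = \frac{1}{n^{\nballs}} \sum_{(v_1, \ldots, v_{\nballs})} \pcercleparking\bigl(\TL = X \mid v_1, \ldots, v_{\nballs}\bigr),
\end{equation*}
and the first observation is that a car starting at some $v_k \in X$ would immediately park at the (still free) hole $v_k$, contradicting $\TL = X$. Hence only sequences with every $v_k$ in some block $B_i \subseteq \znz \setminus X$ contribute. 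For such a sequence, let $\sigma(k) = i$ when $v_k \in B_i$ and $k_i = \card{\sigma^{-1}(i)}$.

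The next step is to argue that the dynamics in distinct blocks decouple on $\{\TL = X\}$. Indeed, the vertices $x_0, \ldots, x_{\nl - 1}$ of $X$ are free throughout the whole process (otherwise they would be filled at time $1$), so no car's walk ever visits $X$; in particular, a car starting in $B_i$ remains in $B_i$ during its walk. The conditional probability therefore factorizes over blocks, and summing over starting sequences compatible with a fixed $\sigma$ yields $\prod_{i \in \Z/\nl\Z} R(k_i, \ell_i)$, where
\begin{equation*}
R(k, \ell) := \sum_{(w_1, \ldots, w_k) \in \{1,\ldots,\ell\}^k} \P\bigl(k \text{ cars starting at } w_1, \ldots, w_k \text{ fill a length-}\ell \text{ segment without leaking}\bigr).
\end{equation*}
A successful fill forces $k_i = \ell_i$ for every $i$ ($k_i < \ell_i$ leaves a site unfilled; $k_i > \ell_i$ forces a leak at $x_i$ or $x_{i+1}$), and the number of $\sigma$ with $|\sigma^{-1}(i)| = \ell_i$ is $\binom{\nballs}{\ell_0, \ldots, \ell_{\nl - 1}}$. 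The proof therefore reduces to the identity
\begin{equation*}
R(\ell, \ell) = (\ell + 1)^{\ell - 1} \qquad \text{for all } \ell \geq 0 \text{ and all } p \in (0, 1).
\end{equation*}

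The main obstacle is proving this per-block identity for arbitrary $p$. The case $p = 1$ is immediate: cars move deterministically rightward, the event $\{\text{successful fill}\}$ is exactly the classical parking-function condition on $(w_1, \ldots, w_\ell)$, and the number of such sequences is the well-known count $(\ell+1)^{\ell-1}$, each contributing conditional probability $1$. To extend to arbitrary $p$, I would prove a $p$-independence property for the segment process with two absorbing free walls, either via a Diaconis--Fulton style commutation argument analogous to the one recalled in Remark \ref{remark:prop_commutation} (viewing the successive car insertions as a golf-type dynamics and exchanging the order of ball activations, exploiting that a car's walk only sees the state of its own block), or by a direct coupling of $p$-walks with rightward walks conditioned on a successful fill combined with the $p \leftrightarrow 1-p$ symmetry of the segment. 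Assembling the block decomposition with the identity yields the claimed formula, which by construction does not depend on $p$.
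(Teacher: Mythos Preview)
Your block decomposition is essentially the paper's: condition so that every car starts in some block $B_i$, observe that on $\{\TL = X\}$ no car's walk ever touches $X$ (so the dynamics factor over blocks), and reduce to a per-block quantity. The difference is in how you compute that per-block factor.

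You reduce to the identity $R(\ell,\ell) = (\ell+1)^{\ell-1}$ on a length-$\ell$ segment with two absorbing free holes at the ends, settle $p=1$ via the classical parking-function count, and then defer the general-$p$ case to ``a Diaconis--Fulton style commutation argument'' or ``a direct coupling of $p$-walks with rightward walks''. This is the gap. The commutation property of Remark~\ref{remark:prop_commutation} concerns independence of the final configuration from the \emph{order} of activations, not from the transition kernel; it does not give $p$-independence. The coupling sketch is too vague, and the $p\leftrightarrow 1-p$ symmetry alone does not pin down intermediate $p$. So as written, the per-block identity for general $p$ is not established.

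The paper closes this gap with a single symmetry argument, by gluing the two boundary holes into a single vertex $0$ on the cycle $\Z/(\ell+1)\Z$. On this cycle with $\ell$ cars choosing starting positions uniformly in $\{0,\ldots,\ell\}$, rotation invariance forces $\P(\TL=\{0\}) = 1/(\ell+1)$ for \emph{every} $p$. Since $\TL=\{0\}$ is impossible when some car starts at $0$, one gets
\[
\frac{1}{(\ell+1)^\ell} \sum_{w \in \{1,\ldots,\ell\}^\ell} \P\bigl(\TL = \{0\} \mid w\bigr) = \frac{1}{\ell+1},
\]
and the sum on the left is exactly your $R(\ell,\ell)$: the cycle event ``$0$ remains free'' and the segment event ``no leak at either boundary and $\{1,\ldots,\ell\}$ is filled'' coincide trajectory-by-trajectory, because a walk that avoids $0$ on the cycle is the same walk avoiding both endpoints on the segment. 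Hence $R(\ell,\ell)=(\ell+1)^{\ell-1}$ for all $p$, and the formula together with its $p$-independence drops out with no commutation or coupling needed. This is the missing ingredient in your argument; once you pass to the cycle, your proof and the paper's are the same.
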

The proof of this theorem is very similar to the proof of Theorem \ref{thm:loitrousresiduels}, which gives the distribution of $\TL$ for the golf model. We detail the main arguments of the proof in Section \ref{subsect:parking_loi_TL}.

{We can reinterpret this parking model as a golf process (replacing cars by balls and parking spaces by holes) where we allow the initial configuration to have, on each vertex, one hole along with a random non-negative number of balls. If there are more holes than balls, this model is again valid.\footnote{In the golf process we defined at the beginning, the balls are present at time 0 and do not move until their clock rings. Here, the temporality is a bit different, but it does not change anything concerning the behavior of the process. Furthermore, there is again a commutation property, which we can state in this way: the distribution of $\TL$ depends only on the vector $W = (W_1,\ldots, W_n)$, such that $W_k = \card{\{i : v_i = k\}}$ is the number of cars that choose vertex $k$ (and does not depend on the order of arrival of the balls); this is again a consequence of \cite[Proposition 4.1]{diaconis1991growth}. The vector $W$ has the multinomial distribution with parameter $(\nballs; 1/n, \ldots, 1/n)$.}}

For $p=1$, this model is the usual parking process, for which asymptotics are already known (results due to \cite{pittel1987linear} and \cite{chassaing2002phase}) and are also valid when $p \neq 1$, thanks to Theorem \ref{thm:parking_loi_TL}.

\begin{corollary}\label{cor:phaseTparking} 
Let $(\nballs(n))$ be a sequence such that $\nballs(n) \to \infty$. For any $p$, under $\pcercleparking$,	
\begin{enumerate}[$(i)$]
	\item \label{thm:phaseTparking_first_item}
	
	{if $a \coloneqq \nballs(n)/n$ is bounded away from 0 and 1, then, in probability,}
	\begin{equation}
		\Max{\Deltan \TL} = \frac{1}{a-1-\log a}\left(\log n - \frac32\log \log n \right) + O(1),
	\end{equation}
	\item \label{thm:phaseTparking_second_item}if $\nballs(n)/\sqrt{n}\to \lambda>0$, then the process $(\sorted{\Deltan \TL}_i/n, i \geq 1)$ converges in distribution for the product topology to a process $(y^{(\lambda)}_j,j\geq 1)$, satisfying $y^{(\lambda)}_j>0$ a.s., for all $j$, and which can be represented as excursion sizes above the current minimum in a Brownian-like process $B = (e_t-\lambda t)_{t\in \intzo}$, where $(e_t)_{t\in\intzo}$ is a Brownian excursion,
	\item \label{thm:phaseTparking_third_item} if $\nballs(n) / \sqrt{n} \to +\infty$, then $\Max{\Deltan \TL}/n\to 0 $ in probability,
	\item \label{thm:phaseTparking_fourth_item} if $\nballs(n) / \sqrt{n} \to 0$, then $\Max{\Deltan \TL}/n\to 1 $ in probability.
\end{enumerate}
\end{corollary}

\begin{proof}
Theorem \ref{thm:parking_loi_TL} allows to treat only the case $p=1$, and in this case the asymptotic behavior of the parking is known: \ref{thm:phaseTparking_first_item} is due to Pittel \cite[Statement (1.3)]{pittel1987linear}, while \ref{thm:phaseTparking_second_item}, \ref{thm:phaseTparking_third_item} and \ref{thm:phaseTparking_fourth_item} are due to Chassaing and Louchard \cite[Theorems 1.1 and 1.2]{chassaing2002phase}.
\end{proof} 

\begin{remark} \label{rem:remarque_parking}
This corollary is very similar to Theorem \ref{thm:phaseT} {(although \ref{thm:phaseTparking_first_item} is stronger in Corollary \ref{cor:phaseTparking})}. The proof is different because the discrete objects are of a different nature. In the sublinear case (Theorem \ref{thm:phaseT}.\ref{thm:phaseT_first_item}), we conjecture the same phenomenon as in Corollary \ref{cor:phaseTparking}.\ref{thm:phaseTparking_first_item}), but we did not pursue in this direction. We will see in Subsection \ref{subsect:connections_with_CL} that the limit Brownian processes appearing in \ref{thm:phaseTparking_second_item} are the same up to some transformation preserving the excursion sizes and it implies that the block-size processes $\Deltan \TL$ in the golf model and in the $p$-parking model have the same limit in law up to a rotation (see, in particular, Lemma \ref{lemma:generalisation_Vervaat}).
\end{remark}

\subsection{Model and main results on $\Z$}\label{subsect:intro_Z}
We now turn our attention to the {golf process} on $\Z$. We let $G = (\Z, E)$ with edge set $E = \left\{ \{x, x+1\}, x\in \Z\right\}$. We again fix $p \in \intzo$, and consider $\PMC$ the transition matrix of a Markov chain such that $\forall x \in \Z, \PMC_{x, x+1} = p\et \PMC_{x, x-1} = 1-p$. For the initial configuration, we draw the state of each vertex independently of the others: the $(\eta^0_x)_{x\in\Z}$ are i.i.d.\ with common distribution:  
\begin{align}
\forall x \in \Z, \eta^0_x = \left\{\begin{array}{lll}
	1 &\text{(a ball $\ball$)} &\text{ with probability } \db, \\
	-1 &\text{(a hole $\hole$)} &\text{ with probability } \dt, \\
	0 &\text{(a neutral vertex)} &\text{ with probability } 1-\db-\dt. \\
\end{array}\right. \label{eqn:mudbdt}
\end{align}

One can note that we have slightly changed our notation (replacing $\ball$ by $1$ for balls, and $\hole$ by $-1$ for holes), so that the absorption of a ball by a hole corresponds algebraically to the operation ``$-1+1$''. Sometimes, for clarity purposes, we will still write $\ball$ instead of $1$, and $\hole$ instead of $-1$.

The activation clocks are i.i.d.\ and uniform on $\intzo$ (and also independent of the affectation of the balls). Contrary to the finite case, the well-definition of $\eta$ is not trivial, since almost surely an infinite number of balls have moved before time $t$, for all $t>0$, so it is not obvious that every ball can reach a free hole at its activation time and {measurability issues arise} (see Section \ref{subsect:discuss_def_Z}, where we discuss examples with different initial conditions, showing that the golf process on $\Z$ is not always valid; it can even be frozen at time $0^+$). %, which is not authorized for a \textit{càdlàg} process
We thus give the following theorem, which we prove in Section \ref{subsect:defZ}.

\begin{theorem}\label{thm:defZ}
If $\db \leq \dt$, then for every $p$, the {golf model} $(\eta^t)_{t\in\intzo}$ on $\Z$ is valid (in the sense of Definition \ref{defi:def_bien_def}).
\end{theorem}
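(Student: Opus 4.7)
The plan is to construct the process on $\Z$ as the almost-sure pointwise limit of golf processes restricted to finite truncations, and to use the density inequality $\db \leq \dt$ to guarantee that no ball has to travel to infinity in order to find a free hole.

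\textbf{Finite approximation.} For each integer $N \geq 1$, I define a golf process $\eta^{[N]}$ on the cycle $\Z/(2N+1)\Z$ whose initial configuration is obtained by restricting $\eta^0$ to $\llbracket -N, N\rrbracket$, plus at most a few extra holes added artificially to ensure that $|\Ballset^{0,[N]}| \leq |\Holeset^{0,[N]}|$. The clocks $\Clock(v)$ and the random-walk increments are shared with those reserved for the infinite process. By Proposition \ref{prop:bonne_def_graphe_fini_ET_commutation}, each $\eta^{[N]}$ is valid, measurable, and càdlàg.

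\textbf{Localization.} The heart of the argument is to show that for every $v \in \Z$ there exists an almost surely finite random radius $R(v)$ such that for every $N \geq R(v)$ and every $t \in [0,1]$, the value $\eta^{[N],t}_v$ does not depend on $N$. Equivalently: only balls initially located in $\llbracket v-R(v), v+R(v)\rrbracket$ influence $v$ during $[0,1]$, and no ball originating there ever leaves this window during its random walk. The assumption $\db \leq \dt$ enters here via a law-of-large-numbers style argument: on any half-line, the cumulative number of holes dominates the cumulative number of balls up to sub-linear fluctuations, so each ball can almost surely be matched with a nearby hole without having to travel arbitrarily far. The critical case $\db = \dt$ is more delicate, since the surplus in $\llbracket 0, N\rrbracket$ is only of order $\sqrt{N}$ by Donsker; a finer argument, close in spirit to the Brownian-excursion picture appearing in Theorem \ref{thm:phaseT}, should carry through.

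\textbf{Passage to the limit and regularity.} I set $\eta^t_v \coloneqq \lim_{N\to\infty} \eta^{[N],t}_v$, well-defined on the full-measure event $\{R(v) < \infty\}$. Measurability is inherited from the $\eta^{[N]}$, while càdlàg regularity in $D_\mathcal{S}[0,1]$ (with the product topology on $\mathcal{S}$) follows from the fact that on $\{R(v) \leq N\}$ the path $t \mapsto \eta^t_v$ coincides with that of $\eta^{[N]}$, which has only finitely many jumps. In particular, $\eta$ is not frozen at time $1$: every ball is absorbed strictly before time $1$ within its localized window. The main obstacle is the a.s.\ finiteness of $R(v)$: it requires combining the recurrence of $\PMC$-random walks with a quantitative matching argument between balls and holes, robust enough to cover the borderline case $\db = \dt$. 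A feasible route is to use the explicit distribution on $\znz$ from Theorem \ref{thm:loitrousresiduels} to obtain $N$-uniform tail estimates on the travel distance of a typical ball, then transfer them to $\Z$ by tightness.
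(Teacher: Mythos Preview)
Your proposal is an outline rather than a proof, and the gap you yourself flag---the a.s.\ finiteness of $R(v)$---is precisely the substance of the theorem. A law-of-large-numbers heuristic does not suffice: Section~\ref{subsect:discuss_def_Z} exhibits deterministic configurations (example~(b)) where the asymptotic ball density is $1/3$ yet the process freezes after time $1/2$. So ``there are enough holes on average'' is not enough to prevent balls from having to travel to infinity; you need a structural barrier. The paper supplies this via \emph{separators}: a hole $v$ such that every half-line starting at $v$ has nonnegative hole surplus. These are determined by $\eta^0$ alone, exist in infinite number on each side of $0$ (by ergodicity, since $\db<\dt$ makes the associated random walk transient), and a short deterministic argument (Lemma~\ref{lemma:no_ball_exits_enclosed_intervals}) shows that no ball trajectory can ever reach a separator, regardless of the walk parameter $p$. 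This immediately partitions $\Z$ into finite ``enclosed intervals'' on which the process is a valid finite golf, giving the localization you want for free.

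Your treatment of the critical case $\db=\dt$ is also too vague: invoking ``the Brownian-excursion picture'' or tail estimates from Theorem~\ref{thm:loitrousresiduels} does not give a path to a.s.\ finiteness of $R(v)$. The paper's device is much simpler: for any $t<1$ the process up to time $t$ has effective ball density $t\db<\dt$, so the subcritical case applies and $\eta[<t]$ is valid; one then defines $\eta = \lim_{t\uparrow 1}\eta[<t]$, which exists because each vertex changes state at most once. Finally, your finite-cycle approximation scheme has its own hazards (wrap-around of trajectories, the ad hoc ``few extra holes'') that you do not control; the paper avoids these entirely by working directly on $\Z$ once separators are in hand, and only later uses cycle approximations for distributional computations (Theorem~\ref{thm:loiblocsZ}), after validity is already established.
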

{For $p\in\{0,1\}$, the $\PMC$-Random Walks are not irreducible, but the golf model is valid in this case too.}

We use the notation $\pdbdh$ for the probabilities concerning the golf model on $\Z$, to {make explicit} the parameters of the model.

The following remark enables to define the block-size process.
\begin{remark}\label{rem:TL_vide_ou_tres_infini}
{Under $\pdbdh$, either $\TL$ is empty, or $\TL$ has an infinite number of holes on the left and on the right of 0.} In fact, if we let $L = \inf \TL$ and $R = \sup \TL$ be respectively the leftmost and rightmost holes of $\TL$, then these variables have a distribution, which is invariant by translation, and thus must have their support outside $\Z$. It implies that either $L = +\infty$ and $R= -\infty$ ($\TL$ is empty) or $L = -\infty$ and $R= +\infty$ ($\TL$ has an infinite number of holes on the left and on the right of 0).
\end{remark}

Under the hypotheses of Theorem \ref{thm:defZ}, the golf process is valid, so the set of remaining holes $\TL$ is also well-defined. In the following, when considering cases with an infinite number of remaining holes on the left and on the right of the origin, it will be convenient to view $\TL = (\TL_i, i \in \mathbb{Z})$ as an increasing process indexed by $\mathbb{Z}$, where $\TL_0$ is the largest non-positive hole. Thus, $\TL$ is therefore viewed as a process taking its values in $\Z^\Z$ equipped with the product topology.

Finally, when $\TL$ is empty, we have only one block (the block containing 0), of size $\Delta_0\TL = \infty$. 
Otherwise, we define the \textit{block-size process} $(\Delta_i \TL, i \in \mathbb{Z})$ by $\Delta_i \TL=\TL_{i+1}-\TL_i-1$, for all $i\in\Z$.

Since $\eta$ is translation invariant, it is equivalent to study $\Delta\TL$ or $\TL$ (but studying $\Delta\TL$ is easier when doing the proofs).\footnote{Indeed, if one wants to find the set $\TL$ instead of $\Delta\TL$, it suffices to draw the process $(\Delta_i\TL)_{i \in \Z}$, to choose the position of the first hole on the left of 0 uniformly in the interval $\llbracket -\Delta_0 \TL,0\rrbracket$, and finally to define all the holes in the unique way that is consistent with the block-size process.}
The following theorem gives the distribution of $\Delta\TL$ on $\Z$ when $\db + \dt = 1$; it is proven in Section \ref{subsect:ZloiTL}.

\begin{theorem}{(Distribution of $\TL$.)}\label{thm:loiblocsZ}
Under $\pdbdh$, with $\db + \dt = 1$ and $\db < \dt$, the distribution of $\left(\Delta_i \TL\right)_{i \in \Z}$ {is characterized by its finite-dimensional distributions:} 

For every $R>0$, for every $(b_i) \in \N^{2R+1}$,
\begin{align}
	\pz\left( \Delta_i \TL = 2b_i, -R\leq i \leq R\right) &= \frac{(2b_0+1)C_{b_0} \lambda^{b_0} }{\mathcal{H}(\lambda)} \prod_{i=-R, i\neq 0}^{R}\frac{ C_{b_i} \lambda^{b_i}}{ \mathcal{G}(\lambda)} \label{eqn:legigathm}
\end{align}
where for every $k\geq 0$, $C_k = \frac{1}{k+1} \binom{2k}{k}$ is the $k$th Catalan number, $\lambda = \db \dt$, and for every $a \in (0,1/4]$,  
\begin{align*}
	\mathcal{G}(a) = \sum_{k\geq0} C_k a^k = \frac{1-\sqrt{1-4a}}{2a} \ \et \ \mathcal{H}(a) = \sum_{k\geq 0} (2k+1) C_k a^k = 2a \mathcal{G}'(a) + \mathcal{G}(a).
\end{align*}

In particular, the $\Delta_i\TL$ are independent with common distribution ($ \forall i\neq 0, \forall b$,  $\P(\Delta_i\TL = 2b) = C_b \lambda^b / \mathcal{G}(\lambda)$), except for $\Delta_0\TL$ which is size-biased ($\forall b,\P(\Delta_0\TL = 2b) = (2b+1)C_b \lambda^b / \mathcal{H}(\lambda)$).
\end{theorem}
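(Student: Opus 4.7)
The plan is to derive the finite-dimensional distributions of $\Delta\TL$ under $\pdbdh$ as the $n\to\infty$ limit of those under $\pcercle$, with the discretization $\nballs(n) = \lfloor n\db\rfloor$ and $\nholes(n) = n - \nballs(n)$. Since $\db+\dt = 1$ leaves no initially neutral vertex, the constraint $\ell_i - 2b_i\ge 0$ in \eqref{eqn:loi_TL_cercle} forces $b_i = \ell_i/2$ uniquely, collapsing the sum to a single term and yielding the clean identity
\[
\pcercle(\TL = X) = \frac{1}{\binom{n}{\nballs}}\prod_i C_{\ell_i/2},
\]
supported on configurations whose block sizes $\ell_i$ are all even.

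Fixing $R\ge 0$ and non-negative integers $(b_{-R},\ldots,b_R)$, I would then compute $\pcercle(\Delta_j\TL = 2b_j,\, -R\le j\le R)$ by summing the above over admissible configurations. Each cyclic sequence of block sizes admits exactly $2b_0+1$ configurations on $\znz$ compatible with $0\in B_0\cup\{x_0\}$ (one per placement of $0$ within $B_0\cup\{x_0\}$), and the remaining $\nl-2R-1$ blocks range over non-negative even sizes summing to $2(\nballs - b)$, with $b = \sum_j b_j$. Lagrange inversion applied to $\mathcal{G}(z) = 1 + z\mathcal{G}(z)^2$ evaluates the free sum:
\[
\sum_{(a_i)}\prod_i C_{a_i} = [z^{\nballs - b}]\,\mathcal{G}(z)^{\nl-2R-1} = \frac{\nl-2R-1}{n-2b-2R-1}\binom{n-2b-2R-1}{\nballs-b}.
\]
Standard binomial asymptotics then give, as $n\to\infty$ with $\nballs/n \to \db$ and $\nholes/n \to \dt$,
\[
\pcercle(\Delta_j\TL = 2b_j,\,-R\le j\le R) \;\longrightarrow\; (2b_0+1)\prod_j C_{b_j}\cdot \lambda^b \cdot \dt^{2R+1}(\dt-\db).
\]
Because $\db+\dt=1$ yields $1-4\lambda = (\dt-\db)^2$, one obtains $\mathcal{G}(\lambda) = 1/\dt$; differentiating the defining ODE for $\mathcal{G}$ gives $\mathcal{G}'(\lambda) = 1/[\dt^2(\dt-\db)]$, and hence $\mathcal{H}(\lambda) = 1/[\dt(\dt-\db)]$. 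Substitution identifies the displayed limit with the RHS of \eqref{eqn:legigathm}, which factorizes and exhibits the claimed independence and marginals.

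The main obstacle is justifying that this $\znz$-limit does equal the finite-dimensional distribution under $\pdbdh$. I would handle this via a coupling argument: the uniform law on initial configurations with counts $(\nballs(n),\nholes(n))$ on $\znz$ agrees in any fixed window $[-N,N]$ with the i.i.d.\ law on $\Z$ with probability tending to $1$ (a standard exchangeability estimate), and Theorem \ref{thm:defZ} together with the strict excess of holes $\db<\dt$ controls the range of the ball trajectories, so that with high probability the dynamics in a fixed window $[-R',R']$ is already determined by the initial data in some larger but bounded window. Transferring the convergence of initial configurations through this localized dynamics identifies the $\znz$-limit computed above with the $\pdbdh$-law of $(\Delta_{-R}\TL,\ldots,\Delta_R\TL)$, which concludes the proof.
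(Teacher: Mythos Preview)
Your overall strategy matches the paper's: compute the $n\to\infty$ limit of the $\znz$ block-size distribution, then transfer to $\Z$ by coupling the initial data and localizing the dynamics. Where you differ is in the first step. The paper recognizes (Lemma \ref{lemma:varindepcondsomme}) that $(\Deltan_i\TL)_{0\le i<\nl}$ has the law of the independent target variables $(\zzlambda_i)$ conditioned on $\sum_i\zzlambda_i=n-\nl$, and then uses the central local limit theorem to show that the conditioning vanishes. You instead sum out the $\nl-2R-1$ free blocks explicitly via $[z^m]\mathcal{G}(z)^k=\frac{k}{2m+k}\binom{2m+k}{m}$ and take the limit by elementary factorial asymptotics. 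Both are correct; yours is more self-contained and avoids the local CLT, while the paper's conditioned-i.i.d.\ viewpoint makes the product structure of the limit visible without ever evaluating $\mathcal{G}(\lambda)$ or $\mathcal{H}(\lambda)$ explicitly.

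Your coupling sketch is the right idea, but one step should be made precise. Knowing that each process has many free holes in $[-M,M]$ (Lemma \ref{lemma:yapleindetrous}) and that the initial data agree there (Lemma \ref{lemma:approxunif}) does not by itself force the free holes to sit at the same positions, and the separators underlying Theorem \ref{thm:defZ} are only available on the $\Z$ side. The paper closes this with a short deterministic statement (Lemma \ref{lemma:loi_Z_lemme_technique_memes_trous}): if the coupled initial data and walks agree on $[-M,M]$ and $x<z<y$ lie in $[-M,M]$ with $x,y$ free in one final configuration and $z$ free in the other, then $z$ is also free in the first. Applied in both directions, this pins down the free holes with indices in $\llbracket -R,R+1\rrbracket$; you should include an argument of this kind rather than appealing only to Theorem \ref{thm:defZ}.
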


\begin{remark}
\begin{itemize}
	\item 
	$\mathcal{G}$ is the generating function of the Catalan numbers.
	
	\item 	$\lambda$ is the unique number such that the average length of a block $\frac{2\lambda \mathcal{G}'(\lambda)}{\mathcal{G}(\lambda)} $ is compatible with the densities of holes and balls. Indeed, for every $i\neq 0$, if ${B}_i = [r_i, l_i]$ is the $i$th block, then set $I_i = [r_i, l_{i}+1]$ (the intervals $I_i$ form a partitioning of $\Z$).
	Then, $I_i$ contains $\frac{\Delta_i \TL}{2}$ balls, $\frac{\Delta_i \TL}{2} + 1$ holes and has size $ \Delta_i \TL + 1$. We therefore need to have: $\E\left[\frac{\Delta_i \TL}{2}\right] = \db\left(\E\left[\Delta_i \TL\right] + 1\right)$ and $\E\left[\frac{\Delta_i \TL}{2} + 1\right] = \dt\left(\E\left[\Delta_i \TL\right] + 1\right)$. Solving this system gives $\E[\Delta_i \TL] = \frac{2\db}{2\dt-1}$.
	
	The equation $\frac{2\lambda \mathcal{G}'(\lambda)}{\mathcal{G}(\lambda)} = \frac{2\db}{2\dt-1}$ has a unique solution between 0 and $1/4$ (because $x\mapsto \frac{2x \mathcal{G}'(x)}{\mathcal{G}(x)}$ is a continuous and increasing function mapping 0 to 0 and going to $+\infty$ as $x$ tends to $1/4$), and it is straightforward to check that $\lambda = \db \dt$ is a solution.
	
	\item The block containing 0 is size-biased: the origin 0 can be in $2k+1$ positions in the block indexed by 0, if this block has size 2k. 
	We will see that on $\znz$ the block containing 0 is also size-biased (see for example Equation (\ref{eqn:jolie_formule_deltaTL_znz})).
	
	\item It is heavier (in terms of computation) but also possible to obtain an analogous theorem for the distribution of $\Delta\TL$ when $\db + \dt < 1$. This will be discussed in Section \ref{subsect:blocks_Z_cascomplexe}.
	
\end{itemize}

\end{remark}

It is somewhat remarkable that the critical case, in which the densities of balls and holes coincide, can be solved exactly:
\begin{theorem} \label{prop:Z_same_density_bh_empty_holeset}
When $\db = \dt$, under $\pdbdh$, $\TL = \emptyset$ almost surely.
\end{theorem}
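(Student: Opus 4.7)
The plan is to establish the stronger identity $\pdbdh(0\in\TL)=\dt-\db$ in the whole valid regime $\db\leq \dt$. Specialising at $\db=\dt$ then gives $\pdbdh(k\in\TL)=0$ for every $k\in\Z$ by translation invariance, so $\E[\card{\TL}]=\sum_{k\in\Z}\pdbdh(k\in\TL)=0$ and therefore $\TL=\emptyset$ almost surely, which is exactly the statement.

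To compute this density I would use the validity of the process (Theorem \ref{thm:defZ}): almost surely every ball ends up filling exactly one hole, so there is a well-defined (random) injection $\phi\colon \Ballset^0\to \Holeset^0$ sending each ball to the hole it fills, and the filled-hole set is $F\coloneqq \phi(\Ballset^0)=\Holeset^0\setminus\TL$. Because $\eta^0$ is i.i.d., the clocks are i.i.d.\ and $\PMC$ is translation-invariant, the joint law of $(\eta^0,\phi)$ is invariant under integer translations. Setting $p_{v,w}\coloneqq \pdbdh(v\in\Ballset^0,\ \phi(v)=w)$, this invariance gives $p_{v,w}=q_{w-v}$ for some $q\colon \Z\to[0,1]$, and since $\phi(v)$ is a.s.\ well-defined whenever $v\in\Ballset^0$,
\begin{equation*}
\sum_{u\in\Z}q_u=\sum_{w\in\Z}p_{v,w}=\pdbdh(v\in\Ballset^0)=\db.
\end{equation*}

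The mass transport identity then follows by a Fubini–Tonelli swap on non-negative summands (using that $\phi$ is injective, so $\pdbdh(w\in F)=\sum_v p_{v,w}$): for every $N\geq 0$,
\begin{equation*}
\E\left[\card{F\cap\llbracket -N,N\rrbracket}\right]=\sum_{w\in\llbracket -N,N\rrbracket}\sum_{v\in\Z}p_{v,w}=(2N+1)\sum_{u\in\Z}q_u=(2N+1)\db.
\end{equation*}
Subtracting this from $\E[\card{\Holeset^0\cap\llbracket -N,N\rrbracket}]=(2N+1)\dt$ and dividing by $2N+1$ yields $\pdbdh(0\in\TL)=\dt-\db$, which closes the argument.

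The only subtle point I anticipate is making precise the joint construction of $(\eta^0,\phi)$: the measurability of $v\mapsto \phi(v)$ and the fact that the translation invariance of the initial data and the dynamics genuinely descends to $\phi$. Both are immediate consequences of the explicit construction of $\eta$ underlying Theorem \ref{thm:defZ} in Section \ref{subsect:defZ}, so no real difficulty arises beyond routine bookkeeping; everything else is a clean application of the mass transport principle.
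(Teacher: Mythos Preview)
Your proposal is correct and takes a genuinely different route from the paper. The paper's proof relies on the explicit law of $\Delta_0\TL$ in the subcritical regime (Theorem \ref{thm:blocsZ_general}), observes that each term $\Prob{\Delta_0\TL=\ell_0}$ carries a prefactor $\dt-\db$ and hence vanishes as $\db\uparrow\dt$, and then transfers this to the critical case via a time-monotonicity argument (comparing $\Delta_0\Holeset^1$ at $\db=\dt$ with $\Delta_0\Holeset^t$ for $t<1$, which has effective ball density $t\dt<\dt$). Your mass-transport argument bypasses all of this: validity (Theorem \ref{thm:defZ}) gives a well-defined injective filling map $\phi$, translation invariance makes the transport kernel shift-invariant, and summing yields $\P(0\in\TL)=\dt-\db$ directly. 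This is more elementary, gives the density identity for all $\db\leq\dt$ without any combinatorics, and would extend verbatim to any translation-invariant valid golf process on a transitive graph. The paper's route, by contrast, stays within the machinery already built (the $\znz$ coupling and the explicit formula), and the intermediate distribution formula it uses is of independent interest. The only point to be careful about in your write-up is the one you flag yourself: that $\phi$ is a measurable, translation-equivariant function of the primitive randomness $(\eta^0,\Clock,(w^{(v)}))$; this is indeed routine from the construction in Section \ref{subsect:defZ}, including in the critical case where each ball's destination is determined within some $\eta[<t]$ with $t<1$.
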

We prove this theorem in Section \ref{subsect:Z_pas_de_trous_si_db=dt}.

\subsection{Related models}

\subsubsection{Variants and new results on them}\label{subsect:variantes}

\paragraph{Different moving strategies in the golf process.} It is natural to wonder about the rigidity of the model: since the distribution of $\TL$ does not depend on $p$, does it depend on the strategy of the ball ?

\addition{The \textit{strategy} of a ball is, in full generality, the distribution of the trajectory that this ball will follow. It can be deterministic or not, and it can depend on the whole configuration $\eta^t$ at the time of its activation, or on a subset of it.

We now allow the balls to move with a more complex strategy than the random walk: it can move according to some deterministic or random function, but as before, at each step, it can only move from one vertex to one of its neighbors (in particular it cannot jump over a free hole).} We borrow the following definitions from Nadeau \cite{nadeau_bilateral_parking_procedures}. We say that a strategy is \textit{shift invariant} if it is invariant by rotation on $\znz$ or invariant by translation on $\Z$. Moreover, a strategy is \textit{local} if a ball has a strategy that depends only on its clock and of the state of the process between the closest hole on its right and the closest hole on its left.

The following examples correspond to shift invariant and local strategies:
\begin{enumerate}
\item Each ball independently chooses a unique direction (left with probability $q$ or right with probability $1-q$) and moves in this chosen direction until it finds a free hole. \label{first_variant}
\item Each ball goes to the closest hole (and chooses one of them uniformly at random if the two holes are at the same distance from the ball).
\item Or something completely arbitrary: A ball with clock $C$ goes to the left if the closest hole on the right is at an even distance, and otherwise it does a random walk with parameter $C$.
\end{enumerate}

We can define a golf process with every shift invariant and local strategy on $\znz$ (still specifying $n$, $\nballs$ and $\nholes$ if necessary) and on $\Z$ (also specifying $\db$ and $\dt$). We prove the following.

\begin{proposition}\label{claim:variantes} We assume that each ball has a strategy that is shift invariant and local.
\begin{itemize}
	\item For the golf process on $\znz$ (with initial condition given at the beginning of Section \ref{subsect:intro_znz}), $\TL$ has the same distribution as in the golf process with random walks of parameter $p$: Theorems \ref{thm:loitrousresiduels}, \ref{thm:phaseT} and \ref{thm:blocs_cercle_nb_nt_fixes} apply to this more general model.
	
	\item For the parking process on $\znz$ (with initial configuration detailed in Section \ref{subsect:parking}), $\TL$ has the same distribution as in the $p$-parking process: Theorem \ref{thm:parking_loi_TL} and Corollary \ref{cor:phaseTparking} apply to this more general model.
	
	\item On $\Z$, the golf process with these strategies is valid, and again the distribution of $\TL$ is the same as in the golf process with random walks of parameter $p$: Theorems \ref{thm:defZ} and \ref{thm:loiblocsZ} hold for this model too.
\end{itemize}
\end{proposition}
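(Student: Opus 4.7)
The plan is to revisit the proofs of Theorems \ref{thm:loitrousresiduels}, \ref{thm:parking_loi_TL}, \ref{thm:defZ} and \ref{thm:loiblocsZ} and check that each argument depends on the $\PMC$-random walk dynamics \emph{only} through the two structural features assumed: shift invariance of the law on the graph and locality of the moves. Shift invariance of the strategy, combined with the exchangeability of the initial condition (uniform on $\configinitcercle$ for the cycle, i.i.d.\ product law on $\Z$), ensures that the law of $\TL$ is still invariant under rotation/translation and that the computations of the form $\pcercle(\TL=X)$ reduce to quantities depending only on the block-size tuple $(\ell_i)_{i\in\Z/\nl\Z}$ derived from $X$, exactly as in the random-walk case.

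The crucial structural input is the block decomposition used to prove Theorem \ref{thm:loitrousresiduels}. Given that $\TL=X$, the vertices $x_i,x_{i+1}$ bounding the $i$-th block $B_i$ are free holes at every time $t\in\intzo$, so, by locality of the strategy, any ball initially inside $B_i$ sees a configuration whose relevant boundaries are precisely $\{x_i,x_{i+1}\}$ and therefore remains inside $B_i$ throughout the process. Consequently the process factorizes over the blocks, exactly as for the random walk, and the probability on the left-hand side of \eqref{eqn:loi_TL_cercle} splits as a sum over the ball-count tuples $(b_i)\in B^{(\nballs,X)}$ of a product of per-block probabilities. The initial law on $\configinitcercle$ is itself obtained, after conditioning on $X$ and on $(b_i)$, as the uniform law on placements of $b_i$ balls and $b_i$ initial holes in the $\ell_i$ interior positions of $B_i$ independently across blocks, which gives the multinomial factor $\binom{\ell_i}{b_i,b_i,\ell_i-2b_i}$.

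The only remaining ingredient is the per-block quantity: inside a block of interior length $\ell$, starting from a uniform placement of $b$ balls and $b$ inner holes (completed by $\ell-2b$ neutral vertices) and two boundary holes, the probability that every ball ends up matched to an inner hole (never touching the boundary) equals $\tfrac1{b+1}$. The main obstacle is to reprove this per-block formula without appealing to the random-walk computation; the approach I would take is to mimic the Catalan / forest encoding from the sketch of Theorem \ref{thm:loitrousresiduels} (and referenced in the remark preceding Theorem \ref{thm:phaseT}) using only the two assumptions: shift invariance inside the segment (so that the law of the hole hit by a given ball is a symmetric function of the current configuration) and locality (so that the process on $B_i$ is autonomous). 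The bijection to rooted plane trees/parking functions that produces the $\tfrac1{b+1}$ factor only uses the \emph{matching pattern} ball$\to$hole and the order in which matches occur, both of which retain their combinatorial meaning for any shift-invariant local strategy; the probabilities of the individual executions may change, but they sum to the same total by an averaging argument over the rotations of the block.

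Once this per-block invariance is established, the consequences are automatic: Theorem \ref{thm:parking_loi_TL} and Corollary \ref{thm:phaseTparking} follow in the same way (the only strategy-specific input in the parking proof is also block-local); Theorems \ref{thm:phaseT} and \ref{thm:blocs_cercle_nb_nt_fixes} are purely asymptotic consequences of the explicit joint law \eqref{eqn:loi_TL_cercle} and therefore unaffected; and for the $\Z$ results, the coupling with truncated golf processes on $\znz$ used to prove Theorem \ref{thm:defZ} only requires that a ball's trajectory be contained in a random but almost surely finite window around its starting point, which is guaranteed by locality together with $\db\leq \dt$. Theorem \ref{thm:loiblocsZ} then follows as in the original proof by taking the limit of the corresponding $\znz$-distribution, which by the previous paragraphs is strategy-independent.
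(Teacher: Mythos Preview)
Your overall architecture is right: block decomposition via locality, then a per-block computation, then asymptotic and $\Z$ consequences. But the step you flag as ``the main obstacle'' --- obtaining the factor $\tfrac{1}{b+1}$ for a general shift-invariant local strategy --- is where your argument becomes vague and misses the simple idea the paper actually uses. You propose to mimic the Catalan/forest encoding and appeal to a bijection on matching patterns; this is neither how the paper derives $\tfrac{1}{b+1}$ in the random-walk case, nor is it clear that your averaging sketch would go through for an arbitrary strategy, since the probabilities of individual executions depend on the strategy in a way a bijection on matchings does not control.

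The paper's route is much cleaner and uses exactly the two hypotheses you have. First, locality lets you close the interval $\llbracket x_i,x_{i+1}\rrbracket$ into a cycle $\Z/(\ell_i+1)\Z$ by identifying the two boundary holes into a single hole at $0$: since a ball only sees up to the nearest free hole on each side, the process on the interval with both endpoints free is indistinguishable from the process on this smaller cycle with $0$ free (this is the content of Lemma~\ref{lemma:decompositionnongeneraleenhistoires}). Second, on that cycle you have a mini-golf with $b_i$ balls, $b_i+1$ holes, uniform initial placement conditioned on $0\in\Tinit$, and a shift-invariant strategy; rotational invariance alone then forces the unique remaining hole to be uniform over the $b_i+1$ initial holes, i.e.\ Lemma~\ref{lemma:casinitloitrousresiduels} gives $\P(\TL=\{0\}\mid 0\in\Tinit)=\tfrac{1}{b_i+1}$. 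No combinatorial bijection is needed.

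Two smaller points. Your account of the $\Z$ case conflates two different proofs: Theorem~\ref{thm:defZ} (validity) is \emph{not} proved by coupling with $\znz$, but by the existence of separators, which are holes that no ball can ever reach for \emph{any} local strategy (this is a property of the initial configuration alone, hence strategy-independent). The coupling with $\znz$ is what proves Theorem~\ref{thm:loiblocsZ}, and there the paper notes that the coupling of trajectories must be replaced by a coupling of strategies (two balls at the same vertex with the same local information make the same choice).
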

We prove this proposition in Section \ref{subsect:proof_prop_variantes}.

\begin{remark}\label{rem:generalisation_local_shift_inv_hyp}\
The special case of the variant \ref{first_variant} for the parking process was already mentioned in the last section of \cite{Konheim1966AnOD} (with the parameter $q$ depending on the car). \addition{It was studied more thoroughly by Durmi{\'c} et al.\ \cite{Durmic_Probabilistic_parking_functions_2023}, who prove that the probability that a given vertex is free at the end does not depend on $p$ (which was already claimed by Konheim and Weiss \cite{Konheim1966AnOD}) and then study a statistic that does depend on $p$, namely the vertex at which the last car tries to park (conditional on the position of the last free hole, or equivalently, when the process is considered on the line). 
}\\
Moreover, in \cite{nadeau_bilateral_parking_procedures}, Nadeau studies the parking process on $\mathbb{Z}$ with a finite number of cars, by studying the parking on $\mathbb{Z}/n\mathbb{Z}$ that leave only one vertex free, and he shows that for any shift invariant and local strategy the number of parking functions is the same (in our setting, it says that the probability that some vertex is the only free one does not depend on the cars strategy).  
\end{remark}

\paragraph{Golf process on $\znz$ with several balls per site.} We have shown in Section \ref{subsect:parking} that the $p$-parking process can be seen as a variant of the golf process on $\znz$ where we allow the initial configuration to have one hole along with a random non-negative number of balls at each vertex. In the case of the parking, Theorem \ref{thm:parking_loi_TL} gives the distribution of $\TL$. 

Let $(N_j)_{-1\leq j < n}$ be such that $\sum_{j\geq-1}N_j = n$. We consider another variant, which consists in fixing $N_j$, the number of vertices containing $j$ balls, for every $j\geq -1$ (to simplify the notation, a hole is considered here as a $-1$) and then taking an initial configuration uniform in ${\sf Ini}(N_j,-1\leq j <n)$ the set of configurations that satisfy these constraints. As long as $N_{-1} \geq \sum_{j\geq 0} jN_j$, the model is valid (Proposition \ref{prop:bonne_def_graphe_fini_ET_commutation} can be easily generalized). 

The support of $\TL$ depends on $(N_j)_{-1\leq j < n}$. Given $(N_j)_{-1\leq j < n}$ and $X$, the following proposition gives the probability that $\TL = X$. It is a sum over the possible assignments of the balls that are consistent with this event. We still use the convention that $\ell_i = \Deltan X$ is the length of the $i$th interval. We denote by $b^i_j \geq 0$ the number of vertices in the $i$th interval containing $j$ balls. 

If we consider the vector $$B = (b^i_{j},-1\leq j < n)_{i\in \Z/\nl\Z},$$ then $B$ is compatible with the event $\TL = X$ if and only if:
\begin{itemize}
\item The $i$th interval contains as many holes as balls: $b^i_{-1} = \sum_{j=0}^k jb^i_j$.
\item There are $\ell_i$ vertices in the $i$th interval: $\sum_{j\geq-1} b^i_j = \ell_i$.
\item The total number of vertices containing $j$ balls is $N_j$: $\forall j, \sum_{i\in\Z/\nl \Z}b_j^i = N_j$.
\end{itemize}
We let $B(X)$ be the set of all these compatible vectors.

This model can be studied with exactly the same tools as those presented above \addition{(we thus omit the proof here)}, and we can obtain the following formula. 
\begin{proposition}\label{prop:bla} Let $(N_j)_{-1 \leq j < n}$ be fixed non-negative integers such that $\nl \coloneqq N_{-1} - \sum_{j\geq 0} jN_j > 0$ and $\sum_{j\geq -1}N_j = n$. Let $\TL$ be the set of remaining holes in {a golf model} starting from a uniform configuration taken in ${\sf Ini}(N_j,-1\leq j <n)$. Then, for any set $X \subseteq \znz$ of cardinality $\nl$, 
\begin{equation}
	\Prob{\TL = X} = \frac{1}{{\binom{n}{N_{-1},\ldots,N_n}}} \sum_{B \in B(X)} 
	\ \	\prod_{i\in \Z/\nl\Z}^{} \ \frac{1}{b^i_{-1}} \binom{\ell_i}{b^i_{-1},b^i_0,\ldots, b^i_n}. \label{eqn:loi_TL_variante}
\end{equation}
\end{proposition}

\begin{remark}\
\begin{itemize}
	\item Here again, this proposition is valid for any ball moving strategy verifying the hypotheses given in Remark \ref{rem:generalisation_local_shift_inv_hyp}.
	\item 
	This formula allows to compute the distribution of the remaining holes for all distribution on the set of configurations ${\sf Ini}$ (as long as it is invariant by any permutation of the vertices). In fact, {it suffices to compute the distribution of $(N_j)_{j\geq -1}$, and then conclude with the distribution of $\TL$ conditional on this $(N_j)_{j\geq -1}$, given by Proposition \ref{prop:bla}}.
\end{itemize}
\end{remark}

\subsubsection{Open questions}

\paragraph{Golf process on $\znz$ with several holes per site.} We have seen in the previous paragraph that when there is at most one hole per vertex, we can compute the distribution of $\TL$ under general hypotheses. When we allow a vertex to contain multiple holes (i.e. $\Tinit$ is a multiset), then it seems more complicated. We illustrate this fact with a small example. 

We fix $\nballs$ and $\nholes$, the number of balls and holes, and let $m_i(\Tinit)$ (esp.\ $m_i(\Binit)$) count the number of holes (resp.\ balls) at vertex $i$. We assume that $(m_i(\Tinit),i\in\znz)$ has the multinomial distribution with parameters $(\nholes; 1/n, \ldots, 1/n)$ and similarly, $(m_i(\Binit),i\in\znz)$ has the multinomial distribution with parameters $(\nballs; 1/n, \ldots, 1/n)$ (as for the parking process). This initial configuration corresponds to the model where $\nholes$ holes and $\nballs$ balls successively choose uniformly at random a vertex of $\znz$ (with repetitions allowed), independently of the other balls and holes.

In view of what we have said before, it is routine that the golf process with such an initial configuration is valid as long as $\nballs \leq \nholes$, and thus that the multiset $\TL$ is also well-defined.

When $n = 4$, $\nballs = 2$ and $\nholes = 4$, the distribution of $\TL$ is quite complicated to compute by hand, but can be easily done with a computer algebra system (for a fixed initial configuration, it is possible to compute the probability that a ball will reach some hole before the others, and thus to compute the probability of some final configuration; it is then sufficient to sum these probabilities over all the possible initial configurations). We can obtain, for example, the probability that the two remaining holes are at position 0 is:
\[
\Prob{\TL = \{0,0\}} = \frac{106 p^4 - 212 p^3 + 322 p^2 - 216 p + 107}{1024 (p^2 - p + 1)^2},
\]
while 
\[\Prob{\TL = \{0,1\}} = \frac{110 p^{4}-220 p^{3}+326 p^{2}-216 p +109}{1024 \left(p^{2}-p +1\right)^{2}}.\]
This time, these values depend non trivially on the parameter $p$. We leave as an open question the characterization of the distribution of $\TL$ in general.

\paragraph{Distribution of the occupied holes in the standard golf process} We focused on the distribution of $\TL$, which does not depend on $p$ and is explicit (see Theorem \ref{thm:loitrousresiduels}). It is not the case for the set of occupied holes $\Tinit\backslash\TL$. It is easy to observe on small examples that the joint distribution of $(\Tinit\backslash\TL, \TL)$ depends on $p$. We can give a simple example that shows this dependence on $p$ for $\Tinit\backslash\TL$ (also computed with a computer algebra system): if $n = 6$, $\nballs = 2$ and $\nholes = 4$, then 
\[\pcercle\left(\Tinit\backslash \TL = \{0,1\}\right) = \frac{-2 p^{6}+6 p^{5}-7 p^{4}+4 p^{3}+2 p^{2}-3 p +1}{30 \left(p^{2}-p +1\right) \left(p^{2}-p +\frac{1}{2}\right)} .\]

We also leave the characterization of $\Tinit \backslash \TL$ as an open question.

\paragraph{Definition of the golf process on $\Z^d$.}
A natural question that arises when reading Theorem \ref{thm:defZ}, and that remains open, is the following.
\begin{question}
For $d\geq 2$, if $\db \leq \dt$, is the golf model on $\Z^d$ valid? \label{question:def_Zd}
\end{question}
{When $\db < \dt$, it seems reasonable to conjecture that the golf process on $\Z^d$ is valid.} On $\Z^2$, it is well-known that the standard random walk is irreducible and recurrent, thus intuitively ``a ball can reach a free hole if such hole exists''; but as we explain in Section \ref{subsect:discuss_def_Z}, it is not easy to prove that such hole always exists. The proof of validity on $\Z$ relies heavily on \textit{separators}, that are holes that allow $\Z$ to be divided into independent finite intervals, and this proof cannot simply be generalized to higher dimensions. Question \ref{question:def_Zd} thus remains open, even on $\Z^2$ with balls doing standard random walks.

\subsubsection{Related models in the literature}\label{subsect:litterature}

In addition to the parking processes discussed in Section \ref{subsect:parking}, we discuss other models that are close to the one we study here.

First, the name ``golf model'' comes from Fredes and Marckert \cite{fredes2021aldousbroder}, who introduced \textit{golf sequences}, which are the sequences of trajectories of the balls.
These golf sequences were introduced as a tool for a combinatorial proof of Aldous-Broder theorem.

We mentioned in Remark \ref{remark:prop_commutation} the Commutation property, which can be seen as a consequence of \cite[Proposition 4.1]{diaconis1991growth}. In this paper, Diaconis and Fulton defined a growth model in which some particles, starting at some positions $x_1,x_2,\ldots $, stop at their first hitting time of a set $Y$ (as what we do here, when a particle at some vertex $y\in Y$ receives a particle, it loses its capacity to absorb another particle, and behaves as a standard non-absorbing vertex). For this general type of model, they show that the distribution of the eventually occupied vertices of $Y$ does not depend on the initial order of the balls.

The internal diffusion limited aggregation model is an instance of this type of growth model, where the graph is the lattice $\Z^d$ and all the balls start at the same position. We refer to Lawler  et al.\ \cite{lawler1992internal} for the study of the asymptotic shape. \\

Several authors \cite{Damron_Parking_transitive_unimodular_graphs_2019, przykucki2019parking} study a parking process on $\Z$, with an initial configuration similar to our golf process in the case $\db + \dt = 1$: at the beginning, each vertex contains either a car (a ball) with probability $\db$ or a parking space (a hole, with probability $\dt$) independently of the other vertices. But the dynamics is different: it is a discrete-time process, at time $k$ (for every $k\geq 0$), all the balls that have not yet reached a free hole do a random step (with random variables to break ties if several balls arrive at a free parking space at the same time). The process is clearly well-defined for all $k$, and in particular is well-defined regardless of the value of $\db$. The question that Przykucki et al.\ study in \cite{przykucki2019parking} is different from ours: their main results concern the average time that balls take to reach a hole. In fact, if $\db \geq 1/2$, the expected time for a car to reach a parking space is infinite. When $\db < 1/2$, every car reaches a parking space in almost surely finite time, but in fact no final configuration is reached in a finite time. Actually, we prove in Theorem \ref{cor:Zgolf=parking} that for $\db \leq 1/2$, the final configuration in our model and in theirs (i.e.\ in their case the limit process as time goes to infinity) have the same distribution.

In the Activated Random Walk model, starting with infinitely many particles on $\Z^d$, the particles perform independent continuous-time random walks, falling asleep at rate $\lambda$ where they are alone at their location (see for example \cite{Cabezas_2014} or this survey by Rolla \cite{Rolla_2020}). For $\lambda = \infty$, this model is a continuous-time version of the parking model. This model has also been studied on $\znz$ by Basu et al.\ who focus on the time taken by the process to stabilize when $\lambda$ is finite \cite{Basu_ARW_cycle_2019}. Again, the models are close but have different time dynamics, since in the golf model the balls move only once; the authors are particularly interested in the average behavior of their models (typically, how many times the origin is visited on average), not in computing the distribution of the process (as we are, here), so that their results are of a different nature from those of the present paper. 

We can also mention diffusion-limited annihilating systems (see for example \cite{Johnson_density_in_diff_limited_annih_systems_2023}), where particles of two types are randomly placed on $\Z$, also perform continuous-time random walks (at a rate depending on their type) and annihilate when hitting a particle of the opposite type. When one type of particle does not move, the model amounts to the parking model of Przykucki et al.\ we described above.

Finally, in Nadeau \& Tewari \cite[Section 4]{Nadeau_2021} (see also Petrov \cite{PETROV2018336}), while studying some algebraic questions related to the basis of Schubert classes, the authors discussed combinatorial formulas arising when one computes the probability of some events related to the final configuration in a $p$-parking in which the initial configuration is fixed.

\subsection{Contents of the paper}

The paper is organized as follows. 

\textbf{Section \ref{sect:znz} contains the proofs of all the theorems concerning the golf model and the $p$-parking on $\znz$.} In Section \ref{subsect:znzloiTL}, we prove Theorem \ref{thm:loitrousresiduels} (for the distribution of $\TL$ in the golf model on $\znz$), and we discuss the proof of Theorem \ref{thm:parking_loi_TL} in Section \ref{subsect:parking_loi_TL} (for the distribution of $\TL$ in the $p$-parking model). Section \ref{subsect:cercle_asymptotiques} is devoted to the proof of the asymptotic results for the block-size process $\Deltan\TL$ (Theorem \ref{thm:phaseT}), with a discussion on the combinatorial links between our block-size process and some combinatorial models of forests and paths in the previous section (Section \ref{subsect:cercle_asymptotiques_preliminaries}). Then, in Section \ref{subsect:connections_with_CL} connections are made between the asymptotics given in Theorems \ref{thm:phaseT} and \ref{cor:phaseTparking}. Finally, Section \ref{subsect:sparse_case} focuses on the block-size process with a fixed number of balls and holes, with the proof of Theorem \ref{thm:blocs_cercle_nb_nt_fixes} in Section \ref{subsect:asympt_nb_et_nt_fixes}.

\textbf{Section \ref{sect:z} contains the proofs of all the results concerning the golf model on $\Z$.} We prove that the golf model is valid (Theorem \ref{thm:defZ}) in Section \ref{subsect:defZ}. In Section \ref{subsect:discuss_def_Z}, we discuss the difficulty that arise when an infinite number of balls are involved. Theorem \ref{thm:loiblocsZ}, giving the distribution of the block-size process when $\db+ \dt = 1$, is proved in Section \ref{subsect:ZloiTL}. In the next Section (Section \ref{subsect:blocks_Z_cascomplexe}), we study the same process when $\db + \dt <1$. Finally, we prove Theorem \ref{prop:Z_same_density_bh_empty_holeset} in Section \ref{subsect:Z_pas_de_trous_si_db=dt} and couple the golf process with the parking process on $\Z$ in Section \ref{subsect:golf=parking_onZ}.

\section{Golf model on $\znz$} \label{sect:znz}

\subsection{Proof of Theorem \ref{thm:loitrousresiduels}}\label{subsect:znzloiTL}

Before proving Theorem \ref{thm:loitrousresiduels}, we give two lemmas. The first one focuses on the case when $\nholes = \nballs + 1$, in which at the end there is exactly one remaining hole, and which we call the \textit{mini-golf} case. This case in much simpler, and will appear to be crucial in the proof of Theorem \ref{thm:loitrousresiduels}.

\begin{lemma}{(Distribution of remaining holes - mini-golf case)} \label{lemma:casinitloitrousresiduels} 
We assume that $\nholes = \nballs + 1$.
For every $x\in\znz$, \begin{equation} \label{eqn:casinitloitrousresiduels1}
	\P^{n, \nballs, \nballs + 1,p}\left(\TL = \{x\}\right) = \frac{1}{n}
\end{equation} and 
\begin{equation} \label{eqn:casinitloitrousresiduels2}
	\P^{n, \nballs, \nballs + 1,p}\left(\TL = \{x\} \middle| x \in \Tinit \right) = \frac{1}{\nballs + 1}. 
\end{equation}
\end{lemma}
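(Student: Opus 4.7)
The plan is to exploit the rotation-invariance of the model on $\znz$. Both ingredients feeding into the distribution of $\TL$ are invariant by rotation: the initial distribution $\mathcal{U}(\configinitcercle)$ is uniform on configurations with prescribed numbers of balls and holes, which is invariant under the cyclic shift of $\znz$; and the kernel $\PMC$ of the biased random walk is translation-invariant on the cycle. Since the construction of $(\eta^t)$ from these ingredients commutes with rotations, the law of $\eta^1$ (and hence of $\TL$) is invariant under rotations of $\znz$.

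By Proposition \ref{prop:bonne_def_graphe_fini_ET_commutation} the process is almost surely valid, so exactly $\nballs$ balls are absorbed by $\nballs$ distinct holes and $\card{\TL}=\nholes-\nballs=1$ a.s. Combined with rotation invariance, the distribution of the unique remaining hole must be uniform on $\znz$, which gives \eqref{eqn:casinitloitrousresiduels1}: $\P^{n,\nballs,\nballs+1,p}(\TL=\{x\})=1/n$ for every $x\in\znz$.

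For \eqref{eqn:casinitloitrousresiduels2} I would simply combine this with the observation that $\TL\subseteq \Holeset^0=\Tinit$, so that the event $\{\TL=\{x\}\}$ is already contained in $\{x\in\Tinit\}$. By Bayes,
\begin{equation*}
\P^{n,\nballs,\nballs+1,p}(\TL=\{x\}\mid x\in\Tinit)=\frac{\P^{n,\nballs,\nballs+1,p}(\TL=\{x\})}{\P^{n,\nballs,\nballs+1,p}(x\in\Tinit)}=\frac{1/n}{\nholes/n}=\frac{1}{\nballs+1},
\end{equation*}
where $\P^{n,\nballs,\nballs+1,p}(x\in\Tinit)=\nholes/n$ follows from the exchangeability of the uniform initial configuration.

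There is no real obstacle here: the whole content of the lemma is the symmetry of the initial law together with the symmetry of the dynamics, plus the fact that $\TL$ has cardinality one. The slightly delicate point to check carefully is the rotation-equivariance of the definition of $(\eta^t)$ — i.e.\ that rotating the initial configuration and the (independent) walks produces a process that is the rotation of the original one — but this is immediate from the construction, since the walks used upon each activation are i.i.d.\ with a shift-invariant kernel and independent of $\eta^0$.
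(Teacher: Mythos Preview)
Your proof is correct and follows essentially the same approach as the paper: rotation invariance of both the initial distribution and the walk kernel forces the law of the unique remaining hole to be uniform, and the conditional version follows immediately from the inclusion $\TL\subseteq\Tinit$ together with $\P(x\in\Tinit)=\nholes/n$. The paper's own proof is even terser but relies on exactly the same two observations.
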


Observe that $\P^{n, \nballs, \nballs + 1,p}\left(\TL = \{x\} \middle|\Tinit = H\right) $ (for some set $H\subseteq \znz$) is different from (\ref{eqn:casinitloitrousresiduels2}), and strongly depends on the geometry of $H$.

\begin{proof}[Proof of Lemma \ref{lemma:casinitloitrousresiduels}]
Equation (\ref{eqn:casinitloitrousresiduels1}) follows immediately from the fact that both the distribution of the initial configuration and $P$ are rotationally invariant, which implies that the final configuration is also {rotationally} invariant, hence the result. Equation (\ref{eqn:casinitloitrousresiduels2}) follows immediately from Equation (\ref{eqn:casinitloitrousresiduels1}).

\end{proof}

The second lemma is a crucial decomposition lemma, which is the key point of all the results concerning golf processes. At time 1, when the golf process achieves its final configuration $\eta^1$, the intervals (blocks) between the remaining empty holes of $\TL$ are equilibrated in the following sense: each of them must have contained as many balls than holes at time 0. The following lemma allows to decompose the probability that $\TL = X$, conditional on the number of balls on each block at time 0, as the product of simpler probabilities on mini-golfs (on the blocks defined by $X$). It relies on the following \textbf{block decomposition principle:} the trajectory of any ball does not intersect with the set of final free holes, so the free holes of $\TL$ allow to divide $\znz$ into disjoint intervals, in the sense that no ball starting in one interval visit any vertex of another interval before finding a hole. \addition{Hence, when one considers an interval $]X_i,X_{i+1}[$ between two holes $X_i$ and $X_{i+1}$, if one activates only the balls in this interval, the probability that no ball gets out of it does not depend on the rest of the configuration, and better than that, this probability would be the same if the size $n$ of the circle $Z/n\Z$ were different: we can then reduce this size as much as it is useful for us, and take n so small that $X_i=X_{i+1}\mod n$. So this probability is the same as the probability that balls do not reach a specified hole on a circle (corresponding to the identification of the two holes on the extremities of the block, see Figure \ref{fig:illu_lemme_decomposition}). }

\begin{lemma}\label{lemma:simpl_thm_loi_trous_residuels}
We use the same notation as in Theorem \ref{thm:loitrousresiduels}: we consider $X \subseteq {\znz}$ of size $\card{X} = \nl$, and for every $i \in \znlz$, we set $\ell_i \coloneqq \Delta^{(n)}_{i}X$. We also let $x$ be the nicely ordered sequence containing all the elements of $X$ (recall that it is such that $\pi(x_1) < \ldots < \pi(x_{k-1}) < \pi(x_0)$, where $\pi$ is the canonical projection $\znz \to \llbracket 1,n \rrbracket$).
For every finite sequence $(b_i)_{1\leq i \leq \nl}$ such that $\sum_{i}b_i = \nballs$ and $\forall i, 0 \leq b_i \leq \ell_i/2$, 
\begin{align}
	\pcercle\left(\TL = X ~\middle|~ \grosevtnom \right) &= \prod_{i \in \znlz} \pcerclei \left( \TL = \{0\} ~\middle|~ 0 \in \Tinit \right) \label{eqn:decomposition}
	\\ 	& = \prod_{i \in\znlz} \frac{1}{b_i + 1},
\end{align}
where we define, for any $H, B, X$ and $(b_i)_{1 \leq i \leq \nl}$ the event $\grosevtnomdet$ corresponding to the conjunction of the following events : \grosevtdefdet

\end{lemma}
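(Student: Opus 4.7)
The plan is to exploit the block decomposition principle described just before the lemma: on the event $\{\TL = X\}$, every vertex $x_j \in X$ is in $\TL$, hence remains a free hole throughout $\intzo$; since a ball whose random walk visits a free hole is immediately absorbed there, no ball's trajectory can ever cross a vertex of $X$. In particular, every ball initially in $B_i$ performs its entire walk inside $\llbracket x_i, x_{i+1} \rrbracket$ and is absorbed there. Combined with the conditioning $\cardBi = b_i$, this forces block $B_i$ to contain exactly $b_i$ balls and $b_i$ non-$X$ holes initially, for otherwise either some hole of $B_i$ would remain free (contradicting $\TL = X$) or some ball of $B_i$ would be unable to find a hole inside its block.

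Next, I would identify the dynamics inside each $B_i$ with a mini-golf on a smaller cycle. Collapsing the two boundary vertices $x_i$ and $x_{i+1}$ into a single distinguished vertex $0$ turns the segment $\llbracket x_i, x_{i+1} \rrbracket$ (of $\ell_i + 2$ vertices) into the cycle $\Z/(\ell_i + 1)\Z$. A $\PMC$-random walk on the segment with absorbing endpoints has the same law, up to this relabelling, as a $\PMC$-random walk on this cycle absorbed at $0$, since the transition kernels agree on the interior and absorption at either endpoint corresponds to absorption at $0$. Under the conditioning of the lemma, the restriction of $\eta^0$ to $B_i$ is uniform among the $\binom{\ell_i}{b_i, b_i, \ell_i - 2b_i}$ assignments of $b_i$ balls, $b_i$ holes and $\ell_i - 2b_i$ neutral sites to $B_i$; this matches the distribution of a uniform mini-golf configuration on $\Z/(\ell_i + 1)\Z$ with $b_i$ balls and $b_i + 1$ holes, conditioned on $0 \in \Tinit$. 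Under this identification, the event ``all balls of $B_i$ are absorbed in $B_i$ and no hole of $B_i$ remains free'' becomes exactly the mini-golf event $\{\TL = \{0\}\}$.

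Finally, under the conditioning the initial configurations on the $\nl$ blocks are mutually independent (the uniform distribution on $\configinitcercle$ restricted to prescribed per-block counts factorises), and the clocks and random walks of the balls are mutually independent by construction; the block decomposition prevents walks of different blocks from interacting. The $\nl$ mini-golfs on $\Z/(\ell_i + 1)\Z$ are therefore independent, the event $\{\TL = X\}$ factors as the intersection of the independent events $\{\TL_{\mathrm{cycle}, i} = \{0\}\}$, and Lemma~\ref{lemma:casinitloitrousresiduels} applied to each mini-golf gives $\pcerclei(\TL = \{0\} \mid 0 \in \Tinit) = 1/(b_i+1)$, yielding the claimed product formula. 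The delicate point, I expect, is justifying the collapse in the second step at the level of the \emph{joint} dynamics of all balls in a block, rather than of a single random walk: successive ball trajectories interact through the evolving hole-set, so one must verify that the natural coupling between the walks on $\llbracket x_i, x_{i+1}\rrbracket$ and on the quotient cycle preserves the whole final configuration in law, not only the marginal absorption positions.
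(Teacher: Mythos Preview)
Your proposal is correct and follows essentially the same approach as the paper: both exploit the block decomposition principle, identify each block $\llbracket x_i,x_{i+1}\rrbracket$ with a mini-golf on $\Z/(\ell_i+1)\Z$ via the collapse of the two boundary holes to a single vertex, and then invoke Lemma~\ref{lemma:casinitloitrousresiduels}. The paper formalizes your ``delicate point'' through a weight-preserving bijection on \emph{golf sequences} (Lemma~\ref{lemma:decompositionnongeneraleenhistoires}) and, rather than appealing directly to independence of the i.i.d.\ clocks across blocks as you do, conditions on the full clock ordering $\sigma$ and averages afterwards---a deliberate choice made so that the proof extends verbatim to the time-dependent variants of Section~\ref{subsect:variantes}, where the commutation property fails.
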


Figure \ref{fig:illu_lemme_decomposition} illustrates this lemma. We prove it in Section \ref{subsect:lemme_loi_cercle}.

\begin{figure}[t]\centering
{\begin{tikzpicture}[line cap=round,line join=round,>=triangle 45,scale = 0.8]
		\draw(0,0) circle (2cm);
		\begin{scriptsize}
			\draw [color=black,fill = black] (-1.41,1.41) circle (1.5pt);
			\draw [color=black,fill = black] (-0.77,1.85) circle (1.5pt);
			\draw [color=black,fill = black] (-1.85,0.77) circle (1.5pt);
			\draw [color=black,fill = black] (-2,0) circle (1.5pt);
			\draw [color=black,fill = black] (-1.85,-0.77) circle (1.5pt);
			\draw [color=black,fill = black] (-1.41,-1.41) circle (1.5pt);
			\draw [color=black,fill = black] (-0.77,-1.85) circle (1.5pt);
			\draw [color=black,fill = black] (0,-2) circle (1.5pt);
			\draw [color=black,fill = black] (0.77,-1.85) circle (1.5pt);
			\draw [color=black,fill = black] (1.41,-1.41) circle (1.5pt);
			\draw [color=black,fill = black] (1.85,-0.77) circle (1.5pt);
			\draw [color=black,fill = black] (1.85,0.77) circle (1.5pt);
			\draw [color=black,fill = black] (2,0) circle (1.5pt);
			\draw [color=black,fill = black] (1.41,1.41) circle (1.5pt);
			\draw [color=black,fill = black] (0.77,1.85) circle (1.5pt);
			\draw [color=black,fill = black] (0,2) circle (1.5pt);
			\node[above] at (0,-1.8) {\redcolor{$0$}};
			\node [black,right, xshift = 7pt] at (-1.85,-0.77) {$x_0$};
			\node [black,above left, xshift = -5pt] at (1.41,-1.41) {$x_{1}$};
			\node [black, xshift = 11pt, yshift = -9pt] at (112.5:2) {$x_{2}$};
			
			\draw [color=black,fill = white] (112.5:2) circle (3.5pt);
			\draw [color=black,fill = white] (-1.85,-0.77) circle (3.5pt);
			\draw [color=black,fill = white] (1.41,-1.41) circle (3.5pt);
			\node [align=left] at (2.65,1.8) {  $ b_1 \times \hspace{-0.1cm}\vcenter{\hbox{ \begin{tikzpicture}[scale = 0.7]
							\draw [color=black,fill = white] (1.85,-0.77) circle (3.5pt);
				\end{tikzpicture} }}\hspace{-0.1cm} + b_1 \times \hspace{-0.1cm}\vcenter{\hbox{ \begin{tikzpicture}[scale = 0.7]
							\draw [color=black,fill = black] (1.85,-0.77) circle (3.5pt);
				\end{tikzpicture} }}\hspace{-0.1cm} $};
			\node [align = right, yshift = 5] at (-2.6,1.7) { 
				$  b_2 \times \hspace{-0.1cm}\vcenter{\hbox{ \begin{tikzpicture}[scale = 0.7]
							\draw [color=black,fill = white] (1.85,-0.77) circle (3.5pt);
				\end{tikzpicture} }}\hspace{-0.1cm} + b_2 \times\hspace{-0.1cm}\vcenter{\hbox{ \begin{tikzpicture}[scale = 0.7]
							\draw [color=black,fill = black] (1.85,-0.77) circle (3.5pt);
				\end{tikzpicture} }}\hspace{-0.1cm} $};
			\node [] at (-0.2,-2.5) { {
					$b_0 \times \hspace{-0.1cm}\vcenter{\hbox{ \begin{tikzpicture}[scale = 0.7]
								\draw [color=black,fill = white] (1.85,-0.77) circle (3.5pt);
					\end{tikzpicture} }}\hspace{-0.1cm} + b_0 \times \hspace{-0.1cm}\vcenter{\hbox{ \begin{tikzpicture}[scale = 0.7]
								\draw [color=black,fill = black] (1.85,-0.77) circle (3.5pt);
					\end{tikzpicture} }}\hspace{-0.1cm} $}};
			\fill[blue!60!cyan, opacity = 0.3] (0,0) ++(101.25:2.2) 
			arc(101.25:-30:2.2)
			-- ++(-210:0.4)
			arc(-30:101.25:1.8)
			-- cycle;
			\fill[blue, opacity = 0.3] (0,0) ++(187.5:2.2)
			arc(187.5:123.75:2.2)
			-- ++(-56.25:0.4)
			arc(123.75:187.5:1.8)
			-- cycle;
			
			\fill[cyan, opacity = 0.3] (0,0) ++(-60:2.2)
			arc(-60:-142.5:2.2)
			-- ++(37.5:0.4)
			arc(-142.5:-60:1.8)
			-- cycle;
			
			\draw [color=orange] (112.5:2) circle (7pt);
			\draw [color=orange] (-1.85,-0.77) circle (7pt);
			\draw [color=orange] (1.41,-1.41) circle (7pt);
			
		\end{scriptsize}
		\node [] at (3.2,-0) { {\Large $ = $}};

		\begin{scope}[shift = {(5.3,0)}]
			\def\ray{1}
			\draw(0,0) circle (\ray);
			\begin{scriptsize}
				\draw [color=black,fill = black] (54:\ray) circle (1.5pt);
				\draw [color=black,fill = black] (198:\ray) circle (1.5pt);
				\draw [color=black,fill = black] (126:\ray) circle (1.5pt);
				\draw [color=black,fill = black] (-18:\ray) circle (1.5pt);
				\draw [color=black,fill = black] (-90:\ray) circle (1.5pt);
				\node[above] at (0,-0.8*\ray) {\redcolor{$0$}};
				\draw [color=black,fill = white](-90:\ray) circle (3.5pt);
				
				\node [] at (-0,1.5) { {$b_0 \times \hspace{-0.1cm}\vcenter{\hbox{ \begin{tikzpicture}[scale = 0.7]
									\draw [color=black,fill = white] (1.85,-0.77) circle (3.5pt);
						\end{tikzpicture} }}\hspace{-0.1cm} + b_0 \times \hspace{-0.1cm}\vcenter{\hbox{ \begin{tikzpicture}[scale = 0.7]
									\draw [color=black,fill = black] (1.85,-0.77) circle (3.5pt);
						\end{tikzpicture} }}\hspace{-0.1cm} $}};
				
				\fill[cyan, opacity = 0.3] (0,0) ++(-54:\ray+0.2)
				arc(-54:234:\ray+0.2)
				-- ++(54:0.4)
				arc(234:-54:\ray-0.2)
				-- cycle;
				
				\draw [color=orange] (-90:\ray) circle (7pt);

			\end{scriptsize}
		\end{scope}
		
		\node [] at (7,0) {\Large {$\times$}};
		
		\begin{scope}[shift = {(8.9,0)}]
			\def\ray{1.2}
			\draw(0,0) circle (\ray);
			\begin{scriptsize}
				\draw [color=black,fill = black] (218.57:\ray) circle (1.5pt);
				\draw [color=black,fill = black] (167.14:\ray) circle (1.5pt);
				\draw [color=black,fill = black] (115.71:\ray) circle (1.5pt);
				\draw [color=black,fill = black] (64.28:\ray) circle (1.5pt);
				\draw [color=black,fill = black] (12.85:\ray) circle (1.5pt);
				\draw [color=black,fill = black] (-38.57:\ray) circle (1.5pt);
				\draw [color=black,fill = black] (-90:\ray) circle (1.5pt);
				\node[above] at (0,-0.8*\ray) {\redcolor{$0$}};
				
				\draw [color=black,fill = white](-90:\ray) circle (3.5pt);
				\node [] at (-0,1.7) { {$b_1\times \hspace{-0.1cm}\vcenter{\hbox{ \begin{tikzpicture}[scale = 0.7]
									\draw [color=black,fill = white] (1.85,-0.77) circle (3.5pt);
						\end{tikzpicture} }}\hspace{-0.1cm} + b_1\times \hspace{-0.1cm}\vcenter{\hbox{ \begin{tikzpicture}[scale = 0.7]
									\draw [color=black,fill = black] (1.85,-0.77) circle (3.5pt);
						\end{tikzpicture} }}\hspace{-0.1cm} $}};
				
				\fill[blue!60!cyan, opacity = 0.3] (0,0) ++(-64.28:\ray+0.2)
				arc(-64.28:244.28:\ray+0.2)
				-- ++(64.28:0.4)
				arc(244.28:-64.28:\ray-0.2)
				-- cycle;
				\draw [color=orange] (-90:\ray) circle (7pt);	
			\end{scriptsize}
		\end{scope}
		
		\node [] at (10.8,0) {\Large {$\times$}};
		
		\begin{scope}[shift = {(12.3,0)}]
			\def\ray{0.8}
			\draw(0,0) circle (\ray);
			\begin{scriptsize}
				\draw [color=black,fill = black] (180:\ray) circle (1.5pt);
				\draw [color=black,fill = black] (90:\ray) circle (1.5pt);
				\draw [color=black,fill = black] (0:\ray) circle (1.5pt);
				\draw [color=black,fill = black] (-90:\ray) circle (1.5pt);
				\node[above] at (0,-0.8*\ray) {\redcolor{$0$}};
				
				\draw [color=black,fill = white](-90:\ray) circle (3.5pt);
				\node [] at (-0,1.4) { {$b_2 \times \hspace{-0.1cm}\vcenter{\hbox{ \begin{tikzpicture}[scale = 0.7]
									\draw [color=black,fill = white] (1.85,-0.77) circle (3.5pt);
						\end{tikzpicture} }}\hspace{-0.1cm} + b_2 \times \hspace{-0.1cm}\vcenter{\hbox{ \begin{tikzpicture}[scale = 0.7]
									\draw [color=black,fill = black] (1.85,-0.77) circle (3.5pt);
						\end{tikzpicture} }}\hspace{-0.1cm} $}};
				
				\fill[blue, opacity = 0.3] (0,0) ++(-45:\ray+0.2)
				arc(-45:225:\ray+0.2)
				-- ++(45:0.4)
				arc(225:-45:\ray-0.2)
				-- cycle;
				\draw [color=orange] (-90:\ray) circle (7pt);	
			\end{scriptsize}
		\end{scope}
		
\end{tikzpicture}}
\hfill
\caption[]{Illustration of the block decomposition principle, and its use to compute $\Prob{\TL = X \middle| X \subseteq \Tinit \et \forall i, \cardBi = b_i}$ given by Lemma \ref{lemma:simpl_thm_loi_trous_residuels}. 
	Here, we give an example for $n = 16$, $\nballs = 5$ and $\nholes =8$. Moreover, we consider $X=\{x_0,x_1,x_2\}$ such that $\ell_0 = 4$, $\ell_1 = 6$ and $\ell_2 = 3$. 
	The orange circled vertices correspond to the elements of $X$. Recall that $(b_0,b_1,b_2)\in B^{(\nballs, X)} = \left\{(b_i)_{1\leq i \leq \nl} :\sum_{i}b_i = \nballs  \et \forall i, 0 \leq b_i \leq \ell_i/2 \right\}$. 
	The notation $b_i \times \hspace{-0.1cm}\vcenter{\hbox{ \begin{tikzpicture}[scale = 0.7]
				\draw [color=black,fill = white] (1.85,-0.77) circle (3.5pt);
	\end{tikzpicture} }}\hspace{-0.1cm} + b_i \times \hspace{-0.1cm}\vcenter{\hbox{ \begin{tikzpicture}[scale = 0.7]
				\draw [color=black,fill = black] (1.85,-0.77) circle (3.5pt);
	\end{tikzpicture} }}\hspace{-0.1cm} $
	means that the corresponding block (in blue, between two consecutive elements of $X$) contains exactly $b_i$ balls and $b_i$ holes (assigned uniformly on the vertices of the block).
	Since no ball ever visits a hole of $\TL$, the elements of $\TL$ divide $\znz$ into ``independent'' blocks that behave as mini-golfs, and this allows to decompose the probability that $\TL = X$ into a product of probabilities that mini-golfs leave 0 free.
}
\label{fig:illu_lemme_decomposition}
\end{figure}

In what follows we identify $\Z/\nl\Z$ with $\llbracket 1, \nl \rrbracket$, and use the two notation equivalently. We now use these two lemmas to prove Theorem \ref{thm:loitrousresiduels}.

\begin{proof}[Proof of Theorem \ref{thm:loitrousresiduels}]
{We work under $\pcercle$, but we write $\P$ for short, since there is no ambiguity in this proof.}

Let $X$ be given, and recall that the set \[B^{(\nballs, X)} = \left\{(b_i)_{1\leq i \leq \nl} :\sum_{i}b_i = \nballs  \et \forall i, 0 \leq b_i \leq \ell_i/2 \right\}\] corresponds to the set of possible allocations of $\nballs$ balls into $\nl$ intervals, without having more than $\ell_i/2$ balls in the $i$th interval. 

The event $\TL = X$ can occur only if $X \subseteq \Tinit$ and there are exactly as many holes as balls in every block. It gives equations (\ref{eqn:thm_loi_trous_residuels_cercle_1}) and (\ref{eqn:thm_loi_trous_residuels_cercle_2}) in the computation below.

\begin{align}\pcercle&\left(\TL =X \right) =\sum_{(b_i)_{1\leq i \leq \nl} \in B^{(\nballs, X)}
	} \P\left(\TL = X \cap \grosevtnom \right) \label{eqn:thm_loi_trous_residuels_cercle_1} \\
	&\hspace{-3mm}= \sum_{(b_i)_{1\leq i \leq \nl} \in B^{(\nballs, X)}
	} \P\left(\TL = X\middle| \grosevtnom \right)\P\left(\grosevtnom\right)	\label{eqn:thm_loi_trous_residuels_cercle_2}
\end{align}

A counting argument gives, for any $b_i$ in $B^{(\nballs, X)}$,
\begin{equation}
	\P\left(\grosevtnom\right) = \frac{\prod_{i = 1}^{\nl} \binom{\ell_i}{b_i, b_i, \ell_i - 2 b_i}}{\binom{n}{\nholes, \nballs, n - \nholes - \nballs}}. \label{eqn:loiTL_cercle_aux_comptage}
\end{equation}

Finally, applying Lemma \ref{lemma:simpl_thm_loi_trous_residuels} and Equation (\ref{eqn:loiTL_cercle_aux_comptage}) to Equation (\ref{eqn:thm_loi_trous_residuels_cercle_2}) gives the right member of Equation (\ref{eqn:loi_TL_cercle}) and thus concludes the proof of Theorem \ref{thm:loitrousresiduels}.

\end{proof}

\subsection{Proof of Lemma \ref{lemma:simpl_thm_loi_trous_residuels}}\label{subsect:lemme_loi_cercle}

This section is devoted to the proof of Lemma \ref{lemma:simpl_thm_loi_trous_residuels}. 
We say that an interval $\llbracket a,b \rrbracket$ is \textit{full} if every ball initially in the interval $\llbracket a,b \rrbracket$ fills a hole inside $\llbracket a,b \rrbracket$, and no hole inside $\llbracket a,b \rrbracket$ is free at the end of the process. 
The proof principle is rather simple: since no ball ever visit any remaining hole, the probability to reach a given final configuration is the product of the probabilities that each interval between two consecutive final holes is full. 

To prove it formally, we introduce and study \textit{golf sequences}, which, as we mentioned in the introduction, were introduced by Fredes and Marckert \cite{fredes2021aldousbroder}. We borrow some of their notation, but for completeness we explain everything here. Golf sequences are informally the sequences of trajectories of the balls that respect the rules we gave (i.e.\ there are $\nballs$ trajectories, each trajectory starting at a vertex initially containing a ball and ending at the first hole that have not been reached by a previous trajectory).\\

\textbf{Golf sequences} can be defined on any connected graph $G = (V,E)$. As before, $\PMC$ is the transition matrix of an irreducible Markov chain on $G$.

Assume that $\Ballset^0$ and the activation clocks $\left(\Clock(v), v\in \Ballset^0\right)$ are given. Build the sequence of balls $S = (S_1, \ldots S_{\nballs})$ containing all the elements of $\Ballset^0$, ordered by activation time. In what follows we use this notation.

For a set of holes $\Holesetdet$ (such that, as usual, $\card{\Holesetdet}\geq \card{\Ballset^0}$) and any ordering of the balls $\Sources = (S_1, \ldots, S_{\nballs})$, we define the set of \textit{golf sequences}, $\golfsequences$, as follows. A sequence of paths $(w^{(1)}, \ldots, w^{(\nballs)})$ of respective lengths $(L_1, \ldots, L_{\nballs})$ belongs to $\golfsequences$ if and only if, for every $i\leq \nballs$ :
\begin{itemize}
\item the $i$th ball starts from $S_i$, i.e. $w^{(i)}_0 = S_i$, and follows the path $w_0^{(i)},\ldots,w^{(i)}_{L_i}$, where each step $(w^{(i)}_k,w_{k+1}^{(i)})$ is an edge of $G$,
\item the $i$th ball stops at the first hole that has not been filled yet, i.e. $\forall k< L_i, w^{(i)}_k \notin \Holesetdet\backslash\{w^{(j)}_{L_j}, j < i\}$ and $w^{(i)}_{L_i} \in \Holesetdet\backslash\{w^{(j)}_{L_j}, j < i\}$. 

\end{itemize}
Moreover, for a sequence $w\in \golfsequences$, we define its \textit{weight} as follows: \[\weight(w) \coloneqq \prod_{i = 1}^{\nballs} \prod_{j = 0}^{L_i - 1} \PMC
_{w^{(i)}_j, w^{(i)}_{j+1}}.\]
It corresponds to the probability that all the balls starting successively from $S_1$ to $S_{\nballs}$, and doing one by one a random walk stopped when hitting a still available hole, perform exactly the sequence of paths $w$. We can then extend the definition of the weight to a set of sequences, setting, for every $W \subseteq \golfsequences$, \[\weight(W) \coloneqq \sum_{w\in W} \weight(w).\]
In other words, $\weight$ can be seen as a probability measure on the set of golf sequences, and in particular, since our process is valid, $\weight(\golfsequences) = 1$ (thanks to Proposition \ref{prop:bonne_def_graphe_fini_ET_commutation}, \addition{and because the weight of $\golfsequences$ is exactly the probability that every ball finds a hole in finite time in the corresponding golf process}
).

We define 
\begin{equation}
\golfsequencesrestricted \coloneqq \left\{ w \in \golfsequences ~:~ \Holesetdet \backslash \{w^{(i)}_{L_i}, 1\leq i \leq \nballs\} = \TLdet\right\} \label{eqn:golf_sequences}
\end{equation}
the subset of $\golfsequences$ that leave free exactly the subset $\TLdet \subseteq \Holesetdet$ (thus with cardinality $\nl = \card{H} - \nballs$).

Then, \begin{equation}
\weight(\golfsequencesrestricted) = \P(\TL = \TLdet) , \label{eqn:truc_utile_etoui}
\end{equation}
where the probability is for our golf model on the same graph $G$, the same transition matrix $P$, with deterministic initial condition given by $H$ and $S$.

\paragraph{From golf to mini-golf: Block decomposition of the golf sequences on $\znz$.}
We now focus again on the golf on $\znz$.
Until the end of this section, we use the same notation as in Theorem \ref{thm:loitrousresiduels} and Lemma \ref{lemma:simpl_thm_loi_trous_residuels}: we consider $X \subseteq {\znz}$ of size $\card{X} = \nl$, and for every $i \in \znlz$, $\ell_i = \Delta^{(n)}_{i}X$. We also let $x$ be the nicely ordered sequence containing all the elements of $X$.

Lemma \ref{lemma:decompositionnongeneraleenhistoires} decomposes golf sequences on the cycle $\znz$ into a family of golf sequences on intervals between two consecutive holes, which are also golf sequences on smaller cycles (and are actually ``mini-golf sequences'', since they leave only one remaining hole, see Figure \ref{fig:decomposition} that illustrates this last point).

\begin{lemma}[Block decomposition of the golf sequences on $\znz$] \label{lemma:decompositionnongeneraleenhistoires}
We let $(b_i)_{1\leq i \leq \nl}$ be such that $\sum_{i}b_i = \nballs$ and $\forall i, 0 \leq b_i \leq \ell_i/2$, and we consider $\Ballsetdet, \Holesetdet \subseteq \znz$ such that $X \subseteq \Holesetdet$ and $\forall i, \cardBidet = \cardTidet = b_i$. We let $S$ be an ordering of $\Ballsetdet$.

Then, there exists a weight-preserving bijection
\[\fonction{f}{\golfsequencesrestrictedX}{\prod_{i \in \znlz} \golfsequencesrestrictedO}{W}{(W^1,\dots,W^{\nl})}\]
where:
\begin{itemize}
	\item for every $i \in \znlz$, $\Holesetdet_{i}$ is the restriction of $\Holesetdet$ to elements of $\Ii$, shifted by $-x_i$ (formally, $\Holesetdet_{i} = \{h - x_i : h \in \Holesetdet \cap \Ii \}$ seen as a subset of $\Z/{(\ell_i+1)}\Z$, that contains 0),
	\item  for every $i \in \znlz$,  $\Sources_{i}$ is the restriction of $\Sources$ to elements of $\Ii$, also shifted by $-x_i$.
	\item The weight of $(W^1,\dots,W^{\nl})$ is $\prod_{i = 1}^{\nl} \weight(W^i)$.
\end{itemize}
Moreover, the map $(H,S) \mapsto \prod_{i \in \znlz} (H_i,S_i)$  is also a bijection.

\end{lemma}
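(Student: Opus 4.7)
The plan is to exploit the key observation that no ball ever visits a vertex of $X$. By the definition of $\golfsequencesrestrictedX$, every $x_i \in X$ remains free at the end of the process; since balls stop at the first free hole they encounter, no trajectory $w^{(j)}$ can pass through any $x_i$ (otherwise $x_i$ would be filled). Consequently, every trajectory whose starting ball lies in $B_i$ stays inside the open arc between $x_i$ and $x_{i+1}$ until it reaches its terminal hole in $B_i$. I would state and verify this geometric fact first; it is the heart of the decomposition.

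Next, I would construct the bijection $f$ concretely. For $W \in \golfsequencesrestrictedX$, define $W^i$ as the sub-sequence of trajectories of $W$ whose starting ball lies in $B_i$ (preserving the relative order inherited from $\Sources$), each shifted by $-x_i \bmod n$. Viewing $\Ii \simeq \{0, 1, \ldots, \ell_i\}$ as $\Z/(\ell_i+1)\Z$, with $x_{i+1}$ identified with $x_i$ at position $0$, each shifted trajectory becomes a walk on the smaller cycle. I would then verify two things: (i) the projected walk has the same step probabilities as the biased $\PMC$-Random Walk on $\Z/(\ell_i+1)\Z$, which is immediate because $\PMC$ is nearest-neighbor and trajectories stay strictly inside the block; and (ii) $W^i \in \golfsequencesrestrictedO$, i.e.\ the balls of $\Sources_i$ (shifted) each end at a hole of $\Holesetdet_i = \{0\} \cup ((\Holesetdet \cap B_i) - x_i)$, the $b_i$ internal holes all get filled, and $0$ is the unique remaining free hole.

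For the inverse, I would lift each $W^i$ back to $\znz$ by adding $x_i$ to every coordinate and interleave the resulting trajectories using the global order of $\Sources$. Since the lifted trajectories live in pairwise disjoint open blocks, no collision occurs and the interleaved sequence lies in $\golfsequencesrestrictedX$. Weight preservation is then a clean factorization, $\weight(W) = \prod_{j} \prod_k \PMC_{w^{(j)}_k, w^{(j)}_{k+1}} = \prod_{i \in \znlz} \weight(W^i)$, each transition probability being unchanged by the projection. The ``moreover'' claim about $(H, S) \mapsto \prod_i (H_i, S_i)$ is essentially bookkeeping, since $(\Ii)_{i \in \znlz}$ is a partition of $\znz$ in which each $\Ii$ contains the marked vertex $x_i$.

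The main obstacle, though conceptually simple, is justifying the initial observation cleanly and then carefully showing that the projected walks really are legitimate $\PMC$-Random Walks on the smaller cycles — this requires tracking how the boundary edges of the original cycle restrict to each $\Ii$ after the identification $x_{i+1} \equiv x_i$. Edge cases such as $\ell_i = 0$ (empty block, so $b_i = 0$ and $\Ii = \{x_i\}$) should also be mentioned, but they correspond to trivial mini-golf sequences with no trajectories and cause no difficulty.
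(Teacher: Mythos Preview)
Your proposal is correct and follows essentially the same approach as the paper's proof: both hinge on the observation that trajectories in $\golfsequencesrestrictedX$ never touch any $x_i$, then restrict $W$ to each block, shift by $-x_i$, and check that the result lies in $\golfsequencesrestrictedO$ with matching weights. Your treatment is in fact more explicit than the paper's (you spell out the inverse map, the step-probability preservation, and the $\ell_i=0$ edge case), whereas the paper dispatches bijectivity and weight preservation in a sentence each.
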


\begin{proof}[Proof of Lemma \ref{lemma:decompositionnongeneraleenhistoires}]
Showing that $(H,S) \mapsto \prod_{i \in \znlz} (H_i,S_i)$ is a bijection is immediate. We now focus on $f$.

This lemma is a deterministic version of the block decomposition principle (see Section \ref{subsect:znzloiTL}), and allows to prove this principle: we decompose golf sequences on $\znz$ leaving $\TLdet$ free into golf sequences on intervals between elements of $\TLdet$. Then, golf sequences on an interval with two free holes on its border are trivially in bijection with golf sequences on a cycle with vertex 0 free (see Figure \ref{fig:decomposition}).

\begin{figure} \centering
	\begin{tikzpicture}[line cap=round,line join=round, x=1.0cm,y=1.0cm,scale = 0.24]
		\usetikzlibrary{arrows.meta}
		\tikzset{>={Latex[width=1.7mm,length=1.7mm]}}
		\clip(-29,-8.5) rectangle (29,14);
		\draw(0,5.49) circle (5.85cm);
		\draw(0,25.01) circle (30cm);
		\begin{scriptsize}
			\draw [color=black,fill = lightgray] (2,0) circle (17pt);
			\draw [color=black,fill = black] (-2,0) circle (17pt) node[yshift = -3.5mm] {$s_1$};
			\draw [color=black,fill = black] (5.06,2.57) circle (17pt) node[yshift = -3mm, xshift = 2mm] {$s_4$};
			\draw [color=black,fill = lightgray] (5.76,6.51) circle (17pt);
			\draw [color=black,fill = black] (3.76,9.97) circle (17pt)node[yshift = 0mm, xshift = 3.5mm] {$s_2$};
			\draw [color=black,fill = white] (0,11.34) circle (17pt);
			\draw [color=orange] (0,11.34) circle (35pt);
			\draw [color=black,fill = lightgray] (-3.76,9.97) circle (17pt);
			\draw [color=black,fill = lightgray] (-5.76,6.51) circle (17pt);
			\draw [color=black,fill = black] (-5.06,2.57) circle (17pt) node[yshift = -3mm, xshift = -2mm] {$s_3$};
			\draw [color=black,fill = black] (-2.86,-4.86) circle (17pt) node[yshift = -3.5mm, xshift = -0mm] {$s_1$};
			\draw [color=black,fill = lightgray] (2.86,-4.86) circle (17pt);
			\draw [color=black,fill = black] (8.46,-3.77) circle (17pt) node[yshift = -3.5mm, xshift = 2mm] {$s_{12}$};
			\draw [color=black,fill = lightgray] (13.77,-1.65) circle (17pt);
			\draw [color=black,fill = black] (18.57,1.45) circle (17pt) node[yshift = -3mm, xshift = 2mm] {$s_8$};
			\draw [color=black,fill = black] (-8.46,-3.77) circle (17pt) node[yshift = -3mm, xshift = -2mm] {$s_7$};
			\draw [color=black,fill = lightgray] (-13.77,-1.65) circle (17pt);
			\draw [color=black,fill = lightgray] (-18.57,1.45) circle (17pt);
			\draw [color=black,fill = white] (-22.7,5.39) circle (17pt);
			\draw [color=black,fill = white] (22.7,5.39) circle (17pt);
			\draw [color=orange] (-22.7,5.39) circle (35pt) node[yshift = -3mm, xshift = -4.5mm, color = black] {$x_i$};
			\draw [color=orange] (22.7,5.39) circle (35pt) node[yshift = -3mm, xshift = 5.5mm, color = black] {$x_{i+1}$};
			\draw [line width = 0.7pt,->, red, shorten <= 9pt ,shorten >=9pt] (22.7,5.39) to[bend right] (0,11.34);
			\draw [line width = 0.7pt,->, red, shorten <= 9pt ,shorten >=9pt] (-22.7,5.39) to[bend left] (0,11.34);
			\draw[line width = 0.7pt,cyan,dotted,->,shorten <= 4pt + \pgflinewidth,shorten >=4pt + \pgflinewidth] (-2,0) to[bend right] (-5.06,2.57);
			\draw[line width = 0.7pt,cyan,dotted,->,shorten <= 4pt + \pgflinewidth,shorten >=4pt + \pgflinewidth] (-2.86,-4.86) to[bend right] (-8.46,-3.77);
			\draw[line width = 0.7pt,cyan,dotted,->,shorten <= 4pt + \pgflinewidth,shorten >=4pt + \pgflinewidth] (-5.06,2.57) to[bend right=15] (-5.76,6.51);
			\draw[line width = 0.7pt,cyan,dotted,->,shorten <= 4pt + \pgflinewidth,shorten >=4pt + \pgflinewidth] (-8.46,-3.77) to[bend right=15] (-13.77,-1.65);
			\draw[line width = 0.7pt,green,dotted,->,shorten <= 4pt + \pgflinewidth,shorten >=4pt + \pgflinewidth] (-5.06,2.57) to[bend right=35] (-5.76,6.51);
			\draw[line width = 0.7pt,green,dotted,->,shorten <= 4pt + \pgflinewidth,shorten >=4pt + \pgflinewidth] (-8.46,-3.77) to[bend right=35] (-13.77,-1.65);
			\draw[line width = 0.7pt,green,dotted,->,shorten <= 4pt + \pgflinewidth,shorten >=4pt + \pgflinewidth] (-5.76,6.51) to[bend right](-3.76,9.97);
			\draw[line width = 0.7pt,green,dotted,->,shorten <= 4pt + \pgflinewidth,shorten >=4pt + \pgflinewidth] (-13.77,-1.65) to[bend right] (-18.57,1.45);
			\draw[line width = 0.7pt,Emerald,dotted,->,shorten <= 4pt + \pgflinewidth,shorten >=4pt + \pgflinewidth] (3.76,9.97) to[bend right](5.76,6.51);
			\draw[line width = 0.7pt,Emerald,dotted,->,shorten <= 4pt + \pgflinewidth,shorten >=4pt + \pgflinewidth] (18.57,1.45) to[bend right] (13.77,-1.65); %green!60!cyan
			\draw[line width = 0.7pt,green!20!cyan,dotted,->,shorten <= 4pt + \pgflinewidth,shorten >=4pt + \pgflinewidth] (5.06,2.57) to[bend left](5.76,6.51);
			\draw[line width = 0.7pt,green!20!cyan,dotted,->,shorten <= 4pt + \pgflinewidth,shorten >=4pt + \pgflinewidth] (8.46,-3.77) to[bend left] (13.77,-1.65);
			\draw[line width = 0.7pt,green!20!cyan,dotted,->,shorten <= 4pt + \pgflinewidth,shorten >=4pt + \pgflinewidth] (5.76,6.51) to[bend right=50](5.06,2.57);
			\draw[line width = 0.7pt,green!20!cyan,dotted,->,shorten <= 4pt + \pgflinewidth,shorten >=4pt + \pgflinewidth] (13.77,-1.65) to[bend right=50] (8.46,-3.77);
			\draw[line width = 0.7pt,green!20!cyan,dotted,->,shorten <= 4pt + \pgflinewidth,shorten >=4pt + \pgflinewidth] (5.06,2.57) to[bend right](2,0);
			\draw[line width = 0.7pt,green!20!cyan,dotted,->,shorten <= 4pt + \pgflinewidth,shorten >=4pt + \pgflinewidth] (8.46,-3.77) to[bend right] (2.86,-4.86);
		\end{scriptsize}
	\end{tikzpicture}
	\caption{From golf to mini-golf: Correspondence between the restriction of golf sequences between two holes (at distance $\ell+1$) of a large cycle and golf sequences on a cycle of size $\ell+1$ with one remaining hole.\\ Here $\ell = 8$. A black vertex corresponds to the initial position of a ball, a white vertex to a free hole, and a grey vertex to a hole that is occupied by a ball at the end. A dotted arrow associates a ball to the hole it filled. Each dotted arrow on the large circle corresponds to a dotted arrow on the small circle, illustrating the bijection defined in Lemma \ref{lemma:decompositionnongeneraleenhistoires}.\\ The restriction of $S = (s_1,\ldots, s_7,s_8,\ldots,s_{12},\ldots)$ to the interval between $x_i$ and $x_{i+1}$ gives $S_i = (s_1,s_2,s_3,s_4)$.
	}
	\label{fig:decomposition}
\end{figure}

Recall that the $i$th block is, here, the interval $\llbracket x_i+1, x_{i+1}-1\rrbracket$. We let $W = (w^{(1)}, \ldots, w^{(\nballs)}) \in  \golfsequencesrestrictedX$. For every $i$, we build the $i$th golf sequence on $\Z/(\ell_i+1)\Z$ as the restriction of $W$ to the $i$th block. More precisely,
we let $W^i$ be the golf sequence that contains the paths of the balls that belong to the $i$th block, shifted so that they now belong to $\Z/\ell_i\Z$: 
\[ W^i = \left( (w^{(j)}_k - x_i)_{0 \leq k \leq L_j} \right)_{j\in \alpha^i}\]
where $\alpha^i$ is the sorted sequence of the indices of the balls that belong to the $i$th block: $\exists j : \alpha^i = j \iff S_j \in \llbracket x_i, x_{i+1}\rrbracket$.

We can easily verify that $(W^1,\dots,W^{\nl}) \in \prod_{i \in \znlz}\golfsequencesrestrictedO  $. In fact, the important point to note is that on any interval $\llbracket x_i+1, x_{i+1}-1 \rrbracket$, the order of the balls is the one induced by the order of the balls on $\znz$, so in particular the defined trajectories still end on the first free hole they reach. 

The application $f$ is clearly bijective, and it is weight-preserving, since the weight of a trajectory only depends on the total number of steps to the right and to the left, so simple computation gives $\weight(W) = \prod_{k = 1}^{\nl} \weight(W^k)$.

\end{proof}

Now we can conclude by proving Lemma \ref{lemma:simpl_thm_loi_trous_residuels}. 

\begin{proof}[Proof of Lemma \ref{lemma:simpl_thm_loi_trous_residuels}]

Recall that we simply write $\P$ for $\pcercle$, and otherwise we use the notation $\pcerclei$.

We fix an arbitrary permutation $\sigma$ on $\znz$. 	
To any set of balls $B \subseteq \znz$, we associate $S(B) = S(B,\sigma)$ the sequence of the elements of $B$ arranged in increasing order (with respect to $\sigma$). It corresponds to the order according to which the balls will be activated: during the whole computation below, we assume that the order induced by the clocks is compatible with $\sigma$, and we write the assumption ${\sf Order}(\Clock) = \sigma$. Then, $\sigma_{[x_i,x_{i+1}]}$ is the permutation of $\Z/\ell_i\Z$ induced by the restriction of $\sigma$ to $\llbracket x_i, x_{i+1}\rrbracket$ (so that $S(B_{i},\sigma_{[x_i,x_{i+1}]})$ is the sequence of elements of $b_i = B \cap \llbracket x_i, x_{i+1}\rrbracket$ arranged in increasing order with respect to $\sigma$, and then shifted by $-x_i$).

We give a sequence of identities now, and we will provide some explanations after. Fix some $(b_i)_{1\leq i \leq \nl}$ such that $\sum_{i}b_i = \nballs$ and $\forall i, 0 \leq b_i \leq \ell_i/2$. We have
\begin{align}
	\P&\left(\TL = X\middle| \grosevtnom{ \et {\sf Order}(\Clock) = \sigma} \right)\label{eqn:ici0}\\
	&= \frac{1}{\prod_{i=1}^{\nl} \binom{\ell_i}{b_i, b_i, \ell_i-2b_i}} \sum_{H,B:\ \grosevtnomdet} \P \left(\TL = X \middle| \substack{\Tinit = H, \Binit = B \\{ \et {\sf Order}(\Clock) = \sigma}}  \right) \label{eqn:ici1}\\
	%		&= \frac{1}{\prod_{i=1}^{\nl} \binom{\ell_i}{b_i, b_i, \ell_i-2b_i}} \sum_{\substack{H,B : X\subseteq H \\ \forall i, \cardTidet = b_i \\ \et\forall i, \cardBidet = b_i}} \P \left(\TL = X \middle| \substack{\Tinit = H, \Binit = B \\{ \et {\sf Order}(\Clock) = \sigma}}  \right) \label{eqn:ici1}\\
	&= \frac{1}{\prod_{i=1}^{\nl} \binom{\ell_i}{b_i, b_i, \ell_i-2b_i}} \sum_{H,B:\ \grosevtnomdet}  \sum_{w \in {\sf GolfSequences}^{\znz}\left[ H, S(B,\sigma), X \right]} \weight(w) \label{eqn:preuve_cercle_avant_utilisation_lemme}\\
	%		&= \frac{1}{\prod_{i=1}^{\nl} \binom{\ell_i}{b_i, b_i, \ell_i-2b_i}} \sum_{\substack{H,B : X\subseteq H \\ \forall i, \cardTidet = b_i\\ \et\forall i, \cardBidet = b_i}}  \sum_{w \in {\sf GolfSequences}^{\znz}\left[ H, S(B,\sigma), X \right]} \weight(w) \label{eqn:preuve_cercle_avant_utilisation_lemme}\\
	&= \frac{1}{\prod_{i=1}^{\nl} \binom{\ell_i}{b_i, b_i, \ell_i-2b_i}} \prod_{i=1}^{\nl} \sum_{\substack{H_i,B_i \subseteq \Z/\ell_i \Z : \\0 \in H_i \et \card{H_i} = \card{B_i} = b_i}} \sum_{w_i \in {\sf GolfSequences}^{\Z/(\ell_i+1) \Z}\left[ H_{i}, S(B_{i},\sigma), \{0\} \right]} \hspace{-0.1mm} \weight(w_i)  \label{eqn:preuve_cercle_utilisation_lemme}\\
	%		&= \frac{1}{\prod_{i=1}^{\nl} \binom{\ell_i}{b_i, b_i, \ell_i-2b_i}} \prod_{i=1}^{\nl} \sum_{\substack{H_i,B_i \subseteq \Z/\ell_i \Z : \\0 \in H_i \et \card{H_i} = \card{B_i} = b_i}} \sum_{w_i \in {\sf GolfSequences}^{\Z/(\ell_i+1) \Z}\left[ H_{i}, S(B_{i},\sigma), \{0\} \right]} \weight(w_i)  \label{eqn:preuve_cercle_utilisation_lemme}\\
	&= \prod_{i=1}^{\nl} \frac{1}{\binom{\ell_i}{b_i, b_i, \ell_i-2b_i}} \sum_{\substack{H_i,B_i \subseteq \Z/\ell_i \Z : \\0 \in H_i \et \card{H_i} = \card{B_i} = b_i}} \pcerclei \left(\TL = \{0\} \middle|  \substack{\Tinit = H_i, \Binit = B_i \\{ \et {\sf Order}(\Clock) = \sigma_{[x_i,x_{i+1}]}}} \right)\label{eqn:ici2-}\\
	&=\prod_{i = 1}^{\nl} \pcerclei \left(\TL = \{0\} ~\middle| ~0 \in \Tinit{ \et {\sf Order}(\Clock) = \sigma_{[x_i,x_{i+1}]}}\right) \label{eqn:ici2}.%\\
\end{align}

\paragraph{Details on the computation.} We obtain (\ref{eqn:ici1}) from (\ref{eqn:ici0}) by noting that every initial configuration (corresponding to the event $\{\Tinit = H, \Binit = B\}$, for every $H,B$ appearing in the sum in (\ref{eqn:ici1})) has the same probability, and using the fact that if $A,B,(B_i)_i$ are events such that $B = \sqcup_{i=1}^K B_i$ (it is a disjoint union) and $\forall i,j, \P(B_i) = \P(B_j)$, then $  \frac{1}{K} \sum \P(A | B_i) = \frac{1}{K \P(B_1)} \sum \P(A \cap B_i) = \frac{1}{K} \frac{\P(A|B) \P(B)}{\P(B_1)}= \P(A|B) $. 

To go from (\ref{eqn:ici1}) to (\ref{eqn:preuve_cercle_avant_utilisation_lemme}), we use Equation (\ref{eqn:truc_utile_etoui}) that gives the correspondence between weights and probabilities.

The passage from (\ref{eqn:preuve_cercle_avant_utilisation_lemme}) to (\ref{eqn:preuve_cercle_utilisation_lemme}) uses Lemma \ref{lemma:decompositionnongeneraleenhistoires}.

We then recognize mini-golfs and do the same in the other way round.

\paragraph{Conclusion.}We can then use the Commutation property (see Remark \ref{remark:prop_commutation}) to conclude directly that
\begin{align}
	\P & \left(\TL = X  \middle| \grosevtnom\right)
	=\prod_{i = 1}^{\nl} \pcerclei \left(\TL = \{0\} \middle| 0 \in \Tinit\right)
	=\prod_{i = 1}^{\nl} \frac{1}{b_i + 1}. \label{eqn:conclu_loi_TL_lemme_simpl_thm_loi_trous_residuels}
\end{align}

\end{proof}

\subsection{Proof of Theorem \ref{thm:parking_loi_TL}}\label{subsect:parking_loi_TL}

The proof of Theorem \ref{thm:parking_loi_TL} follows exactly the same reasoning as the proof of Theorem \ref{thm:loitrousresiduels}; we give the main arguments and computation here.

\begin{proof}
As in the statement of Theorem \ref{thm:parking_loi_TL}, we consider the $p$-parking model with parameters $n$, $\nballs$ and $p$. Recall that here, the set of balls $\Binit$ is a multiset of vertices of $\znz$. When $\nballs = n-1$, i.e.\ there is only one remaining hole at the end, then we have, as an analogue of Lemma \ref{lemma:casinitloitrousresiduels} for the mini-golf case, $\forall x \in \znz$,
\begin{equation}
	\pcercleparking\left(\TL = \{x\}\middle| x \notin \Binit \right) = \frac{n^{n-2}}{(n-1)^{n-1}}. \label{eqn:mini_parking2}
\end{equation}

The proof is similar to the proof of Lemma \ref{lemma:casinitloitrousresiduels}: invariance by rotation of the process gives $\pcercleparking\left(\TL = \{x\}\right) = \frac{1}{n}$, and Equation (\ref{eqn:mini_parking2}) then comes from the fact that \linebreak $\pcercleparking\left( x \notin \Binit \right) = \left(\frac{n-1}{n}\right)^{n-1}$.

For general $n$, $\nballs$ and $X \subseteq {\znz}$ of size $\card{X} = \nl = n - \nballs$, recall that we set $\ell_i \coloneqq \Delta^{(n)}_{i}X$ for every $i \in \Z/\nl \Z$. Then, we have:
\begin{align}
	\pcercleparking\left(\TL = X\right) &= \P\left(\TL = X \bigcap  \substack{X \cap \Binit = \emptyset \\ \et \forall i, \cardBi = b_i} \right)\\
	&= \P\left(\TL = X \middle|  \substack{X \cap \Binit = \emptyset \\ \et \forall i, \cardBi = b_i} \right)  \P\left( \substack{X \cap \Binit = \emptyset \\ \et \forall i, \cardBi = b_i} \right),
\end{align}
(where $\P$ refers to $\pcercleparking$).

Then, we can do the same decomposition as we did in the proof of Lemma \ref{lemma:simpl_thm_loi_trous_residuels} (we do not prove it again, but the same proof with golf sequences works perfectly here too):
\begin{align}
	\P\left(\TL = X \middle|  \substack{X \cap \Binit = \emptyset \\ \et \forall i, \cardBi = b_i} \right)& = \prod_{i\in \Z/\nl\Z} \pcercleparkingi\left(\TL = \{0\}\middle| 0 \notin \Binit \right)\\
	&= \prod_{i\in \Z/\nl\Z}\frac{(\ell_i+1)^{\ell_i - 1}}{{\ell_i}^{\ell_i}}.
\end{align}
Recall that $\Binit$ is characterized by the vector $W = (W_1,\ldots, W_n)$, such that $W_k$ counts the multiplicity of $k$ in $\Binit$ ($W$ has the multinomial distribution with parameter \linebreak $(\nballs; 1/n, \ldots, 1/n)$). Using Equation (\ref{eqn:mini_parking2}) and the fact that $\P\left( \substack{X \cap \Binit = \emptyset \\ \et \forall i, \cardBi = b_i} \right) = \binom{\nballs}{\ell_1,\ldots, \ell_{\nl}}\prod_{i \in \Z/\nl\Z} \left(\frac{\ell_i}{n}\right)^{\ell_i}$, we finally obtain:

\begin{align}
	\pcercleparking\left(\TL = X\right) = \frac{1}{n^{\nballs}}
	\binom{\nballs}{\ell_1,\ldots,\ell_{\nl}} 
	\prod_{i \in \Z/\nl\Z}^{} \left(\ell_i + 1\right)^{\ell_i-1}.
\end{align}
\end{proof}

We can notice that when $n = \nballs + 1$ and $p=1$, Equation (\ref{eqn:mini_parking2}) can be seen as a reformulation of the fact that the number of parking functions on the line of length $k$ is {$(k+1)^{k-1}$} (see \cite{Konheim1966AnOD}, or \cite{RIORDAN1969408} for a simpler proof, sometimes known as Pollak's proof, which we summarize here): on $\znz$, the probability that vertex 0 is free at the end is equal to the number of parking functions on $\{1,\ldots,n-1\}$ (leaving $n$ free, thus equal to $n^{n-2}$) divided by the total number of assignments of the balls (which is equal to $n^{n-1}$, each of the $n-1$ balls choosing a starting vertex among the $n$ vertices).

\subsection{Critical window: combinatorial preliminaries for the proof of Theorem \ref{thm:phaseT}}\label{subsect:cercle_asymptotiques_preliminaries}

Before giving the proof of Theorem \ref{thm:phaseT}, we discuss some useful combinatorial facts, that will lead us to deduce block-size statistics in the golf process, from the study of forests and of finite paths. 
\paragraph{From block sizes to sizes of trees in a marked forest.}
Catalan numbers $C_m=\binom{2m}m/(m+1)$ are known to enumerate various combinatorial objects, including rooted planar trees with $m$ edges, Dyck paths with $2m$ steps, as well as rooted complete binary trees with $m$ internal nodes (i.e. trees with a total of $2m+1$ nodes, such that every internal node has exactly 2 children).

We let $\forests{n}{k}$ be the set of forests $(f_0, \ldots, f_{k-1})$ with $k$ rooted complete binary trees and a total of $n$ nodes. We call \textit{tree-size vector} of a forest $f = (f_0, \ldots, f_{k-1})\in \forestsnr$ the sequence
\[\card{f} = (\card{f_0},\ldots, \card{f_{k-1}}),\]
where $\card{f_i}$ is the number of nodes of the tree $f_i$. We can note that $\sum_{i=0}^{k-1} \card{f_i} = n$.

In all what follows, we will have $k=\nl$ and $n = 2\nballs +\nl$, so that the parameters correspond to those of the golf model.
In particular, if $\sum_j (2b_j + 1) = n$, then there are $\prod_{j=0}^{k-1} C_{b_j}$ forests of $\forests{n}{k}$ such that the $j$th tree has $(2b_j + 1)$ nodes, for every $j$.

It is known that $\card{\forestsnr} = \frac{k}{n} \binom{n}{\frac{n-k}{2}}$ (this formula is folklore in combinatorics, and can be computed using exactly the same reasoning as Pitman did in the case of plane forests without binarity condition \cite[section 6.3]{pitman2006combinatorial}).

We also define $\markedforests{n}{k}$ as the set of forests $(f^{\bullet}_0, f_1, \ldots, f_{k-1})$ such that $f^{\bullet}_0 = (f_0, v)$ is a tree $f_0$ marked at one of its nodes $v$, and $(f_0, \ldots, f_{k-1})$ is a forest of $\forests{n}{k}$. For every family $b_0,\ldots, b_{k-1}$ such that $\sum_{j}(2b_j +1) = n$, there are 
\begin{equation}
(2b_0+1)\prod_{j=0}^{k-1} C_{b_j} \label{eqn:nb_de_forets_avec_cette_taille}
\end{equation} forests of $\markedforests{n}{k}$ such that the $i$th tree has $b_i$ internal nodes (hence $2b_i+1$ nodes). The additional factor $2b_0+1$ is the number of ways to choose a vertex in $f_0$. 

In the sequel, we let $\randforestsnr^{\bullet}$ be a random forest taken uniformly in $\markedforests{n}{k}$. 

We now go back to our {golf model} on the circle. We have the following results, which is a corollary of Theorem \ref{thm:loitrousresiduels}, and justifies our focus on forests: 

\begin{corollary}\label{cor:blocks_and_forests}
We consider a golf process under the distribution $\pcercle$, when $\nballs + \nholes = n$ and $\nl = \nholes - \nballs$. The distribution of the block sizes vector in this model corresponds to the tree sizes in the uniform random forests $\randforests{n}{\nl}^{\bullet}$:
\begin{equation*}\left({\Deltan_{i} \TL+1 }\right)_{0 \leq i < \nl} \overset{d}{=} \card{\randforests{n}{\nl}^{\bullet}} .\end{equation*}
\end{corollary}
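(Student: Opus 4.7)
The plan is to derive the joint law of $(\Deltan_i\TL+1)_{0 \leq i < \nl}$ directly from Theorem \ref{thm:loitrousresiduels}, and then to match it against the tree-size distribution of a uniform marked forest using the counting formula (\ref{eqn:nb_de_forets_avec_cette_taille}).

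First, I would specialize Theorem \ref{thm:loitrousresiduels} to the no-neutral-vertex regime $\nballs+\nholes=n$. In this case, inside each block of $X$ the numbers of balls and holes must add up to the full block size, so necessarily $b_i=\ell_i/2$ and the sum in (\ref{eqn:loi_TL_cercle}) over $B^{(\nballs,X)}$ collapses to a single term. Since $\binom{2b_i}{b_i,b_i,0}/(b_i+1)=C_{b_i}$, this gives
\begin{equation*}
\pcercle(\TL=X) \;=\; \frac{1}{\binom{n}{\nballs}} \prod_{i=0}^{\nl-1} C_{b_i}, \qquad \text{with } b_i=\ell_i/2.
\end{equation*}

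Next, I would fix a target tree-size vector $(T_0,\ldots,T_{\nl-1})$ with $T_i=2b_i+1$ and $\sum_i T_i=n$, and count the sets $X\subseteq\znz$ whose nicely ordered block sizes read $(T_0-1,\ldots,T_{\nl-1}-1)$. The convention $0\in B_0\cup\{x_0\}$ forces $x_0$ to lie in the arc $\{-\ell_0,-\ell_0+1,\ldots,0\}\pmod n$, giving exactly $\ell_0+1=T_0$ admissible values for $x_0$; once $x_0$ is chosen, the block sizes determine the remaining $x_i$. Summing the previous display over these $T_0$ sets yields
\begin{equation*}
\P\!\left((\Deltan_i\TL+1)_{0\leq i<\nl}=(T_0,\ldots,T_{\nl-1})\right) \;=\; \frac{T_0\,\prod_{j=0}^{\nl-1} C_{b_j}}{\binom{n}{\nballs}}.
\end{equation*}

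Finally, I would recognize this as the law of the tree-size vector of $\randforests{n}{\nl}^{\bullet}$: by (\ref{eqn:nb_de_forets_avec_cette_taille}), the number of marked forests with prescribed tree sizes $(T_0,\ldots,T_{\nl-1})$ is $(2b_0+1)\prod_j C_{b_j} = T_0\prod_j C_{b_j}$, so the uniform marked-forest distribution is proportional to the same weights $T_0\prod_j C_{b_j}$ on admissible odd tuples summing to $n$. Since both are probability measures with identical unnormalized masses, they must coincide; as a byproduct one recovers the identity $\card{\markedforests{n}{\nl}}=\binom{n}{\nballs}$, which can also be proven directly via the cycle lemma applied to the preorder encoding of complete binary forests. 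The only delicate point is the combinatorial count of sets $X$ with a prescribed block-size vector, where care is needed in the edge case $\ell_0=0$ (in which $x_0=0$ is the unique admissible position, so the count $T_0=1$ remains correct).
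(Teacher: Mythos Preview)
Your proof is correct and follows essentially the same route as the paper's: specialize Theorem~\ref{thm:loitrousresiduels} to the case $\nballs+\nholes=n$ so that $b_i=\ell_i/2$ and the formula reduces to $\binom{n}{\nballs}^{-1}\prod_i C_{b_i}$, count the $2b_0+1$ sets $X$ with a prescribed block-size vector (the paper states this count without your explicit justification via the position of $x_0$), obtain the formula (\ref{eqn:jolie_formule_deltaTL_znz}), and match against (\ref{eqn:nb_de_forets_avec_cette_taille}) by the ``proportional probabilities are equal'' argument. Your added remarks on the edge case $\ell_0=0$ and the byproduct $\card{\markedforests{n}{\nl}}=\binom{n}{\nballs}$ are correct but not needed for the argument.
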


Notice here that the the block containing 0 is size-biased.
\begin{proof}
When $n = \nballs + \nholes$, Theorem \ref{thm:loitrousresiduels} rewrites, for any $X$ such that $\card{X} = \nholes - \nballs$,
\[ \pcercle\left(\TL = X\right) = \frac{1}{{\binom{n}{\nballs}}} \prod_{i} \frac{1}{b_i + 1} \binom{2b_i}{b_i},\]
where for every $i$, $b_i = \ell_i/2 = (\Deltan_i X )/2$. We can notice here that this probability is proportional to $\prod_{i} C_{b_i}$. 

Now we set $b_0, \ldots,b_{\nl-1}$ such that $\sum b_i = \nballs$. There are exactly $2b_0 +1$ sets $X$ such that $\forall i, b_i = (\Deltan_i X) /2 $ (this is due to the fact that we indexed the process $\Deltan X$ so that $\Deltan_0 X$ is the number of points in the block that ``contains 0'').
We thus finally obtain
\begin{equation}
	\pcercle\left(\forall i, \Deltan_i \TL = 2b_i\right) = \frac{2b_0 + 1}{\binom{n}{\nballs}} \prod_{i} \Catalan{b_i}. \label{eqn:jolie_formule_deltaTL_znz}
\end{equation}	 

Moreover, the probability $\Prob{\card{\randforests{n}{\nl}^{\bullet}} = (2b_0+1,\ldots, 2b_{\nl-1}+1)}$ is proportional to $(2b_0+1)\prod_{j=0}^{\nl-1} C_{b_j}$ (see Equation (\ref{eqn:nb_de_forets_avec_cette_taille})), so finally, since proportional probability distributions are equal\footnote{since each of them has total mass equal to 1},
\[ \Prob{\card{\randforests{n}{\nl}^{\bullet}} = (2b_0+1,\ldots, 2b_{\nl-1}+1)}=\pcercle\left(\forall i, \Deltan_i \TL = 2b_i\right) .\]

\end{proof}

\paragraph{From marked forests to forests.}
Corollary \ref{cor:blocks_and_forests} links block sizes and sizes of trees in a marked forest (which thus exhibit a size bias). It can be useful here to understand further the role played by the mark, which is a bit subtle.
In the next lemma, we assert that the sorted tree sizes of $\randforestsnr$ and of $\randforestsnr^{\bullet}$ have the same distribution: the reason is that it is equivalent to draw a marked forest uniformly at random in $\markedforests{n}{k}$, and to draw a forest uniformly at random in $\forests{n}{k}$, to mark one of its nodes uniformly in the whole forest and shift the tree indices in $\Z/ k\Z$ so that this node belongs to the first tree. Formally, we let $\randforestsnr = ({\bf f}_0,\cdots,{\bf f}_{k-1})$ be a forest taken uniformly in $\forests{n}{k}$, $\bf u$ be a node chosen uniformly in $\randforestsnr$, and $\bf i$ be the index of the tree ${\bf f_i}$ that contains $\bf u$. The forest $R_{\bf u}({\randforestsnr})=(({\bf f}_{\bf i},{\bf u}),{\bf f}_{{\bf i}+1 \mod k},\cdots, {\bf f}_{{\bf i}+(k-1) \mod k})$ is the forest obtained by shifting the indices of ${\randforestsnr}$, so that ${\bf f}_{\bf i}$ is now the first tree.

\begin{lemma}\label{lemma:forests_and_marks} $R_{\bf u}({\randforestsnr})$ and $\randforestsnr^{\bullet}$ have the same distribution.
It implies that
\begin{align}
	\sorted{\card{\randforestsnr}} \overset{(d)}= \sorted{\card{\randforestsnr^{\bullet}}}.
\end{align} 
\end{lemma}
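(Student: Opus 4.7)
The plan is to prove the distributional equality by setting up a bijection between the sample spaces underlying the two constructions. I would introduce $\mathcal{A} = \{(g,u) : g \in \forests{n}{k},\ u \text{ a node of } g\}$ and $\mathcal{B} = \markedforests{n}{k} \times \Z/k\Z$, and consider the map
$$\Phi : \mathcal{A} \to \mathcal{B}, \qquad \Phi(g,u) = (R_u(g),\ i_u(g)),$$
where $i_u(g)$ is the index of the tree in $g$ that contains $u$. The joint law of $(\randforestsnr,{\bf u})$ is, by construction, the uniform distribution on $\mathcal{A}$, so if $\Phi$ is a bijection I will immediately obtain that $\Phi(\randforestsnr,{\bf u})$ is uniform on $\mathcal{B}$, and a projection on the first coordinate will conclude.

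To check bijectivity I would exhibit an inverse $\Psi$: given $(M,j) \in \mathcal{B}$ with $M = ((f_0,v), f_1, \ldots, f_{k-1})$, set $\Psi(M,j) = (g,v)$, where $g = (g_0, \ldots, g_{k-1})$ is the cyclic right-shift of the underlying forest of $M$ by $j$ positions (i.e.\ $g_{j+l \bmod k} = f_l$), so that the marked tree $f_0$ sits at index $j$ of $g$, and $v$ is taken as the distinguished node. The identities $\Phi \circ \Psi = \mathrm{id}$ and $\Psi \circ \Phi = \mathrm{id}$ are then a direct unravelling of definitions. The only subtle point is when $g$ has repeated trees as an ordered tuple (so several cyclic shifts coincide as elements of $\forests{n}{k}$); even then, the $k$ preimages of any fixed $M$ under $\Phi$ remain distinct in $\mathcal{A}$ because each pair records the index of the tree in which $u$ lies.

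Having established the bijection, the uniform distribution on $\mathcal{A}$ pushes forward to the uniform distribution on $\mathcal{B}$, whose first marginal is uniform on $\markedforests{n}{k}$; that marginal is precisely the law of $R_{\bf u}(\randforestsnr)$, and this is the definition of $\randforestsnr^{\bullet}$. As a byproduct of the counting, one also recovers the identity $n\,\card{\forests{n}{k}} = k\,\card{\markedforests{n}{k}}$, which can alternatively be derived from the cyclic invariance of $\forests{n}{k}$ (the $i$-th tree size has expectation $n/k$ for every $i$).

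The second assertion then requires no additional work. For every $(g,u) \in \mathcal{A}$, the multiset of tree sizes of $R_u(g)$ is identical to that of $g$, since $R_u$ only cyclically rotates the order of the trees and attaches a mark. Thus $\sorted{\card{R_{\bf u}(\randforestsnr)}} = \sorted{\card{\randforestsnr}}$ pointwise, and combining with the distributional identity just proved yields $\sorted{\card{\randforestsnr^{\bullet}}} \overset{(d)}{=} \sorted{\card{\randforestsnr}}$. I expect no genuine obstacle; the only point requiring mild care is the handling of forests with cyclic symmetry when verifying that $\Phi$ is injective.
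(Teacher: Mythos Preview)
Your proof is correct and follows essentially the same approach as the paper's. The paper's argument is a one-line probability computation showing $\P(R_{\bf u}(\randforestsnr)=f^\bullet)=\frac{k}{n}\frac{1}{|\forests{n}{k}|}$ for every $f^\bullet$; your bijection $\Phi:\mathcal{A}\to\mathcal{B}$ is precisely the combinatorial content underlying that computation, spelled out explicitly (and your care about forests with cyclic symmetry is exactly the point that makes the $k$-to-$1$ count go through).
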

\begin{proof}Only the first statement needs to be proved, since the tree sizes of $R_{\bf u}({\randforestsnr})$ and ${\randforestsnr}$ coincide (up to a rotation of the indices). For this, it suffices to observe that, for every $f^\bullet=(f_1^\bullet,f_2,\cdots,f_k)\in \markedforests{n}{k}$, 
\begin{equation}
	\P\left({R_{\bf u}({\randforestsnr})} = f^\bullet \right)= \frac{k}{n} \frac{1}{\card{\forests{n}{k}}}. \label{eqn:rqtaille}
\end{equation}
Since this does not depend on $f^\bullet$, this is the uniform distribution on $\markedforests{n}{k}$.

\end{proof} 

From Equation (\ref{eqn:rqtaille}), we deduce the following formula, which will be useful later:
\begin{equation}
\card{\markedforests{n}{k}} = \binom{n}{\frac{n-k}{2}}. \label{eqn:nbdeforetsmarquees}
\end{equation}

\paragraph{From forests to paths, from sizes of trees in a forest to excursion lengths above the current minimum in a path.}
It is well-known that forests are in bijection with some paths, and can be studied with them. 

For a path $w=(w_i, 0\leq i \leq M)$ starting at $w_0=0$, ending at time $M\geq 0$ (for some $M$), and with increments $w_i-w_{i-1}\in\{+1,-1\}$ for all $i$, denote by $\tau_k(w)=\inf\{n: w_n=k\}$ the hitting time of $k$ by $w$ (set $+\infty$, if $k$ is not reached). We simply write $\tau_k$ when $w$ is implicit.

For every $n\geq0, k\geq 1$, there is a bijection between $\forests{n}{k}$ and the set of paths $P(n,k)$ with $n$ steps such that $\tau_{-k}=n$.
This can be viewed as a simple consequence of the bijection that encodes a rooted complete binary tree by a \Lukasie walk: for a rooted complete binary tree $t$, we consider the order induced by a depth-first exploration of its nodes, and we let $n_i$ be the number of children of the $i$th node. Then we define the path $w$ such that for every $i$, $w_i =\sum_{j=1}^{i} (n_i - 1) $. This path $w$ is a Dyck path augmented by an additional $-1$ step. Notice that a tree reduced to its root is sent on a single down step. We can then extend the definition of the \Lukasie walk to a forest, by simply concatenating the paths associated to each tree of this forest. It is also a bijection, since we can uniquely decompose $w \in P(n, k)$ as the concatenation of $k$ augmented Dyck paths, the $i$th path being the restriction of $w$ between times $\tau_{-(i-1)}$ and $\tau_{-i}$. This bijection is illustrated on Subfigure \ref{subfig:luka_foret}.

The uniform distribution on $\mathcal{F}(n,k)$ is then sent by this bijection onto the uniform distribution on $P(n,k)$, and this latter can be viewed as the distribution of a simple random walk $(W_i,0\leq i\leq n)$ (with i.i.d.\ increments, and distribution $\P(W_i-W_{i-1}=+1)=p$ and $\P(W_i-W_{i-1}=-1)=1-p$, for some $p\in(0,1)$), conditioned\footnote{This remarkable property, that any $p$ does the job, is a key point, since we can in particular choose the ``best'' $p$, according to the context, in order to bias the unconditioned walk so that is has a behavior similar to the conditioned walk (see for example Equation (\ref{eqn:eq_pour_tau})in the proof of \ref{thm:phaseT_first_item}).} by $\tau_{-k} = \tau_{-k}(W)=n$. We denote by $\P_{p}$ the associated probability measure on infinite random walks.

The tree sizes, in this representation, correspond to the $(\tau_{-j}-\tau_{-(j-1)})$, and often, in the literature, they are viewed as ``excursion lengths above the current minimum process''. 
Namely, an excursion interval above the current minimum process of a function $f$ is defined as a maximal non empty interval $e = (\ell,r) $ such that $f(\ell) = f(r) = \min_{s\leq \ell} f(s)$ (the corresponding excursion is then the restriction of $f$ to this interval). The length of an excursion interval $e = (\ell, r)$ is $\card{e} \coloneqq r - \ell$. Excursions intervals are characterized by their left bound and their length, so we define the set of excursion intervals $\Excursions(f)$ as the set of these $(\ell, r-\ell)$. 
Then, for $W$ of length $n$ such that $\tau_{-k}(W)=n$, we have $\Excursions(W) = \{(\tau_{-j}, \tau_{-j} - \tau_{-j-1} - 1), 0 \leq j < k\}$.

We can finally define $\sorted{\card{\Excursions(W)}}$ as the sequence of sorted lengths of excursion intervals. 
We then have the following corollary of Corollary \ref{cor:blocks_and_forests}, Lemma \ref{lemma:forests_and_marks}, and of what we have just said above:
\begin{corollary} \label{cor:tailles_excursions_ch_sans_marquage} Let $\TL$ be the set of remaining holes under $\pcercle$, with $\nballs + \nholes = n$ and $\nl = \nholes - \nballs$, and let $W$ be a uniform path in $P(n,\nl)$. Then, the sorted block-size process have the same distribution as the sorted lengths of excursion intervals:
\begin{align}
	\sorted{{\Deltan \TL }} &\overset{d}{=} \sorted{\card{\Excursions(W)}}\\
	& \overset{(d)}= \sorted{\left((\tau_{-j-1 }- \tau_{-j} - 1)\right)_{0 \leq j < \nl}}.
\end{align}
\end{corollary}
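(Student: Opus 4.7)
The plan is to chain together the three reductions set up in the preceding paragraphs: golf blocks $\to$ marked forests $\to$ unmarked forests $\to$ paths. First, Corollary \ref{cor:blocks_and_forests} gives $(\Deltan_i \TL + 1)_{0 \leq i < \nl} \overset{(d)}{=} \card{\randforests{n}{\nl}^{\bullet}}$, and adding $1$ coordinate-wise commutes with the rearrangement $\sorted{\cdot}$, so $\sorted{\Deltan\TL + 1} \overset{(d)}{=} \sorted{\card{\randforests{n}{\nl}^{\bullet}}}$. Then Lemma \ref{lemma:forests_and_marks} removes the mark: $\sorted{\card{\randforests{n}{\nl}^{\bullet}}} \overset{(d)}{=} \sorted{\card{\randforestsnr}}$ (recall that the shift $R_{\bf u}$ leaves the multiset of tree sizes invariant). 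Hence it is enough to identify the law of $\sorted{\card{\randforestsnr}}$ with the law of the sorted excursion lengths of a uniform $W \in P(n,\nl)$, minus $1$ on each coordinate.

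The key ingredient is the \Lukasie\ bijection recalled just before the corollary: to a forest $(f_0,\ldots,f_{\nl-1}) \in \forests{n}{\nl}$ one associates the concatenation of the augmented \Lukasie\ walks of the $f_j$, producing a path in $P(n,\nl)$, and conversely every path in $P(n,\nl)$ is uniquely cut at its successive hitting times of new minima $\tau_{0}=0,\tau_{-1},\ldots,\tau_{-\nl}=n$ into $\nl$ augmented Dyck pieces corresponding to the trees. This bijection sends the uniform law on $\forests{n}{\nl}$ to the uniform law on $P(n,\nl)$, and has the crucial property that the size of the $j$-th tree equals the length of the $j$-th piece: $\card{f_j} = \tau_{-(j+1)} - \tau_{-j}$. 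Taking the decreasing rearrangement on both sides yields
\begin{equation*}
\sorted{\card{\randforestsnr}} \overset{(d)}{=} \sorted{\bigl(\tau_{-(j+1)} - \tau_{-j}\bigr)_{0 \leq j < \nl}} = \sorted{\card{\Excursions(W)}},
\end{equation*}
where the last identity is just the definition of excursion intervals above the current minimum for a path $W$ satisfying $\tau_{-\nl}(W)=n$.

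Combining the three displays and subtracting $1$ from every coordinate (again, this commutes with $\sorted{\cdot}$) produces both claimed equalities:
\begin{equation*}
\sorted{\Deltan\TL} \overset{(d)}{=} \sorted{\card{\Excursions(W)}-1} \overset{(d)}{=} \sorted{\bigl(\tau_{-(j+1)} - \tau_{-j} - 1\bigr)_{0 \leq j < \nl}}.
\end{equation*}
There is essentially no obstacle here: the statement is a direct assembling of results already proved. The only point worth double-checking is the bookkeeping of the ``$+1$''/``$-1$'' shifts between tree sizes, block sizes and excursion lengths (a tree with $2b_j+1$ nodes corresponds to a block of $2b_j$ sites and to a walk piece of length $2b_j+1$), which is exactly what is recorded in Corollary \ref{cor:blocks_and_forests} and in the \Lukasie\ encoding above.
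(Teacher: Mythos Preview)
Your chain of reductions is exactly the one the paper has in mind (it states the corollary as an immediate consequence of Corollary~\ref{cor:blocks_and_forests}, Lemma~\ref{lemma:forests_and_marks}, and the \L ukasiewicz encoding), so the approach is correct and matches the paper.

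There is, however, an off-by-one slip in your identification of $\card{\Excursions(W)}$. Under the paper's convention, the $j$th excursion interval above the current minimum is $(\tau_{-j},\tau_{-(j+1)}-1)$, so its length is $\tau_{-(j+1)}-\tau_{-j}-1$, \emph{not} $\tau_{-(j+1)}-\tau_{-j}$. In other words: tree size $=|f_j|=\tau_{-(j+1)}-\tau_{-j}=2b_j+1$, whereas excursion length $=\tau_{-(j+1)}-\tau_{-j}-1=2b_j=\Deltan_j\TL$. Your display $\sorted{(\tau_{-(j+1)}-\tau_{-j})_j}=\sorted{\card{\Excursions(W)}}$ is therefore wrong, and your conclusion $\sorted{\Deltan\TL}\overset{(d)}{=}\sorted{\card{\Excursions(W)}-1}$ contradicts the very first line of the corollary you are proving. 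The fix is simply to record that excursion length $=$ tree size $-1=$ block size, so after subtracting $1$ you land directly on $\sorted{\Deltan\TL}\overset{(d)}{=}\sorted{\card{\Excursions(W)}}$, as stated. You actually flagged this bookkeeping as ``the only point worth double-checking''; it just needs to be checked the right way.
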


\begin{figure}[t]\centering
\begin{subfigure}{\textwidth} \centering
	\includegraphics[scale=0.69,page=3]{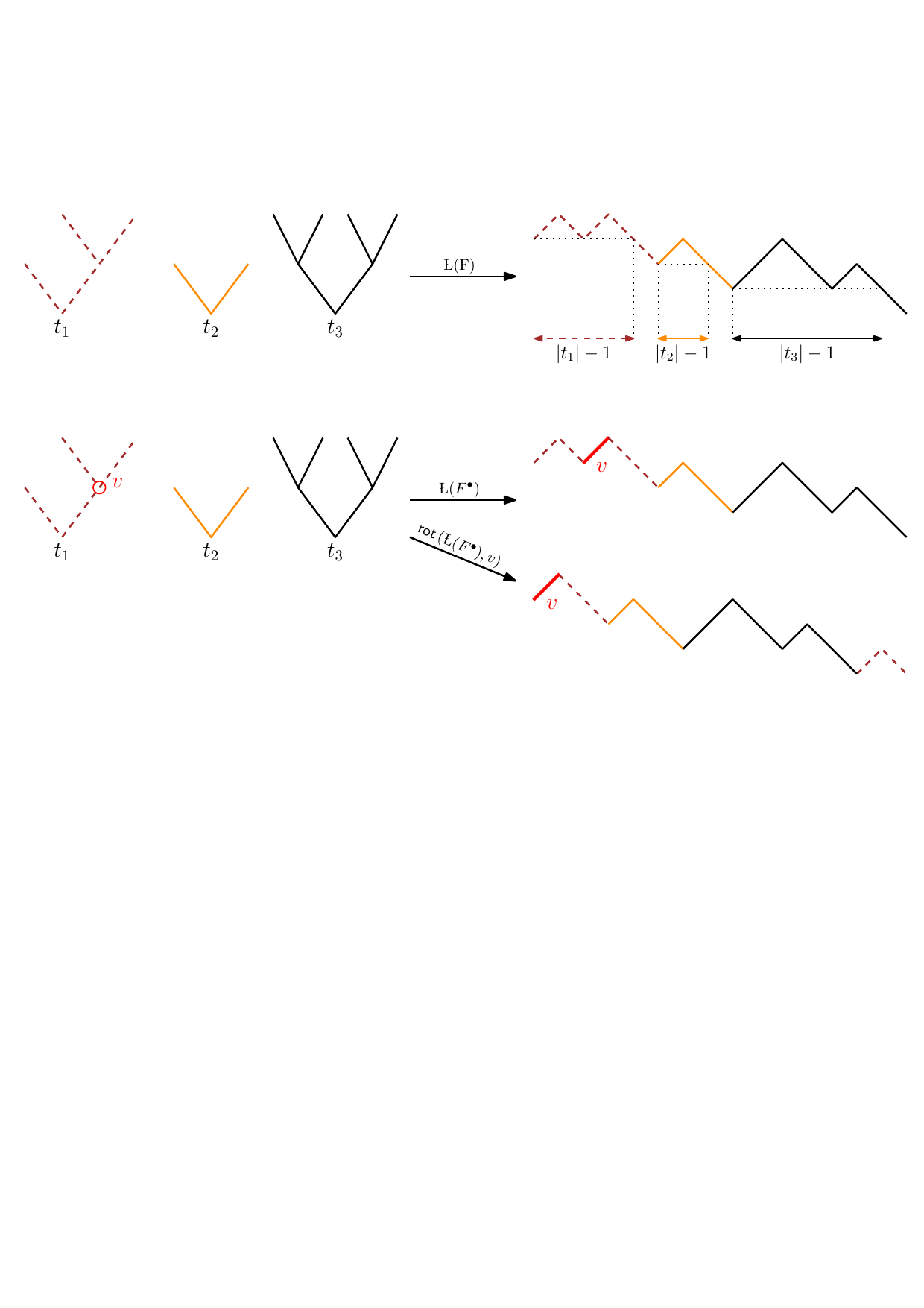}
	\caption{Illustration of the bijection mapping forests of $\forests{15}{3}$ to paths of $P(15,3)$ (such that $\tau_{-3} = 15$), using the \Lukasie walk. The $i$th excursion above the current minimum has size $|t_i|-1$.}
	\label{subfig:luka_foret}
\end{subfigure}
\hfill
\vspace{2mm}
\begin{subfigure}{\textwidth}\centering
	\includegraphics[scale=0.69,page=4]{bijection_foret_chemin_and_marked_forests3.pdf}
	\caption{Illustration of the bijection mapping forests of $\markedforests{15}{3}$ to paths $W$ of length $15$ ending at $-3$. The first path is again obtained doing a \Lukasie walk, then the path is rotated so that the first step of the path is the one associated to the marked vertex. {Notice that the mark on the second path can be deleted, since it is necessarily on the first step of the path.} Another important remark is that the excursions of the second path do not directly correspond to the sizes of trees of $F$, since the excursion corresponding to the first tree has basically been cut into two pieces.} 
	\label{subfig:luka_foret_marquee}
\end{subfigure}
\hfill
\caption{Illustration of the bijections between forests and paths.}
\label{fig:illustration_lukasiewicz}
\end{figure}

\paragraph{Discussion on the paths associated to marked forests.} This paragraph is not required for the understanding of the proof of Theorem \ref{thm:phaseT}, except for the proof of \ref{thm:phaseT_third_item}. It also strengthens our study of excursions and allows to understand more deeply the connections made with the asymptotics of Chassaing-Louchard in Section \ref{subsect:connections_with_CL}.

We define the (discrete) rotation of a function as: 
\begin{equation}
\rotn(f,r) = t \mapsto  \begin{cases}
	f(t+r) - f(r)  & \si t \leq n - r, \\
	f(n) - f(r) + f(t-(n-r)) & \si t > n-r .
\end{cases} \label{eqn:def_de_rot}
\end{equation}
We will also use its continuous analogue (sometimes known as the Vervaat transform, see Section \ref{subsect:connections_with_CL} for a discussion on Vervaat's theorem):
\begin{equation}
\rot(f,r) = t \mapsto  \begin{cases}
	f(t+r) - f(r)  & \si t \leq 1 - r, \\
	f(1) - f(r) + f(t-(1-r)) & \si t > 1-r.
\end{cases} \label{eqn:def_de_rot_continue}
\end{equation}

We have proven in Equation (\ref{eqn:nbdeforetsmarquees}) that there are $\binom{n}{(n-k)/2}$ marked forests with $(n-k)/2$ trees and $n$ nodes. It is also the cardinality of $B(n,k)$, the set of paths of length $n$ ending at $-k$, i.e.\ bridges. There is a bijection between $\markedforests{n}{k}$ and $B(n,k)$: To a marked forest $F^\bullet = (F,v)\in \markedforests{n}{k}$ we can associate a path $\Lpol(F^\bullet)=\Lpol(F)$ of $P(n,k)$ (with the \Lukasie walk, as for the bijection between $\forests{n}{k}$ and $P(n,k)$). Then if $v$ corresponds to the $i$th vertex in the first tree (in the bread-first search order, which is the order taken in the \Lukasie walk), one can consider the rotation of $\Lpol(F^\bullet)$, $p \coloneqq \rotn(\Lpol(F^\bullet),i)$. This is a path ending at $-k$ at time $n$ (the condition $\tau_{-k} = n$ can be satisfied or not). 
Reciprocally, if $i$ is such that $n-i = \minargmin(p)$, then $\rotn(p,-i) \in P(n,k)$ (here and in what follows, if the minimum of a path $p$ is reached several times, then $\minargmin(p)$ is the smallest $k$ such that $p_k = \min(p)$). As $i$ encodes the marked vertex $v$, $\rotn(p,-i)$ encodes a \LL walk, hence a forest $F$ (since the \LL walk gives a bijection), thus $(F,v) \mapsto \rotn(\Lpol(F^\bullet),i)$ is a bijection. It is illustrated on Subfigure \ref{subfig:luka_foret_marquee}.

Thus, if $R^{(n,\nl)}$ is the \LL walk associated to a uniform marked forest $\randforestsmarked{n}{\nl}$ and $B^{(n,\nl)}\sim \mathcal{U}\left(B(n,\nl)\right)$ (in the sequel, we will often denote the uniform distribution over a set $S$ by $\mathcal{U}(S)$), then
\begin{align}
\rotn\left(B^{(n,\nl)},\minargmin(B^{(n,\nl)})\right) &\sim R^{(n,\nl)}\\
\intertext{and}
\rotn\left(R^{(n,\nl)},{\bf U}(\llbracket 0, \tau_{-1}(R^{(n,\nl)})\rrbracket)\right) &\sim B^{(n,\nl)} \label{eqn:marche_Luka_and_rotated_bridge_discreteVersion} 
\end{align}
(here, the variable ${\bf U}(\llbracket 0, \tau_{-1}(R^{(n,\nl)})\rrbracket)$ represents the uniform choice of a vertex in the first tree of the forest associated to $R^{(n,\nl)}$).

Together with Corollary \ref{cor:blocks_and_forests}, it implies:
\begin{corollary}\label{cor:tailles_excursions_ch_avec_marquage}We consider $\TL$ under $\pcercle$, with $\nballs + \nholes = n$, $\nl = \nholes - \nballs$. We let $b \sim \mathcal{U}\left(B(n,\nl)\right)$ and $p = \rotn (b, \minargmin(b))$, and then
\begin{align}
	\left({\Deltan_{i} \TL }\right)_{0 \leq i < \nl} &\overset{d}{=} \card{\Excursions( p)} \label{eqn:utile_pour_lien_entre_blocs_et_excursions_marquees} \\
	& \overset{(d)}= \left((\tau_{-j-1 }(p)- \tau_{-j}(p) - 1)\right)_{0 \leq j < \nl}.
\end{align}
\end{corollary}
It improves Corollary \ref{cor:tailles_excursions_ch_sans_marquage}, since here the equality in distribution holds not only for the sorted sequences, but for the whole sequences.

\addition{\begin{remark}\label{rem:tree_sizes} As illustrated on Figure \ref{subfig:luka_foret_marquee}, the tree sizes of a marked forest $F^\bullet = (F,v) \in \markedforests{n}{k}$ cannot be directly obtained from excursions of the associated bridge $b = \rotn(\Lpol(F), v)$, because the sub-path corresponding to the first tree may be cut into two pieces (at the beginning and the end of $b$). Yet, the size of the first tree is equal to $\tau_{-1}(b) + n - \tau_{-k}(b)$, while the size of the $i$th tree (for every $i \neq 1$) is still equal to $\tau_{-i-1 }(b)- \tau_{-i}(b) $. 
\end{remark}}

\paragraph{Maximum size of a block as a non-increasing function of $\nl$.}
The following corollary enables to compare (for the stochastic order) the maximum size of a block in the golf process $\Max{\Deltannl\TL}$, for different values of the parameters $n$ and $\nl$ (where we have adapted the notation, setting $\Deltannl\TL = {\Deltan\TL}$, in order to make explicit the dependence on $\nl$).

\begin{corollary} \label{cor:monotonie_taille_max_bloc}We again assume that $n = \nballs + \nholes$, and write $\nl = \nholes - \nballs$.
Then, for any fixed $n$, $\Max{\Deltannl\TL}$ is a non-increasing function of $\nl$ for the stochastic order. 

\end{corollary}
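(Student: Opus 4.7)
By Corollary \ref{cor:blocks_and_forests} together with Lemma \ref{lemma:forests_and_marks}, the sorted vector of block sizes with $1$ added to each coordinate is distributed as the sorted vector of tree sizes of a uniform random forest $\mathbf F\sim\mathcal U(\forests{n}{\nl})$; in particular $\Max{\Deltannl\TL}+1\overset{(d)}{=}\max_i|\mathbf f_i|$, so it suffices to show that, for $n$ fixed, $\max_i|\mathbf f_i|$ is stochastically non-increasing in $\nl$ (with $\nl$ restricted to integers of the same parity as $n$). Writing $U_m(y)=\sum_{b=0}^m C_b y^b$ and recalling that $|\forests{n}{\nl}|=[y^{(n-\nl)/2}]\mathcal G(y)^\nl$, the target inequality may be rephrased as: for every $m\ge 0$, the ratio $[y^{(n-\nl)/2}]U_m(y)^\nl/[y^{(n-\nl)/2}]\mathcal G(y)^\nl$ is non-decreasing in $\nl$.

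I would establish this by constructing an explicit coupling. Given $\mathbf F'\sim\mathcal U(\forests{n}{\nl+2})$, one builds $\mathbf F\in\forests{n}{\nl}$ via a \emph{merging operation}: select three trees of $\mathbf F'$ and a leaf $\ell$ in one of them, and replace $\ell$ by a new internal node whose two children are the roots of the other two selected trees. This operation is node-preserving (the new internal node balances the removed leaf) and reduces the number of trees by two; moreover the merged tree has size equal to the sum of the three input sizes, hence $\max_i|\mathbf f_i|\ge\max_i|\mathbf f'_i|$ deterministically. The desired stochastic dominance would then follow at once, provided the distribution of $\mathbf F$ is the uniform law on $\forests{n}{\nl}$.

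The main obstacle is exactly this distributional matching. A uniform choice of the three trees and of the leaf does \emph{not} yield the uniform law on $\forests{n}{\nl}$: iterating the Catalan identity $\mathcal G(y)=1+y\mathcal G(y)^2$ gives $y^2\mathcal G(y)^3=\mathcal G(y)(1-y)-1$, whence
\[
\sum_{a+b+c=k}C_aC_bC_c=C_{k+2}-C_{k+1},
\]
which is not proportional to $C_k$; the marginal law of the merged tree's internal-node count would therefore be biased by a non-constant factor $(C_{k+2}-C_{k+1})/C_k$. One repairs this either by weighting the selection of the three trees and of the leaf by an appropriate Catalan factor, or by analysing the inverse \emph{splitting} operation: starting from $\mathbf F\sim\mathcal U(\forests{n}{\nl})$, pick a non-root internal node $v$ of $\mathbf F$ with an adequate bias, delete $v$ (leaving a leaf in its parent's slot) and transplant the two subtrees at $v$ as two new trees. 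Matching the multiplicities on both sides via the simpler Catalan convolution identity $\sum_{a+b=k}C_aC_b=C_{k+1}$ produces a weighted bijection, and the pointwise inequality $\max_i|\mathbf f_i|\ge\max_i|\mathbf f'_i|$ along the coupling yields the required stochastic dominance.
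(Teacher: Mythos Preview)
Your reduction to the maximum tree size in a uniform forest of complete binary trees, via Corollary~\ref{cor:blocks_and_forests} and Lemma~\ref{lemma:forests_and_marks}, is correct and is exactly how the paper begins. The merging idea is also natural and would indeed give the pointwise inequality $\max_i|\mathbf f_i|\ge\max_i|\mathbf f'_i|$ along any realisation of the coupling.

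What you have written, however, is a plan rather than a proof, and the step that matters is left open. You correctly diagnose that a uniform choice of the three trees and of the leaf does not push the uniform law on $\forests{n}{\nl+2}$ forward to the uniform law on $\forests{n}{\nl}$ --- your computation $\sum_{a+b+c=k}C_aC_bC_c=C_{k+2}-C_{k+1}$ pinpoints the bias exactly --- but neither of the two repairs you mention is carried out. The sentence ``matching the multiplicities on both sides via the simpler Catalan convolution identity \dots\ produces a weighted bijection'' is an assertion, not an argument: you have not written down the weights, checked that they are nonnegative (so that they define a genuine random choice), nor verified that the resulting image law on $\forests{n}{\nl}$ is uniform. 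Until that construction is made explicit, the stochastic inequality is not established.

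The paper takes a different route past this obstacle. After the same forest reduction, it transfers the problem from complete binary forests to \emph{planar} forests via a standard size-linear tree bijection, and then invokes an explicit bijection of Bettinelli between planar forests with $p$ trees carrying a marked corner and an integer in $\{1,\dots,p+1\}$, and planar forests with $p+1$ trees carrying a marked vertex and an integer in $\{1,\dots,p\}$. Bettinelli's map splits one tree into two and leaves the others intact, so the maximal tree size can only decrease; the auxiliary marks are precisely what absorb the multiplicity discrepancy that blocks your naive coupling. The two approaches are morally the same --- a tree-splitting coupling --- but the paper outsources the delicate uniformity step to a cited bijection instead of building one from scratch. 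If you want to salvage your direct argument on binary forests, you would need to produce the analogue of Bettinelli's marked bijection for complete binary trees; that is doable, but it is real work and is not in your write-up.
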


\addition{\begin{proof}
	Thanks to Corollary \ref{cor:blocks_and_forests}, it suffices to prove the same result for $\Max{\card{\randforestsmarked{n}{\nl}}}$, the maximum tree size in a random marked forest with $\nl$ (complete binary) trees and $n$ nodes. Recall that the set of marked forests $\markedforests{n}{\nl}$ is in bijection with the set of bridges $B(n,\nl)$. The main idea is then to couple uniform bridges of lengths $n$ ending in $-\nl$ with uniform bridges of lengths $n$ ending in $-\nl+2$, such that the maximum size of a tree in the corresponding forests can be compared.
	
	We consider the random function $\phi_{n,\nl} : B(n,\nl) \times \{1,\ldots, {(n+\nl)}/{2}\} \to B(n, \nl - 2)$ that changes some down step of a bridge ending in $-\nl$ to an up step, giving a bridge ending at $-\nl +2$. Formally, if $b\in B(n,\nl) $ and $i \in  \{1,\ldots, {(n+\nl)}/{2}\}$, and if $k_i$ is the index of the $i$th down step, then $\phi_{n,\nl}(b,i)$ is the bridge of $B(n, \nl - 2)$ obtained by replacing the down step at index $k_i$ by an up step. This bijection is illustrated on Figure \ref{fig:incr}.
	
	Finally, it is straightforward to check that if $k_i$ belongs to the $j$th tree, the forest corresponding to $\phi_{n,\nl}(b,i)$ is the forest where the $j$th tree (modulo $\nl -2$) has size equal to the sum of the sizes of the trees $j$, $j+1$ and $j+2$ (also modulo $\nl -2$) in the forest coded by $b$, while the other trees remain unchanged, and in particular have the same size. (It can be done formally using Remark \ref{rem:tree_sizes}). It implies that the maximum size of a tree in $b$ is not greater than the maximum size of a tree in $\phi_{n,\nl}(b,i)$.
	
	Moreover, if $b \sim \mathcal{U}(B(n,\nl))$ and $i \sim \mathcal{U}( \{1,\ldots, {(n+\nl)}/{2}\})$, then $$\phi_{n,\nl}(b,i) \sim \mathcal{U}(B(n,\nl-2)).$$ Combining this with the previous paragraph gives $\Max{\card{\randforestsmarked{n}{\nl}}} \leq \Max{\card{\randforestsmarked{n}{\nl-2}}}$ for the stochastic order.

	\begin{figure}[t]\centering
		\begin{subfigure}{0.45\textwidth} \centering
			\includegraphics[scale=0.6,page=4]{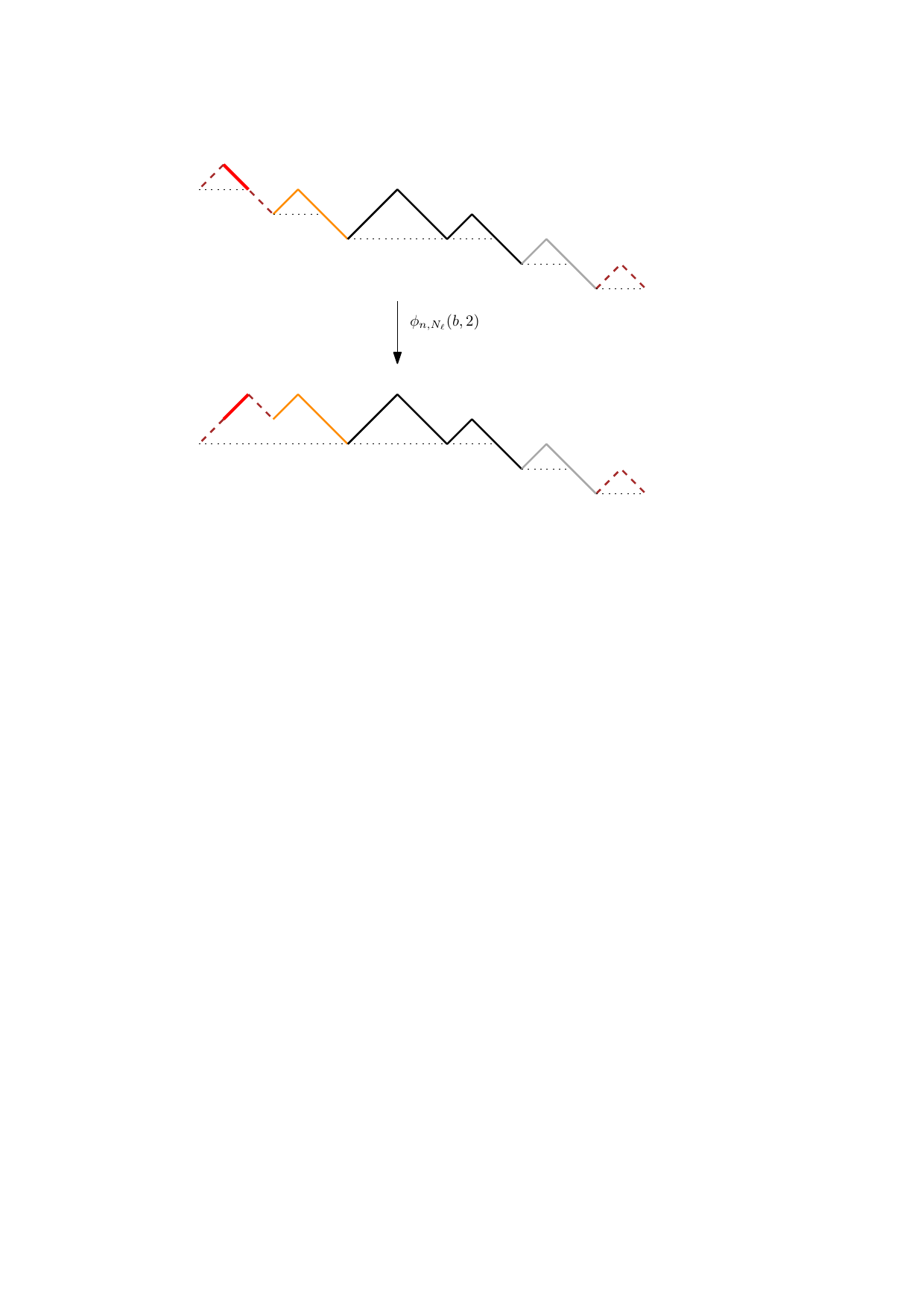}
			\caption{A bridge $b$ encoding a marked forest with 4 trees.}
			\label{subfig:incr1}
		\end{subfigure}
		\hfill
		\begin{subfigure}{0.47\textwidth}\centering
			\includegraphics[scale=0.6,page=5]{illu_increasing_forests.pdf}
			\caption{The bridge $\phi_{n,\nl}(b,2)$ encoding a marked forest with 2 trees, obtained by changing the second down step of $b$ (in red) into an up step. The first tree has the size of the first three trees of $b$, and the other trees are unchanged.} 
			\label{subfig:incr2}
		\end{subfigure}
		\hfill
		\caption{Illustration of the bijection $\phi_{n,\nl}$.}
		\label{fig:incr}
	\end{figure}
\end{proof}}

\bigremoval{
\begin{proof}
	Thanks to Corollary \ref{cor:blocks_and_forests} and Lemma \ref{lemma:forests_and_marks}, it suffices to prove the same result for $\Max{\card{\randforests{n}{\nl}}}$, the maximum tree size in a random forest with $\nl$ (complete binary) trees and $n$ nodes. 
	
	We actually prove it for the maximum tree size in a random unmarked forest with $p$ planar trees and $n$ edges. In fact, it is well-known that the set of planar trees with $x$ edges and the set of binary trees with $2x+3$ nodes are in bijection. We can thus define $\phi$ a bijection mapping the set of planar trees $\mathcal{T}^{\sf planar}$ to the set of binary trees $\mathcal{T}$ (with no size constraint on the trees), such that for every $t\in\mathcal{T}, \card{\phi(t)} = 2 \card{t} + 3$. We can then define, for any $n,p$, $\mathcal{F}^{\sf planar}(n,p)$ the set of forests with $p$ planar trees, and $\phi$ induces a bijection \begin{equation*}
		\fonction{\Phi}{\mathcal{F}^{\sf planar}(n,p)}{\mathcal{F}(2n+3p,p)}{(f_1,\ldots,f_p)}{(\phi(f_1),\ldots, \phi(f_p))}
	\end{equation*}
	which is such that, for any $f\in\mathcal{F}^{\sf planar}(n,p)$,  $\Max{\card{\Phi(f)}} = 2 \Max{\card{f}}  +3$. 
	
	It then suffices to prove that for any fixed $n$, $M(n,p)$, the maximum tree size in a uniform forest of $\mathcal{F}^{\sf planar}(n,p)$, is a non-increasing function of $p$ for the stochastic order.
	
	We use a bijection given by Bettinelli in \cite[Section 3.2, illustrated on Figure 4]{bettinelli}. 	
	It is a bijection between the set $\mathcal{F}^{\sf planar,ci}(n,p)$ of forests of $\mathcal{F}^{\sf planar}(n,p)$ with one distinguished corner and an integer in $\{1,\ldots,p+1\}$ and $\mathcal{F}^{\sf planar, vi}(n,p+1)$ the set forests of $\mathcal{F}^{\sf planar}(n,p+1)$ with a marked vertex and an integer in $\{1,\ldots,p\}$ (corners are a way to mark forests, they are formally defined in his paper, but we don't really need his definition here). A short study of this bijection enables to prove that, up to some re-rooting, it has split one tree in two, and kept the other trees unchanged. In particular, 
	if we consider $(f;c,i)$ a uniform element of $\mathcal{F}^{\sf planar,ci}(n,p)$, and $(f';c,i)$ the uniform element obtained by applying Bettinelli's bijection to $(f;c,i)$, then $f$ and $f'$ are uniform forests of $\mathcal{F}^{\sf planar}(n,p)$ and $\mathcal{F}^{\sf planar}(n,p+1)$, and $ \Max{\card{f}}  \geq \Max{\card{f'}} $. So, for the stochastic order, $M(n,p) \geq M(n,p+1)$.
\end{proof}
}

We now have all the combinatorial tools we need to prove Theorem \ref{thm:phaseT}. We end these preliminaries by stating the following proposition, which gives the convergence of a random walk conditioned by first hitting $-\lambda \sqrt{n}$ at time $n$, up to normalization, to a Brownian process $W^{(\lambda)}$ such that $\tau_{-\lambda}(W^{(\lambda)}) = 1$.

\begin{proposition}\label{prop:cv_marchecondit_vers_browniencondit} We consider $k=k(n)$ such that $k(n)/\sqrt{n}\to \lambda \in (0,+\infty)$ (and $n$ and $k(n)$ have the same parity). We let $(W_i,i\geq 0)$ be a simple random walk. In what follows, we view $W = (W_t, t\geq 0)$ as a continuous process, obtained by linear interpolation of the sequence $(W_k,k\geq 0)$. For every $n$, we let $ W^{(n, \lambda)} $ be a process distributed as $\left(\frac{W_{ nt}}{\sqrt{n}}, 0\leq t\leq 1\right)$ conditioned by $\tau_{-k(n)}(W)=n$. 
As $n$ goes to infinity, $  W^{(n, \lambda)}$ converges in distribution in $\Czo$ (equipped with the uniform topology), to $W^{(\lambda)}$ a Brownian motion {$B$} conditioned by $\tau_{-\lambda}(B) =1$, where $\tau_{-\lambda}(B)\coloneqq \inf\{t>0: B_t = -\lambda\}$.
\end{proposition}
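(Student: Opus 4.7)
\textit{Proof plan.} The strategy is to prove this first-passage invariance principle directly, via the local central limit theorem (LCLT), Kemperman's rotation formula and a tightness argument. The result is a special case of classical invariance principles for conditioned random walks, but in the simple-random-walk setting a self-contained proof is short.

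First, recall that ``Brownian motion conditioned on $\tau_{-\lambda}=1$'' is rigorously defined as the first-passage Brownian bridge, namely the Doob $h$-transform of Brownian motion killed at $-\lambda$ by a suitable space-time harmonic function; its finite-dimensional densities at $0 < t_1 < \cdots < t_m < 1$, $x_1,\ldots,x_m > -\lambda$ are explicit products of killed-Brownian transition densities times a final boundary factor.

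For the finite-dimensional convergence of $W^{(n,\lambda)}$, fix $0 < t_1 < \cdots < t_m < 1$ and $x_1,\ldots,x_m > -\lambda$. By the Markov property and the reflection principle for the walk killed at $-k(n)$, the joint probability $\mathbb{P}\bigl(W_{\lfloor n t_i \rfloor} = \lfloor \sqrt{n}\,x_i\rfloor \text{ for all } i, \ \tau_{-k(n)} = n\bigr)$ factorises into a product of killed transition probabilities together with a final boundary factor. Applying the LCLT (Annex \ref{annex:TCLL}) to each factor, and dividing by $\mathbb{P}(\tau_{-k(n)}=n) \sim c\lambda/n$ (obtained by combining Kemperman's formula~(\ref{eqn:principe_de_rotation}) with the LCLT), yields the finite-dimensional densities of $W^{(\lambda)}$.

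Tightness on $C([0, 1-\varepsilon],\mathbb{R})$ is then transferred from the unconditioned walk via the Radon--Nikodym density $\mathbb{P}(\tau_{-k(n)}=n \mid \mathcal{F}_{\lfloor n(1-\varepsilon)\rfloor})/\mathbb{P}(\tau_{-k(n)}=n)$, which is uniformly bounded by LCLT estimates on the remaining time interval. For the tail near $t=1$, I would time-reverse: the process $\tilde W^{(n,\lambda)}_t \coloneqq W^{(n,\lambda)}_{n-t} + k(n)$ is a walk from $0$ to $\lambda\sqrt{n}$ conditioned to stay strictly positive on $(0,n]$, whose tightness near $0$ follows from classical invariance principles for walks conditioned to stay positive.

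\textbf{Main obstacle.} The trickiest step is tightness near $t=1$: the first-passage constraint is a hard wall degenerating at the endpoint, so the Kolmogorov--Chentsov criterion does not apply directly. Time-reversal reduces the problem to tightness near the origin of a positive-meander-like walk, which is classical but technical. A possibly cleaner alternative would route the proof through the rotated-bridge identity~(\ref{eqn:marche_Luka_and_rotated_bridge_discreteVersion}) and a continuous Vervaat-type theorem, but one would still have to undo the size-biasing by the length of the first excursion before comparing with $W^{(n,\lambda)}$.
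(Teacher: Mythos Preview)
Your finite-dimensional argument coincides with the paper's. The divergence is in the tightness step. You attack the hard wall at $t=1$ head-on, via a bounded Radon--Nikodym density on $[0,1-\varepsilon]$ plus a time-reversal to a positive-meander-type walk near the endpoint; this is correct but, as you note, the second part leans on nontrivial external results. The paper instead bypasses the endpoint entirely: it first proves tightness of the \emph{bridge} $X^{(n,\lambda)}$ (the walk conditioned only on $W_n=-k(n)$), which is easier because the bridge has rotation-invariant increments---cover $[0,1]$ by three half-length intervals, rotate each into $[0,1/2]$, and bound the conditioned modulus there by $C_\lambda$ times the unconditioned one via an LCLT ratio. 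Then the paper invokes the plain cycle lemma in the direction you did not consider: rotating $W^{(n,\lambda)}$ by a \emph{uniform} index $\mathbf r\in\{0,\ldots,n-1\}$ yields exactly $X^{(n,\lambda)}$ in law, and since $\omega_{[0,1]}(f,\delta)\le 2\,\omega_{[0,1]}(\rot(f,r),\delta)$ for any $f,r$, tightness of the bridge transfers to $W^{(n,\lambda)}$ in one line. Your worry about ``undoing size-biasing'' in the rotation route is a red herring: the identity~(\ref{eqn:marche_Luka_and_rotated_bridge_discreteVersion}) you cite concerns the marked-forest walk $R^{(n,\nl)}$ and does carry a size-biased rotation, but the relevant identity here is the uniform-rotation cycle lemma between $W^{(n,\lambda)}$ and $X^{(n,\lambda)}$, with nothing to undo. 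The paper's route thus dissolves precisely the obstacle you single out as hardest.
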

The process $W^{(\lambda)}$ is called a first passage bridge from $0$ to $\lambda$. In Appendix \ref{annex:cv_marchecondit_vers_browniencondit}, we prove this proposition and we give a formal definition of a Brownian motion conditioned by $\tau_{-\lambda}(B) =1$. Chassaing and Janson \cite{Bertoin_Chaumont_Pitman_First_passage_bridges} give some similar results, for random walks with different distribution, in Theorems 4.1, 4.2 and 4.3.%, including Theorems 4.1, 4.2 and 4.3.

\subsection{Critical window: proof of Theorem \ref{thm:phaseT}}\label{subsect:cercle_asymptotiques}
\begin{proof}[Proof of Theorem \ref{thm:phaseT}]

In all the proof, we let $k = k(n) = \nl(n)$. Notice that $n$ and $k(n)$ have the same parity (since this is always true for $n = \nballs + \nholes$ and $\nl = \nholes - \nballs$).

\textbf{\textbullet Proof of \ref{thm:phaseT_first_item}.}
We assume that $k(n) = an + o(\sqrt{n})$ for some constant $a>0$.
We will first show that
\begin{equation}
	\exists \alpha_a,\beta_a >0, \ \Prob{ \alpha_a \log n \leq \Max{\Deltan \TL} \leq  \beta_a \log n} \to 1.\label{eqn:undesobjectifs1}
\end{equation}    
Then, to conclude when $k(n) = an + o(n)$, we consider some $\eps >0$, and we can conclude using (\ref{eqn:undesobjectifs1}) for $k^-(n) \coloneqq a (1-\eps) n $ and $k^+(n) \coloneqq a (1+\eps) n $, and the monotonicity of $\Max{\Delta^{(n,k(n))}\TL}$ given in \ref{cor:monotonie_taille_max_bloc}:
\begin{equation}
	\Prob{ \alpha_{a(1+\eps)} \log n  \leq  \Max{\Deltan \TL} \leq  \beta_{a(1-\eps)} \log n } \to 1.
\end{equation}
\medskip
We consider ${\bf f}=({\bf f}_1,\cdots,{\bf f}_k)$ uniformly chosen in $\forests{n}{k(n)}$ (we previously denoted this random variable as $\randforests{n}{k(n)}$, but we simplify the notation in the proof). Lemma \ref{lemma:forests_and_marks} implies that $\Max{\bff}$ has the same distribution as ${\Max{\randforests{n}{k(n)}^{\bullet}}}$, which has the same distribution as $\Max{\Deltan \TL} +1$ (by Corollary \ref{cor:blocks_and_forests}). Thus, to prove (\ref{eqn:undesobjectifs1}) it suffices to prove that 
\begin{equation}
	\exists \alpha_a, \beta_a \in \R_+, \ \Prob{ \alpha_a \log n  \leq \smallcard{\Max{\bf f}} \leq  \beta_a \log n } \to 1. \label{eqn:trucici}
\end{equation} 
The remaining of this proof focuses on proving (\ref{eqn:trucici}).

\textbf{Computation of the upper bound:} We first focus on giving $\beta_a$ such that \linebreak $\P( \smallcard{\Max{\bff}}\geq \beta_a \log n)\underset{n}\to 0$. 
We have, for any $C >0$ and any $p_a \in (0,1)$ (we fix $C$ and $p_a$ later in the proof), given the correspondence forest-path discussed previously,

\begin{align}
	\P( \card{\bff_1} \geq C\log n)
	& = \sum_{m=\ceil{C\log n}, m \text{ is odd}}^{n} \P \left( \card{\bff_1} = m \right) \label{eq:f0}  \\
	& = \sum_{m=\ceil{C\log n}, m \text{ is odd}}^{n} \P_{p_a} \left(\tau_{-1} = m \middle| \tau_{-k(n)} = n \right) \label{eq:f1}  \\
	& = \sum_{m_0=\frac{\ceil{C\log n}-1}{2}}^{\frac{n-1}{2}} \frac{\P_{p_a}\left(\tau_{-1} = 2m_0+1 \right) \P_{p_a}\left(\tau_{k(n)-1} = n-(2m_0+1) \right)}{\P_{p_a} \left(\tau_{-k(n)} = n\right)} \label{eq:f2}  \\
	& \leq \sum_{m_0=\frac{\ceil{C\log n}-1}{2}}^{\frac{n-1}{2}} \frac{\P_{p_a}\left(\tau_{-1} = 2m_0+1 \right)}{\P_{p_a} \left(\tau_{-k(n)} = n\right)}.\label{eq:f3}
\end{align}

We know, thanks to the famous Rotation principle (sometimes also called Kemperman's formula, see e.g.\ \cite[equation (6.3) in Section 6.1]{pitman2006combinatorial}), that the denominator of (\ref{eq:f3}) is such that
\begin{align}
	\P_{p_a} \left(\tau_{-k(n)} = n\right) &= \frac{k(n)}{n} \P_{p_a} \left(W_{n} = -k(n)\right) \geq \frac{c_a}{2\sqrt{n}} \label{eqn:principe_de_rotation},
\end{align} 
for $n$ large enough and $c_a= \frac{1}{\sqrt{2 \pi p_a (1-p_a)}}$. Indeed, fix {$p_a = \frac{a+1}{2}$} (until the end of the proof of \ref{thm:phaseT_first_item}). Then, $\E_{p_a}\left[W_{n}\right] = (2p_a-1)n = -an = -k(n)+o(\sqrt{n})$ and we can use the {central local limit theorem} (see (\ref{eqn:TCLL3}) in Annex \ref{annex:TCLL} for more details) to prove that 
\begin{equation}
	\P_{p_a} \left(W_{n} = -k(n)\right) \underset{n\to\infty}\sim \frac{1}{\sqrt{2 \pi n p_a (1-p_a)}}
\end{equation} 
and thus that 
\begin{align}
	\P_{p_a} \left(\tau_{-k(n)} = n\right) \underset{n\to\infty}\sim  \frac{c_a}{\sqrt{n}}. \label{eqn:eq_pour_tau}
\end{align}

Moreover, we can compute, for every $m_0$, $\P_{p_a}\left(\tau_{-1} = 2m_0+1 \right) = C_{m_0} {p_a}^{m_0} {(1-p_a)}^{m_0+1} $ and use the fact that $ C_{m_0} \underset{m_0\to\infty}\sim \frac{4^{m_0}}{{m_0}^{3/2} \sqrt{\pi}}$ (obtained by Stirling formula), which implies that $C_{m_0} \leq 2\frac{4^{m_0}}{{m_0}^{3/2} \sqrt{\pi}}$ for ${m_0}$ large enough. So for $n$ large enough:
\begin{align}
	\P( \card{\bff_1} \geq C\log n) 
	& \leq \frac{4\sqrt{n}}{c_a}\sum_{m_0=\frac{\ceil{C\log n}-1}{2}}^{\frac{n-1}{2}} \frac{4^{m_0}}{{m_0}^{3/2} \sqrt{\pi}} {p_a}^{m_0} {(1-p_a)}^{{m_0}+1}\\
	& \leq \frac{4\sqrt{n}}{c_a} \frac{2^{3/2}(1-p_a) }{{(C \log n - 1)}^{3/2} \sqrt{\pi} } \sum_{m_0=\frac{\ceil{C\log n}-1}{2}}^{\frac{n-1}{2}} \left(4 p_a (1-p_a)\right)^{m_0}\\
	\intertext{and we conclude by noting that since $p_a = \frac{a+1}{2} \neq \frac12$, $ q_a \coloneqq 4 p_a (1-p_a) < 1$ hence this partial geometric sum can be easily bounded:}
	\P( \card{\bff_1} \geq C\log n) 
	& \leq \frac{4\sqrt{n}}{c_a} \frac{2^{3/2}(1-p_a)}{{(C \log n-1)}^{3/2} \sqrt{\pi} } \frac{{q_a}^{(C \log n-1)/2}}{1 - q_a}.
\end{align}
Finally, when $\beta_a >0$ is such that $(\beta_a/2) \log q_a < -1 - 1/2$ (for example with $\beta_a = -(3+\eps)/ ( \log q_a)$, for some $\eps >0$), noting that ${q_a}^{(\beta_a/2) \log n} = n^{(\beta_a/2) \log q_a }$, we get
\begin{align}
	\P( \card{\bff_1} \geq \beta_a\log n) 
	&= o \left(1/n\right) \label{eqn:encoreuneequnutile} .
\end{align}

Since the $f_i$ have same law, a union bound argument gives:
\begin{align}
	\P( \smallcard{\Max{\bff}} \geq \beta_a\log n) 
	&\leq k(n) \P( \card{\bff_1} \geq \beta_a\log n)
\end{align}
and this goes to 0 thanks to (\ref{eqn:encoreuneequnutile}).

\textbf{Computation of the lower bound:} To prove that $\P(\alpha_a \log n \leq \smallcard{ \Max{\bff}})\underset{n}\to 1$ for some $\alpha_a >0$, we will prove something much stronger:
\begin{equation}
	\exists \alpha_a >0, \	\Prob{\exists i : \card{{\bf f}_i} = 2\lfloor (\alpha_a/2) \log n \rfloor+1 } \to 1\label{eqn:plus_fort_que_la_lowerbound}.
\end{equation}

For simplicity of notation, in all what follows, we write $\bar{\alpha_a \log n}$ instead of \linebreak$2\lfloor (\alpha_a/2) \log n \rfloor+1 $, the closest odd integer, hence assuming that $\bar{\alpha_a \log n}$ is an odd integer in the computation. We do the same for $\bar{c\log n}$.

For every integer $i$ and every constant $c>0$, we introduce the corresponding counting variable $\aci \coloneqq \ind{\card{{\bf f}_i} =  \bar{c \log n}}$ and $\nbc = \sum_{i=1}^{k(n)} \aci$, and we let $\fnc \coloneqq \Prob{\acone = 1}$. (We will fix later the value of $c$.)

As before, it is straightforward that
\begin{align}
	\fnc &= \Prob{\card{{\bf f}_1} =\bar{c \log n} } = \frac{\P_{p_a}\left(\tau_{-1} = \bar{c\log n}\right) \P_{p_a}\left(\tau_{-k(n)+1} = n - \bar{c\log n}\right)}{\P_{p_a}\left(\tau_{-k(n)} = n\right)}. \label{eqn:trucla}
\end{align}

For any $g_n = o(n)$, $ \P_{p_a}\left(\tau_{-k(n)} = n + g_n\right) \sim \frac{c_a}{\sqrt{n}}$ (see (\ref{eqn:eq_pour_tau}) which is still valid in this case) and thus (\ref{eqn:trucla}) can be simplified (recalling that, for every $m$, $\P_{p_a}\left(\tau_{-1} = 2m+1 \right) = C_{m} {p_a}^{m} {(1-p_a)}^{m+1} $ and using Stirling formula):
\begin{align}
	\fnc &\underset{n\to\infty}\sim \P_{p_a}\left(\tau_{-1} = \bar{c \log n}\right)
	\underset{n\to\infty}\sim \frac{(1-p_a)\left(4p_a (1-p_a)\right)^{ (c/2)\log n }}{\sqrt{\pi}{ ((c/2)\log n )}^{3/2}}.
\end{align}

We can do a similar computation to prove that 
\begin{align}
	\Prob{\acone = \actwo = 1} \sim {(\fnc)}^2.
\end{align}

Thus, $\Cov{\acone, \actwo} = \Prob{\acone = \actwo = 1} -  \Prob{\acone = 1}^2 = o(({\fnc})^2)$, so that
\begin{align*}
	\Esp{\nbc} &= k(n) \Esp{\acone} \underset{n\to\infty}\sim a n \fnc\\
	\Var{\nbc} &= k(n) \Var{\acone} + k(n) (k(n)-1) \Cov{\acone, \actwo}\\
	&= k(n) \fnc + o(n^2 ({\fnc})^2) .
\end{align*}
\addition{Recall that $q_a = 4 p_a (1-p_a) <1$. We now replace $c$ by $\alpha_a = -(2-\eps)/\log q_a$, for some $\eps>0$. Then, ${q_a}^{c/2 \log n} =
	n^{-1+\eps/2}$ and $\fna \sim x n^{-1+\eps/2}(\log n )^{3/2}$ (where $x = (1-p_a)/(\sqrt{\pi} (c/2)^{3/2})$). It implies that $n \fna = xn^{\eps/2}/ (\log n )^{3/2} = o((n\fna)^2)$.} Then, $\Var{\nbca} = o( (n {\fna})^2)$. We can the use Chebyshev's inequality to get
\begin{equation}
	\Prob{\card{\nbca-\Esp{\nbca}} \geq an \fna/2} \leq \frac{\Var{\nbca}}{\left(an \fna/2\right)^2} \underset{n\to\infty}\to 0.
\end{equation}
Since $\fna \gg 1/n$, we get $\Esp{\nbca} \gg 1$ and immediately deduce that $\Prob{\nbca= 0} = o(1)$: Equation \eqref{eqn:plus_fort_que_la_lowerbound} holds.
\medskip

{\noindent \textbullet \bf Proof of \ref{thm:phaseT_second_item}. } We consider $k(n)$ such that $k(n)/\sqrt{n}\to \lambda \in [0,+\infty)$.

Thanks to Corollary \ref{cor:tailles_excursions_ch_sans_marquage}, it suffices to focus on the sizes of excursions of a uniform path of $P(n,k(n))$, and this latter has the same distribution as a simple random walk $W$ of length $n$ with i.i.d.\ increments $\P(W_i-W_{i-1}=\pm1)=1/2$, conditioned by $\tau_{-k(n)} = n$.

We use Proposition \ref{prop:cv_marchecondit_vers_browniencondit} to prove \ref{thm:phaseT_second_item}. The key idea is that $ W^{(n, \lambda)} \overset{(d)}\to W^{( \lambda)} $ in $\Czo)$, which implies the convergence of the sorted lengths of excursions of $ W^{(n, \lambda)} $ above the current minimum, i.e.\ of the block sizes (in distribution, for the product topology) to the lengths of excursions of $ W^{( \lambda)} $ above the current minimum. Indeed, Lemma 7 of \cite{Aldous1997} applied to $W^{(\lambda)}$ and $\left(W^{(n,\lambda)}\right)$ gives the desired result. It is stated for functions $[0,\infty) \to \R$, but can be adapted to functions $[0,1] \to \R$. It is then quite straightforward to check that the hypotheses are verified. We then have: $\Excursions \left(W^{(n,\lambda)}\right)$ converges to $\Excursions \left(W^{(\lambda)}\right)$ as $n$ goes to infinity in distribution (as in Aldous \cite{Aldous1997}, we regard $\Excursions \left(W^{(\lambda)}\right)$ and $\Excursions \left(W^{(n,\lambda)}\right)$ as point processes on $[0,1] \times (0,1]$, and convergence holds for the vague convergence of counting measures on $[0,1] \times (0,1]$).

Thus, for every $k$, the lengths of the sorted $k$ largest excursions (they correspond to the sorted $k$ highest projections of $\Excursions \left(W^{(n,\lambda)}\right)$ onto the second coordinate) converge to their continuous counterpart.
The result given in Theorem \ref{thm:phaseT}.\ref{thm:phaseT_second_item} is an immediate consequence of this.

\noindent{\textbullet \bf Proof of \ref{thm:phaseT_third_item}. } 
We let $\eps >0, \delta >0$. We are going to show that 
\begin{align}
	\pcercle \left( \frac{\Max{\Deltan\TL}}{n} \geq \eps\right) <\delta. \label{eqn:obj_cv_proba_iii}
\end{align}

First, observe that for any continuous map $f : [0,1] \to \R$ such that $f(0) = f(1) = 0$, and for any $\lambda >0$, the largest excursion of $f^\lambda$ defined by $f^\lambda(s) = f(s) - \lambda s$ for $ s \in \intzo$ is smaller than $\frac{\max f - \min f}{\lambda}$ (Figure \ref{fig:illustration_rotation_borne_excursion} illustrates this, and we use this property to obtain (\ref{eqn:truc}) below).

This therefore applies to $b^\lambda$, a Brownian bridge ending at $-\lambda$, using that $b^\lambda \overset{(d)}= (b^0_s - \lambda s, s\in \intzo)$, where $b^0$ is the standard Brownian bridge ending at 0 (see \cite[page 37]{revuzcontinuous}). In fact, there exist $m$ and $M$ such that $\Prob{\forall t, m \leq b^0_t \leq M} \geq 1-\delta$ (because the law of the range $[\min X, \max X]$ of a process $X$ taking its values in $\Czo$ is a law in $\R^2$, and then is tight). We consider $\lambda>0$ such that $\frac{M-m}{\lambda}\leq \eps$. Then, with probability at least $1-\delta$, $b^\lambda$ has excursions smaller than $\frac{M-m}{\lambda}$, hence smaller than $\eps$. 

We consider, as in the previous point, the bridges $X^{(n,\lambda)}$ defined as $\left(\frac{W_{nt}}{\sqrt{n}}, 0\leq t\leq 1\right)$ where $W$ is a simple random walk conditioned by $W_n = k(n) \sim \lambda \sqrt{n}$.
We know that $X^{(n,\lambda)} \overset{(d)}\to b^\lambda$ as $n$ goes to infinity, in distribution in $\Czo$ (this is well-known, and easy to prove by proof of convergence of the FDD and tightness).

As a consequence,
\begin{equation*}
	\card{\Excursions( \rot (X^{(n,\lambda)}, \argmin(X^{(n,\lambda)})))} \xrightarrow[n\to\infty]{{(d)}} \card{\Excursions( \rot (b^\lambda, \argmin(b^\lambda)))} 
\end{equation*}
for the vague convergence of counting measures on $\intzo \times (0,1]$. The reasoning is close to what is done in the proof of \ref{thm:phaseT_second_item}. It is possible to conclude with a continuity argument, using that $\argmin b^\lambda$ is unique almost surely, and Skorokhod representation theorem, to couple $X^{(n,\lambda)}$ and $b^\lambda$. Vervaat uses exactly this kind of reasoning in \cite{Vervaat1979}.

Using Corollary \ref{cor:tailles_excursions_ch_avec_marquage}, this implies the existence of $n_0$ such that, for every $n\geq n_0$,
\begin{align}
	\pcerclenllambda\left(\frac{\Max{\Deltan \TL}}{n} > 2\eps\right) &= 
	\P\left(\Max{\card{\Excursions( \rot (X^{(n,\lambda)}, \argmin(X^{(n,\lambda)})))} } > 2\eps\right) \\
	&\leq \Prob{\Max{\card{\Excursions( \rot (b^\lambda, \argmin(b^\lambda)))}} > \eps} \label{eqn:cetrucla} \\ 
	&\leq \Prob{\frac{\max b^0 - \min b^0}{\lambda} \geq \eps} \label{eqn:truc} \\ 
	& \leq \delta.
\end{align}

To conclude, since $\nl(n) / \sqrt{n} \to +\infty$, then for any $n$ large enough, $\nl(n) \geq \lambda \sqrt{n}$ and Corollary \ref{cor:monotonie_taille_max_bloc} and the previous discussion imply that 
\begin{align}
	\pcerclenl\left(\Max{\Deltan\TL}/n >2\eps\right) &\leq \pcerclenllambda\left(\Max{\Deltan\TL}/n >2\eps\right)\\
	&\leq \delta.
\end{align} 

We have the desired result: $\Max{\Deltan\TL}/n \to 0$ in probability, as $n$ goes to infinity.

\begin{figure}[ht]\centering
	\includegraphics[scale=0.55]{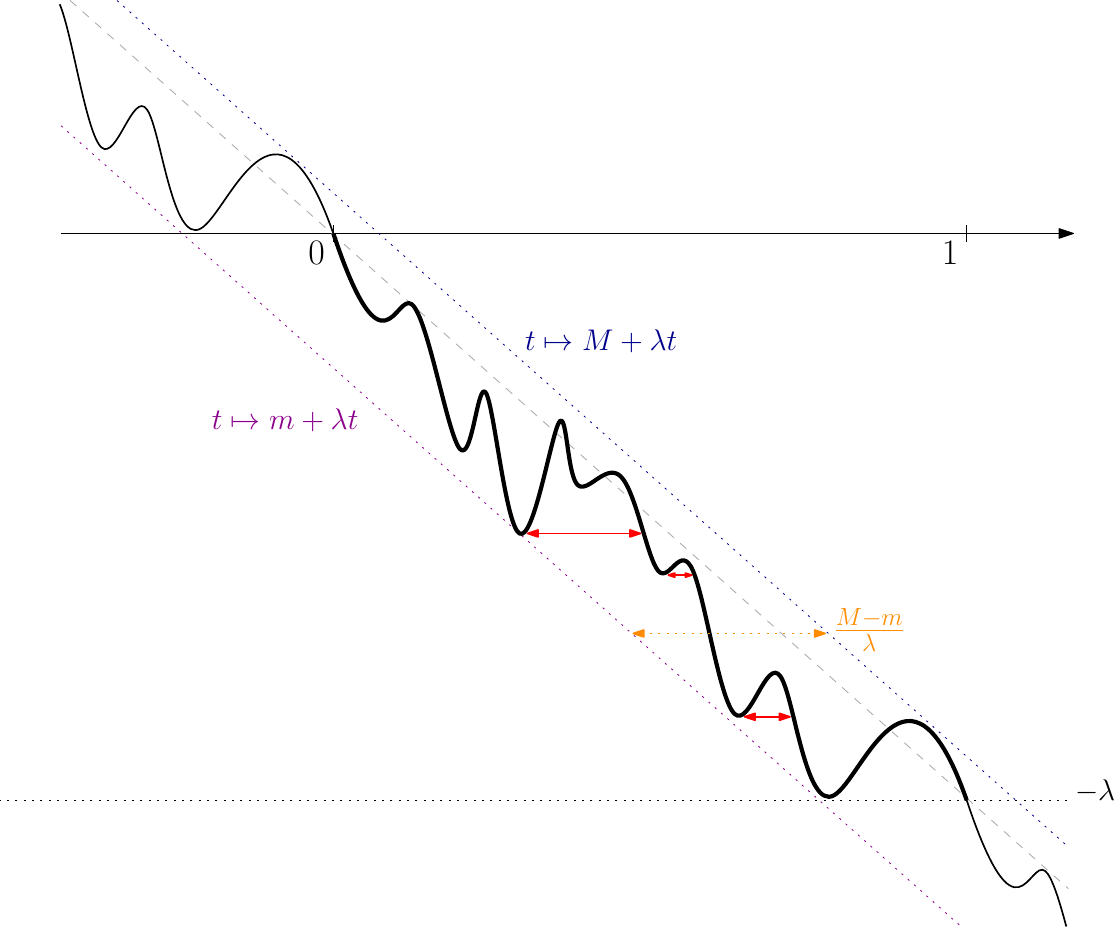}
	\hfill
	\caption{Illustration of the proof that sizes of excursions of a Brownian bridge ending at $-\lambda$ (and its rotation) go to 0 when $\lambda$ goes to infinity. The function in bold represent a continuous map $f^\lambda$ defined as $f^\lambda(s) = f(s) - \lambda s$, for every $s\in \intzo$ and for $\lambda >0$ ($f^\lambda$ can be seen as the Brownian bridge $b^\lambda = \left(b^0 - \lambda t, 0 \leq t \leq 1\right)$ ending at $-\lambda$, where $b^0$ is a standard Brownian bridge). It has been extended periodically, so that every rotation of $f^\lambda$ has the same graph up to some translation, and in particular the same excursion lengths. We assume that $\forall t, m \leq f(t) \leq M$. Then, $f^\lambda (t) =  f(t) - \lambda t$ is always above the line $t \mapsto m -  \lambda t$, and below the line $t \mapsto M - \lambda t$ (illustrated by the colored doted lines on the figure). The excursions lengths of $f^\lambda$ are then always smaller by $\frac{M-m}{\lambda}$ (on the drawing, some excursions are represented by red arrows, while the orange dotted arrow, of length $\frac{M-m}{\lambda}$, illustrates what is the maximum size of an excursion).}
	\label{fig:illustration_rotation_borne_excursion}
\end{figure}

\medskip
\noindent{\textbullet \bf Proof of \ref{thm:phaseT_fourth_item}. }

We prove that, when $\nl \sim \lambda \sqrt{n}$, $\Delta_0\TL /n$ the normalized size of the first block (marked, since it contains 0) converges in distribution to$\frac{N^2}{N^2 + \lambda^2 }$ (where $N$ is a standard Gaussian random variable with mean 0 and variance 1). We borrow arguments from Chassaing-Louchard \cite{chassaing2002phase}, and use them to conclude that when $\nl = o(\sqrt{n})$, $\Delta_0\TL /n \to 1$ (in probability, as $n$ goes to infinity). We now detail the main steps.

First, as a consequence of Corollary \ref{cor:blocks_and_forests}, for every $k$,
\begin{align}
	&	\pcercle \left( \Delta_0 \TL = 2k \right) 
	= \frac{(2k+1)C_k \ \card{\forests{n-(2k+1)}{\nl(n) - 1}}}{\card{\markedforests{n}{\nl(n) }}}\\
	&= { \frac{\sqrt{n}}{\sqrt{n-(2k+1)}} \frac{(2k+1)C_k 2^{-(2k+1)} \sqrt{n-(2k+1)} \card{\forests{n-(2k+1)}{\nl(n) - 1}} 2^{- n + (2k+1)} }{\sqrt{n}\card{\markedforests{n}{\nl(n) }}2^{-n}}}.
\end{align}
One can use this to compute, for any $x \in ([0,1])$ and $\lambda$, with $k = \lfloor nx/2 \rfloor$ and $\nl(n) \sim \lambda \sqrt{n}$:
\begin{align}
	\frac{n}{2}\pcercle \left( \Delta_0 \TL = 2k \right) &\sim n \frac{1}{\sqrt{1-x}} \frac{nx \frac{1}{2 \sqrt{\pi} (nx/2)^{3/2} } \ \frac{\lambda \sqrt{n}}{n(1-x)}  \frac{2 \exp\left(\frac{-\lambda^2}{2(1-x)}\right)}{\sqrt{2\pi}}}{\frac{2 \exp{\frac{-\lambda^2}{2}}}{\sqrt{2\pi}}}\\
	&\sim \frac{1}{\sqrt{2\pi}} \frac{\lambda}{\sqrt{x}(1-x)^{3/2}} \exp \left( \frac{-\lambda^2 x}{2(1-x)} \right) \eqqcolon f(\lambda, x) \label{eqn:densiteflambdax}
\end{align}
where we used that for any $n,r$, $ \card{\forests{n}{r}} = \frac{r}{n} \card{\markedforests{n}{r}}$ and $2^{-n}\card{\markedforests{n}{r}} $ is the probability that a simple random walk of length $n$
ends at height $-r$. We then used the central local limit theorem to compute the asymptotic terms.

The limit $f(\lambda, .)$ is in fact the density of $\frac{N^2}{\lambda^2 + N^2}$ (see the details in the proof of Theorem 2.1 of \cite{chassaing2002phase}). We then use a discrete version of Scheffé's lemma (see Corollary \ref{lemma:discreteScheffe} in Annex \ref{annex:scheffe}, in which we take $a_n = n/2$), to get
\[\frac{\Delta_0\TL}{n}  \overset{(d)}{\underset{n\to\infty}\longrightarrow} \frac{N^2}{\lambda^2 + N^2}.\]

We can then conclude. We let $\eps>0$ and $\delta>0$. For $\lambda>0$ large enough, 
\begin{equation*}
	\Prob{N^2/(N^2 + \lambda^2) < 1-\eps/2} \leq \delta.
\end{equation*}
Thanks to the convergence in distribution we proved above, there exists $n_0$ such that, for all $n\geq n_0$, if $\nl(n) \sim \lambda \sqrt{n}$, then
\begin{align}
	\pcerclenllambda\left(\Deltan_0\TL/n < 1-\eps\right) &\leq \Prob{N^2/(N^2 + \lambda^2) < 1-\eps/2} \leq \delta.
\end{align}
Finally, since $\nl(n) / \sqrt{n} \to 0$, for any $n$ large enough, $\nl(n) \leq \lambda \sqrt{n}$, and by Corollary \ref{cor:monotonie_taille_max_bloc}:
\begin{align}
	\pcerclenl\left(\Max{\Deltan\TL}/n < 1-\eps\right) &\leq \pcerclenllambda\left(\Max{\Deltan\TL}/n < 1-\eps\right)\\
	&\leq \pcerclenllambda\left(\Deltan_0\TL/n < 1-\eps\right)\\
	&\leq \delta.
\end{align} 

So finally, $\Max{\Deltan\TL}/n \to 1$ in probability, as $n$ goes to infinity.

\end{proof}

\subsection{Comparison between golf and parking asymptotic behavior}\label{subsect:connections_with_CL}

As we mentioned in Remark \ref{rem:remarque_parking}, Theorem \ref{thm:phaseT} and Corollary \ref{cor:phaseTparking} highlight very similar behaviors for the asymptotics of the golf process and the $p$-parking process, studied by Chassaing and Louchard for $p=1$ \cite{chassaing2002phase}. Let us shed more light on this connection.

In our study, the block sizes have the same distribution as sizes of trees in a uniform (marked) forest of unlabeled binary trees (see Corollary \ref{cor:blocks_and_forests} and Lemma \ref{lemma:forests_and_marks}), while in parking process the block sizes have the same distribution as the sizes of trees in forests of Cayley trees (i.e.\ trees with vertices labeled from $1$ to $n$), see \cite[Ex 6.4-31]{knuth1973art}. Hence, the combinatorial objects are different, but the reader familiar with this research domain could guess that the block sizes asymptotic behavior in both models are quite similar.

\addition{In addition, another small difference can be noted. The block-sizes in our process is biased by the size of the block containing 0, whereas parking blocks are also biased but in a subtle way: the parking process can be seen as a temporal process, and in \cite{chassaing2002phase} they consider the parking process rooted at the last empty place in the process. }

Actually, we can prove that the limit laws of the block-sizes are the same for both models up to some random rotation, using the following arguments.

We let $e = \left(e_t\right)_{t\in\intzo}$ be the standard Brownian excursion and $b = \left(b_t\right)_{t\in\intzo}$ be the standard Brownian bridge (ending at 0).
Then, for every $\lambda >0$, we set again $b^\lambda = \left(b_t-\lambda t \right)_{t\in\intzo}$, the Brownian bridge ending at $-\lambda$, and we define similarly $e^\lambda = \left(e_t-\lambda t \right)_{t\in\intzo}$, the Brownian excursion above the line $x \mapsto -\lambda x$.

Vervaat's theorem states that, if $a = \argmin_t b_t$, then $e \overset{(d)} = \left( b_{t+a \mod 1} - b_a \right)_{t\in\intzo}$
\cite{Vervaat1979}. Conversely, if $u \sim \mathcal{U}(\intzo)$, then 
$b \overset{(d)} = \left( e_{t+u \mod 1} - e_u \right)_{t\in\intzo}$ \cite{Biane1986}. We can restate this using the rotation function $\rot$ (defined in Equation (\ref{eqn:def_de_rot_continue})):
$b \overset{(d)}= \rot(e,u) \et e \overset{(d)}= \rot(b, \argmin_t b_t)$. 

This result can be generalized for excursions and bridges with a linear drift: $b^\lambda \overset{(d)}= \rot(e^\lambda,u)$
and 
$e^\lambda \overset{(d)}= \rot\left(b^\lambda,\argmin_t( b^\lambda_t + \lambda t)\right)$ (this is a direct consequence of Vervaat's theorem and of the fact that the rotation and the addition of a linear drift commute: $(\rot (e,u)(t) - \lambda t)_t = \rot(e^\lambda, u) $).

We now consider two other processes. First, let $W^\lambda$ be a Brownian motion conditioned by $\tau_{-\lambda} = 1$ (see Proposition \ref{prop:cv_marchecondit_vers_browniencondit} for its definition). Second, we define $R^\lambda$ as the rescaled limit of $R^{(n,\lambda\sqrt{n})}$ (the \LL walk associated to $\randforestsmarked{n}{\lambda\sqrt{n}}$, see Equation (\ref{eqn:marche_Luka_and_rotated_bridge_discreteVersion})). 

The following lemma implies that the processes $W^\lambda$, $e^\lambda$ and $R^\lambda$, up to some translation, have the same set of excursions, and therefore that the processes $\Deltan \TL$ in the golf model and in the $p$-parking model have asymptotically the same limit in law (up to a rotation).
\begin{lemma} \label{lemma:generalisation_Vervaat} For any process $\xi \in \{W^\lambda, R^\lambda\}$,
\begin{itemize}
	\item $e^\lambda \overset{(d)}= \rot\left(\xi,\argmin_t( \xi_t + \lambda t)\right)$
	\item $b^\lambda \overset{(d)}= \rot\left(\xi,u\right)$, where $u\sim\mathcal{U}(\intzo)$.
\end{itemize}
Moreover, 
\begin{equation}
	R^\lambda \overset{(d)}= \rot\left(b^\lambda, \argmin(b^\lambda)\right) \label{eqn:marche_Luka_and_rotated_bridge_limit}.
\end{equation}
Therefore, the processes $W^\lambda$, $R^\lambda$ and $e^\lambda$ have the same excursion lengths (above the current minimum process) up to a rotation. This is also true for the process $b^\lambda$ extended periodically.
\end{lemma}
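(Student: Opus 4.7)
The plan is to push to the continuous limit some discrete rotation identities, and then exploit the rotation invariance of the ``shift-to-argmin'' operation to derive the excursion identity uniformly in $p \in \{W^\lambda, R^\lambda\}$. First, for $u_n$ uniform on $\{0,\ldots,n-1\}$, a standard orbit-counting argument based on the cycle lemma yields $\rotn(W^{(n,k)}, u_n) \sim B^{(n,k)}$: each cyclic orbit in $B(n,k)$ contains exactly $k$ elements of $P(n,k)$, and a direct counting gives that the uniform measure on $P(n,k)$ pushed forward by a uniform rotation is the uniform measure on $B(n,k)$. Combining \eqref{eqn:marche_Luka_and_rotated_bridge_discreteVersion} with the composition rule $\rotn(\rotn(f,r_1),r_2) = \rotn(f,(r_1+r_2) \bmod n)$ and the invariance of the uniform bridge under uniform rotation, the same conclusion holds for $R^{(n,k)}$: $\rotn(R^{(n,k)}, u_n) \sim B^{(n,k)}$. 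Passing to the continuous limit (using Proposition \ref{prop:cv_marchecondit_vers_browniencondit} for $W^\lambda$, the definition of $R^\lambda$ as the rescaled limit of $R^{(n,\lambda\sqrt n)}$, and the continuity of $\rot$ on $\Czo\times[0,1]$) yields $\rot(p, u) \overset{(d)}= b^\lambda$ for both $p \in \{W^\lambda, R^\lambda\}$ and $u \sim \mathcal{U}(\intzo)$ independent of $p$.

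Next, the identity \eqref{eqn:marche_Luka_and_rotated_bridge_limit} is obtained by passing $\rotn(B^{(n,\nl)}, \minargmin B^{(n,\nl)}) \sim R^{(n,\nl)}$ to the limit: $B^{(n,\lambda\sqrt n)}$ converges in distribution in $\Czo$ to $b^\lambda$ by Donsker, $\argmin b^\lambda$ is almost surely unique, so $f \mapsto \rot(f, \argmin f)$ is continuous at $b^\lambda$, and the continuous mapping theorem gives $R^\lambda \overset{(d)}= \rot(b^\lambda, \argmin b^\lambda)$. For the excursion identity, the key observation is that $\Psi(f) \coloneqq \rot(f, \argmin(f + \lambda\cdot))$ is rotation-invariant: since the linear drift commutes with rotation one has $\argmin(\rot(f,r) + \lambda\cdot) = (\argmin(f+\lambda\cdot) - r) \bmod 1$, and the composition rule for rotations gives $\Psi(\rot(f,r)) = \Psi(f)$. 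Applying $\Psi$ to both sides of the distributional identity $\rot(p, u) \overset{(d)}= b^\lambda$ yields $\Psi(p) \overset{(d)}= \Psi(b^\lambda) = e^\lambda$, where the last equality is exactly Vervaat's theorem with drift recalled just above the lemma; unfolding $\Psi$ gives $e^\lambda \overset{(d)}= \rot(p, \argmin(p+\lambda\cdot))$.

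The consequence on excursion lengths then follows immediately: rotation preserves the multiset of excursion lengths above the current minimum of a periodically extended function, and $b^\lambda$, $W^\lambda$, $R^\lambda$ and $e^\lambda$ are all rotations of one another in distribution. The main delicate point is the first step: establishing the discrete identity $\rotn(R^{(n,k)}, u_n) \sim B^{(n,k)}$ (for which \eqref{eqn:marche_Luka_and_rotated_bridge_discreteVersion} and the cycle-lemma counting must be carefully combined) and then transferring the uniform-rotation statements to the continuum via the joint continuity of the rotation map on $\Czo\times[0,1]$. Everything after that is either a limit argument identical to that step, an application of Vervaat with drift, or pure algebraic manipulation of rotations.
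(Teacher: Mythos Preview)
Your proof is correct and follows the same overall strategy as the paper's sketch: establish the discrete rotation identities (via the cycle lemma and \eqref{eqn:marche_Luka_and_rotated_bridge_discreteVersion}), pass them to the continuum using the convergence of the rescaled walks and the continuous mapping theorem, and handle the discontinuity of $f\mapsto\rot(f,\argmin f)$ by the a.s.\ uniqueness of $\argmin b^\lambda$ (exactly as in Vervaat's argument). The paper only spells out the limit argument for \eqref{eqn:marche_Luka_and_rotated_bridge_limit} and says the other statements have ``similar proofs''; you go a bit further and derive the excursion identity $e^\lambda\overset{(d)}=\rot(p,\argmin(p+\lambda\cdot))$ for both $p\in\{W^\lambda,R^\lambda\}$ by the rotation-invariance of $\Psi(f)=\rot(f,\argmin(f+\lambda\cdot))$ applied to the already-established identity $\rot(p,u)\overset{(d)}=b^\lambda$, together with the Vervaat-with-drift relation $\Psi(b^\lambda)\overset{(d)}=e^\lambda$. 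This is a clean shortcut (it performs the delicate limit passage only once, on the side where uniqueness of the argmin is easiest), but it is the same proof in spirit.
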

Figure \ref{fig:illustration_rotation} illustrates this lemma.

The interested reader can find results with the same flavor in \cite{Bertoin_Chaumont_Pitman_First_passage_bridges} and \cite{chassaingjanson_Vervaatlike}. Nevertheless, we think that our lemma is not proved in the literature, so we give the sketch of the proof.

\begin{figure}[t]\centering
\includegraphics[scale=0.5]{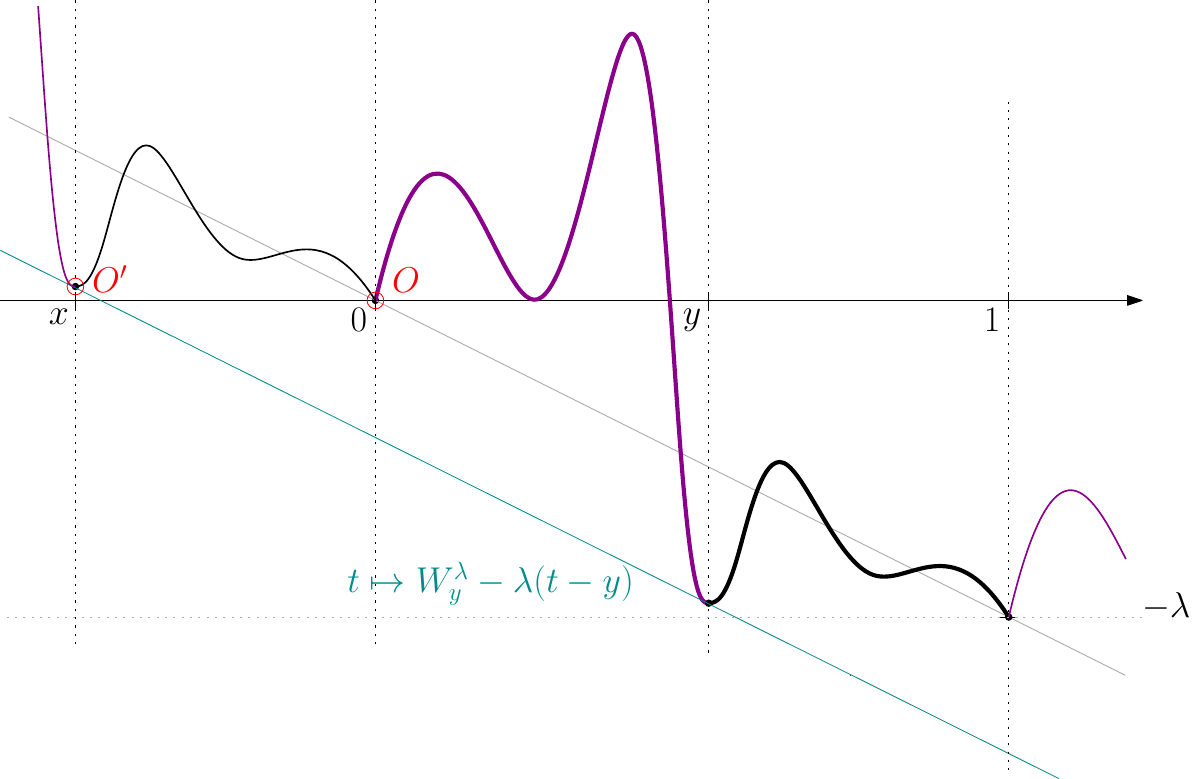}
\hfill
\caption{Illustration of Lemma \ref{lemma:generalisation_Vervaat}. The graph in bold, from the origin $\color{red} O$ to the point with coordinates $(1, -\lambda)$ represents $W^\lambda$, a Brownian process such that $\tau_{-\lambda}=1$. It has been extended periodically, so that the parts in black (or in purple) are exactly the same up to a translation. Then, $y$ is such that $y = \argmin_{t\in\intzo} W^\lambda_t + \lambda t$, and geometrically it means that the process $W^\lambda$ is always above the line of slope $-\lambda$ passing through $(y,W^\lambda_y)$ (drawn in blue on the figure). Therefore, the curve of $\rot\left(W^\lambda,\argmin_t( W^\lambda_t + \lambda t)\right)$ is the same as that of $W^\lambda$ with a change of origin (the graph from ${\color{red} O'}=(x,W^\lambda_x)$ (with $x = y - 1$) to $(y,W^\lambda_y)$). We can see on the figure that it is an excursion above the blue line. Lemma \ref{lemma:generalisation_Vervaat} states that it has the same distribution as $e^\lambda$.
}
\label{fig:illustration_rotation}
\end{figure}

\begin{proof}[Sketch of the proof]
We do not give a full proof here, but we give the main idea behind it, that follows from the proof of Vervaat's theorem \cite{Vervaat1979}. 
We focus on (\ref{eqn:marche_Luka_and_rotated_bridge_limit}) (the other statements have similar proofs). Equation (\ref{eqn:marche_Luka_and_rotated_bridge_discreteVersion}) is a discrete version of this. Moreover, if $\nl \sim \lambda \sqrt{n}$, $R^{(n,\nl)}$ converges to $R^\lambda$ (after a suitable rescaling), and similarly $B^{(n,\nl)}$ (the uniform bridge of length $n$ ending at $-\nl$, see (\ref{eqn:marche_Luka_and_rotated_bridge_discreteVersion})) converges to $b^\lambda$ (also after a suitable rescaling). One can conclude using a continuous mapping theorem (\cite[Theorem 2.7]{Billingsley1999}). The difficulty is that $\rot (., \argmin(.))$ is only continuous at functions $f$ that reach their minimum value only once. Vervaat explains how to deal with this difficulty, and the same reasoning would work here.
\end{proof}

\subsection{Sparse Case}\label{subsect:sparse_case}
\subsubsection{Proof of Theorem \ref{thm:blocs_cercle_nb_nt_fixes}}\label{subsect:asympt_nb_et_nt_fixes}

\begin{proof} We give some notation that enables us to deal properly with the fact that we have $\nl$ variables with a density function (with respect to the Lebesgue measure) on $\R^{\nl -1}$, corresponding to the restriction to $\nl-1$ variables (the last one is a deterministic function of the others, since the sum of the block sizes is equal to $n-\nl$). We define, for any $x=  (x_0,\ldots,x_{\nl-1})\in{\sf Simplex}(\nl)$, $\bar{x} = (x_0, \ldots, x_{\nl-2})$, and $\bar{{\sf Simplex}(\nl)} = \{\bar{x} : x \in {\sf Simplex}(\nl) \}$, the restriction of ${\sf Simplex}(\nl)$ to its first $\nl - 1$ coordinates. Notice that it is similar to consider $x\in{\sf Simplex}(\nl)$ or $x\in\bar{{\sf Simplex}(\nl)}$ with the additional value $x_{\nl-1} \coloneqq 1 - \sum_{i=0}^{\nl-2}x_i$ (which we thus always consider to be defined).

For any $x\in {\sf Simplex}(\nl) $ and for every $k$ such that $0 \leq k < \nl -1$, we set $\ell_k \coloneqq \lfloor x_k (n-\nl) \rfloor$, and we fix $\ell_{\nl - 1} \coloneqq n - \nl - \sum_{i=0}^{\nl - 2}\ell_i$. We will establish that, for any $(x_0,\ldots,x_{\nl-2})\in \bar{{\sf Simplex}(\nl)}$,
\begin{equation}
	f_n (x_0,\ldots,x_{\nl-2}) \coloneqq n^{\nl-1}\pcercle\left(\Delta_i \TL = \ell_i, \forall i \in \Z/\nl\Z \right) \underset{n\to\infty}{\to} f(x_0,\ldots,x_{\nl-2}) \label{eqn:Scheffe1}
\end{equation}
and that 
\begin{equation}
	\int_{\bar{{\sf Simplex}(\nl)}} f(x_0,\ldots,x_{\nl-2}) ~dx_0\ldots dx_{\nl-2} = 1. \label{eqn:Scheffe2}
\end{equation}

These two properties allow to conclude immediately: Corollary \ref{lemma:discreteScheffe} (which gives a discrete version of Scheffé's lemma) implies Theorem \ref{thm:blocs_cercle_nb_nt_fixes}.

It suffices to prove (\ref{eqn:Scheffe1}) and (\ref{eqn:Scheffe2}) to complete the proof of Theorem \ref{thm:blocs_cercle_nb_nt_fixes}.

\paragraph{Proof of (\ref{eqn:Scheffe1}).}
We let $c \coloneqq \card{\configinitcercle}$. We just need the following computation: for $x = (x_0,\ldots, x_{\nl-2}) \in \bar{{\sf Simplex}(\nl)}$,

\begin{align}
	f_n(x)&= \frac{n^{\nl-1}\left(\ell_0+1\right) }{c} \sum_{(b_i) : \substack{\sum_{i}b_i = \nballs \\\text{ et } \forall i, 2b_i \leq \ell_i}}\ \prod_{k \in \Z/\nl\Z} \ \frac{1}{b_k + 1}\binom{\ell_k}{b_k,b_k,\ell_k-2b_k}  \\
	&\underset{n \to\infty}{\sim}  \frac{n^{\nholes - \nballs}x_0 \nballs! \nholes ! }{n ^{\nballs + \nholes }} \sum_{(b_i) :\sum_{i}b_i = \nballs}\ \prod_{k \in \Z/\nl\Z} \ \frac{1}{b_k!(b_k + 1)!}\frac{\ell_k!}{(\ell_k-2b_k)!} \\
	&\underset{n \to\infty}{\sim}  n^{ - 2\nballs}x_0 \nballs! \nholes ! \sum_{(b_i) :\sum_{i}b_i = \nballs}\ \prod_{k \in \Z/\nl\Z} \ \frac{1}{b_k!(b_k + 1)!}\left(x_k n\right)^{2 b_k} \\
	&\underset{n \to\infty}{\to}  x_0 \nballs! \nholes ! \sum_{(b_i) :\sum_{i}b_i = \nballs}\ \prod_{k \in \Z/\nl\Z} \ \frac{1}{b_k!(b_k + 1)!}{x_k}^{2 b_k}. \label{eqn:lalimite}
\end{align}

\paragraph{Proof of (\ref{eqn:Scheffe2}).}
We now compute the integral below, using the formula given in Remark \ref{rem:loi_de_Dirichlet} and the fact that $2\nballs + \nl = \nballs +  \nholes$.
\begin{align}
	\int_{\bar{{\sf Simplex}(\nl)}} f(x) ~dx	&=\int_{\bar{{\sf Simplex}(\nl)}}  x_0 \nballs! \nholes ! \sum_{(b_i) :\sum_{i}b_i = \nballs}\ \prod_{k \in \Z/\nl\Z} \ \frac{1}{b_k!(b_k + 1)!}{x_k}^{2 b_k} ~dx \\
	&= \frac{\nballs! \nholes ! }{(\nholes + \nballs)!}\sum_{\sum_{i}b_i = \nballs}\ (2b_0+1) C_{b_0}\prod_{k \in \Z/\nl\Z, k\neq 0} C_{b_k}	\label{eqn:unjolicardinal}.
\end{align} 
We have seen (in Equation (\ref{eqn:nb_de_forets_avec_cette_taille})) that $(2b_0+1) C_{b_0}\prod_{k =1}^{\nl - 1} C_{b_k}$ is the number of forests of $\markedforests{ \nballs }{\nl}$ such that for every $i$ the $i$th tree has $b_i$ internal nodes. Hence, by (\ref{eqn:nbdeforetsmarquees}),
\begin{align}
	\int_{\bar{{\sf Simplex}(\nl)}} f(x) ~dx &= \frac{\nballs! \nholes ! }{(\nholes + \nballs)!} \card{\markedforests{\nballs}{\nl}}
	= 1.
\end{align}

\end{proof}

\subsubsection{Discussion on the continuous model that appears as the limit of the golf process in the sparse case}\label{subsect:remarque_limit_sparse_case}

In Theorem \ref{thm:blocs_cercle_nb_nt_fixes}, the normalized block sizes converge in distribution to a mixed Dirichlet distribution. Actually, it can be seen as the distribution of the final configuration of several continuous analogues of the golf process defined on the unit circle $C = \R\backslash\Z$. We consider a finite number of balls $\nballs$ and holes $\nholes$ (that are random real points) chosen independently and uniformly on $C$, the balls being equipped with random clocks as usual (call $\mu_\infty (\nballs, \nholes)$ this initial distribution). We consider the two following variants:
\begin{enumerate}[label=(\subscript{V}{{\arabic*}})]
\item \label{limit_sparse_case_variant1}upon activation, a ball fills the first hole to its right;
\item \label{limit_sparse_case_variant2}upon activation, a ball does a Brownian motion stopped at the first hitting time of a free hole, which it fills. 
\end{enumerate}

To show that the distribution of the block-size process in both variants (i.e. the distances between consecutive free holes in the final configuration) is given by $f$ (see Equation (\ref{eqn:densite_sparse_case})), a coupling argument suffices: first, concerning the initial configuration, the unit circle appears as a limit when normalizing $\znz$ by $n$, and by this scaling, the initial configuration converges to $\mu_\infty (\nballs, \nholes)$. 
Then there are two cases:
\begin{itemize}%[noitemsep,topsep=0pt]
\item When $p>1/2$ (the case $p<1/2$ can be dealt with similarly), the discrete trajectory of a generic ball is a random walk $(S_k)_{k\geq0}$ on $\znz$ of parameter $p$ starting at $S_0 = \lfloor n u \rfloor$. It is biased and has the following asymptotic behavior (it is driven by its mean): 
\begin{align}
	\left(\frac{S_{nt}}{n}, t\geq 0\right) \overset{(d)}\to \left( u+ (2p-1)t \mod 1, t\geq 0\right)
\end{align}
uniformly on each compact (this can be seen as a consequence of Donsker's theorem, which allows to control the error term).
In other words, in the continuous limit, the balls almost surely go to the right at constant speed (which means that in the continuous golf model the balls jump instantaneously to the first hole on the right). This corresponds to variant \ref{limit_sparse_case_variant1}. 

\item When $p=1/2$ (corresponding to variant \ref{limit_sparse_case_variant2}), the discrete trajectory of a generic ball is, similarly, a random walk $(S_k)_{k\geq0}$ on $\znz$ of parameter $p=1/2$ starting in $S_0 = \lfloor n u \rfloor$. This random walk converges to a Brownian motion:
\begin{align}
	\left(\frac{S_{n^2 t}}{n}, t\geq 0\right) \overset{(d)}\to \left( (u + B_t) \mod 1, t\geq 0\right)
\end{align}
in $C([0,M], C)$ the set of continuous functions from $[0,M]$ to $C$, equipped with the topology of uniform convergence, for any $M>0$.
Notice that we chose $n$ as space normalization because we want that, in the continuous limit, the Brownian travels across a macroscopic part of the circle, and this imposes the time normalization in $n^2$. 
\end{itemize}

This informal justification can be made rigorous by the Skorokhod representation theorem.

\section{Golf model on $\Z$} \label{sect:z} 

\subsection{The model on $\Z$ is valid - proof of Theorem \ref{thm:defZ} }\label{subsect:defZ}

\begin{proof}
We first prove the theorem when $\db < \dt$.

\paragraph{Case $\db < \dt$:}The key idea in this case is to prove that almost surely, for a given initial configuration $\eta^0$, there exists an infinite set of holes $\Sep$ such that $\Sep\subseteq \TL$, and $\Sep$ is independent of the balls trajectories. We call elements of $\Sep$ \textit{separators}, and show that they allow to {divide $\Z$ into finite disjoint intervals on which it is easy to define the golf process.}\\

We consider the initial configuration $\eta^0$ taken at random as in Section \ref{subsect:intro_Z}.

First, we say that a vertex in $n\in \Z$ is a \textit{separator} in $\eta^0$ if the following event $A_n$ is true : 
\begin{align*}
	A_n \coloneqq \left\{\forall k < n, \sum_{j = k}^{n-1} \eta^0_j\leq 0 \right\} \cap \left\{\eta^0_n = -1\right\} \cap \left\{\forall k > n, \sum_{j = n+1}^k \eta^0_j \leq 0 \right\}
\end{align*}
(it is a hole, and there is a surplus of holes on any interval of the form $\llbracket a, n-1\rrbracket$ or $\llbracket n+1, b\rrbracket$).
We let $\Sep = \{k : A_k\}$ be the set of separators.

\begin{lemma}\label{lemma:infinite_nb_separators}
	There is almost surely an infinite number of separators on the right and on the left of 0.
\end{lemma}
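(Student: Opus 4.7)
I would translate the separator condition into a random-walk statement, prove that the probability of being a separator at a fixed vertex is strictly positive, and then invoke the ergodic theorem to get infinitely many separators on each side.

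First, define $S_n = \sum_{j=1}^{n} \eta^0_j$ for $n \geq 1$ and $S_{-n} = \sum_{j=-n+1}^{0} \eta^0_j$ for $n \geq 1$ (with $S_0 = 0$). A vertex $v$ is a separator exactly when the three events
\[
L_v = \Bigl\{\forall k < v,\ S_{v-1}-S_{k-1} \leq 0\Bigr\}, \quad \{\eta^0_v = -1\}, \quad R_v = \Bigl\{\forall k > v,\ S_k - S_v \leq 0\Bigr\}
\]
hold simultaneously. By construction, $L_v$ depends only on $(\eta^0_j)_{j < v}$, the middle event only on $\eta^0_v$, and $R_v$ only on $(\eta^0_j)_{j > v}$; since the $\eta^0_j$ are i.i.d., these three events are mutually independent.

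Next I would show that $\Prob{A_0} > 0$. Under the hypothesis $\db < \dt$ the increments have negative mean $\db - \dt < 0$, so by the strong law of large numbers $S_n \to -\infty$ almost surely, and therefore $M := \sup_{n \geq 1} S_n < \infty$ almost surely. To get $\Prob{R_0} = \Prob{M \leq 0} > 0$, I use the inclusion
\[
\{M \leq 0\} \supseteq \bigl\{\eta^0_1 = \cdots = \eta^0_n = -1\bigr\} \cap \Bigl\{\sup_{k \geq 1}(S_{n+k}-S_n) \leq n\Bigr\},
\]
where the two events on the right are independent and the second has the same probability as $\Prob{M \leq n}$ by stationarity. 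Since $M < \infty$ a.s., $\Prob{M \leq n} \to 1$, so for $n$ large enough $\Prob{R_0} \geq \dt^n \cdot \tfrac12 > 0$. An identical argument (using the reversed sequence) gives $\Prob{L_0} > 0$, and $\Prob{\eta^0_0 = -1} = \dt > 0$, hence $\Prob{A_0} > 0$ by independence.

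Finally, the sequence $(\eta^0_j)_{j\in\Z}$ is i.i.d., hence stationary and ergodic under the shift, and $\mathbf{1}_{A_v}$ is the same measurable function of the shifted sequence for every $v$. Birkhoff's ergodic theorem then yields
\[
\frac{1}{n}\sum_{v=1}^{n} \mathbf{1}_{A_v} \xrightarrow[n\to\infty]{\text{a.s.}} \Prob{A_0} > 0,
\]
so there are infinitely many separators with $v > 0$ almost surely. The symmetric statement for $v < 0$ follows by applying the same argument to the reversed sequence. The main obstacle is the positivity of $\Prob{R_0}$: without the explicit truncation above, the statement ``a random walk with strictly negative drift stays non-positive with positive probability'' is easy to believe but needs the finite-supremum argument to make rigorous, and everything else in the proof is either an independence observation or a direct application of the ergodic theorem.
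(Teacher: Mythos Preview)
Your proof is correct and follows essentially the same strategy as the paper: show $\Prob{A_0}>0$ via independence of the left, middle, and right parts together with the negative drift of the walk, then invoke ergodicity of the i.i.d.\ shift to conclude. The only cosmetic differences are that the paper uses the $0$--$1$ law for shift-invariant events (arguing that $\inf\{n:A_n\}$ and $\sup\{n:A_n\}$ must lie in $\{-\infty,+\infty\}$) rather than Birkhoff directly, and it justifies $p_0>0$ by the one-line remark that the walk is transient, whereas your truncation argument makes that step fully explicit.
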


Before proving this lemma, we define a coding between initial configurations and bi-infinite paths with steps 0, -1 and +1, illustrated on Figure \ref{fig:defZ_modele_aux}.

To any $\eta^0\in\{-1, 0, +1\}^\Z$, we associate a bi-infinite path $S = S(\eta^0)= (S_{k-1/2})_{k\in\Z}$ such that $S_{-1/2} = 0$ and $\forall k \in \Z, S_{k+1/2} = S_{k-1/2} + \eta^0_k$.
We consider indices in the set of half-integers $\Z+1/2$, so that a vertex of the initial configuration corresponds to an edge in the associated bi-infinite path. For example, if some vertex $n$ contains a ball, $\eta_n = 1$, thus $S_{n-1/2} = S_{n+1/2} +1$, so the corresponding edge is a {``+1 edge''}.

We also define the \textit{height} of a vertex $n$ as the height of (the middle of) its corresponding edge in the bi-infinite path $(S_{k-1/2})_{k\in\Z}$: $\height(n) \coloneqq \frac{S_{n-1/2} + S_{n+1/2}}{2}$. Observe that when the golf process is valid at time $t$, we can extend the definition of the corresponding bi-infinite path and of the height function, so as to define $S^t$ and $h^t$.

Then, $A_n$ is true if and only if $n$ is a hole, and there exists no $k \neq n$ such that $\{S_{k-1/2}, S_{k+1/2}\}=\{S_{n-1/2}, S_{n+1/2}\}$, and equivalently, if there is no $k \neq n$ such that $h_k = h_n$. As an example, the vertex $n$ on Figure \ref{fig:defZ_modele_aux} is a separator (if we assume that the bi-infinite path never comes back at the height of $n$). Figure \ref{fig:defZ_modele_aux} illustrates the reason why we focus on these separators. 
\\

\begin{figure}\centering
	\begin{tikzpicture}[inner sep=0.7mm, scale = 0.5, line width=0.7]
		\draw[-,line width=1.5pt] (-13,1) -- (12,1);
		\foreach \x/\y/\z in {-4/-3/-4,-7/-6/-3,-8/-5/-4,-10/-9/-4}{
			\draw[green,->,shorten <= .2 + \pgflinewidth,shorten >=.12cm + \pgflinewidth] (\x,1) to[bend right] (\y,1);
			\draw[green,->,shorten <= .2 + \pgflinewidth,shorten >=.2cm + \pgflinewidth] (\x,\z+0.5) to (\y,\z+0.5);
		}
		\draw[green,->,shorten <= .2 + \pgflinewidth,shorten >=.21cm + \pgflinewidth] (-11,1) to[bend right] (-1,1);
		\draw[green,->,shorten <= .2 + \pgflinewidth,shorten >=.2cm + \pgflinewidth] (-11,-4.5) to (-1,-4.5);
		\foreach \x/\y/\z in {2/1/-6,6/5/-7,8/7/-7,9/4/-6}{
			\draw[cyan,->,shorten <= .2 + \pgflinewidth,shorten >=.12cm + \pgflinewidth] (\x,1) to[bend right] (\y,1);
			\draw[cyan,->,shorten <= .2 + \pgflinewidth,shorten >=.2cm + \pgflinewidth] (\x,\z-0.5) to (\y,\z-0.5);
		}
		\foreach \x/\y in {-11/0,-10/1,-8/1, -7/2, -4/1, 2/-2, 6/-3, 8/-3, 9/-2}{
			\draw[-,black] (\x-1/2,\y-5) -- (\x+1/2,\y-4);
			\draw[black, fill] (\x,1) circle (0.2);
		}
		\foreach \x/\y in {-12/1,-9/2,-6/3,-5/2,-3/2,-1/1,0/0,1/-1,4/-1,5/-2,7/-2,10/-1,11/-2}{
			\draw[-,black] (\x-1/2,\y-5) -- (\x+1/2,\y-6);
			\draw[black, fill=white] (\x,1) circle (0.2);
		}
		\foreach \x/\y in {3/-1,-2/1}{
			\draw[-,black] (\x-1/2,\y-5) -- (\x+1/2,\y-5);
			\draw[black, fill=black] (\x,1) circle (0.08);
		}
		\draw[red] (-1,1) circle (0.4);
		\draw[orange!80!yellow] (0,1) circle (0.4) node[xshift=10, yshift = 10] {$n$};
		\draw[orange!80!yellow, line width = 2.2pt] (-1/2,-5) -- (1/2,-6);
		\draw[red, line width = 2.2pt] (-1/2,-5) -- (-3/2,-4);
		\draw[dotted,orange!80!yellow] (-13,-5) -- (12,-5);
		\draw[dotted,orange!80!yellow] (-13,-6) -- (12,-6); 
		\node[left] at (-13,-5) {$S_n$};
		\node[left] at (-13,-6) {$S_{n+1}$};
		\draw[black, dotted] (11.5, -8) -- (12.5,-9);
		\draw[black, dotted] (-13.5, -3) -- (-12.5,-4);
	\end{tikzpicture}
	\caption{Illustration of the correspondence between initial configurations and bi-infinite paths with steps 0, -1 and +1. A ball ($\bullet $) corresponds to a step $+1$, a hole ($\circ$) to a step -1, and a neutral vertex to a step 0.\\
		If a vertex is a separator (for example, the vertex $n$ in orange) the corresponding edge is such that no other edge is at its height, and we will show that the corresponding hole is in $\TL$, whatever the ball trajectories {(for a first intuition, the green and blue arrows pointing towards $n$ illustrate a kind of worst case, in which all the ball trajectories are oriented towards $n$)}. The vertex $n-1$, circled in red, is not a separator: there are other edges at the height of the red one, and balls can reach the corresponding hole.
	}
	\label{fig:defZ_modele_aux}
\end{figure}

\begin{proof}[Proof of Lemma \ref{lemma:infinite_nb_separators}]
	
	We denote by $\mudbdt$ the product measure on $\mathcal{S}$ which is the law of $\eta^0 = (\eta^0_k)_{k\in\Z}$ (see (\ref{eqn:mudbdt})).
	The measure $\mudbdt$ is invariant under translation and ergodic. In particular, every event measurable with respect to $\sigma(\eta^0)$ and invariant under translation occurs with probability 0 or 1.
	
	We now prove that almost surely, $\inf \{n: A_n\} = -\infty$ and $\sup \{n: A_n \} = +\infty$.
	By symmetry, it suffices to study $\inf \{n: A_n\}$. Since $\inf \{n: A_n\}$ is a random variable taking value in $\Z \cup \{-\infty, +\infty\}$ and invariant under translation, almost surely $\inf \{n: A_n\} \in \{-\infty, +\infty\}$. 
	Moreover, the events $\inf \{n: A_n\} = -\infty$ and $\inf \{n: A_n\} = +\infty$ are invariant under translation, so by ergodicity they have probability 0 or 1. 
	We can compute the probability of $A_0$:
	\begin{align*}
		\mudbdt(A_0) &=\mudbdt\left(\eta^0_0 = -1\right) \times \mudbdt\left(\forall k \leq 0, \sum_{j = k}^{0}\eta^0_j \leq 0\right) \times \mudbdt\left(\forall k \geq 1, \sum_{j = 1}^k \eta^0_j \leq 0 \right) \\
		&= (\dt) \times (p_0)^2
	\end{align*} 
	(by independence of the $\eta_k$, and with $p_0 = \mudbdt\left(\forall k \geq 1, \sum_{j = 1}^k X_j \leq 0 \right) >0$). The number $p_0$ corresponds to the probability that a random walk with steps $(\eta^0_k)_{k\geq1}$ starting in $S_{1/2} = -1$ does not hit 0 after time 1/2, and this probability is positive, since this random walk is transient (because $\dt > \db$). It can actually be computed (we do it in the proof of Claim \ref{claim:c'est_bien_une_proba}), but we do not need it here, since knowing its positivity suffices.
	
	Finally, we know that $	\mudbdt\left(\inf \{n: A_n\} = -\infty\right) \geq 	\mudbdt(A_0) > 0$, thus $\mudbdt(\inf \{n: A_n\} = -\infty)= 1$. Similarly, $\mudbdt(\sup \{n: A_n\} = +\infty)= 1$.
	
\end{proof}

Until the end of the proof of the case $\db < \dt$, we fix the initial configuration $(\eta^0_k)$ and assume that $\inf \Sep = - \infty$ and $\sup \Sep  = +\infty$ (which is almost sure, thanks to Lemma \ref{lemma:infinite_nb_separators}).

For every $s\in \Sep$, we define $\suiv(s) = \inf\{s' \in \Sep : s' > s\}$ (it is well defined), and we then let $I_s \coloneqq \llbracket s, \suiv(s)\rrbracket$. For every separator $s\in \Sep$, we say that $I_s$ is an \textit{enclosed interval}. Enclosed intervals are such that, for every $x \in I_s$ such that $x\neq s$ and $x\neq \suiv(s)$, $\height(s) > \height(x) > \height(\suiv(s))$.

For every interval $I$, we define the golf process \textit{restricted to $I$} as the golf process on $I$, with initial configuration $(\eta^0_i)_{i\in I}$, clocks $(C(i))_{i\in I}$ and transition matrix $\PMC^p|_I$, which corresponds to $\PMC^p$ on $I$, but reflected on the boundaries (the choice of the simple reflection is arbitrary, since we will see that the balls actually never hit these boundaries). We denote it by $\eta[I]\coloneqq (\eta^t[I], t\in\intzo) $. Note that in general, this is not valid, as there can be more balls than holes in this initial configuration.

We now give the following lemma, that shows that if we consider the process restricted to an enclosed interval, no ball exits this interval.
\begin{lemma}\label{lemma:no_ball_exits_enclosed_intervals}
	Let $I = \llbracket s_1,s_2\rrbracket$ be an enclosed interval (that is, $s_1 \in \Sep$ and $s_2 = \suiv(s_1)$). The golf process $\eta[I]$ is valid, and at time 1, $s_1$ and $s_2$ are free holes, that is $\eta[I]^1_{s_1} = -1$ and $\eta[I]^1_{s_2} = -1$.
	
\end{lemma}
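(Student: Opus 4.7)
The plan is to first verify validity of $\eta[I]$, and then to prove the second claim by combining the Commutation property with a ballot-type invariant that rules out $s_1$ (and symmetrically $s_2$) being filled.

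For validity, the separator conditions for $s_1$ and $s_2$ imply $\Delta_h := S_{s_1-1/2}-S_{s_2+1/2}\ge 2$, and this equals the initial excess of holes over balls inside $I$ (since $\sum_{i\in I}\eta^0_i = S_{s_2+1/2}-S_{s_1-1/2} = -\Delta_h$). Hence $I$ is a finite graph with at least as many holes as balls, and Proposition \ref{prop:bonne_def_graphe_fini_ET_commutation} yields validity of $\eta[I]$.

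For the second claim I would show that $\eta[I]^1_{s_1}=\hole$ almost surely; the argument for $s_2$ is identical up to the obvious mirror reflection of $I$. By the Commutation property (Remark \ref{remark:prop_commutation}), the distribution of $\eta[I]^1$ is independent of $\muclocks$, so I can replace the uniform clocks by a scheme that activates the balls in increasing order of position. Let $b_1<\cdots<b_m$ denote this order, $t_j$ the activation time of $b_j$ and $h_j$ the hole it fills. I would then prove by induction on $j\in\{0,\dots,m\}$ the ballot invariant
\[(P_j):\qquad\forall k\in\llbracket s_1,s_2\rrbracket,\qquad N_b^{t_j}(k)\le N_h^{t_j}(k),\]
where $N_b^t(k)$ (resp.\ $N_h^t(k)$) counts unmoved balls (resp.\ free holes) in $\llbracket s_1+1,k\rrbracket$ at time $t$. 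The base case $(P_0)$ is exactly the separator inequality $\sum_{i=s_1+1}^{k}\eta^0_i\le 0$. For the induction step, I would first show $h_{j+1}\ne s_1$: the nearest-neighbour walk of $b_{j+1}$ reaching $s_1$ would have to traverse every integer of $\llbracket s_1+1,b_{j+1}-1\rrbracket$ without stopping, so none of these positions can be a free hole at $t_{j+1}^-$; by the leftmost-first rule they also contain no unmoved ball, hence they are all neutral. This gives $N_h^{t_{j+1}^-}(b_{j+1})=0$ while $b_{j+1}$ itself yields $N_b^{t_{j+1}^-}(b_{j+1})=1$, contradicting $(P_j)$. Preservation of $(P_j)$ under the fill $(b_{j+1},h_{j+1})$ is a short case analysis on whether each of $b_{j+1},h_{j+1}$ lies in $\llbracket s_1+1,k\rrbracket$; the only delicate case is $h_{j+1}\in\llbracket s_1+1,k\rrbracket$ with $b_{j+1}>k$, but the leftmost-first rule makes $N_b^{t_{j+1}^-}(k)=0$ and the presence of $h_{j+1}$ in the range gives $N_h^{t_{j+1}^-}(k)\ge 1$, so the $-1$ decrement of $N_h$ leaves the inequality intact.

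The main obstacle is the choice of invariant: it must simultaneously rule out $b_{j+1}$'s walk ending at $s_1$ and be preserved by external fills, and these two requirements tug in opposite directions. The leftmost-first activation order (afforded by Commutation) is precisely what reconciles them, since in the delicate external-fill case it automatically empties the ball side of the invariant exactly at the moment the hole side decrements, so that the ballot condition propagates all the way to $j=m$ and forces $s_1\in\Holeset^1[I]$.
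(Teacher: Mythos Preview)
Your proof is correct. The validity argument matches the paper's, and your ballot invariant $(P_j)$ together with the leftmost-first activation order does rule out $h_{j+1}=s_1$ and is preserved at each step; the delicate Case~3 is handled exactly by the leftmost-first rule as you say, and the symmetric argument for $s_2$ goes through via a second application of Commutation with the rightmost-first order (the two probability-one events then intersect in a probability-one event).

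However, your route is genuinely different from the paper's. The paper works directly with the height function $h^t(x)=\tfrac12\bigl(S^t_{x-1/2}+S^t_{x+1/2}\bigr)$ and maintains, for the \emph{actual} random clocks, the two-sided invariant $h^t(s_1)>h^t(x)>h^t(s_2)$ for all $x\in I$ and all $t$; this simultaneously protects both endpoints and, crucially, never invokes the Commutation property. Your argument buys a perhaps more elementary one-sided prefix-sum bookkeeping, at the cost of relying on Remark~\ref{remark:prop_commutation} to choose the activation order. The paper explicitly avoids that reliance because its later results (Section~\ref{subsect:variantes}, Proposition~\ref{claim:variantes}) extend to time-dependent local strategies for which Commutation fails; there, one needs a proof of Lemma~\ref{lemma:no_ball_exits_enclosed_intervals} that works for an arbitrary activation order, which the height-function argument provides and yours does not.
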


\begin{proof}
	
	The process $\eta[I]$ is valid, since an enclosed interval contains more holes than balls (since $\sum_{k = s_1+1}^{s_2-1} \eta^0_k \leq 0$) and this number of balls is finite (by Proposition \ref{prop:bonne_def_graphe_fini_ET_commutation})
	
	For the second statement, we can consider the associated path and height (restricted to $I$) $(S^t_{k-1/2})_{s_1 \leq k \leq s_2+1 }$ and $(\height^t(s))_{s_1 \leq s \leq s_2} $, defined for every $t\in\intzo$. We now show that every ball in $I$ finds a hole inside $I$, never hitting $s_1$ nor $s_2$.
	
	More precisely, we show the three following properties, for every $t$:
	\begin{enumerate}[label=(\subscript{P}{{\arabic*}})]
		\item \label{P1} if a ball starting at some vertex $b\in I$ is able to reach a free hole at vertex $h_1\in I$ at time $t$, with $h_1 < b$ (respectively $h_2\in I$ such that $h_2 > b$) 
		then $\height^{t^-}(h_1) \leq \height^{t^-}(b)$ (respectively $\height^{t^-}(h_2) \geq \height^{t^-}(b)$),
		\item \label{P2} for every $t' < t$, for every $x \in I$, $\height^{t'}(s_1) > \height^{t'}(x) > \height^{t'}(s_2)$,
		\item \label{P3} at time $t$, $s_1$ and $s_2$ are free holes.
		
	\end{enumerate}
	
	Since $s_1$ and $s_2$ are free at time 0, then for every $t$, \ref{P1} and \ref{P2} imply \ref{P3}, and \ref{P3} gives the desired result. So we only need to prove \ref{P1} and \ref{P2}, and to prove it for $t \in \Clock(\Ballsetdet\cap I)$ (the set of clocks of balls that belong to $I$). The proof is illustrated on Figure \ref{fig:defZ_fig2}.
	
	\begin{figure}
		\centering
		\begin{subfigure}[b]{0.92\textwidth}
			\centering
			\begin{tikzpicture}[inner sep=0.7mm, scale = 0.45]
				\clip(-6,-2.5) rectangle (15,4.5);
				\draw[-,line width=1pt, color = orange] (-5,4) -- (-4,3) node [midway, below left] (a) {$s_1$};
				\draw[-,line width=1pt, color=lightgray] (-4,3) -- (-2,1) -- (-1,1) -- (0,0);
				\draw[-,line width=1pt, color = red] (0,0) --++ (1,-1) node [midway] (hole1) {};
				\node [red,below left] at (hole1) {$h_1$};
				\draw[-,line width=1pt] (1,-1) --++ (1,1) --++ (1,0) --++ (1,1);
				\draw[-,line width=1pt, color = red] (4,1) --++ (1,1) node [midway] (ball) {};
				\node [red,below right] at (ball) {$b$};
				\draw[-,line width=1pt] (5,2) --++ (1,0) --++ (1,1) --++ (1,0);
				\draw[-,line width=1pt, color = red]  (8,3) --++ (1,-1) node [midway] (hole2) {}; 
				\node [red,below left] at (hole2) {$h_2$};
				\draw[-,line width=1pt, color=lightgray] (9,2) -- (10,1) -- (11,1) -- (13,-1);
				\draw[-,line width=1pt, color = orange] (13,-1) -- (14,-2) node [midway, below left] (b) {$s_2$};
				
				\draw[->, color = red] (ball) to[bend right] (hole1);
			\end{tikzpicture}
			\caption{$S^{t^-}$: a ball in \redcolor{$b$} is able to reach the holes \redcolor{$h_1$} and \redcolor{$h_2$}, and fills \redcolor{$h_1$}. It can't reach \orangecolor{$s_1$} and \orangecolor{$s_2$}}
			\label{fig:sep_stays_sep1}
		\end{subfigure}
		
		\begin{subfigure}[b]{0.92\textwidth}
			\centering
			\begin{tikzpicture}[inner sep=0.7mm, scale = 0.45]
				\clip(-6,-2.5) rectangle (15,4.5);
				\draw[-,line width=1pt, color = orange] (-5,4) -- (-4,3) node [midway, below left] (a) {$s_1$};
				\draw[-,line width=1pt, color=lightgray] (-4,3) -- (-2,1) -- (-1,1) -- (0,0);
				\draw[-,line width=1pt, color = red] (0,0) --++ (1,0) node [midway] (hole) {};
				\draw[-,line width=1pt] (1,0) --++ (1,1) --++ (1,0) --++ (1,1);
				\draw[-,line width=1pt, color = red] (4,2) --++ (1,0) node [midway] (ball) {};
				\draw[-,line width=0.5pt, color = red,dotted] (0,0) --++ (1,-1);%
				\draw[-,line width=0.5pt,dotted] (1,-1) --++ (1,1) --++ (1,0) --++ (1,1);%
				\draw[-,line width=0.5pt, color = red,dotted] (4,1) --++ (1,1) ;%
				\draw[-,line width=1pt, color=lightgray] (5,2) --++ (1,0) --++ (1,1) --++ (1,0)--++ (1,-1);
				\draw[-,line width=1pt, color=lightgray] (9,2) -- (10,1) -- (11,1) -- (13,-1);
				\draw[-,line width=1pt, color = orange] (13,-1) -- (14,-2) node [midway, below left] (b) {$s_2$};
				
			\end{tikzpicture}
			\caption{$S^{t}$: only the height of edges corresponding to vertices between \redcolor{$h_1$} and \redcolor{$b$} (included) have changed, and are still smaller then $\height(\orangecolor{s_1})$ and greater than $\height(\orangecolor{s_2})$. The previous height function appears in dots}
			\label{fig:sep_stays_sep2}
		\end{subfigure}	
		\caption{We recall that a downward edge corresponds to a free hole, an upward edge to a ball, and a horizontal edge to a neutral vertex (which can be an occupied hole or the vertex of a ball that has already been activated). This figure illustrates the proof that restricting to a closed interval, no ball can reach a separator. The steps $s_1$ and $s_2$ (in orange) correspond to two separators. 
		}
		\label{fig:defZ_fig2}
	\end{figure}
	
	Let $t \in \Clock(\Ballsetdet\cap I)$. First, \ref{P1} is a direct consequence of the fact that for each $t$, $S^t$ is non-decreasing on intervals containing no holes. 
	Let $h_1$ (resp. $h_2$) be the first hole on the right (resp. on the left) of $b$ (they exist since $s_1$ and $s_2$ are free). 
	Since there is no hole between $h_1$ and $b$, $S^{t^-}_{h_1+1/2} \leq S^{t^-}_{b-1/2}$. We immediately get $\height^{t^-}(h_1) = S^{t^-}_{h_1+1/2} +1/2 \leq S^{t^-}_{b-1/2} + 1/2 = \height^{t^-}(b)$. 
	The proof is similar for $h_2$. 
	
	We can now focus on \ref{P2} and on the modification of the height function $\height$ at time $t$, when a ball starting at $b$ fills the first hole $h_1$ at its right (the left case is exactly the same). Subfigure $\ref{fig:sep_stays_sep2}$ illustrates this modification. 
	
	If $b$ fills $h_1$ at time $t$: $\eta^t_{h_1} = 0 $, $\eta^t_b = 0$, and the other vertices' state is unchanged. In terms of height we get
	$\height^t(h_1) = \height^{t^-}(h_1) + 1/2$, $\height^t(b) = \height^{t^-}(b) + 1/2$ and $\forall x \in (h_1,b)$, $\height^t(x) = \height^{t^-}(x) + 1$ (and the other heights remain unchanged).
	So for every $x \in I$, $\height^t(x) \geq \height^{t^-}(x) > \height^{t^-}(s_2) = \height^t(s_2)$. Moreover, $\height(s_1), \height(b) \in \Z+1/2$, so $\height^{t^-}(s_1) > \height^{t^-}(b)$ implies $\height^{t^-}(s_1) > \height^{t^-}(b) + 1/2  =\height^t(b)$. In addition to that, for all $x\in[h_1, b)$, $\height^t(x) \leq \height^t(b) < \height^t(s_1)$, thus finally: for every $x \in I, \height^t(x) < \height^t(s_1) $
	
	We can do exactly the same reasoning if $b$ fills $h_2$, so \ref{P2} is true for every $t$.
\end{proof}

We now conclude the proof in the case $\db < \dt$ (of Theorem \ref{thm:defZ}).

Almost surely, the initial configuration $\eta^0$ on $\Z$ can be divided into an infinite number of finite enclosed intervals (between consecutive separators). \addition{Therefore, conditional on $\eta^0$, the golf process can be divided into independent blocks. To be able to apply Lemma \ref{lemma:no_ball_exits_enclosed_intervals} (in order to conclude that no ball exit any of these enclosed intervals), we need to ensure that no ball come from outside an enclosed interval. One way of doing this is to introduce the alternative process \textit{with sinks}, defined as the golf process with the subtlety that we allow the separators to be filled by an arbitrary number of balls (the separators are sinks). Here we can immediately apply Lemma \ref{lemma:no_ball_exits_enclosed_intervals}, and it implies that almost surely no ball fills any separator. In particular if we go back to the initial process where separators are empty holes that can be filled by at most one ball, then when no separator are filled in the golf process with sinks, these two models coincide, and this happens with probability 1.} It implies that our model is valid almost surely, and that the separators are almost surely not filled.

Concerning measurability, the separators enable to divide the golf process into finite golf processes, which are \textit{càdlàg}, hence the golf process on $\Z$ is almost surely \textit{càdlàg} (since we consider the product topology on $\mathcal{S}$).

\paragraph{Case $\db = \dt$:}
For any $t<1$, we consider the golf process until time $t$, which we denote $\eta[<t]$, that corresponds to the golf process with same initial configuration and dynamics as before, but in which we have activated only the balls with a clock smaller than $t$. It is straightforward that the golf process on $\Z$ is valid until time $t$, for any $t<1$ (because the density of balls is then $t\db < \dt$, so we can use the first part of the proof). If we consider a vertex $b$ containing a ball, its clock $\Clock(b)$ is such that $\Clock(b)<1$ almost surely and thus, in the process $\eta[<(\Clock(b) + 1)/2]$, this ball has been activated. Moreover, for any $s < t <1$, $(\eta[<t])[<s] = \eta[<s]$, therefore, we can define the golf process on $\Z$ as $\eta = \eta[<1] = \lim_{t\to 1, t < 1} \eta[<t]$: the limit exists and corresponds to the process we have described.

Finally, it is also almost surely \textit{càdlàg}: thanks to the previous paragraph it is straightforward that it is \textit{càdlàg} before any time $t<1$; the process is even continuous at $1^-$, since for the continuity on a product space it suffices to have the continuity for each projection, and each state is modified only once (at a time $<1$).
\end{proof}

From this proof we can deduce the two following corollaries:
\begin{corollary}\label{cor:Z_nb_infini_de_trous}
When $\db < \dt$, the set of separators $\Sep$ is included in the set of remaining holes $\TL$. In particular, there is an infinite number of holes on the left and on the right of 0. 
\end{corollary}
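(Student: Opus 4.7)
The proof is essentially a direct repackaging of the ingredients already assembled for Theorem \ref{thm:defZ}. The plan is to argue that every element of $\Sep$ must belong to $\TL$, and then invoke Lemma \ref{lemma:infinite_nb_separators} to obtain the final statement about the infinitely many holes on each side of $0$.

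First, I would recall the decomposition established at the end of the proof of Theorem \ref{thm:defZ}: almost surely, the separators partition $\Z$ into a sequence of finite enclosed intervals $(I_s)_{s \in \Sep}$ with $I_s = \llbracket s, \suiv(s) \rrbracket$, and the full golf process $\eta$ on $\Z$ coincides with the product of the restricted golf processes $(\eta[I_s])_{s\in\Sep}$ (since no ball ever exits the enclosed interval it starts in). Consequently, for any $s\in\Sep$, the state of the vertex $s$ at time $1$ in the full process equals both its state in $\eta[I_{s'}]^1$, where $s' = \max\{s'' \in \Sep : s'' < s\}$ (so that $\suiv(s') = s$), and its state in $\eta[I_s]^1$.

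Now by Lemma \ref{lemma:no_ball_exits_enclosed_intervals}, applied to either of these two adjacent enclosed intervals, the vertex $s$ sits at the boundary of a closed interval whose boundary vertices remain free holes at time 1. In particular $\eta^1_s = \hole$, hence $s \in \TL$. Since this holds for every $s \in \Sep$, we get $\Sep \subseteq \TL$.

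The final conclusion is then immediate from Lemma \ref{lemma:infinite_nb_separators}: almost surely $\Sep$ has infinitely many elements both on the left and on the right of $0$, and the same therefore holds for $\TL$. There is no real obstacle; the only subtlety is that each $s\in\Sep$ plays two roles — right endpoint of $I_{s'}$ and left endpoint of $I_s$ — and one must observe that Lemma \ref{lemma:no_ball_exits_enclosed_intervals} ensures $s$ stays free from the point of view of both enclosed intervals, which is automatic since that lemma applies symmetrically to both boundary separators of any enclosed interval.
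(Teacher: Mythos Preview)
Your proposal is correct and follows essentially the same approach as the paper: the paper simply notes that the corollary is a consequence of Lemma \ref{lemma:infinite_nb_separators} together with the conclusion of the proof of Theorem \ref{thm:defZ} in the case $\db<\dt$, where it was already established that the full process on $\Z$ is the product of the restricted processes on enclosed intervals and that, by Lemma \ref{lemma:no_ball_exits_enclosed_intervals}, no separator is ever filled. Your write-up spells this out a bit more explicitly (noting that each separator is an endpoint of two enclosed intervals), but the argument is the same.
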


It is a consequence of Lemma \ref{lemma:infinite_nb_separators} and of the conclusion of the proof for the case $\db = \dt$.

\begin{corollary}\label{cor:Zdef_tant_que_pas_trop_de_balles}
If $\db > \dt$, the golf process on $\Z$ is valid until time $\dt/\db$ {(included)}. %It is then almost surely immediately frozen.
\end{corollary}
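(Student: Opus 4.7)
The plan has two parts: showing validity up to time $t^\star\coloneqq\dt/\db$ and showing the immediate freezing afterwards.

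For any $t\in(0,1)$, I define the \emph{truncated} golf process $\eta[<t]$ on $\Z$ exactly as in the proof of the case $\db=\dt$ of Theorem \ref{thm:defZ}: the initial configuration and clocks are unchanged, but only balls whose clock is smaller than $t$ are ever activated. Equivalently, after the time-rescaling $s\mapsto s/t$, $\eta[<t]$ is a golf process on $\Z$ with independent initial states, effective ball density $\db\cdot t$, hole density $\dt$, and clocks uniform on $(0,1)$. Because the random walks of the balls stop only on free holes and not on balls, the not-yet-activated balls (those with clock $\geq t$) play no role in the dynamics of $\eta$ on $[0,t]$; hence $\eta$ and $\eta[<t]$ agree on $[0,t]$. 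For every $t\leq t^\star$ we have $\db t\leq\dt$, so Theorem \ref{thm:defZ} applies to the (rescaled) $\eta[<t]$: it is valid. Taking $t=t^\star$ yields validity of $\eta$ on $[0,t^\star]$.

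For the freezing part, I would apply Theorem \ref{prop:Z_same_density_bh_empty_holeset} to $\eta[<t^\star]$, which is a valid golf process on $\Z$ with equal ball and hole densities both equal to $\dt$. That theorem gives $\TL=\emptyset$ almost surely for this truncated process; but $\TL$ in $\eta[<t^\star]$ coincides with the set of free holes at time $t^\star$ in $\eta$ (since no further ball activation takes place in $\eta[<t^\star]$ after $t^\star$). Therefore, at time $t^\star$, almost surely every hole is already filled in $\eta$. For any $\varepsilon>0$, the random set of balls whose clock lies in $(t^\star, t^\star+\varepsilon)$ has positive density $\db\varepsilon$ and hence is almost surely infinite; since filled holes remain filled, any ball activated after time $t^\star$ finds no free hole, so its hitting time $\tau$ equals $\infty$ and the process becomes frozen. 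Consequently $\eta$ is frozen at every time strictly greater than $t^\star$.

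The main conceptual step is the reduction to the critical regime $\db=\dt$ via the truncation $\eta[<t^\star]$, which singles out $t^\star$ as the critical freezing time. The main obstacles will be to formalize the equivalence between $\eta|_{[0,t]}$ and $\eta[<t]$ after time-rescaling (which ultimately rests on the fact that walks ignore balls), and to handle the fact that after $t^\star$ there is no well-defined ``first offending ball'' (the clocks of balls lying in $(t^\star, t^\star+\varepsilon)$ accumulate at $t^\star$), which is precisely why the process is frozen on \emph{all} of $(t^\star,1]$ rather than at a single later time.
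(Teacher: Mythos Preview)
Your argument is correct and matches the paper's own (very brief) justification for validity up to $t^\star=\dt/\db$: both reduce to Theorem~\ref{thm:defZ} via the truncation/time-rescaling that turns the activated balls into an i.i.d.\ configuration with ball density $\db t\le\dt$. For the freezing part you go further than the paper, which leaves this implicit; your use of Theorem~\ref{prop:Z_same_density_bh_empty_holeset} to conclude that all holes are filled at time $t^\star$ is legitimate (there is no circularity, since that theorem's proof does not rely on the present corollary), and your observation that clock times accumulate at $t^\star$ correctly explains why the freezing is immediate rather than occurring at a single later time.
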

Indeed, the probability that a vertex contains a ball activated before time $\dt/\db$ is $\dt$. The golf process until time $\dt/\db$ is thus, up to a rescaling of time, similar to a golf process with ball density equal to $\dt$, which we have proven to be valid. In Section \ref{subsect:Z_pas_de_trous_si_db=dt}, Corollary \ref{cor:Zdef_tant_que_pas_trop_de_balles2} allows us to prove that actually, after time $\dt/\db$, the process is not valid anymore.
\\

\subsection{The golf process on $\Z$ is not always valid for different initial conditions}\label{subsect:discuss_def_Z}

\begin{figure}[]\centering
\begin{subfigure}[b]{0.9\textwidth}
	\centering
	\begin{tikzpicture}[inner sep=0.7mm, scale = 0.35]
		
		\draw[dotted, color=Black] (0,0) -- (0,1.5); \node[color = Black, above] at (0,1.5) {\footnotesize $0$};
		\draw[-,line width=1.5pt] (-13,1) -- (11,1);
		\draw[dotted, line width=1.5pt] (-14,1)--(-13,1); \draw[dotted, line width = 1.5pt] (11,1) -- (12,1);
		\foreach \x in {-12,-9,-6,-3,-10,-11,-7, -8,-4, -5,-2,-1}{
			\draw[black, fill] (\x,1) circle (0.2);
		}
		\foreach \x in {0,1,3,4,6,9,10,2, 5, 7, 8}{
			\draw[black, fill=white] (\x,1) circle (0.2);
		}
		\node[] at (-16,1) {\footnotesize ${\eta{(a)}}^0:$};
	\end{tikzpicture}
	\caption{${\eta{(a)}}^0$ such that $\forall x \geq 0,{\eta{(a)}}^0_x = \hole$ and $\forall x < 0, {\eta{(a)}}^0_x = \ball.$}
	\label{subfig:defZ_contreexs_1}
\end{subfigure}
\vspace{0.5cm}

\begin{subfigure}[b]{0.9\textwidth}
	\centering
	\begin{tikzpicture}[inner sep=0.7mm, scale = 0.35]
		
		\draw[dotted, color=Black] (0,0) -- (0,1.5); \node[color = Black, above] at (0,1.5) {\footnotesize $0$};
		\draw[-,line width=1.5pt] (-13,1) -- (11,1);
		\draw[dotted, line width=1.5pt] (-14,1)--(-13,1); \draw[dotted, line width = 1.5pt] (11,1) -- (12,1);
		\foreach \x in {-10,-11,-7, -8,-4, -5,-2,-1}{
			\draw[black, fill] (\x,1) circle (0.2);
		}
		\foreach \x in {-12,-9,-6,-3,0,1,3,4,6,9,10,2, 5, 7, 8}{
			\draw[black, fill=white] (\x,1) circle (0.2);
		}
		\node[] at (-16,1) {\footnotesize ${\eta{(b)}}^0:$};
	\end{tikzpicture}
	\caption{${\eta{(b)}}^0$ such that ${\eta{(b)}}^0_x = \hole$ if $x\geq 0$ or $x=0 \mod 3$ and ${\eta{(b)}}^0_x = \ball$ otherwise.}
	\label{subfig:defZ_contreexs_2}
\end{subfigure}
\vspace{0.5cm}

\begin{subfigure}[b]{0.9\textwidth}
	\centering
	\begin{tikzpicture}[inner sep=0.7mm, scale = 0.35]
		
		\draw[dotted, color=Black] (0,0) -- (0,1.5); \node[color = Black, above] at (0,1.5) {\footnotesize $0$};
		\draw[-,line width=1.5pt] (-13,1) -- (11,1);
		\draw[dotted, line width=1.5pt] (-14,1)--(-13,1); \draw[dotted, line width = 1.5pt] (11,1) -- (12,1);
		\foreach \x in {-11, -9,-7, -5,-3,-1}{
			\draw[black, fill] (\x,1) circle (0.2);
		}
		\foreach \x in {-12,-10,-6,-8,-4,-2,0,1,3,4,6,9,10,2, 5, 7, 8}{
			\draw[black, fill=white] (\x,1) circle (0.2);
		}
		\node[] at (-16,1) {\footnotesize ${\eta{(c)}}^0:$};
	\end{tikzpicture}
	\caption{${\eta{(c)}}^0$ such that ${\eta{(c)}}^0_x = \hole$ if $x\geq 0$ or $x=0 \mod 2$ and ${\eta{(c)}}^0_x = \ball$ otherwise.}
	\label{subfig:defZ_contreexs_3}
\end{subfigure}
%	\vspace{0.5cm}
%	\begin{subfigure}[b]{0.9\textwidth}
	%		\centering
	%		\begin{tikzpicture}[inner sep=0.7mm, scale = 0.35]
		%			
		%			\draw[dotted, color=Black] (0,0) -- (0,1.5); \node[color = Black, above] at (0,1.5) {\footnotesize $0$};
		%			\draw[-,line width=1.5pt] (-13,1) -- (11,1);
		%			\draw[dotted, line width=1.5pt] (-14,1)--(-13,1); \draw[dotted, line width = 1.5pt] (11,1) -- (12,1);
		%			\foreach \x in {-12,-10,-6,-8,-4,-2,-1,0,1,2,4,6,8,10}{
			%				\draw[black, fill] (\x,1) circle (0.2);
			%			}
		%			\foreach \x in {-11, -9,-7, -5,-3,3,5,7,9}{
			%				\draw[black, fill=white] (\x,1) circle (0.2);
			%			}
		%			\node[] at (-16,1) {\footnotesize ${\eta{(d)}}^0:$};
		%		\end{tikzpicture}
	%		
	%		\caption{${\eta{(d)}}^0$ such that ${\eta{(d)}}^0_x = \hole$ if $|x|\geq 3$ and $x=1 \mod 2$ and ${\eta{(d)}}^0_x = \ball$ otherwise.}
	%		\label{subfig:defZ_contreexs_4}
	%	\end{subfigure}
\caption{Three possible initial configurations on $\Z$ that illustrate the complexity of defining a golf process on $\Z$. The golf process with initial configuration $\eta(a)^0$ is not valid. The process with initial configuration $\eta(c)^0$ is valid, {while the one with initial configuration $\eta(b)^0$ is only valid until time $1/2$.}  }
\label{fig:defZ_contrexs}
\end{figure}

{As we briefly mentioned earlier}, building a well-defined golf process on infinite graphs such as $\Z$ is not that simple. In this subsection, we discuss and illustrate this complexity with several examples (detailed on Figure \ref{fig:defZ_contrexs}). For these examples, we fix the initial condition, and we still assume that the clocks are independent and drawn uniformly at random on $[0,1]$.

We start with the first example, with initial condition defined on Subfigure \ref{subfig:defZ_contreexs_1}.
For any $x <0$, the ball in $x$ has clock $\Clock(x)$, and there almost surely exists an infinite number of other balls with clock smaller than $\Clock(x)$. It means that there cannot be any hole $y\geq 0$ free at time $\Clock(x)$, and thus the ball in $x$ will not be able to find a hole. Thus, the process would be frozen at any time $t>0$ (this prevents $(\eta^t, t\geq 0)$ to be \textit{càdlàg}).

This example also illustrates a limit of the Commutation property (which we had on finite graphs, see Remark \ref{remark:prop_commutation}. In fact, if the clocks were not random but were, for example, an increasing function of $\card{x}$ (for example, if $\Clockdet(x) = 1 - 1/(1+\card{x}), \forall x <0$), then it is quite straightforward that the golf process would be valid: for every $x <0$, the ball in $x$ would fill the hole in $\card{x}-1$ at time $\Clockdet(x)$.

Examples $b$ and $c$ (see Subfigures \ref{subfig:defZ_contreexs_2} and \ref{subfig:defZ_contreexs_3} for the definition of the initial configurations $\eta(b)^0$ and $\eta(c)^0$) are also very interesting. We can show (doing exactly as for Corollary \ref{cor:Zdef_tant_que_pas_trop_de_balles}, using the proof of Theorem \ref{thm:defZ}, the key tool being the existence of separators at any time $t<1$) that the golf process with initial configuration $\eta(c)^0$ is valid, whereas the one with initial configuration $\eta(b)^0$ is only valid until time $t =1/2$ (as for example $a$, it is after that immediately frozen, thus not \textit{càdlàg at $t=1/2$}).

For the golf process with initial configuration $\eta(c)^0$, it would actually be possible to prove that $\TL = \Z_{\geq 1}$.

\addition{All these examples illustrate the importance of separators, even when the initial configuration is not ergodic : when there are no separators anymore, the model is not valid. For the golf process with initial condition $\eta(c)^0$, there are no separators, but if we consider the process before time $t$ (with $t<1$), thus considering only balls with clock smaller than $t$, there is again an infinite number of separators. Similarly, on $\eta(b)^0$ there are separators until time $1/2$, and the process is valid until time 1/2. }

\addition{We can notice that the ``proportion'' of balls $\lim_{r\to\infty} \frac{\card{\{x \in \llbracket-r,r\rrbracket: \eta^0_i = 1\}}}{2r+1}$ is not invariant by translation. It does not enable to distinguish valid examples from non-valid ones.}

\subsection{Distribution of the block-size process - proof of Theorem \ref{thm:loiblocsZ}}\label{subsect:ZloiTL}

In this subsection, we prove Theorem \ref{thm:loiblocsZ}. We thus fix $\db$ and $\dt$, such that $\db < \dt$ and $\db+\dt = 1$.\\

{We first give the main idea of the proof.} It relies on a coupling between the {golf model} on $\Z$ and a golf model on $\znz$, with $n$ large enough, and containing $\nholes(n)$ holes and $\nballs(n)$ balls, such that $\nholes(n)/n\sim \dt$ and $\nballs(n)/n\sim \db$. We give and prove three key lemmas, and then we use them to conclude.

\textbf{Sketch of the proof.} We will see that we can fix some constant $M$ large enough such that, both on $\Z$ and on $\znz$, the probability that $\TL$ restricted to $\llbracket -M, M\rrbracket $ contains at least $R+1$ holes on the left and on the right of 0 is arbitrarily close to 1 (the same $M$ being valid for all $n$ large enough, see Step 2 below). This implies (morally) that the positions of the $R$ first holes on the left (and on the right) of 0 only depend on the golf process restricted to $\llbracket -M, M\rrbracket $. Moreover, when $\nholes(n)/n\sim \dt$ and $\nballs(n)/n\sim \db$, the initial configuration on $\znz$ restricted to $\llbracket -M, M\rrbracket $ is close (in distribution) to the initial configuration on $\Z$, also restricted to $\llbracket -M, M\rrbracket $ (see Step 3 below). It enables to construct a probability space on which the initial configurations coincide on $\llbracket -M, M\rrbracket $ with high probability, as well as the paths of the balls on this interval, that leads to a coupling of the final configurations (still restricted to $\llbracket -M, M\rrbracket $) that are then equal with high probability (Step 4 below gives the last lemma that enables to conclude properly). It then remains to study the distribution of the remaining holes on a large circle and show its convergence in distribution for the product topology. To do this, we show (in Step 1 below) that actually the process {$\Delta^{(n)} \TL$} has the same law as a family of independent variables conditioned by their sum, for which deriving the asymptotic distribution is routine.\\

Let us start the formal proof. 

\paragraph{Notation}For every $n$, we define $\nholes(n) \coloneqq \lfloor n \dholes \rfloor$ and $\nballs(n) \coloneqq n - \nholes(n)$, so that $\nballs(n) + \nholes(n) = n$, $\nballs(n) \sim \db n $ and $\nholes(n) \sim \dt n$. As usual, $\nl = \nholes(n)-\nballs(n)$.

\addition{As we deal with several families of random variables, we use the following notation to avoid ambiguity:

\textbullet We write, as before $\Deltan\TL$ for the block sizes process on $\znz$, and we write $\pcercle$ for probabilities concerning the golf process on $\znz$ when needed (sometimes, for compactness reasons, we omit it, but then the superscript ``$(n)$'' removes any ambiguity).

\textbullet Similarly, $\Delta\TL$ refers to the block sizes process on $\Z$, and most of the time we write $\pz$ for the associated probabilities.

\textbullet Finally, we introduce variables $(\zzlambda_i )_{i\in\Z}$, and we simply denote by $\P$ the associated probabilities.}

\paragraph{Step 1:} We let $\lambda = \db \dt$ (as stated in Theorem \ref{thm:loiblocsZ}). We begin by defining a family $(\zzlambda_i )_{i\in\Z}$ of independent random variables, taking non-negative even integer values, with
\begin{align*}
\forall k\geq 0,\ & \Prob{\zzlambda_0 = 2k} = \frac{(2k+1) C_k \lambda^k}{\mathcal{H}(\lambda)} \\
\et  \forall i \neq 0, \forall k\geq 0,\ & \Prob{\zzlambda_i = 2k} = \frac{ C_k \lambda^k}{\mathcal{G}(\lambda)} .
\end{align*}

\begin{lemma}	\label{lemma:varindepcondsomme}
We have the following equality for the golf process on $\znz$:
\begin{equation}
	{\cal L}\left(\left(\Deltan_i\TL\right)_{0 \leq i < \nl} \right)={\cal L}\left( \left(\zzlambda_i\right)_{0 \leq i < \nl}~\middle|~\sum \zzlambda_i= n-\nl\right). \label{eqn:varinderpcondsomme1}
\end{equation}

Moreover, the {conditioning} on the sum vanishes asymptotically: 
\begin{equation}
	\left(\Deltan_i\TL\right)_{-R \leq i < R} \overset{(d)}{\underset{n\to\infty}\longrightarrow} \left(\zzlambda_i\right)_{-R \leq i < R}. \label{eqn:varinderpcondsomme2}
\end{equation}
\end{lemma}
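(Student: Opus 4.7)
The plan is to first establish identity (\ref{eqn:varinderpcondsomme1}) by directly comparing probability mass functions, and then to deduce (\ref{eqn:varinderpcondsomme2}) from (\ref{eqn:varinderpcondsomme1}) via a local limit theorem.

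For (\ref{eqn:varinderpcondsomme1}), Equation (\ref{eqn:jolie_formule_deltaTL_znz}) asserts that, whenever $\sum_i b_i = \nballs$,
\begin{equation*}
\pcercle\!\left(\forall i \in \Z/\nl\Z,\, \Deltan_i \TL = 2b_i\right) = \frac{2b_0+1}{\binom{n}{\nballs}} \prod_{i \in \Z/\nl\Z} C_{b_i},
\end{equation*}
and this probability vanishes otherwise. The joint law of the independent $\zzlambda_i$ is
\begin{equation*}
\P\!\left(\forall i,\, \zzlambda_i = 2b_i\right) = \frac{\lambda^{\sum_i b_i}}{\mathcal{H}(\lambda)\mathcal{G}(\lambda)^{\nl-1}}\, (2b_0+1) \prod_{i \in \Z/\nl\Z} C_{b_i}.
\end{equation*}
On the conditioning event $\{\sum_i \zzlambda_i = n-\nl\} = \{\sum_i b_i = \nballs\}$ (using $n-\nl = 2\nballs$ and the fact that each $\zzlambda_i$ is an even integer), the factor $\lambda^{\sum_i b_i}$ is the constant $\lambda^{\nballs}$. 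The conditional law is thus proportional in $(b_i)$ to $(2b_0+1) \prod_i C_{b_i}$, exactly matching the law of $(\Deltan_i \TL)$; since both are probability measures with the same proportionality structure, they coincide.

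For (\ref{eqn:varinderpcondsomme2}), the strict inequality $\db < \dt$ together with $\db + \dt = 1$ forces $\lambda = \db(1-\db) < 1/4$ strictly, so the generating function $\mathcal{G}$ is analytic in a neighborhood of $\lambda$ and all moments of $\zzlambda_0$ and of $\zzlambda_1$ (a generic $\zzlambda_i$ with $i \neq 0$) are finite. Let $\mu = \E[\zzlambda_1] = 2\db/(2\dt-1)$, as computed in the remark following Theorem \ref{thm:loiblocsZ}, and $\sigma^2 = \Var{\zzlambda_1} \in (0,\infty)$; a short check gives $\E[\zzlambda_0] + (\nl-1)\mu = n - \nl + O(1)$, so the conditioning value $n-\nl$ lies within $O(1)$ of the mean of $\sum_i \zzlambda_i$. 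Fix $R$ and values $(b_i)_{-R \leq i < R}$, and set $I = \{-R,\ldots,R-1\}$, which we identify with a subset of $\Z/\nl\Z$ containing $0$ for $n$ large. Using (\ref{eqn:varinderpcondsomme1}) and independence, the left-hand side of (\ref{eqn:varinderpcondsomme2}) equals $\prod_{i \in I} \P(\zzlambda_i = 2b_i)$ times the ratio
\[
\frac{\P\!\left(\sum_{j \notin I} \zzlambda_j = n - \nl - 2\sum_{i \in I} b_i \right)}{\P\!\left(\sum_{j \in \Z/\nl\Z} \zzlambda_j = n - \nl\right)}.
\]
Applying the central local limit theorem (Theorem \ref{thm:TCLL}) to the numerator, and conditioning on $\zzlambda_0$ before applying it to the denominator, both probabilities are equivalent to $(\pi \nl \sigma^2)^{-1/2}$ as $n \to \infty$: the evaluation points are within $O(1)$ of the respective means, so the Gaussian exponential factor tends to $1$. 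The ratio therefore tends to $1$, giving (\ref{eqn:varinderpcondsomme2}).

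The main technical delicacy lies in the last step: the denominator is the mass function of a sum of $\nl-1$ i.i.d.\ variables plus the independent size-biased variable $\zzlambda_0$, so it is not literally a sum of i.i.d.\ random variables. This is handled by conditioning on $\zzlambda_0$ and invoking the local limit theorem for the remaining i.i.d.\ sum at a point still within $O(\zzlambda_0)$ of its mean, which works thanks to the finite second moment of $\zzlambda_0$ ensured by $\lambda < 1/4$.
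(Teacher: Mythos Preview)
Your proposal is correct and follows essentially the same route as the paper: for (\ref{eqn:varinderpcondsomme1}) you compare both laws to the common weight $(2b_0+1)\prod_i C_{b_i}$ via Equation (\ref{eqn:jolie_formule_deltaTL_znz}), and for (\ref{eqn:varinderpcondsomme2}) you write the conditional probability as the product of the unconditioned marginals times a ratio of two local-limit probabilities, then show the ratio tends to $1$. The one place to tighten is the denominator: your text first says the evaluation point is within $O(1)$ of the mean and later within $O(\zzlambda_0)$ --- the second is correct, and making this rigorous requires either truncating $\zzlambda_0$ (as the paper does at level $n^{1/3}$) or a dominated-convergence argument using the uniform bound $\sup_m \P(\sum_{i\neq 0}\zzlambda_i = m)=O(\nl^{-1/2})$; a finite first moment of $\zzlambda_0$ suffices for this, so your appeal to the second moment is more than enough but not the sharp reason.
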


\begin{remark} The variables $\zzlambda_i$ ($i \neq 0$), conditioned by their sum have an expectation that tends to $\frac{2\db}{2\dt-1}$. In fact, 
\begin{equation*}
	(\nl - 1) \E\left[\zzlambda_1 \middle| \sum_{i } \zzlambda_i = n - \nl\right] + \E\left[\zzlambda_0 \middle| \sum_{i } \zzlambda_i = n - \nl\right] = n - \nl,
\end{equation*} and finally
\begin{align*}
	\E\left[\zzlambda_1 \middle|\sum_{i } \zzlambda_i = n - \nl\right] &\underset{n \to\infty}{\sim}  \frac{n - \nl}{\nl - 1} \\ &\underset{n \to\infty}{\to} \frac{1-\dt + \db}{\dt - \db} = \frac{2\db}{2\dt -1} = \E[\zzlambda_1].
\end{align*}

It is consistent with the value of $\lambda$ chosen in Theorem \ref{thm:loiblocsZ}, and it is the main reason why the conditioning on the sum vanishes asymptotically.
\end{remark}

\begin{proof} 
\textbf{Proof of (\ref{eqn:varinderpcondsomme1}).}
It suffices to show that, for every sequence $\left(b_i\right)_{0 \leq i < \nl}$ such that $\sum 2b_i = n - \nl$, then
\begin{equation}
	\P^{n,\nballs(n),\nholes(n),p}\left(\forall i, \Deltan_i\TL = 2b_i\right) = \P\left(\forall i , \zzlambda_i = 2b_i \middle| \sum_{i} \zzlambda_i = n - \nl\right), \label{eqn:varindepcondsomme}
\end{equation}  
where everywhere (and also until the end of the proof, unless specified otherwise) $i$ belongs to $\llbracket 0, \nl \llbracket$.
Equation (\ref{eqn:jolie_formule_deltaTL_znz}) in the proof of Corollary \ref{cor:blocks_and_forests} implies that the probability $\P^{n,\nballs(n),\nholes(n),p}\left(\forall i, \Deltan_i\TL = 2b_i\right)$ is proportional to $(2b_0+1)\prod_{j=0}^{\nl-1} C_{b_j}$. Moreover, it is immediate that:
\begin{equation*}
	\P\left(\forall i, \zzlambda_i = 2b_i \middle| \sum_{i} \zzlambda_i = n - \nl\right) = \frac{\lambda^{n-\nl} (2b_0+1) C_{b_0}\prod_{i}C_ {b_i}}{\Prob{\sum_{i } \zzlambda_i = n - \nl}\mathcal{H}(\lambda) {\mathcal{G}(\lambda)}^{\nl-1} }.
\end{equation*}
In particular, both left and right members of Equation (\ref{eqn:varindepcondsomme}) are proportional, thus as they both sum to 1 (when summing over all the sequences $\left(b_i\right)_{0 \leq i < \nl}$ such that $\sum 2b_i = n - \nl$) they are equal and finally Equation (\ref{eqn:varindepcondsomme}) is true.

\textbf{Proof of (\ref{eqn:varinderpcondsomme2}).}
We know thanks to (\ref{eqn:varinderpcondsomme1}) that, for every $\left(b_i\right)_{-R \leq i \leq R}$,

\begin{align}
	\P\left(\Deltan_i\TL = 2b_i, -R \leq i \leq R\right)&= \P\left( \zzlambda_i = 2b_i, \forall i \in \llbracket -R, R-1\rrbracket ~\middle|~ \sum_{i = -R}^{\nl-1-R} \zzlambda_i=n-\nl\right)\\
	&=  \prod_{i =-R}^{R-1} \P(\zzlambda_i = 2b_i)  \frac{\P\left(\sum_{i =R}^{\nl - 1 - R} \zzlambda_i = n - \nl - \sum 2b_i\right)}{\P\left(\sum_{i = -R}^{\nl - 1 - R} \zzlambda_i = n - \nl \right)}\label{eqn:limavecquotient}.
\end{align}
The central local limit theorem (see (\ref{eqn:TCLL3}) {in Appendix \ref{annex:TCLL}}) implies that 
\begin{equation}
	\frac{\sigma \sqrt{n}}{2} \P\left(\sum_{i =R}^{\nl - 1 - R} \zzlambda_i = n - \nl - \sum 2b_i\right) \underset{n\to\infty}{\to} \frac{1}{\sqrt{2\pi}}, \label{eqn:num_du_quotient}
\end{equation}
where $\sigma^2 = \Var{\zzlambda_j} \in (0,\infty)$, for every $j \neq 0$.

One could want to do the same for $\sum_{i = -R}^{\nl - 1 - R} \zzlambda_i$, but since $\zzlambda_0$ does not have the same distribution as the other $\zzlambda_i$, we first need to ``get rid of it'' before applying {the central local limit theorem}.

Markov's inequality implies that $\P (\zzlambda_0 \geq n^{1/3}) \leq c /n^{1/3}$ (with $c=\E [\zzlambda_0] $).
We can thus rewrite
\begin{align*}
	\P\left(\sum_{i = -R}^{\nl - 1 - R} \zzlambda_i = n - \nl \right)
	=&  \P\left(\sum_{i = -R, i \neq 0}^{\nl - 1 - R} \zzlambda_i = n - \nl - \zzlambda_0 \middle| \zzlambda_0 < n^{1/3} \right)(1-O (n^{-1/3})) \\
	&+ O (n^{-1/3}).
\end{align*}
Then, recall that $\lambda$ is such that $\E[\zzlambda_i] = \frac{2\db}{2\dt-1}$, for every $i\neq 0$ and that $\db + \dt = 1$. As a consequence, $\E[\sum_{i=-R}^{\nl - 1 - R} \zzlambda_i] = \frac{2\db}{2\dt-1} \nl \underset{n\to \infty}{\sim} n - \nl$. We can thus use Equality (\ref{eqn:TCLL2}) in Appendix \ref{annex:TCLL} (which is a consequence of the central local limit theorem) to prove that 
\begin{equation}
	\frac{\sigma \sqrt{n}}{2} \P\left(\sum_{i = - R, i\neq 0}^{\nl - 1 - R} \zzlambda_i = n - \nl - \zzlambda_0 \middle| \zzlambda_0 < n^{1/3} \right) \underset{n\to\infty}{\to} \frac{1}{\sqrt{2\pi}}\label{eqn:cv_unif_tcll}
\end{equation}
(if it is not clear enough, write the LHS of (\ref{eqn:cv_unif_tcll}) conditionally to $\{\zzlambda_0 = x\}$, for some $|x|<n^{1/3}$, and observe that the central local limit theorem gives the convergence to the limit uniformly for these $x$).	

This implies that 
\begin{equation}
	\frac{\sigma \sqrt{n}}{2} \P\left(\sum_{i = -R}^{\nl - 1 - R} \zzlambda_i = n - \nl \right) \underset{n\to\infty}{\to} \frac{1}{\sqrt{2\pi}}. \label{eqn:denom_du_quotient}
\end{equation}

Now, we can combine Equations (\ref{eqn:num_du_quotient}) and (\ref{eqn:denom_du_quotient}) to see that the quotient in Equation (\ref{eqn:limavecquotient}) converges to 1, and finally we conclude : 
\begin{align}
	\P\left(\Delta_i\TL^{(n)} = 2b_i, -R \leq i \leq R-1\right) \underset{n\to\infty}{\longrightarrow} \prod_{i =-R}^{R-1} \P(\zzlambda_i = 2b_i) \label{eqn:tropbien}.
\end{align}

\end{proof}

\paragraph{Step 2:} We show that with high probability (that is, up to a probability $\eps>0$), the $R$ first holes on the right and on the left of 0 belong to some deterministic finite (though large enough) interval around 0, which we will denote as $[M, M]$ (both for the golf model on $\Z$ and on $\znz$, for any $n$ large enough). It is proven in the following lemma, and we will see later that it implies that the process on $[M, M]$ suffices, with high probability, to determine the $R-1$ first free holes on the right and on the left of 0.

\begin{lemma}\label{lemma:yapleindetrous} Let $R > 0$ and $\eps >0$. There exists $M \in \N$ and $n_0 \in \N$ such that,
\begin{align}
	\pz\left( \card{\TL \cap [-M,0] } \geq R+2  \et \card{\TL \cap [1,M] } \geq R+2\right) > 1-\eps, \label{eqn:yapleindetrous_surZ}
\end{align}
and for every $n\geq n_0$,  
\begin{align}
	\P^{n,\nballs(n),\nholes(n),p}\left( \card{\TL \cap [-M,0] } \geq R+2 \et \card{\TL \cap [1,M] } \geq R+2 \right) > 1-\eps.\label{eqn:yapleindetrous_surZnZ}
\end{align}

\end{lemma}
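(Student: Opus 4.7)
The plan is to establish the $\Z$-bound (\ref{eqn:yapleindetrous_surZ}) and the $\znz$-bound (\ref{eqn:yapleindetrous_surZnZ}) separately, using two different inputs from earlier in the paper, and then to take $M$ to be the larger of the two thresholds obtained.

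For (\ref{eqn:yapleindetrous_surZ}), Corollary~\ref{cor:Z_nb_infini_de_trous} has essentially done the work: it states that $\Sep \subseteq \TL$ and that $\Sep$ is almost surely unbounded above and below under $\pdbdh$, so $\TL$ has infinitely many holes on each side of $0$. In particular, the positions $-\TL_{-R-1}$ and $\TL_{R+2}$ of the $(R+2)$-th holes to the left and right of $0$ are a.s.\ finite; by tightness of their joint law, there exists $M_1\in\N$ such that $\pdbdh(-\TL_{-R-1}\le M_1 \text{ and } \TL_{R+2}\le M_1)>1-\eps$, and on this event both $[-M_1,0]$ and $[1,M_1]$ contain at least $R+2$ holes of $\TL$.

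For (\ref{eqn:yapleindetrous_surZnZ}), I would set $R'\coloneqq R+2$ and invoke equation (\ref{eqn:varinderpcondsomme2}) of Lemma~\ref{lemma:varindepcondsomme}: the vector $(\Delta^{(n)}_i \TL)_{-R'\le i<R'}$ converges in distribution to the a.s.-finite independent vector $(\zzlambda_i)_{-R'\le i<R'}$. Consequently the finite sum $T^{(n)}\coloneqq \sum_{i=-R'}^{R'-1}\Delta^{(n)}_i \TL$ converges in distribution and in particular is tight, so there exist $M_2\in\N$ and $n_0$ with $\P^{n,\nballs(n),\nholes(n),p}(T^{(n)}\le M_2)>1-\eps$ for all $n\ge n_0$. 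A short telescoping using the identity $\TL_{i+1}=\TL_i+\Delta^{(n)}_i\TL+1$ and the slack bound $-\TL_0\le \Delta^{(n)}_0\TL$ (a consequence of $\TL_0\le 0<\TL_1=\TL_0+\Delta^{(n)}_0\TL+1$) shows that, on the event $\{T^{(n)}\le M_2\}$, one has $-\TL_{-R-1}\le M_2+R+1$ and $\TL_{R+2}\le M_2+R+2$. Setting $M\coloneqq\max(M_1, M_2+R+2)$ then yields both (\ref{eqn:yapleindetrous_surZ}) and (\ref{eqn:yapleindetrous_surZnZ}) with the same constant.

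All the analytic content is already packaged in the two inputs, so the proof itself is close to bookkeeping. The only step that requires some care is the explicit conversion between a bound on the sum of block sizes around $0$ and a bound on the coordinates of the $(R+2)$-th holes to the left and right of $0$, which must correctly account for the $+1$ per block in the telescoping and for the slack $\TL_0\in[-\Delta^{(n)}_0\TL,0]$; this is the only obstacle I foresee, and it is routine.
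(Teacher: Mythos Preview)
Your proposal is correct and follows essentially the same approach as the paper: for $\Z$ you both invoke Corollary~\ref{cor:Z_nb_infini_de_trous} and tightness of the a.s.\ finite variables $\TL_{-R-1},\TL_{R+2}$, and for $\znz$ you both rely on the convergence in distribution of the block sizes from Lemma~\ref{lemma:varindepcondsomme} (equation~(\ref{eqn:varinderpcondsomme2})). The only cosmetic difference is that the paper phrases the $\znz$ step as convergence in distribution of the hole positions $(\TL_i^{(n)})_{-R-1\le i\le R+2}$ (using that $\TL_0$ is uniform in its block), whereas you bound $-\TL_0\le\Delta_0^{(n)}\TL$ directly and carry out the telescoping explicitly; your route avoids needing the exact conditional law of $\TL_0$ and is arguably slightly cleaner.
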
 

\begin{proof} On $\Z$, we can define an ordering of $\TL$: we define the random variables $\TL_i$, for every $i\in \Z$, corresponding to the $i$th hole with respect to 0 (the hole indexed by 0 is the first hole on the left of 0: $\TL_0 = \sup \{h\in \TL : h \leq 0\}$; the other holes are such that $\forall i\in\Z, \TL_i < \TL_{i+1}$). In this way, it is consistent with the definition of $\Delta\TL$, since $\Delta_0\TL = \TL_1 - \TL_0 -1$. Similarly, we define the variables $(\TL^{(n)}_i)_{i\in \Z/\nl\Z}$ corresponding to the ordering of the elements of $\TL^{(n)}$ on $\znz$. 

For the golf process on $\Z$, we have proven (see Corollary \ref{cor:Z_nb_infini_de_trous}) that there is an infinite number of holes, on the left and on the right of 0. Thus $\TL_i$, with $i\in\llbracket -R-1,R+2\rrbracket$ are almost surely finite random variables, and this directly implies (\ref{eqn:yapleindetrous_surZ}).

On $\znz$, we can use Lemma \ref{lemma:varindepcondsomme} to prove that $\left(\TL_i^{(n)}\right)_{-R-1 \leq i \leq R+2}$ converges in distribution (it is a consequence of (\ref{eqn:tropbien}) and of the fact that the point of index 0 is uniform in the block that contains 0, of size $\Deltan_0\TL$, that converges in distribution).
\end{proof}

\paragraph{Step 3:} In order to couple the golf model on $\Z$ with the golf model on $\znz$ (on a large neighborhood of $0$), we want to show that the distributions of the two initial configurations on this large neighborhood become arbitrarily close (when $n$ goes to infinity).

\begin{lemma}[{uniformity of the approximation}]\label{lemma:approxunif} Let $I$ be some finite interval of $\Z$.
\[ \sup_{\left(x_i\right)_{i\in I} \in \{\ball, \hole\}^I } \left| \frac{\P^{n,\nballs(n),\nholes(n),p}\left(\forall i \in I, \eta^0_i = x_i\right)}{\pz\left(\forall i \in I, \eta^0_i = x_i\right)} -1 \right| \limit{}{n \to \infty} 0.\]

(Here, on $\znz$, on the left of 0, the index $-i\in I$ with $-i \leq 0$ needs to be seen as $-i = n-i \in \znz$, so that on $\Z$ and $\znz$, around 0, the indices can be identified).
\end{lemma}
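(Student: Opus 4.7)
The plan is a direct computation: both probabilities are explicit, their ratio depends on $(x_i)_{i\in I}$ only through the counts of balls and holes, and convergence is therefore trivially uniform.

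First I would compute $\pz(\forall i \in I, \eta^0_i = x_i)$. Under $\pdbdh$ the $\eta^0_i$ are i.i.d.; with $\db + \dt = 1$ there are no neutral vertices, so if $b \coloneqq \card{\{i \in I : x_i = \ball\}}$ and $h \coloneqq \card{\{i \in I : x_i = \hole\}}$ (so that $k \coloneqq \card{I} = b + h$), independence gives
\[ \pz(\forall i \in I, \eta^0_i = x_i) = \db^{\,b}\,\dt^{\,h}. \]

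Next I would compute the analogous probability on $\znz$. Since $\nballs(n) + \nholes(n) = n$, the initial distribution is uniform among configurations with exactly $\nballs(n)$ balls and $\nholes(n)$ holes, so prescribing the states on the $k$ positions of $I$ leaves $\binom{n-k}{\nballs(n) - b}$ choices for the remaining $n-k$ vertices. Hence
\[ \P^{n,\nballs(n),\nholes(n),p}(\forall i \in I, \eta^0_i = x_i) = \frac{\binom{n-k}{\nballs(n)-b}}{\binom{n}{\nballs(n)}} = \frac{\prod_{j=0}^{b-1}(\nballs(n)-j) \prod_{j=0}^{h-1}(\nholes(n)-j)}{\prod_{j=0}^{k-1}(n-j)}. \]

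Taking the ratio, I obtain
\[ \frac{\P^{n,\nballs(n),\nholes(n),p}(\forall i \in I, \eta^0_i = x_i)}{\pz(\forall i \in I, \eta^0_i = x_i)} = \prod_{j=0}^{b-1}\frac{\nballs(n)-j}{\db\, n} \cdot \prod_{j=0}^{h-1}\frac{\nholes(n)-j}{\dt\, n} \cdot \prod_{j=0}^{k-1}\frac{n}{n-j}. \]
Using $\nballs(n) = \db n + O(1)$ and $\nholes(n) = \dt n + O(1)$ (with $\db, \dt > 0$), each factor equals $1 + O(1/n)$ with a constant depending only on $k$, $\db$, $\dt$. Thus, for each fixed $(b,h)$ with $b + h = k$, this ratio tends to $1$.

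The key point for uniformity is that this ratio depends on the sequence $(x_i)_{i \in I}$ only through the integer pair $(b,h)$, and since $b \in \{0, 1, \dots, k\}$ takes only finitely many values, the supremum over all $(x_i) \in \{\ball, \hole\}^I$ is attained among finitely many explicit expressions, each tending to $1$. The supremum therefore tends to $0$, which is the claim. There is no real obstacle here; the only thing to notice is that $\db, \dt > 0$ is needed so that the denominator $\db^{\,b}\dt^{\,h}$ never vanishes and the ratio is well-defined.
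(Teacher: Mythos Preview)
Your proof is correct and follows essentially the same approach as the paper: both compute the hypergeometric probability $\binom{n-|I|}{\nballs(n)-b}/\binom{n}{\nballs(n)}$ on $\znz$, compare it to the product $\db^{\,b}\dt^{\,h}$ on $\Z$, and observe that the ratio depends only on the pair $(b,h)$ which ranges over finitely many values. If anything, you make the uniformity step more explicit than the paper does.
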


\begin{proof}
Let $\left(x_i\right)_{i\in I} \in \{\ball, \hole\}^I$. One can easily compute:
\begin{align}
	\P^{n,\nballs(n),\nholes(n),p}\left(\forall i \in I, \eta^0_i = x_i\right)  &= \frac{\binom{n-\card{I}}{\nholes(n) - \card{\{i\in I : x_i = \hole} }}{\binom{n}{\nholes(n)}} \\&
	\underset{n\to\infty}{\sim} \frac{{\nholes(n)}^{\card{\{i\in I : x_i = \hole\}}} {\nballs(n)}^{\card{\{i\in I : x_i = \ball\}}}}{n^{\card{I}}}\\
	&\underset{n \to \infty}{\longrightarrow} \dholes^{\card{\{i\in I : x_i = \hole\}}} \db^{\card{\{i\in I : x_i = \ball\}}}. \label{eqn:untruc}
\end{align}
And the right member of (\ref{eqn:untruc}) is equal to $\pz\left(\forall i \in I, \eta^0_i = x_i\right)$.

\end{proof}

\paragraph{Step 4:} We give and prove the last technical lemma we need to conclude. It is a deterministic lemma, in the sense that it involves considerations on a fixed final configuration and states that something cannot happen. It is illustrated on Figure \ref{fig:loi_Z_lemme_technique_memes_trous}.
\begin{lemma}\label{lemma:loi_Z_lemme_technique_memes_trous}
We assume that $\eta$ and $\eta'$ are two golf processes (one on $\znz$ and one on $\Z$) such that the initial configurations $\eta^0$ and $\eta'^0$ coincide on some interval $I = \llbracket x, y \rrbracket$. We also assume that the clocks are the same on $I$, and that the trajectories of the balls of $I$ are given by the same infinite paths, in the following way: there is a family of infinite paths $(w^{(i)})_{i\in I}$ such that for every $i$, $w^{(i)}$ is a path on $\Z$ starting at $i$, and a ball starting at vertex $b$ at some time $t = \Clockdet(b)$ does the walk given by $w^{(b)}$ stopped at the first empty hole it finds (and fills it, as usual).

If there is $z$ such that $x < z < y$, $x$ and $y$ are empty holes in $\eta'^1$ and $z$ is a empty hole in $\eta^1$, then $z$ is also empty in $\eta'^1$.
\end{lemma}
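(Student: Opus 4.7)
My plan is to prove the lemma by contradiction, via an infinite-descent on the (finite) set of balls $\Ballset^0 \cap I$. The starting point is a decoupling argument in $\eta'$: since a filled hole remains filled, the assumption that $x$ and $y$ are free at time $1$ in $\eta'$ implies that they are free throughout $\eta'$. Hence no trajectory in $\eta'$ can ever visit $x$ or $y$ (otherwise that ball would fill the site). Because the walks $w^{(\cdot)}$ are nearest-neighbor on $\Z$, this has two consequences: (a) for every $b \in \Ballset^0 \cap I$, the path $w^{(b)}$ stays strictly inside $(x,y)$ up to the step where $b$ stops, and $b$ stops at a free hole of $(x,y)$; (b) no ball starting outside $I$ ever reaches $[x,y]$ in $\eta'$, since entering would require first hitting $x$ or $y$. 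Therefore, in $\eta'$ every hole of $(x,y)$ that gets filled is filled by some ball of $\Ballset^0 \cap I$.

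Assume now, for contradiction, that $z$ is filled in $\eta'^1$, and let $b_0 \in \Ballset^0 \cap I$ be the ball filling $z$ in $\eta'$; set $z_0 := z$. I will recursively build a sequence $(b_n, z_n)_{n \geq 0}$ with $b_n \in \Ballset^0 \cap I$ and $z_n \in (x,y)$, such that (i) $b_n$ fills $z_n$ in $\eta'$, (ii) $\Clockdet(b_{n+1}) < \Clockdet(b_n)$, and (iii) $z_n$ is a free hole of $\eta$ just before time $\Clockdet(b_n)$. Since $\Ballset^0 \cap I$ is finite and the $b_n$ are pairwise distinct by (ii), the existence of such an infinite sequence is the desired contradiction. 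Property (iii) holds for $n=0$ because $z$ is free in $\eta^1$, hence free throughout $\eta$.

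For the inductive step, I compare the stopping step of $b_n$ in $\eta$ with the step $\ell_{b_n}^{\eta'}$ at which $b_n$ reaches $z_n$ in $\eta'$. The ball cannot stop at or after step $\ell_{b_n}^{\eta'}$ in $\eta$: by (iii), $z_n$ is still free in $\eta$ at $\Clockdet(b_n)^-$, so visiting $z_n$ would force $b_n$ to fill it, which is ruled out either directly (for $n=0$, since $z$ is free in $\eta^1$) or, for $n \geq 1$, because $z_n = v_{n-1}$ is by construction the stopping vertex of $b_{n-1}$ in $\eta$ at the strictly later time $\Clockdet(b_{n-1})$, so filling $z_n$ at $\Clockdet(b_n) < \Clockdet(b_{n-1})$ would contradict its freeness at $\Clockdet(b_{n-1})^-$. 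Hence $b_n$ in $\eta$ stops strictly earlier at some $v_n := w^{(b_n)}_{\ell_{b_n}^\eta}$; and $v_n \in (x,y)$ because the prefix of $w^{(b_n)}$ up to step $\ell_{b_n}^{\eta'}$ stays in $(x,y)$ by the decoupling. Now $v_n$ is a free hole of $\eta$ at $\Clockdet(b_n)^-$, but cannot be free in $\eta'$ at that time, for otherwise $b_n$ would have stopped at $v_n$ in $\eta'$ rather than at $z_n$. So some ball fills $v_n$ in $\eta'$ before $\Clockdet(b_n)$; by the decoupling this ball belongs to $\Ballset^0 \cap I$, and I take it as $b_{n+1}$, setting $z_{n+1} := v_n$. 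Property (iii) at $n+1$ then follows from $v_n$ being free in $\eta$ at $\Clockdet(b_n)^-$ together with $\Clockdet(b_{n+1}) < \Clockdet(b_n)$ and the monotonicity that holes in the golf process, once filled, never become free again.

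The main subtlety I anticipate lies in tracking (iii) along the recursion: while $z_n$ for $n \geq 1$ is in fact filled in $\eta$ at time $\Clockdet(b_{n-1})$ (precisely by $b_{n-1}$), the induction only ever needs it to be free at the strictly earlier time $\Clockdet(b_n)^-$, which is guaranteed by this same monotonicity. Once (iii) is correctly propagated, everything else reduces to the decoupling of $\eta'$ on $I$ and the purely combinatorial infinite-descent on the finite set $\Ballset^0 \cap I$.
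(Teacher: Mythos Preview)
Your proof is correct and follows essentially the same approach as the paper: a contradiction via infinite descent on the clocks of balls in $\Ballset^0 \cap I$, comparing where each ball stops in $\eta$ versus $\eta'$. Your version is more explicit than the paper's (you spell out the decoupling consequences in $\eta'$, you track invariant (iii) carefully, and you distinguish the base case $n=0$ from the inductive one for why $b_n$ does not stop at $z_n$ in $\eta$), but the core idea and structure are the same.
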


\begin{proof}
We prove this by contradiction. 

We assume that $h_0 \coloneqq z$ is not empty in $\eta'^1$. It means that there exists $b_1 \in I$ such that at time $t_1 \coloneqq \Clockdet(b_1)$, $h_0$ was the first hole of $w^{(b_1)}$ that was empty in ${\eta'}^{t_1}$. Notice that since $x$ and $y$ are empty, $b_1$ necessarily belongs to $I$. Since, in $\eta$, the ball in $b_1$ did not reach $h_0$, it means that there exists $h_1$ that was empty in $\eta^{t_1^-}$ but not empty in ${\eta'}^{t_1^-}$ (and such that $h_1$ appears before $h_0$ in $w^{(b_1)}$).

\addition{We apply this reasoning inductively : from a hole $h_i$ empty in $\eta^{t_i^-}$ but not empty in ${\eta'}^{t_i^-}$, we can obtain, with an identical reasoning, a ball $b_{i+1}\in I$ with clock $t_{i+1} \coloneqq \Clockdet(b_{i+1}) < t_i$, that fills $h_i$ in $\eta'$ at time $t_{i+1}$. It implies that there exists a hole $h_{i+1}$ which is empty in $\eta^{t_{i+1}^-}$ but not empty in ${\eta'}^{t_{i+1}^-}$.}
We thus build an infinite sequence $(b_k)_{k\geq 0}$ of balls such that for every $k$, $b_k\in I$ and $\Clockdet(b_{k+1}) < \Clockdet(b_k)$. As $I$ is finite, we have a contradiction, and deduce that $z$ is empty.
\end{proof}

\begin{figure}
\centering
\begin{tikzpicture}[inner sep=0.7mm, scale = 0.35]
	
	\draw[dotted, color=Black] (0,-3.5) -- (0,3.5); \node[color = Black, below] at (0,-3.5) {\footnotesize $z$};
	\draw[dotted, color=Black] (-9,-3.5) -- (-9,3.5); \node[color = Black, below] at (-9,-3.5) {\footnotesize $x$};
	\draw[dotted, color=Black] (6,-3.5) -- (6,3.5); \node[color = Black, below] at (6,-3.5) {\footnotesize $y$};
	\draw[-,line width=1.5pt] (-13,2.5) -- (11,2.5);
	\draw[dotted, line width=1.5pt] (-14.5,2.5)--(-13,2.5); \draw[dotted, line width = 1.5pt] (11,2.5) -- (12.5,2.5);
	\foreach \x in {-11, -9,-7, -5,-3,-1,-12,-10,-6,-8,-4,-2,0,1,3,4,6,9,10,2, 5, 7, 8}{
		\draw[black,line width = 0.8pt]  (\x,2.3) -- (\x,2.7);
	}
	\node[] at (-16.5,2.5) {\footnotesize ${\eta}^1:$};
	
	\draw[-,line width=1.5pt] (-13,-2.5) -- (11,-2.5);
	\draw[dotted, line width=1.5pt] (-14.5,-2.5)--(-13,-2.5); \draw[dotted, line width = 1.5pt] (11,-2.5) -- (12.5,-2.5);
	\foreach \x in {-11, -9,-7, -5,-3,-1,-12,-10,-6,-8,-4,-2,0,1,3,4,6,9,10,2, 5, 7, 8}{
		\draw[black,line width = 0.8pt]  (\x,-2.3) -- (\x,-2.7);
	}
	\draw[black, fill = orange] (0,-2.5) circle (0.3);
	\draw[black, fill = white] (-9,-2.5) circle (0.3);
	\draw[black, fill = white] (6,-2.5) circle (0.3);
	\draw[black, fill = white] (0,2.5) circle (0.3);
	\node[] at (-16.5,-2.5) {\footnotesize ${\eta'}^1:$};
	\draw [decorate, decoration = {calligraphic brace, raise = 2pt, amplitude = 4pt}] (-9,4) --node[yshift=15] {$I$}   (6,4);
\end{tikzpicture}
\caption{Illustration of Lemma \ref{lemma:loi_Z_lemme_technique_memes_trous}. We show two golf processes $\eta$ and $\eta'$ (typically, one of them is the golf process on $\Z$ and the other one is on $\znz$). A white circle corresponds to an empty hole in the final configuration, while the orange one represents a hole on which we do not do any assumption (but we will show that it is also free). The other vertices can contain holes, empty or not. We assume that the initial configurations restricted to I coincide ($\eta^0_i = \eta'^0_i, \forall i \in I$) and that the trajectories of the balls are given by the same infinite paths, as explained in Lemma \ref{lemma:loi_Z_lemme_technique_memes_trous}. Lemma \ref{lemma:loi_Z_lemme_technique_memes_trous} states that if $x$ and $y$ are free holes in $\eta'^1$ and $z \in \rrbracket x, y \llbracket $ is free in $\eta^1$, then $z$ is also free in $\eta'^1$. 
}
\label{fig:loi_Z_lemme_technique_memes_trous}
\end{figure}

\paragraph{{Conclusion :}}We can now prove Theorem \ref{thm:loiblocsZ}.

\begin{proof}[Proof of Theorem \ref{thm:loiblocsZ}]
We let $R > 0$ and $\eps > 0$ and take $M$ large enough so that Equations (\ref{eqn:yapleindetrous_surZ}) and (\ref{eqn:yapleindetrous_surZnZ}) hold.

\medskip
{As explained above, for $n$ large enough, it is possible to construct a coupling between the initial configurations on $\znz$ and $\Z$ so that the initial configurations are equal on $I = [-M,M]$ with high probability.}
We can then use Lemma \ref{lemma:approxunif}: there exists $n$ (large enough) such that
\begin{align}
	\sup_{\left(x_i\right)_{i\in I} \in \{\ball, \hole\}^I } \left| \P^{n,\nballs(n),\nholes(n),p}\left(\forall i \in I, {\eta^0}^{(n)}_i = x_i\right) - \pz\left(\forall i \in I, \eta^0_i  = x_i\right)\right| < \frac{\eps/2}{2^{2M+1}} \label{eqn:couplage2}
\end{align}
(here and in the sequel, to avoid ambiguity, we write ${\eta^0}^{(n)}$ for the initial configuration on $\znz$).

It implies that there exists a coupling of ${\eta^0}^{(n)}$ and $\eta^0$ such that\footnote{More precisely, there exists a probability space $(\Omega, \mathcal{A},\P)$ on which one can find copies $\bar{\eta}^0_i$ of ${\eta}^0_i$ under $\pz$ and ${\bar{\eta}^0_i}^{(n)}$ of ${{\eta}^0_i}^{(n)}$ under $\pcercle$ such that with probability at least $1-\eps$, $\forall i\in I,{\bar{\eta}^0_i} = {\bar{\eta}^0_i}^{(n)}$ }
\begin{align}
	\Prob{\forall i \in I, {\eta^0}^{(n)}_i = \eta^0_i} \geq 1-\eps.
\end{align}
We assume in what follows that ${\eta^0}^{(n)}$ and $\eta^0$ are coupled this way. We extend the coupling to the clocks, letting $\Clock(v)$ (with $v\in \Z$) be independent with distribution $\Uzo$, and then defining the clocks on $\znz$ by $\forall i \in I, \Clock^{(n)}(v) \coloneqq \Clock(v)$, and for any $v \notin I$, $\Clock^{(n)}(v)\sim \Uzo$ also independently of the other variables.

We then couple the trajectories of the balls. We define, for every $v\in \Z$, an infinite random walk $(w^{(v)}_k)_{k\geq 0}$ with transition matrix $\PMC$, starting at $w^{(v)}=0 = v$. We then define the golf processes on $\Z$ so that a ball starting at $u$ at time $\Clock(u)$ does a random walk with steps given by $w^{(u)}$ until it hits a free hole. Similarly, on $\znz$, a ball starting at $u\in \znz$ at time $\Clock^{(n)}(u)$ uses $w^{(i)}$ for its random walk, where
$i$ is the unique integer such that $-n/2 < i \leq n/2$ and $i \mod n = u$. We then have a coupling of the golf processes on $\Z$ and $\znz$.

\begin{figure}
	\centering
	\begin{tikzpicture}[inner sep=0.7mm, scale = 0.35]
		
		%			\draw[dotted, color=Black] (0,-3.5) -- (0,3.5); 
		%			\node[color = Black, below] at (0,-3.5) {\footnotesize $z$};
		\draw[dotted, color=Black] (-11,-3.5) -- (-11,3.5); 
		\draw[dotted, color=Black] (-4,-3.5) -- (-4,3.5); 
		\draw[dotted, color=Black] (-8,-3.5) -- (-8,3.5); 
		\node[color = Black, below] at (-11.4,-1) {\footnotesize $k'$};
		\node[color = Black, above] at (-12,1) {\footnotesize $k$};
		\node[color = Black, below] at (-2,-1) {\footnotesize $0$};
		\node[color = Black, above] at (-2,1) {\footnotesize $0$};
		\node[color = Black, above] at (9,1) {\footnotesize $l$};
		%			\draw[dotted, color=Black] (6,-3.5) -- (6,3.5); 
		%			\node[color = Black, below] at (6,-3.5) {\footnotesize $y$};
		\draw[-,line width=1.5pt] (-13,2.5) -- (11,2.5);
		\draw[dotted, line width=1.5pt] (-14.5,2.5)--(-13,2.5); \draw[dotted, line width = 1.5pt] (11,2.5) -- (12.5,2.5);
		%			\foreach \x in {-11, -9,-7, -5,-3,-1,-12,-10,-6,-8,-4,-2,0,1,3,4,6,9,10,2, 5, 7, 8}{
			%				\draw[black,line width = 0.8pt]  (\x,2.3) -- (\x,2.7);
			%			}
		\node[] at (-16.5,2.5) {\footnotesize ${\eta}^1:$};
		
		\draw[-,line width=1.5pt] (-13,-2.5) -- (11,-2.5);
		\draw[dotted, line width=1.5pt] (-14.5,-2.5)--(-13,-2.5); \draw[dotted, line width = 1.5pt] (11,-2.5) -- (12.5,-2.5);
		%			\foreach \x in {-11, -9,-7, -5,-3,-1,-12,-10,-6,-8,-4,-2,0,1,3,4,6,9,10,2, 5, 7, 8}{
			%				\draw[black,line width = 0.8pt]  (\x,-2.3) -- (\x,-2.7);
			%			}
		\draw[black,line width = 0.8pt]  (-2,-2.3) -- (-2,-2.7); \draw[black,line width = 0.8pt]  (-2,2.3) -- (-2,2.7);
		%			\draw[black, fill = orange] (0,-2.5) circle (0.3);
		\draw[black, fill = white] (-11,-2.5) circle (0.3);
		\draw[black, fill = orange] (-11,2.5) circle (0.3);
		\draw[black, fill = white] (-4,-2.5) circle (0.3);
		\draw[black, fill = orange] (-4,2.5) circle (0.3);
		\draw[black, fill = white] (-8,-2.5) circle (0.3);
		\draw[black, fill = orange] (-8,2.5) circle (0.3);
		%			\draw[black, fill = white] (6,-2.5) circle (0.3);
		\draw[black, fill = white] (-12,2.5) circle (0.3);
		\draw[black, fill = white] (9,2.5) circle (0.3);
		\node[] at (-16.5,-2.5) {\footnotesize ${\eta'}^1:$};
		\draw [decorate, decoration = {calligraphic brace, raise = 2pt, amplitude = 4pt}] (-12.5,3.6) --node[yshift=15] {\footnotesize $R+2$ holes}   (-1.5,3.6);
		\draw [decorate, decoration = {calligraphic brace, raise = 2pt, amplitude = 4pt}] (-1.5,-3.6) --node[yshift=-15] {\footnotesize $R+2$ holes}   (-11.5,-3.6);
	\end{tikzpicture}
	\caption{ \addition{Illustration of the fact that the $R+2$ empty holes to the left of 0 (included) in $\eta^1$ coincide with the $R+2$ empty holes to the left of 0 in ${\eta}'^1$ : we let $k$ (resp.\ $k'$) be the position of the $R+2$th hole to the left of 0 (included) in $\eta^1$ (resp.\ ${\eta}'^1$). We assume that $k\leq k'$ (the case $k'\leq k$ is symmetric). There are $R+2$ empty holes in $[k',0]$ in ${\eta'}^1$, and Lemma \ref{lemma:loi_Z_lemme_technique_memes_trous} implies that all these holes are also empty holes in ${\eta}^1$ (since we know that there exists a free hole to the right of 0 in $\eta$, at position $l$ on the picture): all the orange vertices in $\eta^1$ are empty holes. As there are exactly $R+2$ empty holes in $[k,0]$ in ${\eta}^1$, it implies that $k' = k$ and that in the interval $[k,0]$, a hole is free in $\eta^1$ if and only if it is free in ${\eta'}^1$.}
	}
	\label{fig:loi_Z_memes_trous}
\end{figure}

Therefore, combining this with Lemma \ref{lemma:yapleindetrous}, we deduce that with probability at least $1-3\eps$, $\forall i \in I, {\eta^0}^{(n)}_i = \eta^0_i$, $\card{\TL \cap [-M,0] } \geq R+2$ and $\card{\TL \cap [1,M] } \geq R +2$. Lemma \ref{lemma:loi_Z_lemme_technique_memes_trous} enables to conclude that on this event, the positions of the remaining holes indexed from $-R$ to $R+1$ also coincide (see Figure \ref{fig:loi_Z_memes_trous}):  $(\Delta_i\TL)_{-R \leq i \leq R} = (\Deltan_i\TL)_{-R \leq i \leq R}$ with probability at least $1 - 3\eps$.

\medskip
We can conclude about the distribution of $(\Delta_i\TL)_{-R \leq i \leq R}$. We have seen (Lemma \ref{lemma:varindepcondsomme}) that $(\Deltan_i\TL)_{-R \leq i \leq R}$ converges in distribution to $(\zzlambda_i)_{-R \leq i \leq R}$.
In the previous paragraph, we showed that $(\Deltan_i\TL)_{-R \leq i \leq R}$ converges in probability to $(\Delta_i\TL)_{-R \leq i \leq R}$. It implies that $(\Delta_i\TL)_{-R \leq i \leq R}$ has the same distribution as $(\zzlambda_i)_{-R \leq i \leq R}$.

\end{proof}

\subsection{Distribution of the block-size process in general}\label{subsect:blocks_Z_cascomplexe}

We give the following theorem for the distribution of $\Delta\TL$. Notice that we do not assume that $\db + \dt = 1$ anymore.

\begin{theorem} \label{thm:blocsZ_general} We assume that $0 < \db < \dt$ and $\db + \dt \leq 1$. Then, under $\pdbdh$,
for every $\ell_0\geq0$, \begin{align}
	\Prob{\Delta_0\TL = \ell_0} = (\dt - \db) \sum_{b_0 : 2 b_0 \leq \ell_0} \frac{\ell_0 + 1}{b_0 + 1}\binom{\ell_0}{b_0, b_0, \ell_0 - 2 b_0} \db^{b_0} \dt^{b_0 + 1}(1 - \db - \dt)^{\ell_0 - 2 b_0}. \label{eqn:lalimite!}
\end{align} 
\end{theorem}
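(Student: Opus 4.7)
The plan is to mirror the proof of Theorem \ref{thm:loiblocsZ}: couple the golf process on $\Z$ (with densities $\db, \dt$) with the golf process on $\znz$ for $\nballs(n) = \lfloor \db n \rfloor$ and $\nholes(n) = \lfloor \dt n \rfloor$, so that initial configurations on a window of fixed size around $0$ agree with high probability, and then transfer the asymptotic distribution of $\Delta^{(n)}_0 \TL$ from $\znz$ to $\Z$. The three coupling lemmas (Lemmas \ref{lemma:yapleindetrous}, \ref{lemma:approxunif}, \ref{lemma:loi_Z_lemme_technique_memes_trous}) go through essentially verbatim when $\db + \dt < 1$: the coordinates of $\eta^0$ are still independent (now $\{\ball,\hole,0\}$-valued), and the existence of infinitely many separators needed on the $\Z$ side is provided by Theorem \ref{thm:defZ}. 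The only part of the argument that must be reworked is the asymptotic analysis on $\znz$.

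\paragraph{Distribution on $\znz$ as a conditioned joint law.} Set
\[ w(\ell, b) \coloneqq \frac{1}{b+1}\binom{\ell}{b,b,\ell-2b}\,\db^b\,\dt^b\,(1-\db-\dt)^{\ell-2b}, \qquad W(\ell) \coloneqq \sum_b w(\ell, b), \]
and let $(Y_i, V_i)_{i \in \Z/\nl\Z}$ be independent with $\P((Y_i, V_i) = (\ell, b)) \propto w(\ell, b)$ for $i \neq 0$ and $\P((Y_0, V_0) = (\ell, b)) \propto (\ell+1)\, w(\ell, b)$. Since for every $(b_i) \in B^{(\nballs, X)}$ the product $\prod_i \db^{b_i}\dt^{b_i}(1-\db-\dt)^{\ell_i-2b_i}$ equals the constant $(\db\dt)^{\nballs}(1-\db-\dt)^{n-\nl-2\nballs}$, multiplying and dividing by this quantity in the formula of Theorem \ref{thm:loitrousresiduels}, and using that for a given block-size vector $(\ell_0, \ldots, \ell_{\nl-1})$ there are exactly $\ell_0 + 1$ choices of $X \subseteq \znz$ (cf.\ the proof of Corollary \ref{cor:blocks_and_forests}), one obtains that $(\Delta^{(n)}_i \TL)_i$ has the same law as $(Y_i)_i$ conditioned on both $\sum_i Y_i = n - \nl$ and $\sum_i V_i = \nballs$.

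\paragraph{Evaluation of the normalizing constants.} Rewriting $\frac{1}{b+1}\binom{\ell}{b,b,\ell-2b} = \binom{\ell}{2b} C_b$, setting $m = \ell - 2b$, and using $\sum_m \binom{m+2b}{m} x^m = (1-x)^{-(2b+1)}$ with $x = 1 - \db - \dt$, one gets
\[ \sum_\ell W(\ell) \;=\; \frac{1}{\db+\dt}\,\mathcal{G}\!\left(\frac{\db\dt}{(\db+\dt)^2}\right). \]
The identity $1 - 4\db\dt/(\db+\dt)^2 = ((\dt-\db)/(\db+\dt))^2$ (together with $\dt > \db$) yields $\mathcal{G}(\db\dt/(\db+\dt)^2) = (\db+\dt)/\dt$, hence $\sum_\ell W(\ell) = 1/\dt$. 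The analogous computation based on $\sum_m (m+2b+1)\binom{m+2b}{m} x^m = (2b+1)/(1-x)^{2b+2}$ and the definition of $\mathcal{H}$ gives $\sum_\ell (\ell+1) W(\ell) = \mathcal{H}(\db\dt/(\db+\dt)^2)/(\db+\dt)^2 = 1/(\dt(\dt-\db))$. Consequently,
\[ \P(Y_0 = \ell_0) \;=\; \dt(\dt - \db)\,(\ell_0 + 1)\, W(\ell_0), \]
which is exactly the right-hand side of (\ref{eqn:lalimite!}).

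\paragraph{Vanishing of the conditioning and main obstacle.} The computations above yield $\E[Y_i] = (1-\dt+\db)/(\dt-\db)$ and $\E[V_i] = \db/(\dt-\db)$ for $i \neq 0$, which match the targets $(n-\nl)/\nl$ and $\nballs/\nl$ up to $o(1)$, so the two linear constraints are centered at the means of $\sum Y_i$ and $\sum V_i$ modulo $o(\sqrt n)$. The main obstacle is then to run a two-dimensional central local limit theorem for $(\sum_{i \neq 0} Y_i, \sum_{i \neq 0} V_i)$: when $\db + \dt < 1$ the covariance matrix of $(Y_i, V_i)$ is non-degenerate (the slack $\ell - 2b$ provides a genuine second degree of freedom), and the contribution of the size-biased pair $(Y_0, V_0)$ is controlled by truncation at $n^{1/3}$ exactly as in Step 1 of the proof of Theorem \ref{thm:loiblocsZ}. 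In the boundary case $\db + \dt = 1$, $w(\ell, b)$ is supported on $\{\ell = 2b\}$, the pair $(Y_i, V_i)$ degenerates to one dimension, and the argument reduces to the 1D local CLT already used (recovering Theorem \ref{thm:loiblocsZ}). Once the conditioning is shown to vanish, $\pcercle(\Delta^{(n)}_0 \TL = \ell_0) \to \P(Y_0 = \ell_0)$, and the coupling transfers this limit to the golf model on $\Z$, completing the proof.
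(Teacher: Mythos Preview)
Your argument is correct and takes a genuinely different route from the paper. The paper does not represent $(\Deltan_i\TL)_i$ as a conditioned i.i.d.\ family in the general case; instead it computes the marginal of $\Deltan_0\TL$ directly via a two-block decomposition (the block containing $0$ versus the rest of the circle), obtaining the closed form
\[
\Prob{\Deltan_0\TL=\ell_0}=\sum_{b_0}\frac{\ell_0+1}{b_0+1}\binom{\ell_0}{b_0,b_0,\ell_0-2b_0}\,\frac{\frac{\nl-1}{\nholes-b_0-1}\binom{n-\ell_0-2}{\nballs-b_0,\nholes-b_0-2,\cdot}}{\binom{n}{\nballs,\nholes,\cdot}},
\]
then passes to the limit term by term via Stirling, and finally checks separately that the limit sums to $1$ by interpreting $f(\ell_0)/(\dt-\db)$ as $\P(W_{\ell_0+1}=-1)$ for the $(\db,\dt)$-random walk and computing the expected number of visits to $-1$. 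Your approach instead keeps the latent ball-count $V_i$ and appeals to a two-dimensional local CLT for $(\sum Y_i,\sum V_i)$; this is heavier machinery (you must check non-degeneracy of the covariance of $(Y_1,V_1)$ and the lattice structure, which is $\Z^2$ when $\db+\dt<1$), but it pays off in two ways: the normalization $\sum_\ell(\ell+1)W(\ell)=1/(\dt(\dt-\db))$ comes out of the generating-function computation rather than a separate argument, and the method immediately yields the joint law of $(\Delta_i\TL)_{-R\leq i\leq R}$ (which the paper explicitly leaves aside as ``heavy computation''). One small point: Lemma \ref{lemma:yapleindetrous} on the $\znz$ side is proved in the paper via Lemma \ref{lemma:varindepcondsomme}, so it is not quite ``verbatim'' for you---but your 2D limit for finitely many blocks supplies exactly what is needed.
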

One could obtain, with the same reasoning, a theorem for the distribution of the whole process $\left(\Delta_i\TL\right)$ (similar to what we have in Theorem \ref{thm:loiblocsZ} when $\db+\dt = 1$), but it would involve heavy computation that we leave as an exercise to the interested reader.

When $\db + \dt = 1$, the theorem simplifies and gives: for every $k\geq0$,
\begin{align}
\Prob{\Delta_0\TL = 2 k } = (\dt - \db) \frac{2 k  + 1}{ k+ 1}\binom{2 k }{k} {\db\dt}^{k}. \label{eqn:lalimite_compat_thm_loi_blocks_db=dt}
\end{align} 
which is consistent with Theorem \ref{thm:loiblocsZ}.

\begin{proof}[Proof of Theorem \ref{thm:blocsZ_general}]
The idea of the proof is exactly the same as for the proof of Theorem \ref{thm:loiblocsZ}: we show that, when $\nballs = \lfloor \db n\rfloor$ and $\nholes = \lfloor \dt n\rfloor$, $\Deltan_0\TL \overset{(d)}\to L_0$ as $n$ goes to infinity, for some random variable $L_0$ whose law is given by the right-hand side of (\ref{eqn:lalimite!}); then, Lemmas \ref{lemma:yapleindetrous} and \ref{lemma:approxunif} still hold and we can again do a coupling between the initial configurations on $\znz$ (for some $n$ large enough) and $\Z$. 

We thus only need to prove that $\Deltan_0\TL$ converges in distribution as $n$ goes to infinity.

First, we can show the following claim:

\begin{claim}\label{claim:loi_bloc0}
	For any integer $\ell_{0}\geq 0$ such that $\ell_0 \leq n-2$,
	\begin{align}
		&\Prob{\Deltan_0\TL = \ell_0} \\
		&= \sum_{b_0 : b_0 \leq \nballs \et 2b_0 \leq \ell_0} \frac{\ell_0 + 1}{b_0 + 1} \binom{\ell_0}{b_0,b_0,\ell_0-2b_0} \  \frac{ \frac{\nl - 1}{\nholes - b_0 - 1} \binom{n - \ell_0 - 2}{\nballs - b_0, \nholes - b_0 - 2, n - \ell_0 - \nballs - \nholes + 2b_0}}{\binom{n}{\nballs, \nholes, n-\nballs-\nholes}}.
	\end{align}
\end{claim}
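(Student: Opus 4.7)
The plan is to derive the formula by applying Theorem \ref{thm:loitrousresiduels} and collapsing the resulting double sum by generating-function identities. Writing
$$\Prob{\Deltan_0 \TL = \ell_0} = \sum_{X : \Deltan_0 X = \ell_0} \Prob{\TL = X},$$
I would first parametrize such $X$ by (i) the pair $(x_0, x_1)$ with $x_1 = x_0 + \ell_0 + 1 \mod n$ bounding the block $B_0 \ni 0$, and (ii) the remaining $\nl - 2$ holes lying in the complementary ``line'' $\znz \setminus \llbracket x_0, x_1 \rrbracket$ of length $n - \ell_0 - 2$. Since the summand in Theorem \ref{thm:loitrousresiduels} depends on $X$ only through the cyclic tuple of block sizes and the constraint $0 \in B_0 \cup \{x_0\}$ admits exactly $\ell_0 + 1$ positions for $x_0$, rotational invariance lets me fix $(x_0, x_1) = (0, \ell_0 + 1)$ and multiply by $\ell_0 + 1$. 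Extracting the block-$0$ contribution indexed by $b_0$ gives
\begin{equation*}
\Prob{\Deltan_0 \TL = \ell_0} = \frac{\ell_0 + 1}{\binom{n}{\nballs, \nholes, n-\nballs-\nholes}} \sum_{b_0} \frac{1}{b_0 + 1}\binom{\ell_0}{b_0, b_0, \ell_0 - 2 b_0}\, T(\ell_0, b_0),
\end{equation*}
where $T(\ell_0, b_0)$ sums $\prod_{i=1}^{\nl - 1} \frac{1}{b_i + 1}\binom{\ell_i}{b_i, b_i, \ell_i - 2 b_i}$ over admissible $(\ell_i, b_i)_{i \geq 1}$ with $\sum_{i \geq 1} \ell_i = n - \ell_0 - \nl$ and $\sum_{i \geq 1} b_i = \nballs - b_0$.

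To evaluate $T(\ell_0, b_0)$ I would rewrite $\frac{1}{b+1}\binom{\ell}{b, b, \ell - 2 b} = C_b \binom{\ell}{2b}$, separating the Catalan weight from the block length. For fixed $(b_i)_{i \geq 1}$, the partial sum over compositions $(\ell_i)$ is a Vandermonde convolution; since $\sum_{\ell \geq 0}\binom{\ell}{2 b}x^\ell = x^{2 b}/(1 - x)^{2 b + 1}$, a single coefficient extraction yields
$$\sum_{(\ell_i)}\prod_{i=1}^{\nl - 1}\binom{\ell_i}{2 b_i} = \binom{n - \ell_0 - 2}{\nballs + \nholes - 2 b_0 - 2},$$
a quantity depending on $(b_i)_{i \geq 1}$ only through $\sum b_i = \nballs - b_0$. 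The remaining sum $\sum_{\sum b_i = \nballs - b_0}\prod_{i=1}^{\nl - 1} C_{b_i}$ equals $[x^{\nballs - b_0}]\mathcal{G}(x)^{\nl - 1}$, and Lagrange inversion applied to $v = x(1+v)^2$ (so that $\mathcal{G} = 1 + v$) evaluates it to $\frac{\nl - 1}{\nballs + \nholes - 2 b_0 - 1}\binom{\nballs + \nholes - 2 b_0 - 1}{\nballs - b_0}$.

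Plugging these two closed forms back in, the factor $(\nballs + \nholes - 2 b_0 - 1)$ cancels against the identity $(\nballs + \nholes - 2 b_0 - 1)\,(\nballs + \nholes - 2 b_0 - 2)! = (\nballs + \nholes - 2 b_0 - 1)!$, collapsing the product of binomials into the single trinomial
$$T(\ell_0, b_0) = (\nl - 1)\binom{n - \ell_0 - 2}{\nballs - b_0,\, \nholes - b_0 - 1,\, n - \ell_0 - \nballs - \nholes + 2 b_0},$$
which is the same as $\frac{\nl - 1}{\nholes - b_0 - 1}\binom{n - \ell_0 - 2}{\nballs - b_0,\, \nholes - b_0 - 2,\, n - \ell_0 - \nballs - \nholes + 2 b_0}$ by pushing one factor of $(\nholes - b_0 - 1)$ into the middle slot; this yields the claim. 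The argument is purely computational, so no structural obstacle is expected; the only delicate points are the bookkeeping of the rotation factor $\ell_0 + 1$ (which encodes the size-bias of the block containing $0$) and invoking Lagrange inversion in the form $[x^N]\mathcal{G}(x)^K = \frac{K}{2N+K}\binom{2N+K}{N}$, whose edge cases (e.g.\ $\nl = 1$) are absorbed by the hypothesis $\ell_0 \leq n - 2$.
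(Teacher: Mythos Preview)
Your argument is correct, but it takes a genuinely different route from the paper's. The paper does not sum Theorem~\ref{thm:loitrousresiduels} over the outer hole positions at all; instead it reapplies the block decomposition principle (Lemma~\ref{lemma:simpl_thm_loi_trous_residuels}) one level up: conditioning on $x_0,x_1\in\Tinit$ and $\card{\Binit\cap\llbracket x_0+1,x_1-1\rrbracket}=b_0$, the event $\{\TL\cap\llbracket x_0,x_1\rrbracket=\{x_0,x_1\}\}$ factorizes as a mini-golf on the inner arc (contributing $\frac{1}{b_0+1}$) times the event $\{0\in\TL\}$ for a golf model on the outer cycle $\Z/(n-\ell_0)\Z$ with $\nballs-b_0$ balls and $\nholes-b_0-1$ holes. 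The latter probability is obtained in one line by rotation invariance: there are $\nl-1$ remaining holes among $n-\ell_0$ vertices, so $\P(0\in\TL)=\frac{\nl-1}{n-\ell_0}$, and conditioning on $0\in\Tinit$ yields $\frac{\nl-1}{\nholes-b_0-1}$. Your route replaces this probabilistic shortcut by an explicit summation over all outer configurations $(\ell_i,b_i)_{i\ge1}$, collapsed via Vandermonde and the forest-counting identity $[x^N]\mathcal G(x)^K=\frac{K}{2N+K}\binom{2N+K}{N}$. The paper's argument is shorter, keeps the golf structure visible, and transfers immediately to the local, shift-invariant variants of Section~\ref{subsect:variantes}; yours is entirely self-contained once Theorem~\ref{thm:loitrousresiduels} is in hand and bypasses a second appeal to the block decomposition. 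One cosmetic point: your intermediate ``trinomial'' $\binom{n-\ell_0-2}{\nballs-b_0,\,\nholes-b_0-1,\,n-\ell_0-\nballs-\nholes+2b_0}$ has lower entries summing to $n-\ell_0-1$, so it is only a multinomial-shaped abbreviation for $(n-\ell_0-2)!/\bigl((\nballs-b_0)!(\nholes-b_0-1)!(n-\ell_0-\nballs-\nholes+2b_0)!\bigr)$; the final rewriting with middle slot $\nholes-b_0-2$ fixes this and matches the claim exactly.
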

We postpone its proof to the end of the proof of Theorem \ref{thm:blocsZ_general}.

Now, for every $n$, we let $\nballs(n) \coloneqq \lfloor \db n \rfloor$ and $\nholes(n) \coloneqq \lfloor \dt n \rfloor$. Then, using the Claim and Stirling's formula, we can easily prove that for any $\ell_0$,
\begin{align}
	\Prob{\Deltan_0\TL = \ell_0} \underset{n \to \infty}\to f(\ell_0) \label{eqn:lalimitecoooool}
\end{align} 
where we have defined, for every $\ell_0$, $f(\ell_0)$ as the right-hand side of Equation (\ref{eqn:lalimite!}). 
\begin{claim} \label{claim:c'est_bien_une_proba}
	We have the following:
	\begin{equation}
		\sum_{\ell_0 \geq 0} f(\ell_0) = 1. \label{eqn:c'est_bien_une_proba}
	\end{equation}
\end{claim}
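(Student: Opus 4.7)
The plan is to evaluate the sum directly by swapping the order of summation, recognizing a generating function identity, and then reducing to an explicit evaluation of $\mathcal{H}$ at a convenient point. Write $\mu = 1-\db-\dt$ and use the identity
\[
\frac{\ell_0+1}{b_0+1}\binom{\ell_0}{b_0,b_0,\ell_0-2b_0} = (\ell_0+1)\binom{\ell_0}{2b_0} C_{b_0},
\]
so that after substituting $k=\ell_0-2b_0$,
\begin{align*}
\sum_{\ell_0\geq 0} f(\ell_0)
= (\dt-\db)\dt \sum_{b_0\geq 0} C_{b_0}(\db\dt)^{b_0} \sum_{k\geq 0} (k+2b_0+1)\binom{k+2b_0}{k}\mu^{k}.
\end{align*}

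Next, I will compute the inner sum in closed form. Using $(k+2b_0+1)\binom{k+2b_0}{k} = (2b_0+1)\binom{k+2b_0+1}{k}$ and the standard generating function $\sum_{k\geq 0}\binom{k+m}{k}x^k = (1-x)^{-(m+1)}$, I get
\[
\sum_{k\geq 0}(k+2b_0+1)\binom{k+2b_0}{k}\mu^{k} = \frac{2b_0+1}{(1-\mu)^{2b_0+2}} = \frac{2b_0+1}{(\db+\dt)^{2b_0+2}}.
\]
Plugging this back and factoring $(\db+\dt)^{-2}$ out of the outer sum,
\[
\sum_{\ell_0\geq 0} f(\ell_0) = \frac{(\dt-\db)\dt}{(\db+\dt)^2}\, \mathcal{H}\!\left(\frac{\db\dt}{(\db+\dt)^2}\right),
\]
by the definition of $\mathcal{H}$ given in Theorem \ref{thm:loiblocsZ}.

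The final step is to evaluate $\mathcal{H}(y)$ at $y = \db\dt/(\db+\dt)^2$. The main observation is the identity $1-4y = (\dt-\db)^2/(\db+\dt)^2$, so $\sqrt{1-4y} = (\dt-\db)/(\db+\dt)$ (using $\db<\dt$). This yields $\mathcal{G}(y) = (\db+\dt)/\dt$. Differentiating the fixed-point relation $\mathcal{G}(y) = 1 + y\mathcal{G}(y)^2$ gives $\mathcal{G}'(y) = \mathcal{G}(y)^2/(1-2y\mathcal{G}(y))$, and a direct computation yields $2y\mathcal{G}'(y) = 2\db(\db+\dt)/(\dt(\dt-\db))$. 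Adding gives
\[
\mathcal{H}(y) = 2y\mathcal{G}'(y)+\mathcal{G}(y) = \frac{(\db+\dt)^2}{\dt(\dt-\db)},
\]
and substituting this back proves that $\sum_{\ell_0\geq 0} f(\ell_0) = 1$. No step here is particularly subtle; the only thing to be careful about is the sign when taking $\sqrt{1-4y}$, which requires the hypothesis $\db<\dt$ stated in Theorem \ref{thm:blocsZ_general}.
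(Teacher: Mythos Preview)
Your proof is correct. All the algebraic identities check out, the series manipulations are justified since every term is nonnegative (so Tonelli applies), and the evaluation of $\mathcal{H}$ at $y=\db\dt/(\db+\dt)^2$ is fine because $\db<\dt$ forces $y<1/4$.

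Your route, however, is genuinely different from the paper's. The paper recognizes $f(\ell_0)/(\dt-\db)$ as the probability that a lazy random walk with step law $\db\delta_{+1}+(1-\db-\dt)\delta_0+\dt\delta_{-1}$ is at level $-1$ after $\ell_0+1$ steps, so that $\sum_{\ell_0} f(\ell_0)=(\dt-\db)\mathbb{E}[N]$ with $N$ the number of visits to $-1$; a first-step decomposition then yields $\mathbb{E}[N]=1/(\dt-\db)$. Your argument is purely analytic: you swap the sums, collapse the inner sum via the negative-binomial identity, and land on the closed form $\frac{(\dt-\db)\dt}{(\db+\dt)^2}\mathcal{H}\bigl(\db\dt/(\db+\dt)^2\bigr)$, which you evaluate using the Catalan functional equation. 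Your approach is shorter, entirely self-contained, and pleasantly ties the general case back to the functions $\mathcal{G},\mathcal{H}$ already introduced for the dense case; the paper's approach, on the other hand, provides a probabilistic interpretation of the normalizing constant that is nice in its own right.
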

The proof of this claim is quite technical (yet very interesting, since it involves some nice combinatorial interpretation of this sum), so we postpone it to the end of the subsection.

Thanks to these previous claims, we can deduce that the law of $\Deltan_0 \TL$ converges to $f$.

\end{proof}

\begin{proof}[Proof of Claim \ref{claim:loi_bloc0}.] The computation follows exactly the same reasoning as the proof of Theorem \ref{thm:loitrousresiduels}, and we detail the main subtleties. 
There are $\ell_0 + 1$ pairs $(x_0,x_1)$ such that $x_0 \leq 0 < x_1$ and $x_1 - x_0 - 1 \mod n = \ell_0$. For any such $(x_0,x_1)$ we want to compute the probability of the event $A \coloneqq \left\{\TL \cap \llbracket x_0 , x_1\rrbracket = \{x_0, x_1\}\right\}$ (and this will provide $\Prob{\Deltan_0\TL = \ell_0} = (\ell_0 + 1) \Prob{A}$).

It is possible to obtain a decomposition result analogous to Equation (\ref{eqn:decomposition}) in Lemma \ref{lemma:simpl_thm_loi_trous_residuels}, stating that if we condition on the number of holes and balls between $x_0$ and $x_1$ being $b_0$,
\begin{align}
	\Prob{A\middle| x_0,x_1\in\Tinit \et \card{ \llbracket x_0 , x_1\rrbracket \cap \Binit} = b_0  } =& \ \P^{\ell_0, b_0, b_0+1,p}(\TL = \{0\} | 0\in \Tinit) \\& \times  \P^{n-\ell_0, \nballs - b_0, \nholes - b_0- 1,p}(\TL = \{0\} | 0\in \Tinit)
\end{align}
(this key  decomposition is illustrated on Figure \ref{fig:bloc_contenant0}).
Then, as before, $\P^{\ell_0, b_0, b_0+1,p}(\TL = \{0\} | 0\in \Tinit) = \frac{1}{b_0+1}$. The difference that appears with the proof of Theorem \ref{thm:loitrousresiduels} lies in the computation of $ \P^{n-\ell_0, \nballs - b_0, \nholes - b_0- 1,p}(\TL = \{0\} | 0\in \Tinit)$. Since there are $(\nholes - b_0 - 1) - (\nballs - b_0) = \nl - 1$ remaining holes in the associated golf model, it implies (by invariance by rotation, as in Lemma \ref{lemma:casinitloitrousresiduels}) that 
\begin{equation}
	\P^{n-\ell_0, \nballs - b_0, \nholes - b_0- 1,p}(\TL = \{0\} ) = \frac{\nl - 1}{n - \ell_0}
\end{equation}
and finally that  
\begin{equation}
	\P^{n-\ell_0, \nballs - b_0, \nholes - b_0- 1,p}(\TL = \{0\} | 0\in \Tinit) = \frac{\nl - 1}{\nholes - b_0  - 1}.
\end{equation}
The conclusion of the claim follows from a computation similar to Equations (\ref{eqn:thm_loi_trous_residuels_cercle_1}) and (\ref{eqn:thm_loi_trous_residuels_cercle_2})

\begin{figure}[t]\centering
	{\begin{tikzpicture}[line cap=round,line join=round,>=triangle 45,scale = 1.1]
			\draw(0,0) circle (2cm);
			\begin{scriptsize}
				\draw [color=black,fill = black] (-1.41,1.41) circle (1.5pt);
				\draw [color=black,fill = black] (-0.77,1.85) circle (1.5pt);
				\draw [color=black,fill = black] (-1.85,0.77) circle (1.5pt);
				\draw [color=black,fill = black] (-2,0) circle (1.5pt);
				\draw [color=black,fill = black] (-1.85,-0.77) circle (1.5pt);
				\draw [color=black,fill = black] (-1.41,-1.41) circle (1.5pt);
				\draw [color=black,fill = black] (-0.77,-1.85) circle (1.5pt);
				\draw [color=black,fill = black] (0,-2) circle (1.5pt);
				\draw [color=black,fill = black] (0.77,-1.85) circle (1.5pt);
				\draw [color=black,fill = black] (1.41,-1.41) circle (1.5pt);
				\draw [color=black,fill = black] (1.85,-0.77) circle (1.5pt);
				\draw [color=black,fill = black] (1.85,0.77) circle (1.5pt);
				\draw [color=black,fill = black] (2,0) circle (1.5pt);
				\draw [color=black,fill = black] (1.41,1.41) circle (1.5pt);
				\draw [color=black,fill = black] (0.77,1.85) circle (1.5pt);
				\draw [color=black,fill = black] (0,2) circle (1.5pt);
				\node[above] at (0,-1.8) {\redcolor{$0$}};
				\node [black,right, xshift = 7pt] at (-1.85,-0.77) {$x_0$};
				\node [black,above left, xshift = -5pt] at (1.41,-1.41) {$x_{1}$};
				
				\draw [color=black,fill = white] (-1.85,-0.77) circle (3.5pt);
				\draw [color=black,fill = white] (1.41,-1.41) circle (3.5pt);
				
				\node [align=left] at (2.5,1.8) { {$h_1 \times \hspace{-0.1cm}\vcenter{\hbox{ \begin{tikzpicture}[scale = 0.7]
									\draw [color=black,fill = white] (1.85,-0.77) circle (3.5pt);
						\end{tikzpicture} }}\hspace{-0.1cm} + b_1 \times \hspace{-0.1cm}\vcenter{\hbox{ \begin{tikzpicture}[scale = 0.7]
									\draw [color=black,fill = black] (1.85,-0.77) circle (3.5pt);
						\end{tikzpicture} }}\hspace{-0.1cm} $}
				};
				\node [] at (-0.2,-2.5) { {
						$b_0 \times \hspace{-0.1cm}\vcenter{\hbox{ \begin{tikzpicture}[scale = 0.7]
									\draw [color=black,fill = white] (1.85,-0.77) circle (3.5pt);
						\end{tikzpicture} }}\hspace{-0.1cm} + b_0 \times \hspace{-0.1cm}\vcenter{\hbox{ \begin{tikzpicture}[scale = 0.7]
									\draw [color=black,fill = black] (1.85,-0.77) circle (3.5pt);
						\end{tikzpicture} }}\hspace{-0.1cm} $}};
				\fill[cyan, opacity = 0.3] (0,0) ++(187.5:2.2)
				arc(187.5:-30:2.2)
				-- ++(-210:0.4)
				arc(-30:187.5:1.8)
				-- cycle;
				
				\fill[green, opacity = 0.3] (0,0) ++(-60:2.2)
				arc(-60:-142.5:2.2)
				-- ++(37.5:0.4)
				arc(-142.5:-60:1.8)
				-- cycle;
				\draw [color=orange] (-1.85,-0.77) circle (7pt);
				\draw [color=orange] (1.41,-1.41) circle (7pt);

			\end{scriptsize}
			\node [align=left] at (2.9,0) {\LARGE {$\displaystyle = $}};
			
			\begin{scope}[shift = {(4.8,0)}]
				\def\ray{1}
				\draw(0,0) circle (\ray);
				\begin{scriptsize}
					\draw [color=black,fill = black] (54:\ray) circle (1.5pt);
					\draw [color=black,fill = black] (198:\ray) circle (1.5pt);
					\draw [color=black,fill = black] (126:\ray) circle (1.5pt);
					\draw [color=black,fill = black] (-18:\ray) circle (1.5pt);
					\draw [color=black,fill = black] (-90:\ray) circle (1.5pt);
					\node[above] at (0,-0.8*\ray) {\redcolor{$0$}};
					\draw [color=black,fill = white](-90:\ray) circle (3.5pt);
					
					\node [] at (-0,1.5) { {$b_0 \times \hspace{-0.1cm}\vcenter{\hbox{ \begin{tikzpicture}[scale = 0.7]
										\draw [color=black,fill = white] (1.85,-0.77) circle (3.5pt);
							\end{tikzpicture} }}\hspace{-0.1cm} + b_0 \times \hspace{-0.1cm}\vcenter{\hbox{ \begin{tikzpicture}[scale = 0.7]
										\draw [color=black,fill = black] (1.85,-0.77) circle (3.5pt);
							\end{tikzpicture} }}\hspace{-0.1cm} $}};
					\fill[green, opacity = 0.3] (0,0) ++(-54:\ray+0.2)
					arc(-54:234:\ray+0.2)
					-- ++(54:0.4)
					arc(234:-54:\ray-0.2)
					-- cycle;
					
					\draw [color=orange] (-90:\ray) circle (7pt);

				\end{scriptsize}
			\end{scope}
			
			\node [align=left] at (6.4,0) {\LARGE {$\times$}};
			
			\begin{scope}[shift = {(8.5,0)}]
				\def\ray{1.5}
				\draw(0,0) circle (\ray);
				\begin{scriptsize}
					\draw [color=black,fill = black] (237.2:\ray) circle (1.5pt);
					\draw [color=black,fill = black] (204.5:\ray) circle (1.5pt);
					\draw [color=black,fill = black] (171.8:\ray) circle (1.5pt);
					\draw [color=black,fill = black] (139.1:\ray) circle (1.5pt);
					\draw [color=black,fill = black] (106.3:\ray) circle (1.5pt);
					\draw [color=black,fill = black] (73.6:\ray) circle (1.5pt);
					\draw [color=black,fill = black] (41:\ray) circle (1.5pt);
					\draw [color=black,fill = black] (8.1:\ray) circle (1.5pt);
					\draw [color=black,fill = black] (-24.5:\ray) circle (1.5pt);
					\draw [color=black,fill = black] (-57.3:\ray) circle (1.5pt);
					\draw [color=black,fill = black] (-90:\ray) circle (1.5pt);
					\node[above] at (0,-0.8*\ray) {\redcolor{$0$}};
					
					\draw [color=black,fill = white](-90:\ray) circle (3.5pt);
					\node [] at (-0,2) { {$h_1 \times \hspace{-0.1cm}\vcenter{\hbox{ \begin{tikzpicture}[scale = 0.7]
										\draw [color=black,fill = white] (1.85,-0.77) circle (3.5pt);
							\end{tikzpicture} }}\hspace{-0.1cm} + b_1 \times \hspace{-0.1cm}\vcenter{\hbox{ \begin{tikzpicture}[scale = 0.7]
										\draw [color=black,fill = black] (1.85,-0.77) circle (3.5pt);
							\end{tikzpicture} }}\hspace{-0.1cm} $}};
					
					\fill[cyan, opacity = 0.3] (0,0) ++(-73.5:\ray+0.2)
					arc(-73.5:253.5:\ray+0.2)
					-- ++(73.5:0.4)
					arc(253.5:-73.5:\ray-0.2)
					-- cycle;
					\draw [color=orange] (-90:\ray) circle (7pt);	
				\end{scriptsize}
			\end{scope}
			
	\end{tikzpicture}}
	\hfill
	\caption[]{Illustration of the computation of $\Prob{A \middle| x_0,x_1\in\Tinit \et \card{ \llbracket x_0 , x_1\rrbracket \cap \Binit} = b_0  }$. Here, we give an example for $n = 16$, $\nballs = 5$, $\nholes =9$ and $\ell_0 = x_1 - x_0 - 1 \mod n = 4$. In the green interval (the smaller one), there need to be the same number of balls and holes (equal to $b_0$, which is either 1 or 2, hence the notation $b_0 \times \hspace{-0.1cm}\vcenter{\hbox{ \begin{tikzpicture}[scale = 0.7]
					\draw [color=black,fill = white] (1.85,-0.77) circle (3.5pt);
		\end{tikzpicture} }}\hspace{-0.1cm} + b_0 \times \hspace{-0.1cm}\vcenter{\hbox{ \begin{tikzpicture}[scale = 0.7]
					\draw [color=black,fill = black] (1.85,-0.77) circle (3.5pt);
		\end{tikzpicture} }}\hspace{-0.1cm} $
		). This way, there are no remaining holes in the green interval. In the blue interval, there are $h_1 = \nholes - b_0 - 2$ holes and $b_1 = \nballs - b_0$ balls. Thus, at time 1, the number of holes in $\TL$ in the blue interval is equal to $h_1 - b_1 = \nholes - \nballs - 2$ (equal to 2 in this example).
	}
	\label{fig:bloc_contenant0}
\end{figure}
\end{proof}

\begin{proof}[Proof of Claim \ref{claim:c'est_bien_une_proba}.]
We consider the random walk $(W_i)_{i\geq 0}$ defined as $W_0 = 0$ and with independent increments such that the law of $W_{i+1} - W_i$ is $\db \delta_{1} + (1- \db - \dt)\delta_0 + \dt \delta_{-1}$, for any $i$. The key is the following equality
\begin{align}
	\frac{f(\ell_0)}{\dt - \db} &=  \sum_{b_0 : 2 b_0 \leq \ell_0} \frac{\ell_0 + 1}{b_0 + 1}\binom{\ell_0}{b_0, b_0, \ell_0 - 2 b_0} \db^{b_0} \dt^{b_0 + 1}(1 - \db - \dt)^{\ell_0 - 2 b_0}\\
	&=  \sum_{b_0 : 2 b_0 \leq \ell_0} \binom{\ell_0+1}{b_0, b_0+1, \ell_0 - 2 b_0} \db^{b_0} \dt^{b_0 + 1}(1 - \db - \dt)^{\ell_0 - 2 b_0}\\
	&= \Prob{W_{\ell_0} = -1}.
\end{align}
Therefore, if we define $N\coloneqq \sum_{\ell_0 = 0}^{+\infty} \ind{W_{\ell_0 = -1}}$ as the number of passages of $W$ at $-1$, we have 
\begin{equation}
	\sum_{\ell_0 \geq 0} f(\ell_0) = (\dt - \db) \sum_{\ell_0 \geq 0} \Esp{\ind{W_{\ell_0} = -1}} =  (\dt - \db) \Esp{N}.
\end{equation}
To compute $\Esp{N}$, we let $N_{-1} \overset{(d)}= \sum_{\ell_0 > \tau_{-1}} {\ind{W_{\ell_0} = -1}}$ be the number of passages at -1 after time $\tau_{-1}$. We observe that $N \overset{(d)}= 1 + N_{-1}$ and we can decompose $N_{-1}$ according to the first step of the walk after time $\tau_{-1}$, which we denote by $X = W_{\tau_{-1}+1} - W_{\tau_{-1}}$:
\begin{equation}
	N_{-1} \overset{(d)}= \left\{\begin{array}{lll}
		N & \text{with probability } \db & \text{(if $X = 1$)}\\
		1 + N_{-1} & \text{with proba } 1 - \db - \dt & \text{(if $X = 0$)} \\
		1 + N_{-1} & \text{with proba } \dt p_{-2}  & \text{(if $X = -1$ and $W$ returns to -1)} \\
		0 & \text{with proba } \dt (1-p_{-2} ) & \text{(if $X = -1$ and $W$ never returns to -1)}
	\end{array}\right. \label{eqn:N}
\end{equation}
where $p_{-2} \coloneqq \P_{-2}\left(\tau_{-1} < \infty\right)$ refers to the probability that $W$ returns to $-1$ after having reached $-2$. We now show that $\P_{-2}\left(\tau_{-1} < \infty\right) =  \frac{ \db}{\db + \dt} \mathcal{G}\left(\frac{\db \dt}{(\db + \dt)^2}\right)$ (recall that $\mathcal{G}$ is the Catalan generating function). We consider the walk $W'$, starting at $-2$ and with i.i.d.\ increments having distribution $\mathcal{L}\left(W'_{i+1} - W'_i\right) = \frac{\db}{\db + \dt} \delta_1 + \frac{\dt}{\db + \dt} \delta_{-1}$. Then, since the return of $W$ to $-1$ (after time $\tau_{-2}$) does not depend on its steps $0$, it means that
$\P_{-2}\left(\tau_{-1} < \infty\right) = \P_{-2}\left(\tau_{-1}(W') < \infty\right)$. We can thus compute it:
\begin{align}
	\P_{-2}\left(\tau_{-1} < \infty\right) &= \sum_{k\geq 0} \P\left(\tau_{-1}(W') = 2k+1\right) \\
	&= \sum_{k\geq 0}C_k \left(\frac{\db}{\db+\dt}\right)^{k+1} \left(\frac{\dt}{\db+\dt}\right)^{k} = \frac{\db}{\db+\dt} \mathcal{G}\left(\frac{\db \dt}{(\db+\dt)^2}\right).
\end{align}
In particular, it is straightforward to compute that $\mathcal{G}\left(\frac{\db \dt}{(\db+\dt)^2}\right) = \frac{\db+ \dt}{\dt}$ (since $\dt > \db > 0$). Moreover, Equation (\ref{eqn:N}) gives the following system:
\begin{equation}
	\begin{cases}
		\Esp{N} = 1 + \Esp{N_{-1}}, \\
		\Esp{N_{-1}} = \db \Esp{N} + \left(1 - \db - \dt + \frac{ \db \dt}{\db + \dt} \mathcal{G}\left(\frac{\db \dt}{(\db + \dt)^2}\right)\right) (1 + \Esp{N_{-1}}).
	\end{cases}
\end{equation}
which can be solved to obtain $\Esp{N} = \frac{1}{\dt - \db}$. Finally, $\sum_{\ell_0 \geq 0} f(\ell_0) = (\dt - \db) \Esp{N} = 1$ as wanted.

\end{proof}

\subsection{Law of the set of remaining holes when $\db = \dt$ - proof of Theorem \ref{prop:Z_same_density_bh_empty_holeset}}\label{subsect:Z_pas_de_trous_si_db=dt}

\begin{proof}[Proof of Theorem \ref{prop:Z_same_density_bh_empty_holeset}]
We assume that $\db = \dt$. The case $\db = 0$ is trivial, so we assume that $\db>0$. We let $k\geq1$ and $\eps>0$. 

As $\Holeset^t$ is well-defined for every $t$, we can consider the process $\left(\Delta_0 \Holeset^t\right)_{t\in\intzo}$. It is a non-decreasing process (since $\Holeset^t$ is a set that decreases over time), and thus for any $t< 1$, \[\forall i,\ \pdhdh(\Delta_0\TL \leq i) \leq \pdhdh(\Delta_0\Holeset^t \leq i).\]

Then (also for any $t$), $\Delta_0\Holeset^t$ under $\pdhdh$ has the same distribution as $\Delta_0\Holeset^1$ under $\ptdhdh$ (the key being that, in both cases, the probability that some vertex contains a ball that has been activated at the considered time is $t\dt$). In particular, $\pdhdh(\Delta_0\Holeset^t \leq k) = \ptdhdh(\Delta_0\Holeset^1 \leq k)$.

Equation (\ref{eqn:lalimite!}) implies that for every $i$, $\pdbdh(\Delta_0\TL = i)$ goes to $0$ as $\db$ tends to $\dt$. Thus there exists $t_\eps<1$ large enough so that for every $i\in\llbracket 0, k\rrbracket$, $\ptepsdhdh(\Delta_0\TL = i) \leq \eps$. It enables to conclude :
\begin{align}
	\pdhdh(\Delta_0\TL \leq k) &\leq \ptdhdh(\Delta_0\Holeset^1 \leq k) \leq (k+1)\eps.
\end{align}
As this is true for any $\eps$, it implies that $\pdhdh(\Delta_0\TL \leq k) = 0$. Therefore, under $\pdhdh$, $\Delta_0\TL = +\infty$ almost surely. Finally, thanks to Remark \ref{rem:TL_vide_ou_tres_infini}, $\TL = \emptyset$ almost surely.
\end{proof}

\addition{We therefore obtain the following results, completing Corollary \ref{cor:Zdef_tant_que_pas_trop_de_balles}: we know that when $\db > \dt$ the process is valid until time $\dt/\db$ (included), and this corollary tells us what happens after time $\dt/\db$.

\begin{corollary}\label{cor:Zdef_tant_que_pas_trop_de_balles2}
	If $\db > \dt$, the golf process on $\Z$ is almost surely frozen at any time $t>\dt/\db$.
\end{corollary}

Recall that a process is frozen when a ball cannot reach a hole. Theorem \ref{prop:Z_same_density_bh_empty_holeset} implies that at time $\dt/\db$ all the holes are filled, and thus the process becomes frozen as soon as other balls are activated.}
%
%The probability that a vertex contains a ball activated before time $\dt/\db$ is $\dt$. The golf process until time $\dt/\db$ is thus, up to a rescaling of time, similar to a golf process with ball density equal to $\dt$, which we have proven to be valid. Then at time $\dt/\db$ all the holes are filled, and thus the process becomes frozen as soon as other balls try to move.\\

\subsection{Coupling with the parking on $\Z$}\label{subsect:golf=parking_onZ}

\addition{We described in Section \ref{subsect:litterature} the parking process on $\Z$ (see \cite{przykucki2019parking}), defined with the same initial configuration as the golf process on $\Z$ (i.e.\ each site contains a ball with probability $\db$ or a hole with probability $\dt$, independently of the other sites). In the parking process, all the balls move simultaneously, doing one random step at each integer time, until they have reached a free hole, which becomes occupied, as usual. More precisely, each ball steps are independent, and such that a ball moves to the right with probability $p$, to the left with probability $1-p$. Recall that $\Holeset^t$ is the set of free holes at time $t$ for the golf process. Similarly, we let $P^t$ be the set of free holes at time $t$ in the parking process. We should emphasize that the temporality in the golf and parking processes is completely different: the golf process ends at time $t=1$, once every ball has moved (recall that they move only once, directly to a free hole) whereas the parking process almost surely do not reach a final configuration in finite time. 

In the following theorem, we prove that when $\db \leq \dt$, the two processes have the same final configuration (asymptotically).
}

\begin{theorem}[Coupling with the parking process on $\Z$]\label{cor:Zgolf=parking}
\addition{
	When $\db \leq \dt$, $\lim_{t\to\infty}P^t \overset{(d)}= \TL $. Moreover, the two processes can be coupled so that almost surely,
	$$\lim_{t\to\infty}P^t= \TL .$$
}
\end{theorem}

\begin{proof}\addition{		
	As for Theorem \ref{thm:defZ}, we first prove the result in the case $\db < \dt$. We consider $\eta$ the golf process, and $\xi$ the parking process, coupled so that $\eta^0 = \xi^0$ (this is possible since the two initial configurations have the same distribution). We let, as in the proof of theorem \ref{thm:defZ}, $\Sep $ be the set of separators in $\eta^0$. We use Lemma \ref{lemma:infinite_nb_separators} and consider the processes $\eta$ and $\xi$ between two consecutive elements of $\Sep$: it suffices to do a finite coupling between those two processes. This coupling comes directly thanks to Diaconis and Fulton argument (\cite{diaconis1991growth}, see Remark \ref{remark:prop_commutation}), or equivalently from the \textit{space-based} model introduced in \cite{przykucki2019parking}: we associate to each vertex $u$ an infinite sequence $V^u = (V_1^u, V_2^u,\cdots)$ such that $\forall v, \Prob{V_i^u = v} = P_{u,v}$. We let $\Veta$ be this sequence for the golf process $\eta$, and $\Vxi = \Veta$ be a copy of this sequence, which is going to be used for the parking process. We then proceed as explained in Remark \ref{remark:prop_commutation}: when a ball at vertex $u$ needs to do a step in the golf process (respectively in the parking process), it moves according to the first $\Veta_i^u$ (respectively $W_i^u$) that has not been used yet. For the parking case, we use random variables to break ties if needed. Then, we know from Diaconis-Fulton that whatever the order in which the balls move, the same steps of $V$ will be used, so in particular the same holes will be reached in both the golf and the parking processes. We thus have built a coupling of the two final configurations $\eta^1$ and $\lim_{t \to \infty}\xi^t$, between every pair of consecutive elements of $\Sep$. This induces a coupling on the final configurations on $\Z$, since we have seen (Lemma \ref{lemma:no_ball_exits_enclosed_intervals}) that separators are free holes, at any time step (and it is straight forward to check that it is also true for the parking process).\\
	
	When $\db = \dt$, we have proved that, for the golf process, every ball finds a hole and that at time 1, every hole is filled (see Theorem \ref{prop:Z_same_density_bh_empty_holeset}). It suffices to prove the same result for the parking process. First, every ball finds a hole almost surely (it is a direct consequence of Theorem 1.2 in \cite{przykucki2019parking}, applying Markov property to conclude that $\P(\tau>t) \leq C t^{-1/4}$, where $\tau$ is the time at which a ball finds a hole). Second, we can prove that $\lim_{t\to\infty} P^t = \emptyset$: we consider $ P^t$ as a function of $\db$, and now write $P^t(\db)$, for the process in which we only have activated balls with clocks $<\db$ (clocks being the same as clocks for the golf process, as in the previous paragraph). With this notation, we thus want to prove that $\lim_{t\to\infty} P^t(\dt) = \emptyset$. Since we do not care which ball fills which hole in the parking, it is straightforward that $P^t(\db) $ is a decreasing function of both $\db$ and $t$, and therefore $$\lim_{t\to\infty} P^t(\dt) = \lim_{t\to\infty} \lim_{\db \to \dt, \db < \dt} P^t(\db) = \lim_{\db \to \dt, \db < \dt}  \lim_{t\to\infty} P^t(\db) $$
	Finally, we use the proof for the case $\db < \dt$ to couple $\lim_{t\to\infty} P^t(\db)$ with $\TL$ for the golf process with $\db < \dt$. Similar reasoning as in the proof of Theorem \ref{prop:Z_same_density_bh_empty_holeset} implies that, when the size of the block containing 0 in $\lim_{t\to\infty} P^t(\db)$ converges to $\infty$ when $\db \to \dt$, and in particular $\lim_{t\to\infty} P^t(\dt)=\lim_{\db \to \dt, \db < \dt}  \lim_{t\to\infty} P^t(\db) = \emptyset$.
	
}\end{proof} 

\section{Golf and parking with different moving strategies}\label{subsect:proof_prop_variantes}

\addition{
In this section we prove Proposition \ref{claim:variantes}. It is a kind of corollary of Theorems \ref{thm:loitrousresiduels}, \ref{thm:parking_loi_TL} and \ref{thm:defZ}, or rather can be proven by more or less the same proofs, therefore we only sketch the main arguments. We first give two reasons why the proof of Theorem \ref{thm:loitrousresiduels} is robust and applies to all the variants of the golf process that are shift invariant and local. Then, we discuss how to conclude for the other results of Proposition \ref{claim:variantes}.

\paragraph{Robustness of the proof of Theorem \ref{thm:loitrousresiduels} - exchangeability argument}
We can use an exchangeability argument to prove that the distribution of $\TL$ does not depend on the strategy of the balls (but it does not give this distribution). Exchangeability is a notion discussed, in particular, by Aldous \cite{Aldous_exchangeability}. Here, the block sizes are exchangeable (except for the size of the block containing 0, which is size-biased): $(\Delta_0\TL, \Delta_1 \TL, \ldots, \Delta_{\nl-1} \TL) \overset{(d)}= (\Delta_0\TL, \Delta_{\sigma_1} \TL, \ldots, \Delta_{\sigma_{\nl-1}} \TL)$, for any permutation $\sigma$ of $\{1,\ldots,\nl-1\}$. This can be proven by showing that the blocks (that is, the restrictions of the golf process between two consecutive free holes) are exchangeable, at any time $t\leq 1$ (this is the identity of the block that contains 0 which is not). This comes from the fact that the balls have shift invariant and local strategies. 

Exchangeability implies that the strategy of the balls do not really matter: 
If at time $t$, a ball $b$, coming from a block $B_j(\tmoins)$ existing at this time, becomes active, it will fill the space between $B_j(\tmoins)$ and $B_k(\tmoins)$ where $k=j+1$ or $j-1$ (that is an adjacent block) and this will produce the coalescence of $B_k(\tmoins)$ and $B_j(\tmoins)$ at time $t$. The distribution of $k$ depends on the position of the activated ball $b$ in its block and of the transition matrix of the Markov chain it used (or more generally, of the displacement politics it employed); nevertheless, since $(|B_{j+1}(t)|,|B_{j-1}(t)|)$ has same distribution as  $(|B_{j-1}(t)|,|B_{j+1}(t)|)$ by exchangeability (recall that $(\smallcard{B_{i}(t)} = \Delta_i\Holeset^t$, for any $i$), the distribution of the resulting coalescent process (seen at the level of the block sizes) does not depend on the displacement politics.

This reasoning can be made rigorous to deal with the size-bias of $\Delta_0 \Holeset^t$, but we do not do it here.

\paragraph{Robustness of the proof of Theorem \ref{thm:loitrousresiduels} - digging into the details of the proof of Lemma \ref{lemma:simpl_thm_loi_trous_residuels}} 
For every shift invariant strategy, it is straightforward that Lemma \ref{lemma:casinitloitrousresiduels} is true. Lemma \ref{lemma:simpl_thm_loi_trous_residuels} relies on the block decomposition principle, and the property of ``local decision'' is the key that enables to see that Lemma \ref{lemma:simpl_thm_loi_trous_residuels} is also true here: this notion of local decision implies that a ball strategy does not depend on what is outside its block, and thus that we can decompose $\znz$ as independent blocks, that we then treat independently.

At the end of the computation (to get Equation \ref{eqn:conclu_loi_TL_lemme_simpl_thm_loi_trous_residuels} in the proof of Lemma \ref{lemma:simpl_thm_loi_trous_residuels}), we use the commutation property. We can still conclude if the commutation property is not true, as it suffices to sum over all the permutations of $\znz$ (i.e.\ over every order of the balls) : the uniform distribution of clocks gives a uniform permutation of the balls, which induces a uniform permutation of the balls inside each interval between consecutive elements of $X$, and thus allows to conclude. Finally, if the strategy of a ball depends on its clock, the proof of Lemma \ref{lemma:simpl_thm_loi_trous_residuels} is a bit more subtle, because it involves to deal with continuous variables, but the same structure of proof works in this continuous setting.

\paragraph{Conclusion}

Once Theorem \ref{thm:loitrousresiduels} is valid, with the same formula for the distribution of $\TL$, it follows directly that Theorems \ref{thm:phaseT} and \ref{thm:blocs_cercle_nb_nt_fixes} also apply for different moving strategies.

For the parking process, the proof of Theorem \ref{thm:parking_loi_TL} follows exactly the same steps as the one of Theorem \ref{thm:loitrousresiduels} for the golf process (see Section \ref{subsect:parking_loi_TL}), and our two previous paragraphs justify why the theorem also applies to shift invariant and local variants.

On $\Z$, the proof of validity of the golf process relies on a notion of separators, that are holes that cannot be filled by any ball (almost surely, for a given initial distribution), regardless of the balls strategy, and thus applies to these variants. For the proof of Theorem \ref{thm:loiblocsZ}, the reasoning also works for any variant, with the subtlety that the coupling defined in the conclusion of the proof needs to be slightly modified (so that the coupling of the trajectories becomes a coupling of the ball strategies, such that two balls on the same vertex with the same local information do the same choice). Notice that Lemma \ref{lemma:loi_Z_lemme_technique_memes_trous} can also be extended to any local variant.
}

\section*{Acknowledgments}

The author would like to thank Jean-François Marckert for his great supervision, for the many fruitful discussions we had together and for his guiding in the writing of this paper.

We also thank Laurent Tournier and Matthew Junge who communicated us additional references that we were not aware of in a preliminary version of the paper. 

Finally, we would like to express our gratitude to the reviewers for their careful reading and their numerous corrections and suggestions, which allowed us to improve the quality and clarity of the paper.

\bibliographystyle{alpha}
\bibliography{biblio.bib}

\appendix

\section{Useful theorems}\label{annex:thms}
\subsection{Central local limit theorem}\label{annex:TCLL}

We use the following {central local limit theorem} :
\begin{theorem}[Central local limit theorem] \label{thm:TCLL}
Let $(X_i)_{i\in\N}$ be a sequence of i.i.d.\ random variables, such that $\Esp{X_1} = \mu$ and $\Var{X_1} = \sigma^2 \in (0,\infty)$. We assume that the support of $X_1$ is included in the set $b+h\Z$, for some $b$ and $h$, with $h$ maximal for this property. Then, 
\begin{equation}
	\sup_x \left| \frac{\sigma \sqrt{n}}{h} \Prob{\frac{\sum_{i=1}^{n}X_i}{\sigma\sqrt{n}} = x + \sqrt{n}\frac{\mu}{\sigma} } - \frac{\exp{-\frac{x^2}{2}}}{\sqrt{2\pi}} \right|  \underset{n\to\infty}\to 0
\end{equation}
where $x$ takes values in $\frac{b-\mu}{\sigma}\sqrt{n} + \frac{h}{\sigma \sqrt{n}}\Z$.
\end{theorem}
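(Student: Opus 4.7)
The plan is to prove this classical central local limit theorem by Fourier inversion on the lattice coset $b + h\Z$; the argument is standard (see Gnedenko--Kolmogorov or Petrov), and I only sketch the structure.

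\textbf{Step 1: Reduction.} Replacing $X_i$ by $(X_i - b)/h$, we may assume $b = 0$ and $h = 1$, so that the $X_i$ are integer-valued and $\Z$ is the smallest subgroup containing $\mathrm{supp}(X_1) - \mathrm{supp}(X_1)$. Maximality of $h$ then translates into the strong non-lattice condition on the characteristic function $\phi(t) = \E[e^{itX_1}]$: we have $|\phi(t)| < 1$ for all $t \in (-\pi,\pi) \setminus \{0\}$. Setting $S_n = \sum_{i=1}^{n} X_i$, the target is
\[
\sup_{k\in\Z} \left| \sigma\sqrt{n}\,\P(S_n = k) - \tfrac{1}{\sqrt{2\pi}} \exp\!\Bigl(-\tfrac{(k-n\mu)^2}{2n\sigma^2}\Bigr) \right| \xrightarrow[n\to\infty]{} 0.
\]

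\textbf{Step 2: Fourier representation.} Since $S_n$ is $\Z$-valued,
\[
\P(S_n = k) = \frac{1}{2\pi}\int_{-\pi}^{\pi} \phi(t)^n e^{-ikt}\,dt,
\]
and the Gaussian density admits the Fourier representation $\tfrac{1}{\sqrt{2\pi n}\,\sigma}\exp(-(k-n\mu)^2/(2n\sigma^2)) = \tfrac{1}{2\pi}\int_{\R} e^{-u^2/(2n\sigma^2)} e^{-iu(k-n\mu)}\,du$. After the change of variables $u = \sigma\sqrt{n}\,t$, it suffices to show that
\[
I_n := \sup_{k\in\Z}\left| \int_{-\pi\sigma\sqrt n}^{\pi\sigma\sqrt n} \phi\!\Bigl(\tfrac{u}{\sigma\sqrt n}\Bigr)^n e^{iu n\mu/(\sigma\sqrt n)} e^{-iuk/(\sigma\sqrt n)}\,du \;-\; \int_{\R} e^{-u^2/2} e^{-iu(k-n\mu)/(\sigma\sqrt n)}\,du \right|
\]
tends to $0$. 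Because the oscillatory factors have modulus $1$, the supremum over $k$ is dominated by the $k$-free integral
\[
I_n \le \int_{-\pi\sigma\sqrt n}^{\pi\sigma\sqrt n} \Bigl| \phi\!\Bigl(\tfrac{u}{\sigma\sqrt n}\Bigr)^n e^{iu\sqrt{n}\,\mu/\sigma} - e^{-u^2/2} \Bigr|\,du + \int_{|u|>\pi\sigma\sqrt n} e^{-u^2/2}\,du,
\]
so uniformity in $k$ comes for free.

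\textbf{Step 3: Local expansion and tail estimate.} Fix $\delta > 0$ small. On $\{|u| \le \delta\sigma\sqrt n\}$, the second-order expansion of $\phi$ around $0$ (valid thanks to the finite variance hypothesis) gives
\[
\log \phi\!\Bigl(\tfrac{u}{\sigma\sqrt n}\Bigr) = \frac{i\mu u}{\sigma\sqrt n} - \frac{u^2}{2n} + o\!\Bigl(\frac{u^2}{n}\Bigr),
\]
so that $\phi(u/(\sigma\sqrt n))^n e^{iu\sqrt n \mu/\sigma} \to e^{-u^2/2}$ pointwise, with a uniform bound $|\phi(u/(\sigma\sqrt n))|^n \le e^{-cu^2}$ on this region for some $c>0$ (by choosing $\delta$ small enough so that $|\phi(t)|\le \exp(-\sigma^2 t^2/4)$ for $|t|\le \delta$). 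Dominated convergence then yields that the corresponding part of the integral tends to $0$. On the tail $\delta \le |t| \le \pi$, maximality of $h$ gives $\rho := \sup_{\delta \le |t| \le \pi} |\phi(t)| < 1$, so the integral over $\{\delta\sigma\sqrt n \le |u| \le \pi\sigma\sqrt n\}$ is bounded by $2\pi\sigma\sqrt n \cdot \rho^n$, which is exponentially small. The Gaussian tail $\int_{|u|>\pi\sigma\sqrt n} e^{-u^2/2}du$ is likewise negligible.

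\textbf{Main obstacle.} The only delicate point is the compactness-type input from the strong non-lattice hypothesis: one must verify that $|\phi(t)| < 1$ strictly on $(-\pi,\pi)\setminus\{0\}$ and, by continuity and periodicity, that $\sup_{\delta \le |t| \le \pi}|\phi(t)| < 1$. This is where the maximality of $h$ is used, and it is what makes the tail estimate strong enough to multiply by the polynomial factor $\sigma\sqrt n$ that hides in the rescaling. Once this is in place, Steps 2 and 3 combine to give the claimed uniform convergence.
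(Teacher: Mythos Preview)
The paper does not prove this theorem: it is stated in the appendix as a classical tool (the central local limit theorem, in the tradition of Gnedenko--Kolmogorov and Petrov) and simply invoked where needed. Your sketch is the standard Fourier-inversion proof and is essentially correct; the reduction to $h=1$, the split of the integral into a central window (handled by the second-order expansion of $\log\phi$ and dominated convergence) and a tail (handled by $\sup_{\delta\le|t|\le\pi}|\phi(t)|<1$, which is exactly where maximality of $h$ enters) is the canonical argument. There are a couple of harmless sign slips in your display for $I_n$ (the factor $e^{iu n\mu/(\sigma\sqrt n)}$ should not appear in the first integrand coming from Fourier inversion of $\P(S_n=k)$; it belongs to the Gaussian side, or equivalently one compares $\phi(u/(\sigma\sqrt n))^n e^{-iu\sqrt n\,\mu/\sigma}$ with $e^{-u^2/2}$), but since you then bound by the modulus of the difference this does not affect the conclusion.
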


We can deduce that, under the same hypothesis, for any $(X_i)_{i\in\N}$ a sequence of i.i.d.\ random variables with $\Esp{X_1} = \mu$ and for any sequence $K_n = o(\sqrt{n})$, 
\begin{equation}
\sup_{x\in {b-\mu}n + h\Z :\ |x| \leq K_n} \left| \frac{\sigma \sqrt{n}}{h} \Prob{ \sum_{i=1}^{n}X_i = x + n\mu } - \frac{1}{\sqrt{2\pi}} \right|  \underset{n\to\infty}\to 0 \label{eqn:TCLL2}.
\end{equation}

In particular, for any sequence $x_n = o(\sqrt{n})$, 
\begin{equation}
\frac{\sigma \sqrt{n}}{h} \Prob{ {\sum_{i=1}^{n}X_i} = n{\mu} + x_n} \underset{n\to\infty}\to \frac{1}{\sqrt{2\pi}}.\label{eqn:TCLL3}
\end{equation}

\subsection{Scheffé's lemma and its use to prove convergence of rescaled discrete random variables} \label{annex:scheffe}
We recall the classical Scheffé's lemma (see \cite{MR1324786}):
\begin{lemma}[Scheffé's lemma]\label{lemma:scheffe}
Let $f, f_0,f_1\ldots$ be a sequence of probability density functions on $\R^d$. If $(f_n)$ converges almost everywhere to $f$, and if the random variables $X_n$ and $X$ have respective densities $f_n$ and $f$, then $X_n \overset{(d)}\to X$.

\end{lemma}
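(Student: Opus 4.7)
The strategy is to upgrade the hypothesis of pointwise (almost everywhere) convergence of the densities to $L^1$ convergence---the classical content of Scheffé's inequality---and then close the argument via the Portmanteau characterization of convergence in distribution.

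First, I would establish that $\int_{\R^d} |f_n - f| \, dx \to 0$. Applying dominated convergence directly to $|f_n - f|$ is not available, since no integrable majorant for $f_n$ is provided. The classical trick is the symmetrization identity $|a-b| = a + b - 2\min(a,b)$, which for probability densities gives
\[
\int_{\R^d} |f_n - f| \, dx = \int f_n \, dx + \int f \, dx - 2\int_{\R^d} \min(f_n, f) \, dx = 2 - 2\int_{\R^d} \min(f_n, f) \, dx.
\]
Now $0 \leq \min(f_n, f) \leq f \in L^1(\R^d)$ and $\min(f_n, f) \to f$ almost everywhere, so the dominated convergence theorem yields $\int \min(f_n, f) \, dx \to \int f \, dx = 1$, whence $\int |f_n - f| \, dx \to 0$.

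Second, I would deduce $X_n \overset{(d)}{\to} X$. For any bounded continuous $\varphi \colon \R^d \to \R$ with $\|\varphi\|_\infty \leq M$,
\[
\left|\E[\varphi(X_n)] - \E[\varphi(X)]\right| = \left|\int_{\R^d} \varphi (f_n - f) \, dx\right| \leq M \int_{\R^d} |f_n - f| \, dx \longrightarrow 0,
\]
which is one of the standard equivalent formulations of convergence in distribution (Portmanteau). The only real subtlety---more a neat device than a genuine obstacle---is the symmetrization identity $|a-b| = a + b - 2\min(a,b)$, which reroutes the problem (where $|f_n - f|$ lacks an obvious integrable majorant) through the manifestly dominated sequence $\min(f_n, f) \leq f$, on which dominated convergence applies directly.
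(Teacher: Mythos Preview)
Your proof is correct and is precisely the classical argument for Scheff\'e's lemma. The paper does not actually give a proof of this statement---it merely recalls the lemma with a reference to the literature---so there is nothing to compare against; your write-up would in fact supply what the paper omits.
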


We will use the following corollary, which is folklore:
\begin{corollary}[Discrete Scheffé's lemma] \label{lemma:discreteScheffe}
We consider a sequence of random variables $(X_n)_n$ on $\Z^d$, and a sequence $(a_n)_n \in (\R_+)^\N$ such that $a_n \to +\infty$ as $n$ goes to infinity. We also let $X$ be a random variable with probability density function $f$, defined on $\R^d$. If for almost every $x\in \R^d$, $\lim_n {a_n}^d \Prob{X_n = \lfloor a_n x \rfloor} = f(x)$, then $X_n/a_n \overset{(d)}\to X$ as $n \to\infty$.
\end{corollary}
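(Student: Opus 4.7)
The plan is to reduce the discrete convergence to the continuous Scheffé's lemma by smoothing $X_n/a_n$ with a small independent uniform perturbation, so that the resulting random variable admits a density which is explicitly $a_n^d\,\P(X_n = \lfloor a_n x\rfloor)$.

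First I would fix, on an enlarged probability space, a sequence of random variables $U_n$ independent of $X_n$, each $U_n$ uniform on $[0,1/a_n)^d$. Set $Z_n \coloneqq X_n/a_n + U_n$. For every $x \in \R^d$ the point $x$ lies in a unique half-open box $B_n(x) \coloneqq [k/a_n, k/a_n + 1/a_n)^d$ with $k = \lfloor a_n x\rfloor$, and $Z_n \in B_n(x)$ if and only if $X_n = k$ and $U_n = a_n x - k \mod 1$. A direct computation then gives that $Z_n$ has density
\begin{equation}
f_n(x) \;=\; a_n^d\,\P\!\left(X_n = \lfloor a_n x \rfloor\right)
\end{equation}
with respect to Lebesgue measure on $\R^d$; in particular $\int_{\R^d} f_n = \sum_{k\in\Z^d}\P(X_n=k) = 1$, so $f_n$ is a bona fide probability density.

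Second, the hypothesis states exactly that $f_n(x) \to f(x)$ for almost every $x\in\R^d$. Since $f_n$ and $f$ are probability densities and $f_n \to f$ a.e., the classical Scheffé's lemma (Lemma \ref{lemma:scheffe}) applies and yields $Z_n \overset{(d)}\to X$ as $n\to\infty$.

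Finally, since $|Z_n - X_n/a_n| = |U_n| \leq \sqrt{d}/a_n \to 0$ deterministically, the pair $(Z_n, Z_n - X_n/a_n)$ has the second coordinate converging to $0$ in probability, and a standard application of Slutsky's theorem gives $X_n/a_n \overset{(d)}\to X$. I do not anticipate a genuine obstacle: the only thing to be slightly careful about is the verification that $Z_n$ is exactly the distribution with density $f_n$ on every half-open cube (a measurability/boundary check), which is harmless because the boundaries of the cubes form a Lebesgue-null set.
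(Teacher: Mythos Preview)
Your proof is correct and follows essentially the same approach as the paper: smooth $X_n/a_n$ by an independent uniform on a cube of side $1/a_n$ so that the resulting variable has density $a_n^d\,\P(X_n=\lfloor a_n x\rfloor)$, apply Scheff\'e, then use Slutsky. The paper writes this as $Y_n=(X_n+U)/a_n$ with $U\sim\mathcal{U}([0,1]^d)$, which is exactly your $Z_n$.
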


\begin{proof}It suffices to notice that if we set, for every $n$, 
\begin{equation*}
	Y_n \coloneqq \frac{X_n + U}{a_n}
\end{equation*}
where $U \sim \mathcal{U}(\intzo^d)$ is independent of $(X_n)$, then $Y_n$ is a continuous random variable with density $f_n(.) \coloneqq {a_n}^d \Prob{X_n = \lfloor a_n . \rfloor}$, that converges almost everywhere to $f$. Lemma \ref{lemma:scheffe} implies that $Y_n \overset{(d)}\to X$ as $n \to \infty$, and Slutsky's theorem enables to conclude that $X_n \overset{(d)}\to X$ as $n \to\infty$.
\end{proof}

\addition{
\subsection{Proof of Proposition \ref{prop:cv_marchecondit_vers_browniencondit}}\label{annex:cv_marchecondit_vers_browniencondit}

Since the event $\{\tau_{-\lambda}(B)=1\}$ has probability zero, we first explain what we call a Brownian motion conditioned by $\tau_{-\lambda}(B)=1$. Consider the three following kernels (defined for all $t> 0, x \in \R, y \in \R$):
\begin{eqnarray*}
	\Phi_t(x)&=&\exp(-x^2/(2t))/\sqrt{2\pi t},\\
	\Phi^{+}_t(x,y)&=& \Phi_t(y-x) - \Phi_t(x+y+2\lambda),\\
	\Phi^m_t(x)&=&\frac{x}{t} \Phi_t(x).
\end{eqnarray*}
The first kernel is the standard Gaussian kernel, the second one ``is the weight of Brownian paths'' going from $x$ to $y$ in an interval of length $t$ and which stays above the line $y=-\lambda$, the last one, is a ``meander type weight'', corresponding to the weight of Brownian paths of length $t$, starting at (0,0), staying above the $x$ axis, and ending at height $x$ at time $t$.
The process that we call Brownian motion, conditioned by $\tau_{-\lambda}=1$ has finite-dimensional distributions (FDD)
\[f_{t_1,\cdots,t_k}(x_1,\cdots,x_k)=\frac{\sqrt{2\pi}}{\lambda\exp(-\lambda^2/2)} \left[\prod_{j=1}^k \Phi^+_{t_j-t_{j-1}}(x_{j-1},x_{j})\right]\Phi^m_{1-t_k}(x_k+\lambda)\] 
for all $(x_i)$ such that $\forall i,x_i>-\lambda$ (if otherwise $x_i < -\lambda$ for some $i$, then $f_{t_1,\cdots,t_k}(x_1,\cdots,x_k) \linebreak= 0$).

\begin{proof}[Proof of Proposition \ref{prop:cv_marchecondit_vers_browniencondit}]
	Fix $\lambda>0$, take $k(n)\sim \lambda\sqrt{n}$. 
	
	The proof of the convergence of the FDD of the rescaled path ${W^{(n, \lambda)}}$ is a classical proof: fix some $0<t_1<\cdots < t_i<1$ (with $i$ an arbitrary integer), some $x_1,\ldots,x_i$ all greater than $-\lambda$, and compute the number of trajectories with step $+1$, $-1$ staying above level $-\lambda\sqrt{n}$ and passing by the points $(\floor{nt_j},\floor{x_j\sqrt{n}})$ (for any $j\leq i$); exact formulas are available, using the reflection principle of Désiré André (and also, for the last section between times $t_i$ and 1, that the probability that a random walk of length $\ell$ ends at its lowest level $-y$ has probability $(y/\ell)\P( W_l=-y)$). They allow to establish the convergence of the FDD, using a local limit theorem.
	
	Now, to conclude the proof of the convergence, it suffices to prove that the sequence (of the distributions) of $(W^{(n, \lambda)})_{n\geq 1}$ is tight.\footnote{In fact, it is well-known that, in $\Czo$ (the set of continuous functions on the unit interval) equipped with the uniform topology, convergence of the FDD of a sequence $(P_n)$ of probability measures and tightness of the sequence $(P_n)$ imply the convergence of $P_n$ in distribution in $\Czo$ (and the FDD of the limit are the limit of the FDD of $(P_n)$). See for example \cite[Theorem 7.1]{Billingsley1999}. } To do this, we consider two other sequences $X^{(n, \lambda)}$ and $Y^{(n)}$ that we define as follows. We let, as in the statement of Proposition \ref{prop:cv_marchecondit_vers_browniencondit},  $(W_i,i\geq 0)$ be a simple random walk (without any conditioning). We linearly interpolate it, so that it is defined for every real number (and continuous), and we let $Y^{(n)} \coloneqq \left(\frac{W_{nt}}{\sqrt{n}}, 0\leq t\leq 1\right)$. We also let, for every $n$,  $X^{(n,\lambda)} \coloneqq \left(\frac{W_{nt}}{\sqrt{n}}, 0\leq t\leq 1\right)$ conditioned by $W_n = k(n)$. In other words, $X^{(n,\lambda)}$ is a bridge of length $n$ ending at $-k(n) \sim -\lambda \sqrt{n}$.
	One can note that $\mathcal{L}\left(X^{(n, \lambda)}\right) =  \mathcal{L}\left(Y^{(n)}\middle| Y^{(n)}_1 = -\lambda\right)$ and $\mathcal{L}\left(W^{(n, \lambda)}\right) = \mathcal{L}\left( Y^{(n)} \middle| \tau_{-\lambda}(Y^{(n)}) = 1\right)$ (Here and below, when we write $ Y^{(n)}_1 = -\lambda$ or $\tau_{-\lambda}(Y^{(n)}) = 1$, $\lambda$ is an abuse of notation for $k(n)/\sqrt{n}$).

	\medskip
	We now prove that $(X^{(n, \lambda)})_{n\geq 1}$ is tight. We show that the tightness of the laws of our conditioned paths can be deduced from that of the sequence of the laws of $(Y^{(n)})$ of simple random walks suitably rescaled (the latter's tightness is a consequence of Donsker's theorem,  see e.g. \cite[Theorem 8.2]{Billingsley1999}). We use arguments sketched in \cite[proof of Lemma 1]{snakesJansonMarckert}.
	
	We use Theorem 7.3 (ii) of \cite{Billingsley1999}, to assert that for each $\eps>0, \eta>0$, there exist $\delta\in(0,1)$ and $n_0$ such that
	\begin{equation}
		\P (\omega_{[0,1]}(Y^{(n)},\delta)\geq \eps) \leq \eta, \textrm{ for } n \geq n_0 \label{eqn:tight=prop_mod_continuity}
	\end{equation}where 
	$\omega_A(f,\delta)=\max\{ |f(x)-f(y)|, |x-y|\leq \delta, x,y\in A\}$ 
	is the standard modulus of continuity of $f$ on $A$. 
	Of course, this property implies that  for each $\eps>0, \eta>0$, there exists $\delta\in(0,1)$ and $n_0$ such that
	\[\P (\omega_{[0,1/2]}(Y^{(n)},\delta)\geq \eps) \leq \eta, \textrm{ for } n \geq n_0.\]
	
	In what follows, we cover $[0,1]$ by three intervals $[0,b_n]$, $[a_n,d_n]$ and $[c_n,1]$ of size $<1/2$, where $a_n,b_n,c_n,d_n$ are discretization points of the form $j/n$, and such that each entry of $(a_n,b_n,c_n,d_n ) $ is at distance smaller than $1/n$ to the corresponding entry of $(1/4,2/5,3/5,3/4)$.
	Using that for all $\delta<1/10$ (this is sufficient), and all $f$,
	\[ \omega_{[0,1]}(f,\delta)\leq \omega_{[0,b_n]}(f,\delta)+ \omega_{[a_n,c_n]}(f,\delta)+ \omega_{[b_n,1]}(f,\delta) \]
	and invariance by rotation (its distribution is preserved by the rotation in $\znz$ of its increments), we get
	\begin{align}\P\left( \omega_{[0,1]}(X^{(n,\lambda)},\delta)) \geq \eps \right)
		&\leq 3 \P\left( \omega_{[0,1/2]}(X^{(n,\lambda)},\delta) \geq \eps /3 \right) \\
		&= 3  \P\left( \omega_{[0,1/2]}(Y^{(n)},\delta) \geq \eps /3  \middle| Y^{(n)}(1)=-\lambda\right) .
	\end{align}
	We now prove that there exists $C_\lambda$ such that 
	\begin{equation}
		\P\left( \omega_{[0,1/2]}(Y^{(n)},\delta) \geq \eps /3  \middle| Y^{(n)}(1)=-\lambda\right) \leq C_\lambda  \P\left( \omega_{[0,1/2]}(Y^{(n)},\delta) \geq \eps /3 \right) . \label{eqn:trucamontrer}
	\end{equation} 
	
	Recall that $Y^{(n)} = \left(\frac{W_{nt}}{\sqrt{n}}, 0\leq t \leq 1\right)$ and $\mathcal{L}\left(X^{(n, \lambda)}\right) =  \mathcal{L}\left(Y^{(n)}\middle| Y^{(n)}_1 = -\lambda\right)$. We focus on $W$, the non-normalized path. In what follows, we denote by $W[0,a]$ the restriction of the path $W$ to $[0,a]$.
	For $A$ an event measurable with respect to $\sigma\{W_0,\cdots,W_{\lfloor n/2 \rfloor}\}$,
	\begin{align*}
		&	\P( W[0,\lfloor n/2\rfloor ] \in A~|~ W_{n}=-k(n))\\&=\sum_m \P(  W[0,\lfloor n/2\rfloor ] \in A, W_{\lfloor n/2\rfloor }=m~|~ W_{n}=-k(n))\\
		&= \sum_m \frac{\P(  W[0,\lfloor n/2\rfloor]\in A| W_{\lfloor n/2\rfloor}=m )\P(   W_{n}=-k(n)|W_{\lfloor n/2\rfloor}=m )\P(W_{\lfloor n/2\rfloor}=m)}{\P(W_{n}=-k(n) )}
	\end{align*} 
	and using that $\frac{\P(   W_{n}=-k(n)|W_{\lfloor n/2\rfloor}=m )}{\P(W_{n}=-k(n) )}=\frac{\sqrt{n} \P(W_{{\lfloor n/2\rfloor}}=-m-k(n))}{\sqrt{n}  \P(W_{n}=-k(n) )}$ is bounded (the central local limit theorem {(see Theorem \ref{thm:TCLL} in Annex \ref{annex:TCLL})} enables to show that the numerator is bounded and that the denominator converges), one gets
	\[\P( W[0,\lfloor n/2 \rfloor ] \in A~|~ W_{n}=-k(n)) \leq C_\lambda\P( W[0,\lfloor n/2 \rfloor] \in A)\] for a constant $C_\lambda$.
	
	We directly deduce that Equation (\ref{eqn:trucamontrer}) is true. For any $\eps>0$ and $\eta^X>0$, by (\ref{eqn:tight=prop_mod_continuity}), considering $\eta^Y = \eta^X/ (3 C_\lambda)$, there exists $n_0$ and $\delta>0$ such that $\forall n \geq n_0$, 
	$P (\omega_{[0,1]}(Y^{(n)},\delta)\geq \eps/3) \leq \eta^Y$.
	Then,
	\begin{align}\P( \omega_{[0,1]}(X^{(n,\lambda)},\delta)) \geq \eps )
		&\leq 3 C_\lambda  \P( \omega_{[0,1/2]}(Y^{(n)},\delta) \geq \eps /3 ) \\
		&\leq 3 C_\lambda \P( \omega_{[0,1]}(Y^{(n)},\delta) \geq \eps /3 ) \\
		&\leq 3 C_\lambda \eta^Y = \eta^X.
	\end{align}
	
	Theorem 7.3 of \cite{Billingsley1999} thus implies the tightness of $(X^{(n,\lambda)})$. Actually, this theorem of Billingsley gives a necessary and sufficient condition, that is why we used it twice here.
	
	\medskip
	From here to conclude, it suffices to notice that the tightness of $(X^{(n, \lambda)})_{n\geq 1}$ implies that of $(W^{(n, \lambda)})_{n\geq 1}$.
	Indeed, an argument that we develop below, for the sake of completeness, is that the ``rotation'' multiplies the modulus of continuity by at most 2.
	
	Recall that the rotation of a function was defined in Equation (\ref{eqn:def_de_rot_continue}).
	
	As a matter of fact, it is known that, if ${\bf r} \sim \mathcal{U}(\llbracket 0,n-1 \rrbracket)$,
	\begin{equation}
		\rot(W^{(n,\lambda)},{\bf r}/n) \overset{(d)}= X^{(n,\lambda)}.
	\end{equation}
	It can be seen as a consequence of the rotation principle (see again Equation (\ref{eqn:principe_de_rotation}) and the references given above).
	Then, it is straightforward that, for any $f$ and $r$, $\omega_{[0,1]}(f) \leq 2 \omega_{[0,1]}(\rot(f,r))$. Thus the tightness of $X^{(n,\lambda)}$ implies that of $W^{(n,\lambda)}$.

\end{proof}

}
	
\end{document}